\documentclass[10pt]{amsart}
\usepackage[english]{babel}
\usepackage{amssymb}
\usepackage{hyperref}
\usepackage[initials]{amsrefs}
\usepackage{tikz-cd}
\usepackage{graphicx}
\usepackage{amssymb, mathrsfs, enumerate}
\usepackage{xcolor}
\usepackage{comment}


\newcommand{\bbA}{{\mathbb A}}
\newcommand{\bbN}{{\mathbb N}}
\newcommand{\bbQ}{{\mathbb Q}}
\newcommand{\bbR}{{\mathbb R}}

\newcommand{\bbZ}{{\mathbb Z}}
\newcommand{\bbC}{{\mathbb C}}

\newcommand{\bfG}{{\mathbf G}}
\newcommand{\bfH}{{\mathbf H}}

\newcommand{\calS}{\mathcal{S}}

\newcommand{\calO}{\mathcal{O}}


\newcommand{\bs}{\backslash}

\newcommand{\colim}{\operatorname*{colim}}
\newcommand{\defq}{\mathrel{\mathop:}=}
\newcommand{\id}{\operatorname{id}}
\newcommand{\im}{\operatorname{im}}

\newcommand{\res}{\operatorname{res}}

\newcommand{\cohom}{\operatorname{cohom}}

\newcommand{\pr}{\operatorname{pr}}

\newcommand{\vol}{\operatorname{vol}}

\newcommand{\filling}{\operatorname{FV}}
\newcommand{\combfilling}{\operatorname{cFV}}
\newcommand{\combweightedfilling}{\operatorname{cwFV}}

\newcommand{\Ker}{\operatorname{Ker}}

\newcommand{\Hom}{\operatorname{hom}}

\newcommand{\SL}{\operatorname{SL}}

\newcommand{\SO}{\operatorname{SO}}
\newcommand{\SU}{\operatorname{SU}}
\newcommand{\Sp}{\operatorname{Sp}}

\newcommand{\rank}{\operatorname{rank}}
\newcommand{\frank}{\operatorname{rank}_\pol}

\newcommand{\aut}{\operatorname{aut}}
\newcommand{\Ind}{\operatorname{Ind}}

\newcommand{\St}{\operatorname{St}}
\newcommand{\FP}{\operatorname{FP}}

\newcommand{\mass}{\operatorname{mass}}
\newcommand{\loc}{\operatorname{loc}}

\newcommand{\pol}{\ensuremath{\mathrm{pol}}}
\newcommand{\cell}{\ensuremath{\mathrm{cell}}}
\newcommand{\lip}{\ensuremath{\mathrm{lip}}}


\newcommand{\norm}[1]{{\left\lVert #1\right\rVert}}

\newtheorem{mthm}{Theorem}

\newtheorem{theorem}{Theorem}[section]
\newtheorem{lemma}[theorem]{Lemma}

\newtheorem{cor}[theorem]{Corollary}

\newtheorem{prop}[theorem]{Proposition}

\theoremstyle{definition}
\newtheorem{definition}[theorem]{Definition}
\newtheorem{defn}[theorem]{Definition}

\newtheorem{remark}[theorem]{Remark}
\newtheorem{conjecture}[theorem]{Conjecture}


\begin{document}

\title[Unitary cohomology of arithmetic groups]{Higher Kazhdan property and unitary cohomology of arithmetic groups}

\author{Uri Bader}
\address{Weizmann Institute, Israel}
\email{uri.bader@gmail.com}

\author{Roman Sauer}
\address{Karlsruhe Institute of Technology, Germany}
\email{roman.sauer@kit.edu}

\subjclass[2000]{Primary 22E41; Secondary 22E46, 20G10}
\keywords{Higher Kazhdan property T, Cohomology of Arithmetic Groups, Unitary representations}

\begin{abstract}
Notions of a higher Kazhdan property can be defined in terms of the vanishing of unitary group cohomology in higher degrees. Garland's theorem for simple groups over non-archimedean fields provides the first examples of a higher Kazhdan property. We prove a version of Garland's theorem for simple Lie groups and their lattices. 
A novelty of our approach is the use of methods from geometric group theory and -- in the rank~$1$ case -- from (global) representation theory pertaining to the spectral gap property.
\end{abstract}

\maketitle


\section{Introduction}

In 1967 Kazhdan introduced a property of groups, known as Kazhdan's property T, which is defined in terms of unitary representations. This property found intriguing and surprising applications in various branches of mathematics (ergodic theory, geometric group theory, algebraic groups, computer science). 

A theorem of Delorme-Guichardet characterizes property T in terms of the first group cohomology with unitary coefficients. Equivalent criteria for property~T are the vanishing of the first cohomology for all unitary representations, the vanishing of the first cohomology for all unitary representations not containing the trivial one and the first cohomology always being Hausdorff. 

The theorem of Delorme-Guichardet suggests notions of higher property~T by considering higher-degree group cohomology. The analogs of the equivalent criteria in degree~$1$ turn out to be not equivalent in higher degrees. 
A definition of higher property~T, which differs from the one below and generalizes the Hausdorff criterion, was given by Bader-Nowak~\cites{BN2}. 
The definition by Chiffre-Glebsky-Lubotzky-Thom\cite{stability}*{Definition 4.1} corresponds to property~$[T_n]$ defined next. 

\begin{defn} \label{def:T_n}
  A locally compact second countable group $G$ has property $[T_n]$ if its continuous cohomology $H^j_c(G, V)$ vanishes for every unitary $G$-representation $V$ and every $1\le j\le n$. It has property $(T_n)$ if the same is true provided that $V$ has no nontrivial $G$-invariant vectors.
\end{defn}
Property $(T_1)$ is equivalent to property $[T_1]$, and both properties are equivalent to Kazhdan's property T~(Lemma~\ref{lem:T_1}).
On the other hand, for every $n\geq 2$, there are groups satisfying property $(T_n)$ but not property $[T_n]$ (e.g. the group $\Sp_{2n+2}(\mathbb{Z})$, as follows from Theorem~\ref{thm:semisimpleT} below).
Higher property~T recently gained interest due to its connection to higher-dimensional expanders and group stability. For example, a finitely presented group with property $[T_2]$ is Frobenius stable~\cite{stability}*{Theorem~1.2}. More applications of $(T_2)$ instead of $[T_2]$ to Frobenius stability can be found in~\cite{stability_four}. 

The first result about higher property~T is by Garland~\cite{Garland}. We state the case of $\SL_n$. 

\begin{theorem}[Garland]
The group $\SL_n(\bbQ_p)$ and any of its lattices have property~$[T_{n-2}]$.  
\end{theorem}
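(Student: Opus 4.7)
The plan is to apply Garland's local-to-global spectral method on the affine Bruhat--Tits building of $\SL_n(\bbQ_p)$. Let $X$ denote this building, a contractible, locally finite simplicial complex of dimension $n-1$ on which $G = \SL_n(\bbQ_p)$ acts simplicially and cocompactly with compact open simplex stabilizers. For any continuous unitary $G$-representation $V$, the complex of $G$-equivariant $V$-valued cochains $C^\bullet(X,V)^G$ computes $H^\bullet_c(G,V)$: this follows because $X$ is a contractible proper $G$-complex and, stabilizers being compact, taking $G$-invariants is exact on unitary representations.

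Having reduced to a cochain complex, I would put a Hilbert-space structure on each $C^j(X,V)^G$ by choosing representatives of the finitely many $G$-orbits of $j$-simplices and summing the inner products on the stabilizer-invariants $V^{G_\sigma}$, suitably weighted by $|G_\sigma|^{-1}$. The coboundary operators $d_j$ are bounded, yielding a self-adjoint combinatorial Laplacian $\Delta_j = d_j^* d_j + d_{j-1} d_{j-1}^*$ on $C^j(X,V)^G$. A standard Hodge argument shows that a positive spectral gap of $\Delta_j$ implies the outright vanishing $H^j_c(G,V)=0$ (not merely reduced cohomology), since the gap forces the image of $d_{j-1}$ to be closed and the harmonic space to be trivial.

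The core step is Garland's local-to-global inequality, which bounds $\lambda_1(\Delta_j)$ from below by the minimum, over $(j-1)$-simplices $\tau$, of the spectral gap $\lambda_1$ of the $0$-th local Laplacian on the link $\mathrm{Lk}(\tau)$ with coefficients in $V^{G_\tau}$, shifted by an explicit combinatorial constant. For the Bruhat--Tits building of $\SL_n(\bbQ_p)$, every such link is (a join of) spherical buildings of type $A_*$ over the residue field $\mathbb{F}_p$, whose spectral gap is explicitly computable from the representation theory of $\SL_k(\mathbb{F}_p)$ and, for the simplex dimensions relevant when $1\le j\le n-2$, exceeds the threshold required by Garland's inequality. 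This gives a uniform positive spectral gap for $\Delta_j$ on $C^j(X,V)^G$ for every unitary $V$, proving property $[T_{n-2}]$ for $G$. For a lattice $\Gamma\le G$, which must be cocompact since simple $p$-adic groups of positive rank admit only uniform lattices, the same argument applies to $C^\bullet(X,V)^\Gamma$ (noting that $\Gamma$ still acts cocompactly on $X$), yielding $H^j(\Gamma,V)=0$ in the same range.

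The main obstacle I anticipate is the passage from finite-dimensional coefficients, where Garland's original argument was carried out, to an arbitrary unitary representation $V$. The key local estimates are pointwise Cauchy--Schwarz-type bounds that generalize to $V$-valued cochains once inner products are interpreted as Hilbert-space inner products on $V^{G_\sigma}$; verifying this rigorously, together with the self-adjointness and functional calculus needed for $\Delta_j$ on the (potentially infinite-dimensional) Hilbert spaces $C^j(X,V)^G$, is conceptually routine but is where the most care is required. A secondary subtlety is ensuring that the spectral-gap bounds on the links are independent of $V$, which reduces to the observation that the local Laplacians on links act by convolution with a fixed operator and hence inherit their spectrum from the scalar case via tensoring with $V^{G_\tau}$.
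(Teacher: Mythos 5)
Your proposal follows Garland's original local-to-global spectral approach on the Bruhat--Tits building, as refined by Dymara--Januszkiewicz for arbitrary unitary coefficients. The paper never proves this theorem --- it is cited as Garland's result, and Remark~\ref{rem:p-adicT} attributes the fully general form to Dymara--Januszkiewicz --- but the paper's own machinery would reprove it along entirely different lines: Casselman's representation-theoretic computation of the cohomological dual (Theorem~\ref{thm:Casselman}), the ultrapower technique (Theorem~\ref{thm:gensh}) to pass from reduced cohomology of irreducibles to non-reduced cohomology of arbitrary unitary representations, and the Shapiro lemma for the (necessarily cocompact) lattices (Lemma~\ref{lem:GammaG criterion}, Theorem~\ref{thm:r0}). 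So you and the paper take genuinely distinct routes: you work directly with combinatorial Laplacians on the building, while the paper avoids the building and Laplacians entirely in favor of the Langlands-type classification of cohomological representations plus functional-analytic arguments about ultralimits.

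There is, however, a real gap in your proposal that you do not flag. You assert that the link spectral gaps of the $A_{n-1}$ spherical buildings over $\mathbb{F}_p$ ``exceed the threshold required by Garland's inequality'' for the degrees $1\le j\le n-2$, and treat this as a routine consequence of the representation theory of $\SL_k(\mathbb{F}_p)$. But in Garland's original paper the local-to-global inequality only closed for $p$ sufficiently large (depending on $n$); the removal of this lower bound on $p$ was precisely what required further work --- Casselman did it by abandoning the spectral method entirely in favor of admissible-representation theory, and only later did Dymara--Januszkiewicz (and related refinements) establish that for type $\tilde{A}$ buildings the spectral method closes for every residue field size. The threshold comparison is not a formality: it depends on an explicit sharpening of Garland's original inequality and on precise eigenvalue computations for the flag complexes of $\mathbb{F}_p^k$. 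As written, your proof would reproduce Garland's ``$p$ large'' restriction rather than the clean statement you are asked to prove, unless you either import the Dymara--Januszkiewicz refinement explicitly or switch to Casselman's representation-theoretic argument for the local vanishing.
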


The primary objective of this paper is to generalize Garland's result to the case of real simple Lie groups. 

\begin{mthm}\label{thm: higher property T main result} The group $\SL_n(\bbR)$ and any of its lattices have property~$(T_{n-2})$. 
More generally, if $G$ is a connected almost simple Lie group of real rank~$d$ with finite center, then $G$ and any of its lattices have property $(T_{d-1})$.
\end{mthm}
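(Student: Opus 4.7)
The plan is to prove the statement first for the ambient group $G$ and then transfer it to its lattices. For $G$ itself I would split the argument into the case of irreducible unitary coefficients (handled by classical $(\mathfrak{g},K)$-cohomology) and the passage to arbitrary unitary representations (which requires a spectral-gap input together with a geometric resolution).

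For an irreducible unitary $\pi$ with no nonzero $G$-invariants I would invoke the Borel-Wallach identification $H^*_c(G,\pi) = H^*(\mathfrak{g}, K, \pi_K)$, which forces vanishing unless $\pi$ has the same infinitesimal character as the trivial representation. Such $\pi$ is classified by Vogan-Zuckerman as a module of the form $A_\mathfrak{q}(\lambda)$, and a lower bound of Vogan-Zuckerman refined by Kumaresan guarantees that a nontrivial $A_\mathfrak{q}$ can carry cohomology only in degrees at least the real rank $d$. This yields $H^j_c(G,\pi) = 0$ for $1 \le j \le d-1$ on the irreducible level. To pass from irreducibles to a general unitary $V = \int^\oplus V_\lambda\, d\mu(\lambda)$, I would use the proper $G$-action on the symmetric space $X = G/K$ (and, if needed, a suitable completion or Tits-boundary compactification) to build a geometric resolution of $V$ by cohomologically tractable modules, and then commute the fiberwise vanishing with the direct integration. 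This commutation step is where property T of $G$ (valid since $d \ge 2$; the rank~$1$ case $d = 1$ is vacuous) enters, providing the uniform spectral gap away from the trivial representation that prevents cohomology classes from accumulating in the integral.

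For a lattice $\Gamma \le G$ and a unitary $\Gamma$-representation $W$ without nonzero $\Gamma$-invariants, a Shapiro-type identification relates $H^j(\Gamma, W)$ to $H^j_c(G, L^2(\Gamma\bs G, W))$. The induced $G$-representation inherits the absence of invariants from $W$, so the vanishing established for $G$ transfers. In the non-cocompact case the comparison requires a Borel-Serre style truncation, which is compatible with the range $1 \le j \le d-1$ because the cohomology of the locally symmetric space in those degrees is captured by a compact core.

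The main obstacle is the passage from irreducible to arbitrary unitary coefficients. In Garland's $p$-adic setting this step is almost automatic: the Bruhat-Tits building provides a finitely generated $G$-complex whose chain groups decompose cleanly under direct integrals, so fiberwise vanishing immediately integrates. For Lie groups there is no finite-dimensional combinatorial model of the right dimension, and one must instead work on the smooth symmetric space or its boundary and invoke genuine spectral-gap estimates — property T for $d \ge 2$, and, for the rank~$1$ extensions studied elsewhere in the paper, global automorphic spectral gap — to commute continuous cohomology with the direct integration.
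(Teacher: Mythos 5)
Your high-level plan — reduce to Vogan–Zuckerman vanishing for irreducibles, pass to arbitrary unitary coefficients, then transfer to lattices via Shapiro — is the right skeleton, and you correctly identify that the minimal cohomological degree of a nontrivial $A_{\mathfrak q}$-module is at least the real rank. But two of the three steps as you describe them have genuine gaps.

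\textbf{Irreducible $\Rightarrow$ arbitrary unitary.} The obstruction here is not spectral accumulation but the failure of direct integration to commute with \emph{unreduced} continuous cohomology. Blanc's theorem says direct integrals are compatible with \emph{reduced} cohomology, so the irreducible vanishing yields $\bar H^k_c(G,V)=0$ for all unitary $V$ with $V^G=0$ and $k\le d-1$. What is needed beyond this is a mechanism to upgrade reduced to ordinary vanishing. The paper does this (Theorem~\ref{thm:gensh}) by choosing a cocompact lattice $\Lambda<G$ (existing by Borel–Harder), using its $\FP_n(\bbQ)$ property to get a finitely generated free resolution hence bounded Hilbert-space differentials, and then applying an ultrapower argument: a non-closed image of $d^k$ produces a nonzero reduced class in $H^k(\Lambda, V_\omega)$ for the ultrapower $V_\omega$. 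Property T (or, in the rank-one factor case, the Clozel-type spectral gap from \S4) enters exactly to guarantee $(V_\omega)^\Lambda=0$, which is what lets you apply the reduced vanishing to $V_\omega$. Your proposed ``geometric resolution on the symmetric space commuting with the direct integral'' does not resolve this reduced/unreduced tension, and I do not see a way to make it do so without reinventing the ultrapower-plus-$\FP$ argument.

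\textbf{Transfer to non-uniform lattices.} For a non-uniform lattice $\Gamma<G$ and a unitary $\Gamma$-representation $W$, the Blanc–Shapiro isomorphism identifies $H^*(\Gamma,W)$ with $H^*_c(G, I^2_{\loc}(W))$ where $I^2_{\loc}(W)$ is the space of \emph{locally} square-integrable sections. This module is a Fréchet space, not a Hilbert space, and it is not unitarizable, so the vanishing theorem you proved for unitary $G$-representations simply does not apply to it. This is the central difficulty the paper flags in the introduction and in \S\ref{subsec:Shapiro++}. A Borel–Serre truncation controls the homotopy type of $\Gamma\backslash X$ (trivial coefficients, finiteness properties), but it does not repair the unitarity failure of the induction module, nor does it by itself give you a comparison map between $H^*_c(G, I^2(W))$ and $H^*(\Gamma,W)$ in the degrees you need. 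The paper's actual route replaces the Shapiro isomorphism with a \emph{surjectivity} statement (Theorem~\ref{thm: Shapiro surjectivity}): the natural map $H^j_c(G, I^2(W)) \to H^j(\Gamma,W)$ is onto for $j < \frank\Gamma$, where $\frank\Gamma$ is the polynomial filling rank. Since one has $\frank\Gamma = \rank G$ for non-uniform lattices (Leuzinger–Young) or $=\infty$ for uniform ones, and $I^2(W)^G = W^\Gamma = 0$, surjectivity plus the vanishing for the (genuinely unitary) $I^2(W)$ gives $H^j(\Gamma,W)=0$. The proof of that surjectivity uses polynomial cohomology, universal integrability of the lattice cocycle (Shalom, Lubotzky–Mozes–Raghunathan), and the deep geometric-group-theoretic inputs on filling functions. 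Your sketch does not contain a substitute for any of this.

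In short: your Step 1 (irreducible vanishing via Vogan–Zuckerman) is correct and matches the paper. Step 2 needs the $\FP$-plus-ultrapower mechanism — a spectral gap alone is not enough, you also need bounded differentials coming from a finitely generated resolution. Step 3 needs to confront the non-unitarizability of the local induction module, which Borel–Serre does not do; the paper resolves it via polynomial cohomology and filling-function bounds.
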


For an \emph{irreducible} unitary $\SL_n(\bbR)$-representation $V$  the continuous cohomology of $\SL_n(\bbR)$ is well understood and satisfies a vanishing-below-the-rank phenomenon, most notably by the work of Zuckerman~\cite{Zuckerman}, Borel-Wallach~\cite{Borel-Wallach}, Vogan-Zuckerman~\cite{Vogan-Zuckerman} and Casselman~\cite{Casselman}, which we survey in~\S\ref{subsec:cohomreps}. In \S\ref{sec:finiteness} we develop tools to pass from irreducible unitary to arbitrary unitary representations. This allows us to prove Theorem~\ref{thm: higher property T main result} for the Lie group~$G$. It is possible to re-prove Garland's theorem along these lines. However, the passage to lattices in Theorem~\ref{thm: higher property T main result} is much harder than in Garland's result due to the existence of non-uniform lattices. A major novelty of the present paper is to use tools from geometric group theory to deal with the passage to non-uniform lattices. 

Theorem~\ref{thm: higher property T main result} is a special case of a more general result about semisimple Lie groups: 

\begin{mthm} \label{thm:semisimpleT}
Let $G$ be a connected semisimple Lie group~$G$ with a finite center.
Then $G$ has property $(T_{r_0(G)-1})$, where $r_0(G)$ is the invariant given in Appendix~\ref{sec:rG}.
Let $\Gamma$ be an irreducible lattice in $G$.
Then $\Gamma$ has property $(T_n)$, where $n+1=\min\{r_0(G),\rank(G)\}$.
In particular, both $G$ and $\Gamma$ have property $(T_m)$, where $m+1$ equals the minimal rank of a simple factor of $G$.
\end{mthm}
\begin{remark}
  The assumption of having a finite center can be dropped in the previous statement. First one shows the vanishing for irreducible representations using a spectral sequence argument, and then extends it to all unitary representations using the methods in~\S\ref{sec:finiteness} (cf.~the proof of Theorem~2.6 in~\cite{stability}).     
\end{remark}

Unlike $\SL_n(\bbQ_p)$, the group $\SL_n(\bbR)$ has non-trivial continuous cohomology for the trivial representation below the rank, so Theorem~\ref{thm: higher property T main result} does not hold for property $[T_{n-2}]$. It is well known that the continuous cohomology of the connected semisimple Lie group $G$ is isomorphic to the singular cohomology of the compact dual of the symmetric space $G/K$ where $K<G$ is a maximal compact subgroup~\cite{Borel-Wallach}*{Corollary IX.5.6}. The compact dual for $\SL_n(\bbR)$ is $\SU(n)/\SO(n)$, and we obtain that (see~\cite{mimura+toda}*{Theorem III.6.7 on p.~149})
\begin{equation}\label{eq: example cohomology}
H^\ast_c\bigr(\SL_n(\bbR),\bbC\bigr)\cong \begin{cases}\Lambda(e_5, e_9, \dots, e_{4m+1}) & \text{if $n=2m+1$;}\\
\Lambda(e_5, e_9, \dots, e_{4m-3})\otimes \bbC[e_{2m}]/(e_{2m}^2) & \text{if $n=2m$.}
\end{cases}
\end{equation}

The question of the invariance of the cohomology of $\SL_n(\bbZ)$, that is, the question to what extent it comes from the continuous cohomology of $\SL_n(\bbR)$ was studied by Borel in order to compute the rational algebraic K-theory of number fields. He proved the following result~\cite{Borel-stable}, which we state only for $\SL_n$. 

\begin{theorem}[Borel]
The restriction map 
\[H^i_c\bigl(\SL_n(\bbR),\bbC\bigr)\to H^i\bigl(\SL_n(\bbZ),\bbC\bigr)\]
is an isomorphism for $0\le i<(n-1)/4$. 
\end{theorem}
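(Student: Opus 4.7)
The plan is to realize both sides of the restriction map as de~Rham-type cohomology on the symmetric space $X=\SL_n(\bbR)/\SO(n)$ and to show that in the stable range every $\SL_n(\bbZ)$-invariant cohomology class is already represented by an $\SL_n(\bbR)$-invariant form. The Van Est theorem identifies $H^i_c(\SL_n(\bbR),\bbC)$ with the cohomology of the complex of $G$-invariant smooth forms on $X$; after passing to a torsion-free finite-index subgroup of $\SL_n(\bbZ)$ (harmless for rational cohomology by a transfer argument), $H^i(\SL_n(\bbZ),\bbC)$ is identified with the cohomology of $\Gamma$-invariant smooth forms on $X$. The restriction map in the theorem then becomes the tautological inclusion of $G$-invariants into $\Gamma$-invariants.

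If $\SL_n(\bbZ)$ were cocompact, Matsushima's formula would give
\[ H^i(\Gamma\bs X,\bbC)\cong\bigoplus_{\pi\in\widehat{G}} m(\pi)\,H^i\bigl(\mathfrak{g},K;V_\pi^\infty\bigr), \]
with the trivial representation contributing exactly $H^i_c(G,\bbC)$, and it would suffice to verify that $H^i(\mathfrak{g},K;V_\pi^\infty)=0$ for every nontrivial irreducible unitary $\pi$ when $i<(n-1)/4$. This vanishing is supplied by the Vogan-Zuckerman classification of cohomological unitary representations (as surveyed in~\S\ref{subsec:cohomreps}): every such $\pi$ is of the form $A_{\mathfrak{q}}(\lambda)$ for a $\theta$-stable parabolic $\mathfrak{q}\subset\mathfrak{g}$, and the minimal degree of nontrivial cohomology of a nontrivial such $\pi$ on $\SL_n(\bbR)$ exceeds $(n-1)/4$.

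The main obstacle is that $\SL_n(\bbZ)$ is non-cocompact, so Matsushima's formula does not apply directly. Following Borel, I would regularize $\Gamma$-invariant closed forms at the cusps using reduction theory: on a neighborhood of each standard rational parabolic, subtract an Eisenstein-type boundary correction to produce a cohomologous square-integrable representative, and then invoke the spectral decomposition of $L^2(\Gamma\bs G)$ together with the Vogan-Zuckerman vanishing above. The step I expect to be hardest is controlling the boundary contributions: one must show that in degrees below $(n-1)/4$ the Eisenstein cohomology from the rational parabolics of $\SL_n$ already comes from the restriction of $G$-invariant forms, so that regularization yields a genuine equality of cohomology classes rather than a defect concentrated at infinity. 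This can be organized either through the Borel-Serre compactification together with the long exact sequence of the pair (bordification, boundary), or via growth estimates on constant terms of Eisenstein series; the delicate point is that the bound $(n-1)/4$ must be verified simultaneously for the interior and boundary contributions, and the latter requires genuinely nontrivial input from the rank and inductive structure of $\SL_n$.
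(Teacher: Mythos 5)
The paper does not reprove Borel's theorem; it states it as background and cites \cite{Borel-stable}. Your proposal follows the outline of Borel's original argument: identify $H^\ast_c(G,\bbC)$ with $G$-invariant forms on $X=G/K$ via Van Est, reduce to Matsushima-type spectral considerations in the cocompact case, and then handle the cusps by producing square-integrable (or moderate-growth) representatives and matching them against the $L^2$-spectrum. That is the classical route, and it is genuinely different from the route the paper takes to its stronger Theorem~\ref{thm: bijectivity Lie to lattice} (which improves the range to $i<\rank G=n-1$). The paper avoids Eisenstein/boundary analysis entirely: it proves a Shapiro-type surjectivity for non-uniform lattices in low degrees by combining polynomial cohomology with the filling-function estimates of Leuzinger-Young and Bestvina-Eskin-Wortman (Theorems~\ref{thm: leuzinger-young} and~\ref{thm: bestvina-eskin-wortman}), and it kills the induction module $L^2_0(G/\Gamma)\bar\otimes V$ using spectral gap together with Vogan--Zuckerman. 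The geometric-group-theoretic approach buys both the sharper bound and arbitrary unitary coefficients; your approach, if completed, only yields trivial coefficients and the weaker Borel constant.

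One imprecision is worth flagging, since it misattributes where the constant $(n-1)/4$ comes from. The Vogan--Zuckerman/Kumaresan vanishing for $\SL_n(\bbR)$ gives $H^i(\mathfrak{g},K;V_\pi^\infty)=0$ for nontrivial unitary $\pi$ in all degrees $i\le n-2$ (that is, $r_0(\SL_n(\bbR))=n-1$, as tabulated in Appendix~\ref{sec:rG}), which is far better than $(n-1)/4$. In the cocompact case the stability range would therefore already be $i<n-1$. The weaker constant $(n-1)/4$ in Borel's theorem is purely an artifact of the regularization-at-the-cusps step: it is the range in which Borel could show that every de Rham class on $\Gamma\backslash X$ admits a representative of moderate growth, so that the $L^2$/Matsushima machinery applies. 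If you carry out your proposal you should expect the representation-theoretic vanishing to be the easy part and essentially all of the content to sit in controlling that regularization range, and you should expect to recover exactly Borel's constant rather than anything better.
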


 Our next result improves Borel's stability range and allows for arbitrary unitary coefficients. 

\begin{mthm}\label{thm: bijectivity Lie to lattice}
Let $\Gamma$ be an irreducible lattice in a connected semisimple Lie group~$G$ with a finite center and without compact factors. 
Let $V$ be a unitary $G$-representation.
Then the restriction map 
\[ H_c^i(G,V)\to H^i(\Gamma, V) \]
is an isomorphism for every $0\le i<\rank G$. 
\end{mthm}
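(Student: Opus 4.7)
The plan is to split $V$ into its $G$-invariants and a complement with no invariants, then handle each case by combining the higher-property-$T$ vanishing of Theorems~\ref{thm: higher property T main result} and~\ref{thm:semisimpleT} with a geometric comparison of $G$- and $\Gamma$-cohomology. A standard Borel density argument (using that $G$ has no compact factors and $\Gamma$ is Zariski dense) gives $V^\Gamma = V^G$. Writing $V = V^G \oplus V'$ with $V'$ having no invariants, the theorem splits into two independent claims: (a)~the restriction $H^i_c(G,V^G) \to H^i(\Gamma,V^G)$ is an isomorphism for $i < \rank G$, and (b)~both groups vanish for $V'$ in the same range. The case $i=0$ is automatic from $V^\Gamma = V^G$.

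For (a), this is the stable range of the classical Matsushima--Borel comparison: after decomposing $L^2(\Gamma\bs G)$ into irreducibles in the cocompact case, or working on the Borel--Serre bordification in the non-uniform case, and expressing cohomology via relative $(\mathfrak{g},K)$-cohomology, the Vogan--Zuckerman/Casselman vanishing recalled in~\S\ref{subsec:cohomreps} forces only the trivial representation to contribute below the rank, yielding the isomorphism.

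For (b), the $G$-side vanishing $H^i_c(G,V')=0$ for $i<\rank G$ reduces, via the direct integral machinery of~\S\ref{sec:finiteness}, to the irreducible case, where it is again Vogan--Zuckerman/Casselman. On the $\Gamma$-side, when $\Gamma$ is cocompact, Shapiro's lemma gives $H^i(\Gamma,V') \cong H^i_c(G, L^2(\Gamma\bs G)\otimes V')$; since $L^2(\Gamma\bs G)\otimes V'$ has no non-zero $G$-invariants, one decomposes it into irreducibles and invokes the same vanishing.

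The main obstacle is the non-uniform case of~(b), where Shapiro's lemma is unavailable. My plan is to work with a cocompact $\Gamma$-CW model such as the Borel--Serre bordification $\overline{X}^{\mathrm{BS}}$, compute $H^\ast(\Gamma,V')$ from its cellular cochain complex, and compare with $H^\ast_c(G,V')$ through a spectral sequence whose boundary contributions are indexed by proper parabolic subgroups $P = L \ltimes N$. The unipotent radical $N$ contributes only via $N$-invariants, and the Levi quotient $L$ has strictly smaller real rank, so boundary contributions are pushed into cohomological degrees $\ge \rank G$ because the Tits building has dimension $\rank G - 1$. This geometric-group-theoretic reduction to lower rank is precisely the novelty announced in the introduction.
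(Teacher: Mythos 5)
Your proposal diverges substantially from the paper's argument, and several steps in it have genuine gaps.

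First, the reduction "decompose into irreducibles and invoke Vogan--Zuckerman/Casselman" only produces vanishing of \emph{reduced} cohomology: direct integral decompositions commute with $\bar H^\ast_c$ (Lemma~\ref{lem:directint} via~\cite{Blanc}), not with ordinary $H^\ast_c$. Upgrading to unreduced vanishing requires an additional input -- in the paper, the combination of a finiteness property $\FP_n(\bbQ)$, the ultrapower argument of Theorem~\ref{thm:gensh}, and a spectral gap ensuring the ultrapower has no new invariants. You never address this, and in particular you make no use of the spectral gap package of~\S\ref{sec:SG}. This omission is not cosmetic: when $G$ has simple factors of real rank $1$, property~T fails for those factors, and the vanishing $H^j_c(G, L^2_0(G/\Gamma)\bar\otimes V)=0$ hinges on Clozel's property~$\tau$ and its strengthening (Theorem~\ref{Clozel++}, Corollary~\ref{cor:Liespecgap}). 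Without it there is no reason for the tensor product $L^2_0(G/\Gamma)\bar\otimes V$ to avoid almost-invariant vectors along a rank-one factor.

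Second, your treatment of part~(a) -- restriction to $\Gamma$ with trivial coefficients being an isomorphism below the rank -- is precisely the Li--Sun/Grobner improvement of Borel, and those arguments rest on Franke's analytic machinery. The paper explicitly sets out to reprove this without Franke, so invoking it here defeats the purpose and does not constitute a new proof of the theorem.

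Third, and most seriously, your route through the Borel--Serre bordification for the non-uniform case is not a correct plan. The boundary $\partial\overline{X}^{\mathrm{BS}}$ has the homotopy type of the \emph{rational} Tits building of dimension $\rank_{\bbQ}\Gamma-1$, not $\rank_{\bbR}G-1$, and these ranks can differ. More fundamentally, the boundary's cohomology is not concentrated above the rank -- it contributes in all degrees starting at~$0$ -- so there is no soft dimension count that pushes the boundary terms out of the stable range. The classical arguments achieve this only after detailed automorphic/parabolic-induction analysis (Franke again), which for arbitrary infinite-dimensional unitary coefficients is not available. The paper instead circumvents the Borel--Serre boundary entirely: it uses polynomial cohomology, the filling-function theorems of Leuzinger--Young and Bestvina--Eskin--Wortman to prove that the comparison map $H^i_\pol(\Gamma,V)\to H^i(\Gamma,V)$ is an isomorphism below $\frank\Gamma=\rank G$, and a generalized Shapiro-type surjection $H^i_c(G,I^2(V))\to H^i(\Gamma,V)$ (Theorem~\ref{thm: Shapiro surjectivity}) valid for non-uniform universally integrable lattices. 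Injectivity comes from Theorem~\ref{thm: injectivity G to lattice} and Theorem~\ref{thm: comparison map for semisimple Lie}; surjectivity from the spectral-gap-induced vanishing $H^j_c(G,L^2_0(G/\Gamma)\bar\otimes V)=0$ together with the Shapiro surjection. You would need to replace your Borel--Serre sketch with machinery of comparable strength, and your proposal currently contains none.
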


For the trivial representation and congruence lattices, Li-Sun provide a similar improvement of Borel's stability range~\cite{Li-Sun}*{Theorem~1.8}. 
We were informed that the result of Li-Sun also follows from the methods in Grobner's paper~\cite{grobner}. Both Li-Sun's and Grobner's papers rely heavily on the work of Franke~\cite{Franke}. Our approach instead relies on geometric group theory and, in the case that $G$ is a product of groups of rank~$1$, on Clozel's results on property~$\tau$. 

The next theorem makes a statement about the invariance of the cohomology of a lattice with coefficients that are a priori not unitary representations of the ambient Lie group.  

\begin{mthm} \label{thm:nonuniformextension}
    Let $\Gamma$ be an irreducible lattice in a connected semisimple Lie group~$G$ with a finite center and without compact factors. Assume $G$ has property T.
    Let $V$ be a unitary $\Gamma$-representation. Then there is 
    unitary subrepresentation $V_0\subset V$ on which the $\Gamma$-representation extends to a unitary $G$-representation such that for every $j<\rank(G)$, $H^j(\Gamma,V_0^\perp)=0$ and the maps 
    \[ H^j_c(G,V_0) \to H^j(\Gamma,V_0) \to H^j(\Gamma,V) \]
    are isomorphisms. Here the first map comes from the restriction from $G$ to $\Gamma$, and the second map comes from the inclusion of $V_0$ in $V$.
\end{mthm}

The subspace $V_0$ consists precisely of the $G$-continuous vectors as in Definition~\ref{def: G-continuous vectors}. 

\begin{conjecture} \label{conj1}
  The property T assumption in Theorem~\ref{thm:nonuniformextension} could be replaced with the assumption $\rank(G)\geq 2$.  
\end{conjecture}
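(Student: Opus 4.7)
The plan is to mimic the proof of Theorem~\ref{thm:nonuniformextension} and substitute Clozel's property~$\tau$ for property~T of $G$ wherever the latter was invoked. Higher-rank simple Lie groups already have property~T (Kazhdan), so the only case not covered by the existing theorem is when $G$ has at least one rank~$1$ simple factor; accordingly write $G=G^{\geq 2}\times G^{=1}$ with $G^{=1}\neq 1$ while keeping $\rank(G)\geq 2$.

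The reduction used for Theorem~\ref{thm:nonuniformextension} goes through unchanged: it suffices to prove $H^j(\Gamma,V_0^\perp)=0$ for every $j<\rank(G)$, since the claimed isomorphisms then follow from the long exact sequence associated to $V=V_0\oplus V_0^\perp$ combined with Theorem~\ref{thm: bijectivity Lie to lattice} applied to $V_0$. The case $j=0$ is immediate: a $\Gamma$-invariant vector is $G$-continuous for the trivial extension and hence lies in $V_0$. Since $\rank(G)\geq 2$ and $\Gamma$ is irreducible, Margulis arithmeticity forces $\Gamma$ to be arithmetic, so Clozel's theorem puts property~$\tau$ at our disposal.

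The heart of the proof should be a dichotomy: any unitary $\Gamma$-representation $W$ with trivial $G$-continuous part admits a uniform spectral gap. One would decompose $W$ by Mackey-style direct integral machinery according to which simple factor of $G$ the spectrum is ``visible in'' and treat the $G^{\geq 2}$ contribution by property~T of that factor, and the $G^{=1}$ contribution by the following Clozel-Burger-Sarnak-style argument: a sequence of almost-$\Gamma$-invariant unit vectors in $W$ would, through property~$\tau$ and density of the commensurator of $\Gamma$ in $G$, produce an approximately $G$-equivariant structure on a non-trivial subrepresentation, hence a $G$-continuous vector, contradicting $V_0^\perp\cap W=W$. Combined with Delorme-Guichardet this yields $H^1(\Gamma,V_0^\perp)=0$. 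To propagate the spectral gap to degrees $2\leq j<\rank(G)$, one would reuse the Garland-style geometric machinery developed in Section~\ref{sec:finiteness} and the proof of Theorem~\ref{thm: higher property T main result}, which extracts higher-degree vanishing from a spectral gap together with the geometry of $G/K$ and the coarse structure of $\Gamma$, neither of which uses property~T of $G$ directly.

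The principal obstacle, and the reason the statement is only a conjecture, lies in the dichotomy step: Clozel's property~$\tau$ controls only congruence quotients of $\Gamma$, whereas $V_0^\perp$ is an arbitrary unitary $\Gamma$-representation a priori unrelated to automorphic spectra. Promoting $\tau$ to the assertion that $\Gamma$-almost-invariant vectors can only live inside a $G$-continuous subrepresentation requires a careful decomposition of $W$ under the action of the commensurator of $\Gamma$ in $G$, and a transfer result bridging abstract weak containment with the automorphic side; this is where substantial new input beyond the current paper's methods is needed.
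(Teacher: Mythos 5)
This statement is a \emph{conjecture}, not a proved result: the paper offers no proof of it and explicitly notes right after Conjecture~\ref{conj2} that both conjectures ``follow from a certain conjecture regarding spectral gap property of higher rank lattices on which we will not elaborate here.'' There is therefore no paper proof against which to compare your argument, and the correct assessment of your write-up is that it is a plausibility analysis, not a proof.

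That said, your diagnosis of the strategy and of the obstruction is accurate and matches what the paper implicitly suggests. The only place property~T enters the proof of Theorem~\ref{thm:nonuniformextension} is the step ``$I^2(V)^{G_i}=0$, hence $I^2(V)$ has no almost $G_i$-invariant vectors,'' after which Theorem~\ref{thm:Lie-vanishss-general} together with $r(G)\ge\rank G$ and Theorem~\ref{thm: Shapiro surjectivity} close the argument in all degrees $j<\rank G$ at once — you do not need a separate degree-one Delorme--Guichardet step nor the Garland-style propagation you allude to. Replacing property~T therefore amounts precisely to showing: if $V$ is a unitary $\Gamma$-representation with no $G$-continuous vectors, then $I^2(V)$ has a $G_i$-spectral gap for every simple factor $G_i$. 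When $V$ is the restriction of a $G$-representation, $I^2(V)\cong L^2(G/\Gamma)\bar\otimes V$ and the module is weakly contained in the automorphic spectrum, so Corollary~\ref{cor:Liespecgap} (via Margulis arithmeticity, strong approximation, and Clozel's~$\tau$) does the job. For an arbitrary $\Gamma$-representation $V$, $I^2(V)$ has no a priori relation to the automorphic spectrum, and — as you correctly flag in your final paragraph — there is no known mechanism to transfer the spectral gap across this divide. Your middle paragraph's sketch of a Mackey-style decomposition plus a commensurator argument does not close the gap: density of the commensurator and property~$\tau$ for congruence quotients are statements about automorphic representations, and they do not obviously constrain an abstract representation whose only hypothesis is orthogonality to the $G$-continuous subspace. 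This missing ingredient is exactly the further ``spectral gap'' conjecture the authors allude to, so your conclusion that the statement must remain a conjecture under current technology is correct.
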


Our next theorems are motivated by the following result of Borel-Yang~\cite{Borel-Yang} that was the key to their solution of the rank conjecture in algebraic K-theory. In the setting of \emph{bounded} cohomology, similar results were proved by~Monod~\cite{monod}. 

\begin{theorem}[Borel-Yang]
The restriction map 
\[ H^\ast_c\bigl(\SL_n(\bbR),\bbC\bigr)\to H^\ast\bigl(\SL_n(\bbQ),\bbC\bigr)\]
is an isomorphism in all degrees. More generally, if $k$ is a number field and $\mathbf{G}$ is a connected, simply connected, almost simple $k$-algebraic group, then the restriction map 
\[ H_c^*\bigl(\mathbf{G}(k\otimes\bbR),\bbC\bigr)\to H^*\bigl(\mathbf{G}(k),\bbC\bigr) \]
is an isomorphism in all degrees. 
\end{theorem}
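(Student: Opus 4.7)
\emph{Plan.} I would reduce to Theorem~D by presenting $\mathbf{G}(k)$ as a filtered colimit of $S$-arithmetic subgroups. Fix embeddings $k\hookrightarrow\bbR$ and, for each finite set $S$ of places of $k$ containing the archimedean ones, let $\Gamma_S \defq \mathbf{G}(\calO_{k,S})$. By Borel--Harish-Chandra and Behr--Harder, $\Gamma_S$ is an irreducible lattice in $G_S \defq \prod_{v\in S}\mathbf{G}(k_v)$, and $\mathbf{G}(k)=\bigcup_S \Gamma_S$. Since group homology with $\bbC$-coefficients commutes with filtered colimits, it suffices (after a duality argument for which the needed finite-type property follows from the conclusion itself) to produce, for every fixed $j$ and all sufficiently large $S$, a compatible isomorphism
\[H^j(\Gamma_S,\bbC)\cong H^j_c\bigl(\mathbf{G}(k\otimes\bbR),\bbC\bigr).\]

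For each such $S$, I would invoke an $S$-arithmetic analog of Theorem~D. The geometric-group-theoretic arguments used in the paper to pass from the ambient group to a non-uniform lattice should extend to the mixed Lie/$p$-adic setting with essentially no change, with Garland's theorem taking over from Theorem~A at the non-archimedean factors; this should yield $H^j(\Gamma_S,\bbC)\cong H^j_c(G_S,\bbC)$ in the range $j<\rank(G_S)$. A Künneth formula for continuous cohomology, combined with the standard vanishing $H^j_c\bigl(\mathbf{G}(k_v),\bbC\bigr)=0$ for $v$ non-archimedean and $j\ge 1$ (Borel--Wallach, Casselman), then collapses $H^j_c(G_S,\bbC)$ onto $H^j_c\bigl(\mathbf{G}(k\otimes\bbR),\bbC\bigr)$ compatibly with the restriction maps.

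Since $\mathbf{G}$ is almost simple and simply connected, it is isotropic at all but finitely many non-archimedean places, so $\rank(G_S)\to\infty$ as $|S|\to\infty$; for any fixed $j$ we may therefore choose $S$ with $\rank(G_S)>j$, and the resulting system is cofinal. I expect the main obstacle to be the $S$-arithmetic extension of Theorem~D itself: the non-uniform passage must now carry the coarse-geometric and boundary estimates through a product of a symmetric space with Bruhat--Tits buildings, and one must also ensure compatibility with the spectral-gap input at the rank-$1$ archimedean factors. A secondary, formal issue is ruling out a $\lim^1$-obstruction in the colimit over $S$, which should be immediate once the displayed isomorphism is known to stabilize in each fixed degree~$j$.
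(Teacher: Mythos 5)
Your plan is essentially the paper's own route, specialized to trivial coefficients. The paper proves the stronger Theorem~\ref{thm:adelic} (restriction from $\bfG(\bbA(k))$ to $\bfG(k)$ is an isomorphism for \emph{all} unitary $\bfG(\bbA(k))$-modules) and deduces Borel--Yang by composing with the isomorphism $H_c^*(\bfG(k\otimes\bbR),\bbC)\to H_c^*(\bfG(\bbA(k)),\bbC)$ coming from Casselman's vanishing; you bypass the adelic group and instead collapse $H_c^*(G_S,\bbC)$ onto the archimedean factor via K\"unneth plus Casselman at each finite stage, which amounts to the same thing when the coefficients are trivial. The $S$-arithmetic analogue of the Shapiro-type isomorphism that you correctly identify as the crux is the paper's Theorem~\ref{thm: bijectivity arithmetic}: it is proved using the Bestvina--Eskin--Wortman bound $\frank\Gamma_S\ge |S|-2$ (not Leuzinger--Young) together with the universal integrability of $\Gamma_S<G_S\times K_S$ and the spectral-gap input from Corollary~\ref{cor:Clozel++}. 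Accordingly, the range of bijectivity there is $j<|S|-1$ rather than $j<\rank(G_S)$; since both tend to infinity with $|S|$, your argument is unaffected, but the bound you quote is not what is actually proved. Finally, your treatment of the colimit via homology and a duality ``follows from the conclusion itself'' is workable once the stable value is known to be finite dimensional, but it is cleaner to do what the paper does: apply the Milnor $\lim^1$-sequence to both inverse systems simultaneously and invoke the five lemma once the level-$S$ restriction maps have been shown to be isomorphisms for $|S|$ large, which directly kills the $\lim^1$ obstruction without any circularity concern.
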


Borel-Yang conjecture that the assumption that $\bfG$ is simply connected as an algebraic group is not necessary~\cite[Remark 3.4]{Borel-Yang}. We verify this conjecture in Theorem~\ref{thm:adelicC}. 
The paper by Borel-Yang is heavily based on the work of Blasius-Franke-Grunewald~\cite{BFG}, which itself relies on Franke's work~\cite{Franke}. 

The crucial input in our generalization of Borel-Yang's theorem (Theorem~\ref{thm:adelic} and~Theorem~\ref{thm:arith} below) comes again from geometric group theory. 

\begin{mthm} \label{thm:adelic}
Let $k$ be a number field and $\mathbb{A}(k)$ the ring of adeles of~$k$.
Let $\mathbf{G}$ be a connected, simply connected, almost simple $k$-algebraic group
and let $V$ be a unitary representation of the adelic group $\mathbf{G}(\mathbb{A}(k))$.
Then the restriction map
\[ \res:H_c^*(\mathbf{G}(\mathbb{A}(k),V)\to H^*(\mathbf{G}(k),V) \]
is an isomorphism in all degrees.
\end{mthm}

\begin{mthm} \label{thm:arith}
Let $k$ be a number field and $\mathbb{A}(k)$ the ring of adeles of~$k$.
Let $\mathbf{G}$ be a connected and simply connected almost simple $k$-algebraic group and $\Gamma<\mathbf{G}(k)$ an associated arithmetic subgroup.
Let $V$ be a unitary representation of the Lie group $\mathbf{G}(k\otimes \bbR)$ and consider the natural maps
\[ H_c^*(\mathbf{G}(k\otimes \bbR),V)\to H_c^*(\mathbf{G}(\mathbb{A}(k)),V)\to H^*(\mathbf{G}(k),V) \to H^*(\Gamma,V). \]
Then the first two maps are isomorphisms in all degrees and the last map, namely $H^*(\mathbf{G}(k),V) \to H^*(\Gamma,V)$, 
is an isomorphism in degrees lower than
$\rank \mathbf{G}(k\otimes \bbR)$.
\end{mthm}

In fact, the entire unitary cohomology theory of $\mathbf{G}(k)$ is determined by that of $\mathbf{G}(\mathbb{A}(k))$, at least when the $k$-rank of $\bfG$ is at least 2, e.g for $\bfG=\SL_n$ for $n\geq 3$. Although $\bfG(k)$ is not of type I and a complete understanding of its unitary dual is out of reach, Theorems~\ref{thm:adelic} and \ref{thm:Gkhigherrank} could be used to determine the cohomological unitary dual of $\bfG(k)$ completely. See Corollary~\ref{Gkcohomdual}.  

\begin{mthm} \label{thm:Gkhigherrank}
Let $k$ be a number field and let $\bfG$ be a 
connected and simply connected almost simple $k$-algebraic group, satisfying $\rank_k\bfG\geq 2$.
    Let $V$ be a unitary $\bfG(k)$-representation.
    Then there exists a $\bfG(k)$-subrepresentation $V_0$ on which the representation extends to $\bfG(\bbA(k))$ such that $H^\ast(\bfG(k),V_0^\perp)=0$.
    In particular, the maps 
    \[ H^\ast_c(\bfG(\bbA(k)),V_0) \to H^\ast(\bfG(k),V_0) \to H^\ast(\bfG(k),V) \]
    are isomorphisms,
    where the first map comes from the restriction from $\bfG(\bbA(k))$ to $\bfG(k)$ and the second map comes from the inclusion of $V_0$ in $V$.
\end{mthm}

The assumption on the rank of $\bfG$ in Theorem~\ref{thm:Gkhigherrank} could be relaxed, see Theorem~\ref{thm:GknonGA} below. This provides some evidence for the following.

\begin{conjecture} \label{conj2}
  The assumption on the rank of $\bfG$ in Theorem~\ref{thm:Gkhigherrank} can be removed.  
\end{conjecture}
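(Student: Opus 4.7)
The strategy is the adelic analog of Theorem~\ref{thm:nonuniformextension}, with the ambient locally compact group replaced by the adelic group $\bfG(\bbA(k))$ and with Theorem~\ref{thm:adelic} playing the role of Theorem~\ref{thm: bijectivity Lie to lattice}. By strong approximation (valid because $\bfG$ is simply connected and almost simple) together with reduction theory, $\bfG(k)$ embeds as a lattice in $\bfG(\bbA(k))$. Since $\rank_k\bfG\geq 2$, every completion $\bfG(k_v)$ has local rank at least $2$, hence Kazhdan's property~T, and this property is inherited by the restricted product $\bfG(\bbA(k))$.

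Given a unitary $\bfG(k)$-representation $V$, define $V_0\subset V$ as the closed subspace of $\bfG(\bbA(k))$-continuous vectors, in direct analogy with Definition~\ref{def: G-continuous vectors}. Property~T of $\bfG(\bbA(k))$ implies that $V_0$ is a $\bfG(k)$-subrepresentation on which the action extends to a continuous unitary $\bfG(\bbA(k))$-representation, and that $V_0^\perp$ carries no nonzero $\bfG(\bbA(k))$-continuous vectors. Applying Theorem~\ref{thm:adelic} to the adelic representation $V_0$ then yields the isomorphism $H^*_c(\bfG(\bbA(k)),V_0)\cong H^*(\bfG(k),V_0)$ in all degrees.

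The crucial remaining task is to establish that $H^*(\bfG(k),V_0^\perp)=0$ in all degrees; the long exact cohomology sequence attached to $0\to V_0\to V\to V_0^\perp\to 0$ will then deliver the second isomorphism in the statement. The plan here is to combine two ingredients: (i) the coarse-geometric structure of $\bfG(k)$ as a lattice in $\bfG(\bbA(k))$ acting on the contractible complex assembled from symmetric spaces and Bruhat--Tits buildings at all places of~$k$, exactly as used in the proof of Theorem~\ref{thm:adelic}; and (ii) the structural observation that a unitary $\bfG(k)$-representation with no $\bfG(\bbA(k))$-continuous vectors disintegrates into spectral factors that do not glue to an adelic representation and therefore behaves cohomologically like a representation induced from a subgroup of infinite index.

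The main obstacle is precisely this all-degree vanishing of $H^*(\bfG(k),V_0^\perp)$, which is the adelic counterpart of Conjecture~\ref{conj1} for Lie-group lattices. Unlike the Lie-group case, where the available vanishing range is bounded by $\rank G$, here the hypothesis $\rank_k\bfG\geq 2$ ensures that \emph{every} completion has property~T, and it supplies an abundance of proper $k$-parabolic subgroups whose unipotent radicals feed a Hochschild--Serre style reduction; this is what allows the vanishing to be pushed past $\rank_k\bfG$ to all degrees, converting the rank-restricted Theorem~\ref{thm:nonuniformextension} into the unrestricted statement in the adelic setting. I expect the Hochschild--Serre/induction step on the $k$-parabolic structure, together with verifying that the orthogonal complement $V_0^\perp$ really does lie in the ``no-adelic-continuous-vectors'' locus after each restriction to a parabolic, to be the technical heart of the argument.
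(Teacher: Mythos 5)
This statement is a \emph{conjecture}, not a theorem: the paper itself does not prove it, and the authors explicitly state after Conjecture~\ref{conj2} that ``Both Conjecture~\ref{conj1} and Conjecture~\ref{conj2} follow from a certain conjecture regarding spectral gap property of higher rank lattices on which we will not elaborate here.'' So there is no proof in the paper with which to compare your attempt; anything you write must be a new argument, and the obstacle the authors point at (spectral gap) must be overcome.

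Your proposal does not do this, and in fact does not engage with the problem the conjecture poses. Theorem~\ref{thm:Gkhigherrank} assumes $\rank_k\bfG\geq 2$; Conjecture~\ref{conj2} is about \emph{removing} that assumption, i.e.\ handling the cases $\rank_k\bfG\in\{0,1\}$, where there will be places $s$ with $\rank_{k_s}\bfG=1$ and hence $\bfG(k_s)$ lacking Kazhdan's property~T. Yet your write-up repeatedly invokes ``$\rank_k\bfG\geq 2$'' (``Since $\rank_k\bfG\geq 2$, every completion $\bfG(k_v)$ has local rank at least $2$, hence Kazhdan's property~T'' and later ``here the hypothesis $\rank_k\bfG\geq 2$ ensures that every completion has property~T''). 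You are re-deriving the already-proved Theorem~\ref{thm:Gkhigherrank} (which the paper obtains as a special case of Theorem~\ref{thm:GknonGA}, where the hypothesis is precisely that every $\bfG(k_s)$ has T), not attacking the conjecture. Beyond this, you yourself flag that ``the crucial remaining task is to establish that $H^*(\bfG(k),V_0^\perp)=0$'' and call the ensuing step ``the technical heart of the argument'' which you ``expect'' to work -- this is a description of the gap, not a proof. What is genuinely missing, as the paper's own structure makes clear (see the roles of Theorem~\ref{Clozel++}, Corollary~\ref{cor:Clozel++}, and the property-T hypothesis in Theorem~\ref{thm:GknonGA}), is a spectral gap statement strong enough to replace property~T at the rank-one places; the Hochschild--Serre/parabolic-induction idea you gesture at does not supply this, and no such result is currently known.
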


Both Conjecture~\ref{conj1} and Conjecture~\ref{conj2} follow from Conjecture~106, which is concerned with the spectral gap property of higher rank lattices, in our survery paper~\cite{survey}. We refer the reader to~\cite{survey}*{\S~3.4.1} for a detailed discussion of that conjecture and its ramifications.

\subsection{Further applications}

Since the first version of this paper was circulated, several applications of our techniques and results have appeared in the literature. We first list applications of our techniques, before turning to applications of our results. 

The ultraproduct techniques of~\S\ref{sec:finiteness} were used in~\cite{waist} to prove certain coboundary expansion properties in the cellular cohomology, which lead to waist inequalities in codimension~$2$ for Riemannian manifolds with Kazhdan fundamental groups. Techniques about Shapiro-like induction and polynomial cohomology as in~\S\ref{sec:PCH}, specifically, a quantitative version of Proposition~\ref{prop: comparison map and polynomial filling rank}, were used by L{\'o}pez Neumann and Paucar~\cite{neumann+paucar} to obtain results on the invariance of Betti numbers under $\ell^p$-measure equivalence. Further, they develop techniques to deal with unitary Shapiro-like induction for non-uniform lattices in Lie groups of rank~$1$, extending our results on higher rank lattices. 

In~\cite{FournierFacio-Sauer} Fournier-Facio and the second author use group-theoretic Dehn fillings and Theorem~\ref{thm:semisimpleT}, specifcally, property $(T_2)$ for cocompact lattices in $F_4^{-20}$, to construct a family of simple Kazhdan groups with an uncountable range of second $\ell^2$-Betti numbers. See also Fournier-Facio's preprint~\cite{FournierFacio} for a related construction that shows that poperty $(T_2)$ does not pass to quotients in general. 
Cohomological vanishing in  degree 2 has applications in the theory of stability and finitary approximations of groups, which was an important insight of the work of de Chiffre-Glebsky-Lubotzky-Thom~\cites{stability, Thom-ICM}. Theorem~\ref{thm: higher property T main result} yields a positive answer to Question~3.16 in Thom's ICM survey~\cite{Thom-ICM} on finitary approximations of groups. 
Via Theorem~\ref{thm: higher property T main result}, it is proved in~\cite{stability_four} that the non-trivial central extensions of symplectic lattices are not Frobenius-approximable. In~\cite{survey} we announced a version of Theorem~\ref{thm: higher property T main result} for coefficients in $L^p(X)$, $p\ge 1$, which will appear in future joint work with Shaked Bader and Saar Bader. The proofs draw essentially on the techniques and results developed in the present paper in two ways: First, the ultraproduct techniques of~\S\ref{sec:finiteness} and the Shapiro-type results for polynomial cohomology in~\S\ref{sec:PCH} are agnostic about the value of~$p$. Second, the Mazur map~\cite{bader+furman+gelander+monod}*{Theorem~2.17} allows to transfer 
certain spectral gap results from the case $p=2$, i.e.~from Hilbert spaces, to the case of general~$p$.

Finally, for a detailed exploration of the implications of higher property~T we refer to our survey paper~\cite{survey}.

\subsection{Structure of the paper}
We start with \S\ref{sec: setup} where we give our setting and notation. 
In \S\ref{sec:properties} 
we establish some general results regarding the properties $(T_n)$ and $[T_n]$ given in Definition~\ref{def:T_n}.
The most important results of this section are Theorem~\ref{thm:gensh} and Theorem~\ref{thm:r0}.
The former 
could be seen as a generalization of Shalom's \cite[Theorem 0.2]{Shalom}, which proves a conjecture by Karpushev--Vershik~\cite{Karpushev-Vershik}, while the latter is an application for semisimple groups.
Theorem~\ref{thm:gensh} allows us to extend vanishing results for the continuous cohomology of irreducible unitary representations of semisimple Lie groups in~\S\ref{sec:CSS} to arbitrary unitary representations. The methods use finiteness properties of lattices and ultrapowers of unitary representations. 

In \S\ref{sec:SG} we discuss spectral gap properties of semisimple groups.
The most important result of this section is Theorem~\ref{Clozel++}, which is a generalization of Clozel's Theorem \cite[Theorem 3.1]{Clozel} regarding property~$\tau$. This section is only needed when we deal with semisimple groups whose simple factors do not have Kazhdan's property~T. The spectral gap property ensures, when applying the results of~\S\ref{sec:finiteness}, that the ultrapower of a representation without invariant vectors has no invariant vectors. If property T is not available, then we need Theorem~\ref{Clozel++}. 

In \S\ref{sec:CSS} we discuss the theory of cohomological representation of semisimple groups, following Zuckerman \cite{Zuckerman}, Borel-Wallach \cite{Borel-Wallach}, Vogan-Zuckerman \cite{Vogan-Zuckerman} and Casselman\cite{Casselman}, among others.

In \S\ref{sec:PCH} we discuss the polynomial cohomology of semisimple groups and their lattices. We use in an essential way results of Leuzinger-Young~\cite{leuzinger+young} and Bestvina-Eskin-Wortman~\cite{bestvina+eskin+wortman} regarding filling functions of arithmetic lattices to show that polynomial cohomology and ordinary cohomology coincide in low degrees (see~\S\ref{subsec:Shapiro++}). 

There we address a major problem briefly mentioned in the introduction. If we want to deduce a higher property~T result from a semisimple Lie group~$G$ to a non-uniform lattice of $G$, then the usual Shapiro isomorphism fails us  since the induction module is a space of locally square-integrable functions and not unitarizable. There is no general induction scheme for non-uniform lattices and unitary coefficients available. 
We are not quite able to prove a strict analog of the Shapiro isomorphism for the polynomial cohomology of (non-uniform) lattices and unitary coefficents but Theorems~\ref{thm: Shapiro surjectivity} and~\ref{thm: injectivity G to lattice} are sufficiently strong to prove everything in our specific situation that we would get from a full Shapiro isomorphism. 

Finally, all the theorems stated in the introduction are proven in \S\ref{sec:pfmain}. We also discuss some further results, among those a complete determination of the cohomological dual of the rational points of a semisimple algebraic group with property~T.

\subsection{Acknowledgment}
We are grateful to Rami Aizenbud, Michael Cowling, Alex Furman, Dima Gourevitch, Erez Lapid, Piotr Nowak, Andrei Rapinchuk, and Marco Tadic for many conversations, advice, and reference pointers. 

Special thanks go to Alex Lubotzky and Yehuda Shalom for lots of discussions about the paper, encouragement and sharing their wisdom.

We are especially grateful to Marc Burger who suggested to try to reprove the result of Borel-Yang~\cite{Borel-Yang} using our methods after hearing us giving a talk on Theorem~\ref{thm: bijectivity Lie to lattice} above. His suggestion led us ultimately to proving Theorem~\ref{thm:adelic}. 

U.B acknowledges support by the ISF Moked grant 2919/19.
R.S acknowledges support by the projects  441426599 and 441848266 and 281869850 funded by the DFG (Deutsche Forschungsgemeinschaft). R.S. thanks the 
the Weizmann Institute for the hospitality when part of this work was carried out. 

\section{Notation and setup}\label{sec: setup}

All fields considered in this paper will be assumed to be of characteristic zero.
We will denote by $k$ a number field and by $\calO$ its ring of integers.
By a \emph{local field} we mean a non-discrete locally compact field.
A local field which is not a priori an extension of $k$ will be typically denoted by $F$.
By a \emph{place of $k$}, typically denoted by $s$, we mean a compatible uniform structure on $k$ such that the completion, denoted $k_s$, is a local field.
The place $s$ is called archimedean (or infinite) if $k_s$ is archimedean.
In this case $\calO$ is either discrete or dense in $k_s$. Otherwise $s$ is said to be non-archimedean (or finite), and we denote by 
$\calO_s$ the closure of $\calO$ in $k_s$, which is a compact subring. 
By the letter $S$ we will typically denote a set of places, not necessarily finite.
We will denote by $\mathbb{A}_S(k)$ the $k$-algebra of \emph{$S$-adeles}, that is the restricted product of all local fields $k_s$, with respect to the a.e defined compact subrings $\calO_s$, running over all non-archimedean $s\in S$.
We will endow this $k$-algebra with the restricted product topology, which is locally compact.
When $S$ is the full set of places of $k$, we get the algebra of \emph{$k$-adeles}, $\mathbb{A}(k)$, and when $S$ is the subset of non-archimedean places we get the algebra of \emph{finite $k$-adeles}, $\mathbb{A}_f(k)$.
When $S$ is the subset of archimedean places, we get the algebra of \emph{infinite $k$-adeles}, that is the product of the archimedean completions of $k$, which we identify with $k\otimes \bbR$.
In particular, we obtain the identification $\mathbb{A}(k)\cong (k\otimes \bbR) \times \mathbb{A}_f(k)$.
Assuming $S$ contains all archimedean places of $k$, we denote by $\calO_S$ the subring of $S$-integers of $k$, that is the preimage of $\prod_{s\in S^c} \calO_s$ under the natural map $k\to \mathbb{A}_{S^c}(k)$, where $S^c$ is the completion of $S$.
The image of the ring homomorphism $\calO_S \to \mathbb{A}_{S}(k)$ is cocompact.
In particular, the images of $\calO\to k\otimes \bbR$ and $k\to \mathbb{A}(k)$ are cocompact.

All algebraic groups considered here are affine, typically semisimple.
They will be considered as schemes, thus could be defined over arbitrary rings.  
We will typically denote an algebraic group by $\bfG$. If it is defined over a local field $F$, then $\bfG(F)$ is often considered as a topological group with respect to its $F$-topology.  
Similarly, if $\bfG$ is defined over $k$, we regard the adelic group $\bfG(\mathbb{A}(k))$ as a topological group. For a detailed account of the corresponding group topologies, we direct the reader to~\cite{Conrad}.
In particular, by \cite[Theorem 3.4(1)]{Conrad}, $\bfG$ is defined over $\calO_{S_0}$, for a certain 
finite set $S_0$ of places of $k$ containing all archimedean ones,
thus $\bfG(O_s)$ is well defined for every $s \notin S_0$,
and the topology of $\bfG(\mathbb{A}(k))$ coincides with the restricted product of the groups $\bfG(k_s)$ with respect to the a.e defined compact subgroups $\bfG(\calO_s)$.
A similar discussion applies to the groups $\bfG(\mathbb{A}_S(k))$ when $S$ is an arbitrary set of places of $k$.
If $\bfG$ is reductive then it is quasi-split, hence isotropic, over $k_s$ for almost every place $s$ of $k$~\cite{conrad-reductive}*{Ex.~5.5.3}.

Assume $\bfG$ is a $k$-algebraic group.
The locally compact group $\bfG(\mathbb{A}_f(k))$ is totally disconnected, hence it has a basis of identity neighborhoods consisting of compact open subgroups. The intersection of such a compact open subgroup with $\bfG(k)$ is called \emph{a congruence subgroup} of $\bfG(k)$. A subgroup of $\bfG(k)$ that is commensurable with a congruence subgroup is said to be \emph{an arithmetic subgroup} associated with the $k$-group $\bfG$. 
More generally, given a set $S$ consisting of places of $k$ and including all archimedean places, the intersection of $\bfG(k)$ with a compact open subgroup of $\bfG(\mathbb{A}_{S^c}(k))$ is called \emph{an $S$-congruence subgroup} of $\bfG(k)$ and a subgroup of $\bfG(k)$ that is commensurable with an $S$-congruence subgroup is said to be \emph{an $S$-arithmetic subgroup}.
Assume $\bfG$ is semisimple.
Then the image of an $S$-arithmetic subgroup in $\bfG(\mathbb{A}_{S}(k))$ is a lattice by a celebrated theorem of Borel and Harish-Chandra.
These lattices are called \emph{$S$-arithmetic lattices}.
They are cocompact if and only if $\bfG$ is $k$-anisotropic.
In the special case where $S$ consists of all places we get that the image of $\bfG(k)$ is a lattice in $\bfG(\mathbb{A}(k))$.
When $S$ consists of only the archimedean places, we get that the images in $\bfG(k\otimes \bbR)$ of arithmetic subgroups  of $\bfG(k)$ are lattices, called \emph{arithmetic lattices}.

The symbol $\Gamma$ always stands for an arbitrary group, typically countable, possibly satisfying some finiteness properties and many times considered as a lattice in a locally compact topological group. The symbol $G$ always stands for a locally compact second countable topological group.
However, many times $G$ will have further structure and properties.
Below we indicate the two main instances of such structures we impose on $G$, namely the structure of a \emph{semisimple Lie group} and the structure of a general \emph{semisimple group}. We will be always explicit regarding the structure we impose on $G$.

Often times $G$ will be assumed to be a semisimple Lie group with finite center and without compact factors.
In this case, a lattice $\Gamma<G$ is said to be \emph{irreducible} if the image of $\Gamma$ is dense modulo each non-compact almost simple factor of $G$. 
Typically, we denote the Lie algebra of a Lie group $G$ by $\mathfrak{g}$. 
A notable exception is~\S\ref{subsec:cohomreps}, where $\mathfrak{g}$ stands for the complexification of the Lie algebra of $G$. This will be clearly stated.
The \emph{rank} of $G$, $\rank(G)$, of a semisimple Lie group $G$ is defined to be the rank of its Lie algebra, namely the dimension of any of its Cartan subalgebras.

When we say that $G$ is a \emph{semisimple group} (without mentioning the word ``Lie"), we mean that as topological groups, $G \cong \prod_{i=1}^l \bfG_i(F_i)$ where each $F_i$ is a local field and each $\bfG_i$ is a connected almost simple $F_i$-group.\footnote{We could allow here the groups $\bfG_i$ to be $F_i$-semisimple, but we choose not to do so, for simplicity.} 
In this case, $G$ is totally disconnected if and only if  each $F_i$ is non-archimedean and $G$ is a semisimple Lie group if and only if  each $F_i$ is archimedean.
We will typically denote $G_i=\mathbf{G}_i(F_i)$ and regard these groups as the simple factors of $G$.
A lattice in a semisimple group, $\Gamma<G$, is \emph{irreducible} if it is dense modulo each non-compact simple factor of $G$.
The \emph{rank} of $G$, $\rank(G)$, of a semisimple group $G \cong \prod_{i=1}^l \bfG_i(F_i)$ is defined to be the sum of the ranks of its factors, $\rank(G)=\sum \rank_{F_i}(\bfG_i)$.

Every semisimple Lie group $G$ is closely related to a real algebraic group $\bfG$, namely the group of Lie algebra automorphisms of its Lie algebra $\mathfrak{g}$.
We note that the two possible notions of rank coincide, so there is no chance of confusion.
A semisimple group $G$, Lie or not, is said to be of higher rank if $\rank(G)\geq 2$.

Topological vector spaces are considered over the complex numbers. They will typically be assumed to be Fr\'{e}chet; in particular, Hilbert spaces are generally assumed to be separable. 
An exception is when we consider an ultrapower of another (separable) Hilbert space. In such a case we will explicitly address the non-separability. Another exception is the space of continuous cohomology of a Fr\'{e}chet module, which carries the structure of a (often non-Hausdorff) topological vector space. See below. 

We will use the term $\hat{\otimes}$ to denote the \emph{projective tensor product} of Fr\'{e}chet spaces and we use the term $\bar{\otimes}$ exclusively to denote the \emph{Hilbertian tensor product} of Hilbert spaces.

For an arbitrary group $\Gamma$ and a $\Gamma$-module $V$, we denote by $H^j(\Gamma,V)$ the corresponding group cohomology and we set, as usual,  $H^*(\Gamma,V)=\oplus_{j=0}^\infty H^j(\Gamma,V)$.

For a locally compact second countable group $G$, a Fr\'{e}chet module $V$
is a Fr\'{e}chet space that is endowed with an action of $G$ by continuous automorphisms such that the action map $G\times V\to V$ is continuous.
In such a case we denote by $H^j_c(G,V)$ the corresponding continuous $j$-cohomology and we set $H^*_c(G,V)=\oplus_{j=0}^\infty H^j_c(G,V)$.
Each of these spaces have a natural vector space topology which is typically non-Hausdorff. Their Hausdorffification, that is the spaces obtained by moding out $\overline{\{0\}}$, are Fr\'{e}chet spaces which are denoted 
$\bar{H}^j_c(G,V)$ and $\bar{H}^*_c(G,V)$ correspondingly.
These spaces are said to be the \emph{reduced continuous cohomologies} of $G$.
It is common to say that $H^j_c(G,V)$ is reduced if it is Hausdorff. We will avoid this practice to prevent confusion.
For an important background material on continuous cohomology of Fr\'{e}chet spaces we direct the reader to the manuscripts \cite{Blanc} and \cite{Guichardet}.

For a Hilbert space $V$ we denote by $\mathcal{U}(V)$ the group of unitary transformations of $V$. A homomorphism $\pi:G \to \mathcal{U}(V)$ is said to be a \emph{unitary representation} of $G$ if the associated action of $G$ on $V$ makes it a Fr\'{e}chet module. Equivalently, this means that the representation $\pi$ is strong operator topology continuous. Our convention is to regard a unitary representation as a Fr\'{e}chet module, thus we often  refer to the representation $\pi$ by the underlying Hilbert space $V$. 
The trivial representation is denoted either $\bbC$ or ${\bf 1}$.

\section{Properties $[T_n]$ and $(T_n)$} \label{sec:properties}

In this section we establish some general results regarding the properties $(T_n)$ and $[T_n]$ given in Definition~\ref{def:T_n}.
The most important results of this section are Theorem~\ref{thm:gensh} and Theorem~\ref{thm:r0}.
The former 
could be seen as a generalization of Shalom's characterization of property T, \cite[Theorem 0.2]{Shalom}, which proved a conjecture by Karpushev--Vershik~\cite{Karpushev-Vershik} and generalized work of Mok~\cite{mok} in the Riemannian context, while the latter is an application for semisimple groups. Both theorems rely on taking ultralimits of unitary representations, a technique that was used in the context of property T in, for example,~\cites{ozawa, fisher+margulis}. The novelty here lies in using the ultrapower technique in higher degree cohomology. As a first application, we obtain a higher property~T result for semisimple groups (see Theorem~\ref{thm:r0}).

But first, a word of caution. 

\begin{remark}
    Definition~\ref{def:T_n} of higher property~T slightly differs from previous definitions given in~~\cite{BN1}*{Definition 30} and \cite{stability}*{Definition 4.1}.
\end{remark}

\subsection{Generalities regarding higher property T}
For a locally compact second countable group $G$ and a Fr\'{e}chet $G$-representation $U$,
the continuous group cohomology $H^n_c(G,U)$ has a natural topology which is not necessarily Hausdorff. The \emph{reduced cohomology} $\bar{H}^n_c(G,U)$ is its maximal Hausdorff quotient that is a Fr\'{e}chet space. 
Our background references for continuous group cohomology are~\cites{Blanc, Guichardet}.

Unitary representations are always assumed to be continuous in the strong operator topology. 

\begin{lemma} \label{lem:T_1}
    For a locally compact second countable group $G$ the following properties of $G$ are equivalent: Kazhdan's property T, $(T_1)$, $[T_1]$, and $H^1_c(G,V)$ being Hausdorff for every unitary $G$-representation. 
\end{lemma}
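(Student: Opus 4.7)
The plan is to establish the cycle of implications
\[ \text{Kazhdan's T} \;\Longrightarrow\; [T_1] \;\Longrightarrow\; (T_1) \;\Longrightarrow\; \text{Kazhdan's T}, \]
and then insert the Hausdorff condition via $[T_1]\Rightarrow\text{(Hausdorff)}\Rightarrow\text{Kazhdan's T}$. Three of these implications are either classical or formal; the only substantive work is the implication $\text{(Hausdorff)}\Rightarrow\text{Kazhdan's T}$.

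First I would record that $(T_1)\Leftrightarrow$ Kazhdan's T is the Delorme--Guichardet theorem. For the step $\text{Kazhdan's T}\Rightarrow[T_1]$, given any unitary representation $V$, I would split $V=V^G\oplus (V^G)^\perp$ as a $G$-representation, so that
\[ H^1_c(G,V)\cong H^1_c(G,V^G)\oplus H^1_c(G,(V^G)^\perp). \]
The second summand vanishes by $(T_1)$. The first summand identifies with $\Hom_c(G,V^G)$, the space of continuous homomorphisms from $G$ to the abelian Hilbert space $V^G$; the image of such a homomorphism is an abelian locally compact group inheriting property T, hence compact, but the only compact subgroup of a Hilbert space is trivial. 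So this summand also vanishes, giving $[T_1]$. The implications $[T_1]\Rightarrow(T_1)$ and $[T_1]\Rightarrow\text{(Hausdorff)}$ are trivial.

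The remaining, and main, implication is \emph{(Hausdorff)~$\Rightarrow$~Kazhdan's T}; this is where the real content lies, and I would deduce it by the contrapositive via the Guichardet non-Hausdorffness construction. Suppose $G$ does not have property T; then there exists a unitary representation $(\pi,H)$ with no nonzero invariant vectors but admitting a sequence of almost invariant unit vectors. Pick an exhaustion of $G$ by compact sets $K_n$ and, using almost invariance, choose unit vectors $\xi_n\in H$ with $\sup_{g\in K_n}\|\pi(g)\xi_n-\xi_n\|<\epsilon_n$ for a rapidly decreasing sequence $\epsilon_n\to 0$. Setting $c_n=\epsilon_n^{-1/2}$ (so that $c_n\to\infty$ while $c_n^2\epsilon_n\to 0$), consider the Hilbertian direct sum $W=\bigoplus_n H$ and the cocycle
\[ b(g)=\bigl(c_n(\pi(g)\xi_n-\xi_n)\bigr)_{n\ge 1}. \]
The partial sums are coboundaries of $(c_1\xi_1,\dots,c_N\xi_N,0,\dots)\in W$, and a straightforward estimate using the choice of $c_n$ shows that these partial coboundaries converge to $b$ uniformly on compact subsets of $G$, i.e.\ in the topology of continuous cochains. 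On the other hand, if $b$ were itself a coboundary $dw$ with $w=(w_n)\in W$, the relation $\pi(g)(c_n\xi_n-w_n)=c_n\xi_n-w_n$ combined with $H^G=0$ would force $w_n=c_n\xi_n$, contradicting $\sum c_n^2=\infty$. Hence $b$ lies in the closure of the coboundaries but not in the coboundaries themselves, so $H^1_c(G,W)$ is not Hausdorff.

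The main obstacle is that last step: constructing a unitary representation whose first cohomology is non-Hausdorff when property T fails. The delicate point is calibrating the scalars $c_n$ so that the partial cocycles converge uniformly on compacts in $G$ (which requires the ``diagonal'' choice matching $c_n$ to the compact sets $K_n$ on which $\xi_n$ is genuinely almost invariant) while simultaneously guaranteeing $c_n\to\infty$ so that the would-be primitive $(c_n\xi_n)$ falls out of the Hilbert direct sum. Everything else is bookkeeping around the Delorme--Guichardet theorem and the decomposition into invariants and their orthogonal complement.
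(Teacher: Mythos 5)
Your proof is correct and shares the paper's core ingredients---the splitting $V=V^G\oplus(V^G)^\perp$ so that $H^1_c(G,V)\cong \Hom(G,V^G)\oplus H^1_c(G,(V^G)^\perp)$, and the Guichardet construction of a non-Hausdorff $H^1$ out of almost-invariant vectors---but arranges them differently and is more self-contained. The paper's route is: Delorme--Guichardet gives $\mathrm{T}\Leftrightarrow[T_1]$; then $(T_1)\Rightarrow[T_1]$ is shown by deducing compact abelianization from $(T_1)$ via an application of \cite{Tbook}*{Proposition~2.12.2 on p.~128} to $L^2(A)$, and the Hausdorff characterization is delegated to the same citation. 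You instead prove $\mathrm{T}\Rightarrow[T_1]$ directly from the standard fact that a group with property~T admits no nonzero continuous homomorphism to a Hilbert space, and you reproduce the Guichardet cocycle by hand to get Hausdorff~$\Rightarrow$~T; both approaches ultimately rest on the same two facts, the paper just cites them. Two small remarks. First, the representation $W$ you construct has $W^G=0$ and non-vanishing (indeed non-Hausdorff) $H^1$, so your construction already proves $(T_1)\Rightarrow\mathrm{T}$ as a byproduct; the separate appeal to ``Delorme--Guichardet'' for $(T_1)\Rightarrow\mathrm{T}$ is therefore redundant, and in any case the standard statement of that theorem is $\mathrm{T}\Leftrightarrow[T_1]$, not $\mathrm{T}\Leftrightarrow(T_1)$. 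Second, a computational slip: with $c_n=\epsilon_n^{-1/2}$ one has $c_n^2\epsilon_n\equiv 1$, not $c_n^2\epsilon_n\to 0$. What you actually need---and do get for, say, $\epsilon_n=2^{-n}$---is $\sum_n c_n^2=\infty$ (so $(c_n\xi_n)_n\notin W$) together with $\sum_{n\ge m}c_n^2\epsilon_n^2<\infty$ for each $m$ (so $b(g)\in W$ for $g\in K_m$ and the partial coboundaries converge uniformly on $K_m$).
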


\begin{proof}
By the Delorme-Guichardet Theorem, property $[T_1]$ is equivalent to Kazhdan's property T.
    Clearly, $[T_1]$ implies $(T_1)$.
    Assume $G$ has $(T_1)$ and let $U$ be a unitary representation of $G$.
    Let $V=U^G$. We deduce from $(T_1)$ that
    \[ H^1_c(G,U)=H^1_c(G,V\oplus V^\perp)\cong H^1_c(G,V)\oplus H^1_c(G,V^\perp)\cong \Hom(G,V)\oplus 0. \]
    We show that $G$ has a compact  abelianization, hence $\Hom(G,V)=0$.
    Assume that $G$ has a non-compact abelian quotient $A$. We have  $L^2(A)^G=0$ but there exists in $L^2(A)$ an almost invariant sequence of unit vectors. Then $H^1_c(G,L^2(A))\neq 0$ by~\cite{Tbook}*{Proposition~2.12.2 on p.~128} -- contradicting the assumption that $G$ has $(T_1)$. 
    Finally, $H^1_c(G, \_)$ being Hausdorff is equivalent to other properties (see~\emph{loc. cit.}). 
\end{proof}

Compact groups satisfy property $[T_n]$ for every $n$.
In fact, their higher degree cohomology groups are trivial for arbitrary Fr\'{e}chet coefficients, as the following basic result shows.

\begin{lemma}[{\cite[III, Corollary 2.1]{Guichardet}}] \label{lem:compact}
    Assume $G$ is compact. Then for every Fr\'{e}chet representation $V$ of $G$, $H^\ast_c(G,V)=H^0_c(G,V)=V^G$. 
    In particular, $G$ satisfies property $[T_n]$ for every $n$.
\end{lemma}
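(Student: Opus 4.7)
My plan is to compute $H^*_c(G,V)$ directly from the bar resolution, using averaging against the normalized Haar measure to build a contracting homotopy. Let $C^n = C^n_c(G,V)$ denote the space of continuous $G$-equivariant maps $f\colon G^{n+1}\to V$, equipped with the standard alternating-sum differential $d$; the continuous cohomology of the Fr\'{e}chet $G$-module $V$ is the cohomology of this complex (\cite{Guichardet}). The plan is then to exhibit explicit operators $h_n\colon C^n\to C^{n-1}$ for $n\geq 1$ satisfying a chain-homotopy identity that implies $H^n_c(G,V)=0$ for $n\geq 1$ and collapses $H^0_c$ to $V^G$.

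Concretely, I would set
\[ (h_n f)(g_0,\ldots,g_{n-1}) \;=\; \int_G f(g,g_0,\ldots,g_{n-1})\,dg, \]
where the integral is the Bochner integral in the Fr\'{e}chet space $V$; this is well defined because the integrand is continuous and $G$ is compact. The first step is to verify that $h_n f$ is again $G$-equivariant, which follows from the left-invariance of Haar measure together with the equivariance of $f$ via the change of variable $g\mapsto g_0^{-1} g$ inside the integral (combined with equivariance in the remaining slots). In particular $h_n$ lands in $C^{n-1}$.

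The heart of the argument is a routine computation verifying $d h_n + h_{n+1} d = \id_{C^n}$ for all $n\geq 1$: expanding both $d h_n f$ and $h_{n+1} d f$ using the alternating-sum formula and peeling off the term where the inserted integration variable $g$ is removed from $d f(g,g_0,\ldots,g_n)$ produces exactly $f(g_0,\ldots,g_n)$, while the remaining terms cancel in pairs. At $n=0$ the analogous identity yields $(h_1 d f)(g_0) = f(g_0) - \int_G f(g)\,dg$, so that the projection $P f \defq \int_G f(g)\,dg \in V^G$ captures the degree-zero cohomology. Together these imply $H^0_c(G,V)=V^G$ and $H^n_c(G,V)=0$ for all $n\geq 1$.

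The property $[T_n]$ statement for every $n$ is then immediate: a unitary representation is in particular a Fr\'{e}chet $G$-module, so the previous paragraph gives $H^j_c(G,V)=0$ for every $j\geq 1$ and every unitary representation $V$, which is exactly Definition~\ref{def:T_n}. I do not foresee a genuine obstacle; the only technical point is justifying the Bochner integral on the Fr\'{e}chet space $V$ (standard, since $G$ is compact and the integrand is continuous) and confirming equivariance of $h_n f$, both of which are mechanical.
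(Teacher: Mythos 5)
The paper cites this as \cite[III, Corollaire 2.1]{Guichardet} and does not supply its own proof, and your averaging argument is exactly the standard contracting-homotopy proof underlying that reference. Your computations (equivariance of $h_n f$ via left-invariance of Haar measure, the identity $dh_n + h_{n+1}d = \id$ on $C^n$ for $n\ge 1$, and the degree-$0$ analysis giving $H^0_c = V^G$) are all correct, and the remaining technicalities you flag -- existence and continuity of the Bochner integral valued in a Fr\'{e}chet space over a compact group -- are indeed routine.
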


\begin{lemma}\label{lem: product with compact group}
Let $G$ be a locally compact second countable group and $K \lhd G$ a compact normal subgroup. Let $V$ be a unitary representation of $G$. Then the inclusion of the fixed points $V^K\hookrightarrow V$ and the projection $G\to G/K$ induce an isomorphism 
\[ H_c^\ast(G/K,V^K)\xrightarrow{\cong} H_c^\ast(G, V).\]
In particular, $G$ satisfies property $(T_n)$ if and only if  $G/K$ satisfies property $(T_n)$ and $G$ satisfies property $[T_n]$ if and only if  $G/K$ satisfies property $[T_n]$.
\end{lemma}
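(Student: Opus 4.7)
The plan is to deduce the isomorphism from the Lyndon--Hochschild--Serre spectral sequence in continuous cohomology associated with the normal subgroup $K \lhd G$. In the Fr\'{e}chet setting (see~\cites{Blanc, Guichardet}) this takes the form
\[ E_2^{p,q} = H^p_c(G/K, H^q_c(K, V)) \Longrightarrow H^{p+q}_c(G, V). \]
Since $K$ is compact, Lemma~\ref{lem:compact} yields $H^q_c(K, V) = 0$ for $q \ge 1$ and $H^0_c(K, V) = V^K$. Normality of $K$ ensures that $G$ preserves the closed subspace $V^K$ and that $K$ acts trivially on it, so $V^K$ is genuinely a unitary $G/K$-representation. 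The spectral sequence therefore collapses at the $E_2$-page to an isomorphism $H^p_c(G/K, V^K) \xrightarrow{\cong} H^p_c(G, V)$, realized concretely by the edge morphism, which is the composition of inflation along $G \to G/K$ with the map induced by the inclusion $V^K \hookrightarrow V$ -- exactly the map in the statement.

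For the ``in particular'' clauses, both equivalences are formal consequences of this isomorphism. The relevant observations are: (i) $V \mapsto V^K$ sends unitary $G$-representations to unitary $G/K$-representations; (ii) pullback along $G \to G/K$ sends a unitary $G/K$-representation $W$ to a unitary $G$-representation for which $W^K = W$; and (iii) $(V^K)^{G/K} = V^G$, so the property of having no nontrivial invariant vectors transfers in both directions. The equivalence for $[T_n]$ then follows by applying the main isomorphism to an arbitrary unitary $V$ in one direction, and to the pullback of an arbitrary unitary $W$ (noting $W^K = W$) in the other. The equivalence for $(T_n)$ is identical, with observation (iii) ensuring the invariance-free hypothesis is preserved under both operations.

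I do not anticipate any real obstacle: all the content sits in the vanishing supplied by Lemma~\ref{lem:compact}, after which the Hochschild--Serre machinery does the rest. As an alternative that bypasses the spectral sequence on the nose, one could use the orthogonal decomposition $V = V^K \oplus (V^K)^\perp$ of unitary $G$-representations (available because $K$ acts unitarily and is normal), argue that $K$ has no invariants on $(V^K)^\perp$, and deduce $H^\ast_c(G, (V^K)^\perp) = 0$ from one (now essentially trivial) application of Hochschild--Serre, combined with the fact that $K$ acts trivially on $V^K$ to identify $H^\ast_c(G, V^K)$ with $H^\ast_c(G/K, V^K)$ via inflation.
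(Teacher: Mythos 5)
Your proof is essentially the paper's proof: it invokes Lemma~\ref{lem:compact} to kill $H^q_c(K,V)$ for $q\geq 1$ and then collapses the Hochschild--Serre spectral sequence for $K\lhd G$. One technical point the paper takes care to record and you elide: in the Fr\'echet setting the Hochschild--Serre spectral sequence of~\cite[Theorem~9.1]{Blanc} is only available when the continuous cohomology of the normal subgroup is Hausdorff, and it is precisely Lemma~\ref{lem:compact} (giving $H^*_c(K,V)=V^K$ concentrated in degree $0$, a Hilbert space) that guarantees this. You should flag this hypothesis explicitly before invoking the spectral sequence, since it is not automatic for general Fr\'echet coefficients. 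Your sketched alternative via the orthogonal decomposition $V=V^K\oplus (V^K)^\perp$ is a valid and slightly more hands-on variant, but it still needs the same Hausdorffness input to justify the one remaining application of Hochschild--Serre, so it does not actually avoid the issue.
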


\begin{proof}
    by Lemma~\ref{lem:compact}, $H^\ast_c(K,V)=H^0_c(K,V)=V^K$. In particular, all the cohomologies are reduced, thus we may apply the Hochschild-Serre spectral sequence associated with the normal subgroup $K\lhd G$,
    which computes $H_c^\ast(G, V)$ and in which we have $E_2^{p,q}=H_c^p(G/K,H_c^q(K,V))$, see \cite[Theorem 9.1]{Blanc}.
    The result is now straightforward, as all terms with $q>0$ vanish.
\end{proof}

\begin{lemma} \label{lem:directint}
    Let $G$ be a locally compact second countable group and $j\in\bbN$. 
    Then $\bar{H}_c^j(G,U)=0$ for every irreducible unitary representation $U$ if and only if $\bar{H}_c^j(G,U)=0$ for every unitary representation $U$. 
        Furthermore, $\bar{H}_c^j(G,U)=0$ for every non-trivial irreducible unitary representation $U$ if and only if $\bar{H}_c^j(G,U)=0$ for every unitary representation $U$ with  $U^G=0$.  
\end{lemma}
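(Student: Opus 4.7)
The plan is to reduce arbitrary unitary $G$-representations to their irreducible constituents via direct integrals. One direction of each equivalence is immediate: irreducibles are in particular unitary, and non-trivial irreducibles in particular have no $G$-invariants, so vanishing of $\bar{H}^j_c(G,\cdot)$ on the larger class specialises to the smaller. For the converse, since $G$ is locally compact second countable, every unitary representation $U$ on a separable Hilbert space admits a direct integral decomposition $U \cong \int_X^\oplus U_x\,d\mu(x)$ over a standard Borel probability space $(X,\mu)$ with $\mu$-a.e.\ fiber $U_x$ irreducible. This decomposition need not be canonical (as $G$ may be non-type-I), but mere existence will suffice.

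The technical heart of my argument is a direct-integral principle for reduced continuous cohomology: if $\bar{H}^j_c(G,U_x)=0$ for $\mu$-a.e.\ $x$, then $\bar{H}^j_c(G,U)=0$. My plan to establish it is as follows. Given a continuous $j$-cocycle $c:G^j\to U$, disintegrate it as a measurable field of continuous cocycles $c_x:G^j\to U_x$. The pointwise vanishing hypothesis means each $c_x$ lies in the closure of the continuous coboundaries in $Z^j_c(G,U_x)$. A measurable selection argument then provides, for each compact $K\subset G^j$ and each $\varepsilon>0$, a measurable field $(b_x)$ of continuous $(j-1)$-cochains satisfying $\sup_{g\in K}\norm{c_x(g)-db_x(g)}<\varepsilon$ for $\mu$-a.e.\ $x$. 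Integrating over $x$ assembles these fibers into a global continuous $(j-1)$-cochain $b:G^{j-1}\to U$ with $c-db$ uniformly small on $K$; exhausting $G^j$ by compacta and letting $\varepsilon\to 0$ shows that $c$ lies in the closure of the coboundaries, hence $[c]=0$ in $\bar{H}^j_c(G,U)$.

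Granting the principle, the first equivalence follows at once. For the second, observe that the Borel subset $X_0=\{x\in X: U_x\cong\bbC\}$ contributes a subrepresentation $\int_{X_0}^\oplus U_x\,d\mu(x)$ which, carrying the trivial $G$-action, is contained in $U^G$; so $U^G=0$ forces $\mu(X_0)=0$, $\mu$-a.e.\ fiber is a non-trivial irreducible, and the principle again yields $\bar{H}^j_c(G,U)=0$. The main obstacle I expect is the measurable-cocycle step of the direct-integral principle: one must select the $b_x$ Borel-measurably in $x$ and control convergence in the (generally non-Hausdorff) topology of $Z^j_c(G,U)$, which requires careful use of the theory of measurable fields of Fr\'echet spaces together with compatibility with the standard-Borel structure on the unitary dual of $G$.
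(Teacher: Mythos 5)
Your overall strategy coincides with the paper's: decompose $U$ as a direct integral $\int_X^\oplus U_x\,d\mu(x)$ of irreducibles (citing \cite[Theorem F.5.3]{Tbook}), invoke a direct-integral compatibility principle for \emph{reduced} continuous cohomology, and for the second equivalence note that $U^G=0$ forces $U_x$ non-trivial a.e. The paper, however, simply cites \cite[Theorem 7.2]{Blanc} for the key step (``if $\bar{H}^j_c(G,U_x)=0$ for a.e.\ $x$, then $\bar{H}^j_c(G,U)=0$''), whereas you attempt to re-derive it via a measurable selection argument.

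Your sketch of that principle has a real gap at exactly the step you flag as the ``main obstacle,'' but it is not only the non-Hausdorffness and measurable selection that cause trouble: even granting a measurable choice of continuous $(j-1)$-cochains $b_x$ with $\sup_{g\in K}\norm{c_x(g)-db_x(g)}<\varepsilon$ for a.e.\ $x$, the family $(b_x)$ need not assemble into a cochain $b\colon G^{j-1}\to U$. For each fixed $g$ you need the vector field $x\mapsto b_x(g)$ to be square-integrable, i.e.\ $\int_X\norm{b_x(g)}^2\,d\mu(x)<\infty$, and nothing in the pointwise approximation controls $\norm{b_x}$; the norms may blow up on a set of positive measure. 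One has to truncate (set $b_x=0$ outside a set where $\norm{b_x}$ is uniformly bounded on relevant compacta) and then argue that the contribution of the discarded set to the $L^2$-norm of $c-db$ over $K$ is small, which requires a further Chebyshev-type estimate and is precisely the sort of bookkeeping hidden inside Blanc's theorem. There is also a smaller gap earlier: disintegrating a continuous cocycle $c\colon G^j\to U$ into a measurable field of \emph{continuous} cocycles $c_x$ (rather than merely an a.e.-defined field) needs an argument, typically via a countable dense subset of $G^j$ and completeness of each fiber. So the proposal follows the same route as the paper but leaves open, without a complete argument, exactly the step the paper delegates to Blanc; citing \cite[Theorem 7.2]{Blanc} is both cleaner and safer here.
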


\begin{proof}
Let $V$ be a unitary $G$-representation. 
Assume that for every irreducible unitary representation $U$, $\bar{H}^j_c(G,U)=0$.
    By \cite[Theorem F.5.3]{Tbook} $V$ is equivalent to a direct integral of irreducible unitary representations over a standard Borel space.
    Thus it follows by \cite[Theorem 7.2]{Blanc} that $\bar{H}^j_c(G,V)=0$.
    This proves the first part of the lemma. 
    The second part of the lemma follows from the fact that if $V^G=0$ then almost every irreducible representation in the direct integral decomposition is non-trivial.
\end{proof}

\subsection{Groups with finiteness properties} \label{sec:finiteness}

We now consider a countable group $\Gamma$ with property $\FP_n(\bbQ)$,
that is, the trivial $\bbQ\Gamma$-module $\bbQ$ has a projective resolution
\[ \cdots \to P_2 \to  P_1 \to P_0 \to \bbQ\to 0, \]
where the $\bbQ\Gamma$-modules $P_0,\ldots,P_n$ are finitely generated. 
Without loss of generality, we may and will assume that each $P_i$, $i\in\{0,\dots,n\}$, is finitely generated free. Let $m_k$ be the $\bbQ\Gamma$-rank of $P_k$. We pick $\bbQ\Gamma$-bases in $P_0, \dots, P_n$. The differential $P_k\to P_{k-1}$ for $k\le n$ corresponds to a matrix $D_k\in M_{m_{k}\times m_{k-1}}(\bbQ\Gamma)$. 

Let $V$ be a unitary $\Gamma$-representation. Then $H^*(\Gamma,V)$ is the cohomology of the cochain complex $\hom_{\bbQ\Gamma}(P_\ast, V)$. Up to degree~$n$, this is a cochain complex of Hilbert spaces with bounded differentials $d^k$ via the isomorphisms 
\begin{equation}\label{eq: cochain bases}
\Hom_\Gamma(P_k,V)\cong V^{m_k},~~d^k\colon V^{m_{k-1}}\to V^{m_k}
\end{equation}

induced by the $\bbQ\Gamma$-bases. We may view $V$ as a right $\bbQ\Gamma$-module via the involution on $\bbQ\Gamma$ induced by taking inverses on $\Gamma$. 
Then, similarly, the chain complex $V\otimes_{\bbQ\Gamma} P_\ast$ 
is a chain complex of Hilbert spaces up to degree~$n$ via the  isomorphism $V\otimes_{\bbQ\Gamma} P_k\cong V^{m_k}$. For $k\le n$ the differential 
\begin{equation}\label{eq: chain bases}
d_k\colon V^{m_k}\cong V\otimes_{\bbQ\Gamma} P_k\to  V\otimes_{\bbQ\Gamma} P_{k-1}\cong V^{m_{k-1}} 
\end{equation}
and the differential $\hom_{\bbQ\Gamma}(P_{k-1}, V)\to \hom_{\bbQ\Gamma}(P_k, V)$ are adjoints of each other. 
\begin{equation}\label{eq: adjoint differentials}
(d_k)^\ast=d^k
\end{equation}

The following theorems rely on taking ultrapowers of unitary representations which violate the standing assumption that all Hilbert spaces are assumed to be separable. We will frequently rely on Lemma~\ref{lem: non-separable reps} below to reduce the situation to separable representations. 
\begin{lemma}\label{lem: non-separable reps}
 Let $G$ be a locally compact second countable group and $j\in\bbN$. Let $V$ be a possibly non-separable unitary $G$-representation. 
 For every $x\in H_c^j(G,V)$ there is a separable unitary $G$-subrepresentation $U$ of $V$ such that $x$ is in the image of $ H_c^j(G,U)\to H_c^j(G,V)$. A similar statement holds true for reduced cohomology.  
 \end{lemma}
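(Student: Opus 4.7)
The plan is to realize $x$ by a continuous inhomogeneous cocycle and extract from its image a separable, closed, $G$-invariant subspace of $V$. The continuous cohomology $H^j_c(G,V)$ is computed by the standard complex of continuous inhomogeneous cochains $c \colon G^j \to V$, so $x$ admits such a representing cocycle $c$, and we let $U \subset V$ be the closed $G$-invariant linear subspace generated by $c(G^j)$. By construction $c$ takes values in $U$, so it represents a class $[c]_U \in H^j_c(G,U)$ whose image under the natural map $H^j_c(G,U) \to H^j_c(G,V)$ is $x$; the whole content of the lemma is therefore that this $U$ can be chosen separable.

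To verify separability, we argue as follows. Since $G$ is second countable, so is $G^j$, and hence $S := c(G^j)$ is separable in $V$, being the continuous image of a separable metric space into a Hilbert space with its norm topology. Fix countable dense subsets $\{g_n\}_n \subset G$ and $\{s_m\}_m \subset S$. Strong operator continuity of the representation implies that the countable set $\{\pi(g_n)s_m\}_{n,m}$ is dense in the orbit $G \cdot S$, so $G \cdot S$ is separable, and consequently so is its closed linear span $U$. This is the only step that uses the hypotheses on $G$ and on the unitary representation $V$ in an essential way.

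For the reduced version, given $x \in \bar{H}^j_c(G,V)$ we pick any lift $\hat x \in H^j_c(G,V)$ (which exists because $\bar{H}^j_c(G,V)$ is by definition a quotient of $H^j_c(G,V)$), apply the construction above to $\hat x$ to produce $U$ and $[c]_U \in H^j_c(G,U)$ mapping to $\hat x$, and then invoke naturality of the passage to the Hausdorffification: the image of $[c]_U$ in $\bar{H}^j_c(G,U)$ maps to $x$ in $\bar{H}^j_c(G,V)$. I do not anticipate any serious obstacle here: the whole argument is a formal manipulation of the standard continuous cochain complex, the only non-trivial input being the elementary observation that an orbit of a separable subset under a strongly continuous action of a separable group remains separable.
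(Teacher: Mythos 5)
Your proof is correct and takes essentially the same approach as the paper's: both represent the class by a continuous cocycle, observe that the image is separable because $G$ is second countable, and enlarge to a closed $G$-invariant separable subspace. The only cosmetic differences are that the paper works with a homogeneous ($G$-equivariant) cocycle $\phi\colon G^{j+1}\to V$ — whose image is already $G$-invariant, so the closure of its span is $G$-invariant essentially for free — and produces the invariant separable subspace as the closure of $L^1(G)\cdot V'$, whereas you use an inhomogeneous cocycle and close up under the $G$-action directly, verifying separability via the countable set $\{\pi(g_n)s_m\}$. Both arguments are valid and of the same length.
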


\begin{proof}
The element $x$ is represented by a continuous cocycle $\phi:G^{j+1}\to V$. Choose a countable dense subset in $G^{j+1}$ and let $V'$ be the closure of the span of its image under $\phi$. Then $V'$ is a separable subspace of $V$ that contains the image of $\phi$. Let $U$ be a closed separable $G$-invariant subspace of $V$ that contains $V'$. For instance, take $U$ to be the closure of the image of $L^1(G)\otimes V'$. Then $x$ is clearly in the image of $H_c^j(G,U)\to H_c^j(G,V)$. 
\end{proof}
\begin{theorem} \label{thm:gensh}
    Let $n\geq 1$. Let $\Gamma$ be a countable group with property $\FP_n(\bbQ)$. Then we have the following equivalences. 
\begin{enumerate}[a)]
    \item The group $\Gamma$ has $[T_n]$ if and only if for every irreducible unitary representation $U$ and for every $0<k\leq n$ we have $\bar{H}^k(\Gamma,U)=0$. 
    \item The group $\Gamma$ has $(T_n)$ if and only if $\Gamma$ has a finite abelianization and for every non-trivial irreducible unitary representation $U$ and for every $0<k\leq n$ we have  $\bar{H}^k(\Gamma,U)=0$.
\end{enumerate}
\end{theorem}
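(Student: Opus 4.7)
The plan is to reduce both parts to a single question---upgrading reduced cohomology vanishing to unreduced vanishing---via Lemma~\ref{lem:directint} on one hand and an ultrapower argument exploiting $\FP_n(\bbQ)$ on the other. The ``only if'' directions are straightforward: $[T_n]$ (respectively $(T_n)$) gives vanishing of $H^k(\Gamma,V)$ for all relevant unitary $V$ and hence the claimed reduced vanishing on irreducibles; for the abelianization condition in (b), the implication $(T_n)\Rightarrow (T_1)$ combined with Lemma~\ref{lem:T_1} identifies $(T_1)$ with Kazhdan's property T, which forces a finite abelianization. By Lemma~\ref{lem:directint}, the reduced vanishing hypothesis on irreducibles upgrades to reduced vanishing for every unitary representation of the relevant type, so it only remains to show that $H^k(\Gamma,V)$ is Hausdorff for $1\le k\le n$, that is, that $\im d^{k-1}$ is closed in $V^{m_k}$.

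The $\FP_n(\bbQ)$ hypothesis equips the low-degree cochain complex with Hilbert-space structures $V^{m_k}$ and bounded differentials $d^k$ whose adjoints are the differentials $d_k$ of $V\otimes_{\bbQ\Gamma}P_\ast$, as recorded in~\eqref{eq: cochain bases}--\eqref{eq: adjoint differentials}. For $k=1$ I would invoke Shalom's theorem~\cite{Shalom}: the finitely generated group $\Gamma$ (from $\FP_1$) together with the reduced vanishing of $H^1$ on non-trivial irreducibles, and finiteness of $\Gamma^{ab}$, yield Kazhdan's property T. In part~(b) the abelianization is finite by assumption; in part~(a) it is a consequence of $\bar H^1(\Gamma,\bbC)=\Hom(\Gamma^{ab},\bbC)=0$, since the finitely generated abelian group $\Gamma^{ab}$ has no $\bbC$-characters only if it is finite.

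For $2\le k\le n$ I would argue by contradiction. Assume $\im d^{k-1}\subseteq V^{m_k}$ is not closed for some unitary $V$ (in case~(b), one may restrict attention to $V$ with $V^\Gamma=0$). Then $d^{k-1}$ fails to be bounded below on $(\ker d^{k-1})^\perp$, producing unit vectors $u_n\in(\ker d^{k-1})^\perp\subseteq V^{m_{k-1}}$ with $d^{k-1}u_n\to 0$. Fixing a non-principal ultrafilter $\omega$, the ultralimit $(u_n)\in(V^\omega)^{m_{k-1}}$ is a unit vector in $\ker d^{k-1}_\omega$. Since each $u_n$ is perpendicular to $\ker d^{k-1}\supseteq \im d^{k-2}$, we have $(d^{k-2})^\ast u_n=0$; the adjoint identity then yields $(u_n)\perp\overline{\im d^{k-2}_\omega}$, so $(u_n)$ represents a non-zero harmonic class in $\bar H^{k-1}(\Gamma,V^\omega)$. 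Let $W\subseteq V^\omega$ be the closed $\Gamma$-invariant subspace generated by the finitely many coordinates of $(u_n)$; this is a separable unitary $\Gamma$-representation in which $(u_n)$ remains a non-zero harmonic vector. In case~(b), the property~T established above prohibits almost invariant vectors in $V$ and hence non-zero $\Gamma$-invariants in $V^\omega$, so $W^\Gamma=0$. Applying Lemma~\ref{lem:directint} in degree $k-1\in\{1,\ldots,n-1\}$ to $W$ then forces $\bar H^{k-1}(\Gamma,W)=0$, the desired contradiction.

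The main obstacle is the orthogonality $(u_n)\perp\overline{\im d^{k-2}_\omega}$ in the ultrapower, which is subtle because this closure strictly contains the naive ultrapower of $\overline{\im d^{k-2}}$. The crucial calculation pairs $(u_n)$ against an arbitrary coboundary $d^{k-2}_\omega(e^{(j)})$ and uses $\langle u_n,d^{k-2}e\rangle=\langle(d^{k-2})^\ast u_n,e\rangle=0$; it is at exactly this step that the finite generation of $P_{k-2}$---hence the full strength of $\FP_n(\bbQ)$ rather than merely $\FP_{n-1}(\bbQ)$---becomes indispensable, since it is what turns $d^{k-2}$ into a bounded operator on a finite Hilbert direct sum and guarantees the existence of the adjoint $(d^{k-2})^\ast$.
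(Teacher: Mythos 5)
Your proof is correct, and the broad strategy matches the paper's: reduce to reduced vanishing via Lemma~\ref{lem:directint}, then use the Hilbert-space structure that $\FP_n(\bbQ)$ provides on the truncated cochain complex, together with an ultrapower, to rule out non-Hausdorff cohomology. There is a tactical difference, though. You locate the harmonic representative one degree lower: starting from the non-closedness of the coboundary map out of degree $k-1$, you take unit $(k-1)$-cochains orthogonal to the $(k-1)$-cocycles whose coboundaries tend to zero, and the ultralimit is a nonzero harmonic class in degree $k-1$. The paper instead passes to the adjoint differential (their $d_k$) and produces the harmonic representative in degree $k$ itself: unit $k$-cochains in $(\ker d_k)^\perp=\overline{\im d^k}\subseteq\ker d^{k+1}$ whose $d_k$-images tend to zero. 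The degree-$k$ version works uniformly for $1\le k\le n$, whereas your degree-$(k-1)$ version degenerates at $k=1$ (the ultralimit is merely a nonzero $\Gamma$-invariant vector, which contradicts nothing in case~(a) and is circular in case~(b)), which is precisely why you need the separate appeal to Shalom's theorem for $k=1$. That detour is legitimate, and your abelianization bookkeeping is right, but the paper avoids it: the uniform ultrapower argument in effect reproves the relevant direction of Shalom's result for discrete $\FP_1$ groups. Your explicit separable $W\subseteq V^\omega$ plays the same role as the paper's Lemma~\ref{lem: non-separable reps}, and your orthogonality computation against $d^{k-2}_\omega$-coboundaries is handled correctly. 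One correction: the closing remark about where $\FP_n$ (as opposed to $\FP_{n-1}$) becomes essential is misplaced. The existence and boundedness of $(d^{k-2})^\ast$ only requires $P_{k-2}$ finitely generated, which already follows from $\FP_{n-2}$ since $k\le n$. What genuinely forces $\FP_n$ is that the target of $d^{k-1}$, namely $\Hom_\Gamma(P_k,V)$, must itself be a Hilbert space with $d^{k-1}$ a bounded operator for $k$ as large as $n$; it is the finite generation of $P_n$ that cannot be dispensed with.
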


\begin{proof}
    In both statements the ``only if" part is obvious. We will prove the ``if" parts.

 For the first statement we assume that $\bar{H}^k(\Gamma,U)=0$ for every irreducible unitary representation $U$ and for every $0<k\leq n$. Lemma~\ref{lem:directint} implies that 
\begin{equation} \label{eq:red}    
    \bar{H}^k(\Gamma,V)=0~\text{for every $0<k\leq n$ and every unitary $\Gamma$-representation $V$.}
\end{equation}
By contradiction, let us assume that there is $0<k\leq n$ and a unitary $\Gamma$-representation $V$ such that ${H}^k(\Gamma,V)\ne 0$. We adhere to the notation~\eqref{eq: cochain bases}~\eqref{eq: chain bases}, and freely use~\eqref{eq: adjoint differentials}. 

By assumption, the image of $d^k\colon V^{m_{k-1}}\to V^{m_{k}}$ is not closed. Hence the image of $d_{k}=(d^k)^\ast\colon V^{m_{k}}\to V^{m_{k-1}}$ is not closed.
By the Open Mapping Theorem, there exists a sequence of unit cocycles $\bar{v}^i=(v_1^i,\ldots,v_{m_{k}}^i)_{i=1}^\infty$
   in $V^{m_{k}}$ satisfying $\lim_i d_k(\bar{v}^i)=0$ in $V^{m_{k-1}}$.
We fix an ultrafilter $\omega$ on $\bbN$ and consider the ultrapower $V_\omega$
and the corresponding element $(\bar{v}^i_\omega) \in V_\omega^{m_{k-1}}$ (we make here the obvious identification between $(V_\omega)^{m_{k}}$ and $(V^{m_{k}})_\omega$).
The $k$-cochain $(\bar{v}^i_\omega)\in V^{m_{k}}_\omega$ in the chain complex 
\[ \cdots \leftarrow \Hom_\Gamma(P_{n+1},V_\omega) \leftarrow V^{m_{n}}_\omega \xleftarrow{d^n}  \cdots \xleftarrow{d^{k+1}} V^{m_{k}}_\omega \xleftarrow{d^k}  \cdots \xleftarrow{d^1}  V^{m_0}_\omega \leftarrow 0, \]
is a unit cocycle satisfying $d_k(\bar{v}^i_\omega)=(d^k)^\ast(\bar{v}^i_\omega)=0\in V_\omega^{m_{k-1}}$.
It follows that $(\bar{v}^i_\omega)$ is a cocycle in $V^{m_{k}}_\omega$ that is orthogonal to all coboundaries.
Thus $\bar{H}^k(\Gamma,V_\omega)\neq 0$. 
The ultrapower $V_\omega$ violates the standing assumption about separability but with Lemma~\ref{lem: non-separable reps} we get a contradiction to~\eqref{eq:red}.
This concludes the proof of the first statement.

Next, we consider the second statement.
Since $\Gamma$ is finitely generated and it has a finite abelianization, we have ${H}^1(\Gamma,\bbC)=0$,
thus we get that $\Gamma$ has $[T_1]$ by the first statement.
Assume that $V^\Gamma=0$. The proof of the first statement applies verbatim to show that $\Gamma$ has $(T_n)$
provided we have $(V_\omega)^\Gamma=0$. But since there are no almost invariant vectors in $V$ by property T, the ultrapower $V_\omega$ has no invariant vectors. Another application of Lemma~\ref{lem: non-separable reps} finishes the argument. 
\end{proof}

\begin{remark}
    Fixing a choice of the ultrafilter $\omega$ in the above proof, we obtain a \emph{ultrapower functor} $V\mapsto V_\omega$. This functor commutes with direct sums and it has the property that if $T:V\to U$ fails to have a closed image then $T_\omega:V_\omega \to U_\omega$ fails to be injective.
\end{remark}

\begin{remark} \label{rem:Lap}
    In the above proof, knowing that the image of $d_k^*$ is not closed, we could have deduced that the image of the Laplacian operator $d_{k+1}d_{k+1}^*+d_k^*d_k$ is not closed and apply the functoriality mentioned above to deduce that the Laplacian is not injective on $V_\omega^{m_k}$, thus the corresponding reduced cohomology is non-trivial.
    However, this method fails in degree $n$ without assuming that $\Gamma$ has property $\FP_{n+1}(\bbQ)$.
\end{remark}

\begin{remark}
    If $\Gamma$ satisfies $\FP_n(\bbQ)$ then for every $k\leq n$, $H^k(\Gamma,\bbC)$ is finite dimensional. If, moreover, $\Gamma$ has $(T_n)$ then for every unitary $\Gamma$-representation $U$,
$H^k(\Gamma,U)\cong H^k(\Gamma,U^G)\cong H^k(\Gamma,\bbC)\otimes U^G$ and this cohomology is reduced.
Note, however, that for $n>1$, having that for every $k\leq n$ and every unitary $\Gamma$-representation $U$,
$H^k(\Gamma,U)$ is reduced, does not guarantee that $\Gamma$ has $(T_n)$, as $\SL_{n+1}(\bbQ_p)$ and its lattices have reduced cohomology groups in \emph{all} degrees.
\end{remark}

\begin{theorem} \label{thm:red}
    Fix $n\geq 2$. 
    Let $\Gamma$ be a countable group satisfying property $\FP_n(\bbQ)$ and property $(T_{n-1})$.
    Then for every unitary $\Gamma$-representation $U$, $H^{n}(\Gamma,U)$ is Hausdorff.

    In particular, if $\Gamma$ is a finitely presented group with property T, then 
    for every unitary $\Gamma$-representation $U$, $H^{2}(\Gamma,U)$ is Hausdorff.
\end{theorem}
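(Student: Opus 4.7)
The statement is that $d^n : V^{m_{n-1}} \to V^{m_n}$ in the cochain complex associated with a length-$n$ partial free $\bbQ\Gamma$-resolution (guaranteed by $\FP_n(\bbQ)$) has closed image. I would argue by contradiction: assume $\im d^n$ is not closed.

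The first reduction exploits that $(T_{n-1})$ contains $(T_1)$, which by Lemma~\ref{lem:T_1} is Kazhdan's property T. Hence $V$ splits $\Gamma$-equivariantly and orthogonally as $V = V^\Gamma \oplus V'$ with $V'$ admitting a spectral gap. The coboundary preserves the splitting. On the trivial summand, $d^n$ equals $\bar{D}\otimes \id_{V^\Gamma}$, where $\bar{D}$ is the augmentation of the boundary matrix $D_n\in M_{m_n\times m_{n-1}}(\bbQ\Gamma)$ to $M_{m_n\times m_{n-1}}(\bbQ)$; its image is automatically closed as a finite-rank perturbation. Thus the failure of closedness must occur on the $V'$-summand, and we may assume $V=V'$, that is $V^\Gamma=0$ and $V$ has no almost-invariant vectors.

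With this reduction in place, the restriction of $d^n$ to $(\ker d^n)^\perp$ is a continuous injection that is not bounded below, so there is a sequence of unit vectors $u^i\in(\ker d^n)^\perp\subseteq V^{m_{n-1}}$ with $d^n u^i\to 0$. From $\im d^{n-1}\subseteq \ker d^n$ we also have $u^i\perp\im d^{n-1}$. Fix a non-principal ultrafilter $\omega$ on $\bbN$ and form the (non-separable) unitary $\Gamma$-representation $V_\omega$. The element $(u^i_\omega)\in V_\omega^{m_{n-1}}$ is a unit vector, a $d^n$-cocycle (since $\|d^n u^i\|\to 0$), and orthogonal to $\im(d^{n-1})_\omega$ (by the orthogonality of each $u^i$ to $\im d^{n-1}$). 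Hence $(u^i_\omega)$ represents a nontrivial class in $\bar H^{n-1}(\Gamma, V_\omega)$.

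The spectral gap forces $V_\omega^\Gamma=0$: any $\Gamma$-invariant vector in $V_\omega$ arises as the ultralimit of an almost-invariant sequence in $V$, which must vanish. By Lemma~\ref{lem: non-separable reps} the nontrivial class is carried by some separable $\Gamma$-subrepresentation $U\subseteq V_\omega$, and then $U^\Gamma\subseteq V_\omega^\Gamma=0$. Hypothesis $(T_{n-1})$ applied to the separable representation $U$ yields $H^{n-1}(\Gamma, U)=0$, contradicting the nontriviality of the class. The ``in particular'' clause is immediate because finitely presented groups satisfy $\FP_2(\bbQ)$ and property T coincides with $(T_1)$. The main technical subtlety is precisely the step of arranging $V_\omega^\Gamma=0$ so that $(T_{n-1})$ applies to the ultrapower; this is why property T, folded into the hypothesis via $(T_{n-1})\supseteq (T_1)$, is indispensable to the argument.
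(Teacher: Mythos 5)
Your proof is correct and essentially the same as the paper's: reduce to $V^\Gamma=0$, take a unit almost-null sequence off the kernel, form the ultrapower to get a nonzero reduced $(n-1)$-class, and contradict $(T_{n-1})$ after cutting down to a separable subrepresentation via Lemma~\ref{lem: non-separable reps}. (Two cosmetic points: you choose the sequence in $(\ker d^n)^\perp$ while the paper uses $(\im d^{n-1})^\perp$, which agree here since $(T_{n-1})$ gives $\ker d^n=\im d^{n-1}$; and the image of $\bar D\otimes\id_{V^\Gamma}$ is closed simply because it equals $\im(\bar D)\otimes V^\Gamma$ with $\bar D$ a finite matrix---``finite-rank perturbation'' is not quite the right description.)
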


We note that the case $\Gamma=\SL_3(\bbZ)$ in the above theorem follows from \cite{Kaluba-Mizerka-Nowak}, who proved it with a computer aid, using the technique suggested in \cite{BN2}.
In fact, Theorem~\ref{thm:red} shows that the assumption that $H^{n+1}$ is reduced could be dropped in the main theorem of \cite{BN2}.

\begin{proof}
    Let $V$ be a unitary $\Gamma$-representation. 
    Then $H^{n}(\Gamma,V^G)\cong H^{n}(\Gamma,\bbC) \otimes V^G$ 
    and $H^{n}(\Gamma,\bbC)$ is finite dimensional by property $\FP_n(\bbQ)$. Hence $H^{n}(\Gamma,V^G)$ is Hausdorff. 
    So we may assume that $V^G=0$.
    Assume that $H^{n}(\Gamma,V)$ is not Hausdorff, that is, $d^n$ does not have a closed image.
    Applying now the method indicated in Remark~\ref{rem:Lap} basically finishes the proof, but we will follow closely the proof and the notation of Theorem~\ref{thm:gensh}.
    
    By the Open Mapping Theorem, there exists a sequence of unit vectors $\bar{v}^i=(v_1^i,\ldots,v_{m_{n-1}}^i)_{i=1}^\infty$
   in $d^{n-1}(V^{m_{n-2}})^\perp$ satisfying $\lim_i d^{n}(\bar{v}^i)=0$ in $V^{m_{n}}$.
We fix an ultrafilter $\omega$ on $\bbN$ and consider the ultrapower $V_\omega$
and the corresponding element $(\bar{v}^i_\omega) \in V_\omega^{m_{n-1}}$.
Since $\Gamma$ has property T and $V^\Gamma=0$, we obtain that $(V_\omega)^\Gamma=0$.
The vector $(\bar{v}^i_\omega)$ is a unit vector in $d^{n-1}(V^{m_{n-2}}_\omega)^\perp<V^{m_{n-1}}_\omega$ satisfying $d^n(\bar{v}^i_\omega)=0\in V_\omega^{m_{n}}$.
It follows that $\bar{v}^i_\omega$ is a cocycle in $V^{m_{n-1}}_\omega$ that is orthogonal to all coboundaries.
Thus $\bar{H}^{n-1}(\Gamma,V_\omega)\neq 0$. 
An application of Lemma~\ref{lem: non-separable reps} implies a similar non-vanishing for a separable subrepresentation of the ultrapower, 
contradicting the assumption that $\Gamma$ has $(T_{n-1})$.
\end{proof}

\subsection{The Shapiro Lemma for cocompact lattices} \label{subsec:SL}

The Shapiro Lemma relates the cohomology of a group and its closed subgroup.
In this subsection we review it in the setting where the subgroup is a cocompact lattice and discuss consequences of the results of the previous subsection. 
We will discuss the Shapiro Lemma again, in a more general setting, in \S\ref{subsec:Shapiro++}.

For a unimodular locally compact second countable group $G$, a discrete subgroup $\Gamma<G$ and unitary representation $U$ of $\Gamma$, the \emph{local unitary induction} $I^2_{\loc}(U)$ is defined
as the subspace of $L^2_{\loc}(G,U)$ that consists of functions $\phi$ such that for a.e~$g\in G$ and for every $\gamma\in \Gamma$, $\phi(g\gamma^{-1})=\gamma\phi(g)$.
On $I^2_{\loc}(U)$ the group $G$ acts by $(g\phi)(\_)=\phi(g\_)$. 
For $\phi\in I^2_{\loc}(U)$, the function $\|\phi(\_)\|_U$ yields a well defined function on $G/\Gamma$ and hence a norm $\|\phi\|_{I^2(U)}=\int_{G/\Gamma} \|\phi(\cdot)\|_U$. 
We define the \emph{unitary induction} 
as 
\[ I^2(U)=\bigl\{\phi\in I^2_{\loc}(U)\mid \|\phi\|_{I^2(U)}<\infty\bigr\}.\]
The unitary induction $I^2(U)$ is a Hilbert space. Its $G$-action is a continuous unitary representation of $G$. If $\Gamma<G$ is a cocompact lattice, then $I^2(U)=I^2_{\loc}(U)$~\cite{Blanc}*{Lemme~8.8}. 

Hereafter, the symbol $\bar{\otimes}$ denotes the Hilbertian tensor product.

\begin{lemma}[Shapiro Lemma] \label{lem:Shapiro}
Let $G$ be a locally compact second countable and let $\Gamma<G$ be a cocompact lattice. 
    Given a unitary representation $U$ of $\Gamma$, we have
\[ H^*_c(\Gamma,U)\cong H^*_c(G,I^2(U)), \quad \bar{H}^*_c(\Gamma,U)\cong \bar{H}^*_c(G,I^2(U)).\]
If $U$ is a $G$-representation, then the induction of its restriction to $\Gamma$ satisfies $I^2(U) \cong U \bar{\otimes} L^2(G/\Gamma)$. In particular,  
\[ H^*_c(\Gamma,U)\cong H^*_c(G,U) \oplus H^*_c(G,U\bar{\otimes} L^2_0(G/\Gamma)), \]
\[ \bar{H}^*_c(\Gamma,U)\cong \bar{H}^*_c(G,U) \oplus \bar{H}^*_c(G,U\bar{\otimes} L^2_0(G/\Gamma)).\]
The corresponding inclusion  map $H^*_c(G,U) \to H^*_c(\Gamma,U)$ is induced by restriction, which is thus injective. Therefore, for every $j$, the restriction map $H^j_c(G,U) \to H^j_c(\Gamma,U)$ is an isomorphism if and only if ${H}^j_c(G,U\bar{\otimes} L^2_0(G/\Gamma))=0$.
\end{lemma}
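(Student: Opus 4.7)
The plan is to proceed in three stages: establish the Shapiro isomorphism, then the tensor identification when $U$ extends to~$G$, then derive the splitting and identify the restriction map.

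For the Shapiro isomorphism, since $\Gamma\backslash G$ is compact we have $I^2(U)=I^2_{\loc}(U)$ by the cited result of Blanc. I would use the homogeneous bar resolution: the complex of continuous equivariant cochains $G^{\bullet+1}\to I^2(U)$ computes $H^*_c(G,I^2(U))$, and evaluation at the identity in the $I^2(U)$-valued cochains (combined with the defining $\Gamma$-equivariance of $I^2(U)$, which converts $G$-equivariance on the induced module into $\Gamma$-equivariance on $U$) gives a natural chain map into the $\Gamma$-bar complex computing $H^*(\Gamma,U)$. A quasi-inverse is built from a Borel section of $G\to\Gamma\backslash G$, which exists with relatively compact image by cocompactness, ensuring the induced chain map is bounded. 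Since the map is a topological isomorphism of Fr\'echet cochain complexes in each degree, it descends to an isomorphism of reduced cohomology as well; alternatively, one may appeal directly to~\cite{Blanc}.

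For the tensor identification, when $U$ extends to a unitary $G$-representation $\pi$, I would define
\[ \Psi\colon U\,\bar{\otimes}\,L^2(G/\Gamma)\longrightarrow I^2(U),\qquad \Psi(u\otimes f)(g)=f(g\Gamma)\,\pi(g)^{-1}u. \]
The condition $\Psi(u\otimes f)(g\gamma^{-1})=\gamma\cdot\Psi(u\otimes f)(g)$ follows from $\pi(g\gamma^{-1})^{-1}=\pi(\gamma)\pi(g)^{-1}$ together with the right $\Gamma$-invariance of $f(g\Gamma)$; isometry follows from Fubini and unitarity of $\pi$ after identifying $L^2(G/\Gamma,U)\cong U\,\bar{\otimes}\,L^2(G/\Gamma)$; surjectivity follows by trivializing $\phi\in I^2(U)$ over a Borel fundamental domain and untwisting by $\pi$; $G$-equivariance is a direct computation once the $G$-action on $U\,\bar{\otimes}\,L^2(G/\Gamma)$ is chosen to be compatible with the convention $(h\phi)(x)=\phi(hx)$ on $I^2(U)$.

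The $G$-equivariant orthogonal splitting $L^2(G/\Gamma)=\bbC\oplus L^2_0(G/\Gamma)$, valid since $\Gamma\backslash G$ has finite volume, yields a $G$-equivariant orthogonal splitting $I^2(U)=U\oplus\bigl(U\,\bar{\otimes}\,L^2_0(G/\Gamma)\bigr)$. Additivity of continuous cohomology on direct sums of Fr\'echet modules then gives the claimed decompositions of $H^*_c$ and $\bar{H}^*_c$. Tracing the embedding of the first summand $U$ (corresponding to the constants in $L^2(G/\Gamma)$) through the Shapiro isomorphism shows that the induced map $H^*_c(G,U)\to H^*_c(\Gamma,U)$ is precisely the restriction; its injectivity and the final ``iff'' criterion follow at once. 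The main technical obstacle is reconciling the various sign and convention choices so that both $\Psi$ and the Shapiro comparison map are genuinely equivariant; once this bookkeeping is done, the rest is formal.
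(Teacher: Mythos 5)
Your proposal is correct and follows essentially the same route as the paper: cite Blanc for the Shapiro isomorphism (both ordinary and reduced), identify $I^2(U)\cong U\,\bar\otimes\,L^2(G/\Gamma)$ via the map $\phi(g)=f(g)\,g^{-1}u$, split off the constants in $L^2(G/\Gamma)$, and trace the $U$-summand through Shapiro to recognize restriction. The one caveat in your "evaluation at the identity" sketch is that elements of $I^2(U)$ are only defined almost everywhere, so pointwise evaluation is not literally available; the paper instead works with the $L^2_{\loc}$ model of the cochain complex and an explicit chain-level isomorphism (reviewed in its equation for the Shapiro homomorphism), which avoids this issue -- but since you also defer to Blanc directly, this is a cosmetic rather than substantive gap.
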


\begin{proof}
    The first statement is~\cite{Blanc}*{Théorème 8.8}. The isomorphism $U\bar\otimes L^2(G/\Gamma)\to I^2(U)$, where $U$ is the $\Gamma$-restriction of a unitary $G$-representation, is induced by $f\otimes u\mapsto \phi$ with $\phi(g)=f(g)g^{-1}u$. The compatibility with the restriction homomorphism is a consequence of the proof of~\cite{Blanc}*{Théorème 8.7}. In~\eqref{eq: Shapiro} we review the Shapiro homomorphism in order to generalize it to certain non-uniform lattices. The compatibility with the restriction homomorphism is evident from the explicit description in~\eqref{eq: Shapiro} as well. 
\end{proof}

\begin{lemma} \label{lem:Tninduction}
     Let $G$ be a locally compact second countable group and let $\Gamma<G$ be a cocompact lattice.     
    Then $G$ has $[T_n]$ if and only if $\Gamma$ has $[T_n]$.
    If $G$ has $(T_n)$ then $\Gamma$ has $(T_n)$.
    If $\Gamma$ has $(T_n)$ and for every $G$-representation $U$ we have $U^\Gamma=U^G$
    then $G$ has $(T_n)$.
\end{lemma}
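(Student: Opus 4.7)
The entire statement reduces to careful bookkeeping with the Shapiro Lemma (Lemma~\ref{lem:Shapiro}). The plan is to exploit its two manifestations in tandem: the general isomorphism $H^\ast_c(\Gamma,U)\cong H^\ast_c(G,I^2(U))$, valid for any unitary $\Gamma$-representation $U$, and the direct-sum refinement $H^\ast_c(\Gamma,U)\cong H^\ast_c(G,U)\oplus H^\ast_c(G, U\bar\otimes L^2_0(G/\Gamma))$ that is available when $U$ is the restriction of a unitary $G$-representation.

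For the $[T_n]$ equivalence, the two implications are immediate. Assuming $G$ has $[T_n]$ and given a unitary $\Gamma$-representation $U$, the induced representation $I^2(U)$ is unitary as a $G$-representation, so Shapiro's isomorphism gives $H^j_c(\Gamma,U)\cong H^j_c(G,I^2(U))=0$ for $1\le j\le n$. Conversely, assuming $\Gamma$ has $[T_n]$ and given a unitary $G$-representation $U$, the decomposition displays $H^j_c(G,U)$ as a direct summand of $H^j_c(\Gamma,U)$, which vanishes in the relevant range.

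The only step that requires a small dedicated argument is controlling invariant vectors in the induced representation; this is the pivot for the $(T_n)$ statements. I would first establish the key observation that if $U$ is a unitary $\Gamma$-representation with $U^\Gamma=0$, then $I^2(U)^G=0$. Indeed, a $G$-invariant $\phi\in I^2(U)$ satisfies $\phi(g)=\phi(e)$ for almost every $g$, and then the defining equivariance relation $\phi(g\gamma^{-1})=\gamma\phi(g)$ forces the common value to lie in $U^\Gamma$. Granted this, the second claim is immediate: given a unitary $\Gamma$-representation $U$ with $U^\Gamma=0$, property $(T_n)$ of $G$ applied to the $G$-representation $I^2(U)$ yields $H^j_c(\Gamma,U)\cong H^j_c(G,I^2(U))=0$ in the desired range.

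For the third statement, suppose $\Gamma$ has $(T_n)$ and the extra hypothesis $U^\Gamma=U^G$ holds for all unitary $G$-representations. Given such a $U$ with $U^G=0$, the hypothesis forces $U^\Gamma=0$, so $(T_n)$ of $\Gamma$ gives $H^j_c(\Gamma,U)=0$ for $1\le j\le n$, and the summand decomposition then yields $H^j_c(G,U)=0$. I do not anticipate any serious obstacle; the only nontrivial input is the elementary invariance computation above, and everything else is a mechanical application of Shapiro's Lemma combined with the fact that $H^\ast_c(\Gamma,\cdot)=H^\ast(\Gamma,\cdot)$ since $\Gamma$ is discrete.
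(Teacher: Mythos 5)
Your proof is correct and follows essentially the same route as the paper: both directions of the $[T_n]$ equivalence come straight from the Shapiro isomorphism and the split injectivity of the restriction map, and the $(T_n)$ claims hinge on the observation that $U^\Gamma=0$ implies $I^2(U)^G=0$. The only cosmetic difference is that you verify $U^\Gamma=0\Rightarrow I^2(U)^G=0$ by an explicit computation with invariant sections, whereas the paper simply asserts it (it is also the degree-zero case of the Shapiro isomorphism, $I^2(U)^G=H^0_c(G,I^2(U))\cong H^0(\Gamma,U)=U^\Gamma$).
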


\begin{proof} \label{lem:Tnfi}
    If $G$ has $[T_n]$, then $\Gamma$ has $[T_n]$ by the Shapiro isomorphism~\ref{lem:Shapiro}. 
    If $\Gamma$ has $[T_n]$, then $G$ has $[T_n]$ by the injectivity of the restriction map in cohomology~\ref{lem:Shapiro}.
    Similarly, if $G$ has $(T_n)$, then $\Gamma$ has $(T_n)$ since for every $U$ with $U^\Gamma=0$ we have $I^2(U)^G=0$. 
    The last statement follows by the injectivity of the restriction map.
\end{proof}

\begin{lemma} \label{lem:GammaG criterion}
    Let $G$ be a locally compact second countable and let $\Gamma<G$ be a cocompact lattice.
    Assume that $\Gamma$ has property $\FP_n(\bbQ)$. Then we have:
    \begin{enumerate}[a)]
    \item If 
    $\bar{H}^k_c(G,U)=0$ for every $0<k\leq n$ and every irreducible $G$-representation~$U$, then $G$ and $\Gamma$ have property $[T_n]$. 
    \item If $G$ has a compact abelianization     
     and $\bar{H}^k_c(G,U)=0$ for every $0<k\leq n$ and every non-trivial irreducible $G$-representation $U$,
     then $\Gamma$ has $(T_n)$. 
    Furthermore, if for every $G$-representation $U$ we have $U^\Gamma=U^G$,
    then also $G$ has $(T_n)$.
    \end{enumerate}
\end{lemma}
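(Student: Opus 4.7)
The plan is to reduce the vanishing statement for $\Gamma$ to the hypothesis on $G$ via the Shapiro Lemma, apply Theorem~\ref{thm:gensh}, and then transfer $[T_n]$ from $\Gamma$ back to $G$ via Lemma~\ref{lem:Tninduction}. First, Lemma~\ref{lem:directint} upgrades the hypothesis from irreducible to arbitrary unitary $G$-representations: $\bar{H}^k_c(G,V)=0$ for every unitary $V$ and every $0<k\leq n$. For every unitary $\Gamma$-representation $U$, the Shapiro Lemma~\ref{lem:Shapiro} then gives $\bar{H}^k(\Gamma,U)\cong \bar{H}^k_c(G,I^2(U))=0$. Specializing to irreducible $U$, Theorem~\ref{thm:gensh}a) (applicable since $\Gamma$ has $\FP_n(\bbQ)$) yields property $[T_n]$ for $\Gamma$, and Lemma~\ref{lem:Tninduction} transfers it to $G$.

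\textbf{Part b).} The scheme is parallel but requires careful tracking of triviality. Lemma~\ref{lem:directint} upgrades the hypothesis to: $\bar{H}^k_c(G,V)=0$ for every unitary $V$ with $V^G=0$ and every $0<k\leq n$. The key point is that $U^\Gamma=0$ forces $I^2(U)^G=0$: any $G$-invariant element of $I^2(U)$ is essentially constant as a function on $G$, and the $\Gamma$-equivariance relation $\phi(g\gamma^{-1})=\gamma\phi(g)$ then pushes that constant into $U^\Gamma=0$. The Shapiro Lemma now gives $\bar{H}^k(\Gamma,U)=0$ for every such $U$ and every $0<k\leq n$, covering in particular the non-trivial irreducible $\Gamma$-representations required by Theorem~\ref{thm:gensh}b).

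To invoke Theorem~\ref{thm:gensh}b) we still need $\Gamma$ to have finite abelianization, and this is where the main external input enters. Taking $k=1$, the upgraded hypothesis gives $\bar{H}^1_c(G,V)=0$ for all unitary $V$ with $V^G=0$; the compact abelianization of $G$ gives $H^1_c(G,\bbC)=0$; together these yield $\bar{H}^1_c(G,V)=0$ for \emph{every} unitary $V$. Since $\Gamma$ is finitely generated (by $\FP_1(\bbQ)$) and cocompact in $G$, the group $G$ is compactly generated, and Shalom's characterization of property T for compactly generated LCSC groups via reduced first cohomology now gives property T for $G$. Lemma~\ref{lem:Tninduction} passes this to $\Gamma$, and Lemma~\ref{lem:T_1} yields a compact, hence finite, abelianization of $\Gamma$. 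Theorem~\ref{thm:gensh}b) then delivers $(T_n)$ for $\Gamma$, and the ``furthermore'' statement follows immediately from the last clause of Lemma~\ref{lem:Tninduction}.
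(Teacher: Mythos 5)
Your proposal is correct and follows essentially the same route as the paper: upgrade the hypothesis via Lemma~\ref{lem:directint}, pass to $\Gamma$ via the Shapiro Lemma, invoke Theorem~\ref{thm:gensh}, and transfer back to $G$ via Lemma~\ref{lem:Tninduction}, with Shalom's cohomological characterization of property T supplying the finite abelianization of $\Gamma$ in part (b). The one place you add something the paper leaves implicit is the short argument that $U^\Gamma=0$ forces $I^2(U)^G=0$; that step is indeed needed before applying the Shapiro isomorphism, and your justification (a $G$-invariant section is a.e.\ constant, and $\Gamma$-equivariance then lands it in $U^\Gamma$) is correct.
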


\begin{proof}
Assume that $\bar{H}^k_c(G,U)=0$ for every $0<k\leq n$ and for every irreducible $G$-representation~$U$.     
    By Lemma~\ref{lem:directint}, $\bar{H}^k_c(G,U)=0$ for every $G$-representation $U$ of $G$. 
    Then by the Shapiro Lemma~\ref{lem:Shapiro},  $\bar{H}^k(\Gamma,V)=0$ 
    for every $\Gamma$-representation~$V$.
    
    We conclude by the first part of Theorem~\ref{thm:gensh} that $\Gamma$ has $[T_n]$
    and deduce form Lemma~\ref{lem:Tninduction} that also $G$ has $[T_n]$.

Assume that $G$ has a compact abelianization     
     and that $\bar{H}^k_c(G,U)=0$ for every $0<k\leq n$ and for every non-trivial irreducible $G$-representation~$U$. 
    
    The group $\Gamma$ is finitely generated as it has $\FP_1(\bbQ)$. Hence $G$ is compactly generated.
    Since $G$ has a compact abelianization, we have $H^1_c(G,\bbC)=0$. By Lemma~\ref{lem:directint} we get that for every representation $U$ of $G$,
    $\bar{H}^1_c(G,U)=0$.
    It follows by \cite[Theorem 6.1]{Shalom} that $G$ has T, thus also $\Gamma$ has T.
    In particular, $\Gamma$ has a finite abelianization.
    Using Lemma~\ref{lem:directint} again, we get that for every representation $U$ of $G$ with $U^G=0$,
    $\bar{H}^1_c(G,U)=0$ and it follows from the Shapiro Lemma~\ref{lem:Shapiro} that 
    for every representation $V$ of $\Gamma$ with $V^\Gamma=0$,
    $\bar{H}^1_c(\Gamma,V)=0$.
    We conclude by the second part of Theorem~\ref{thm:gensh} that $\Gamma$ has $(T_n)$
    and that if for every $G$-representation $U$, $U^\Gamma=U^G$, then $G$ has $(T_n)$.
\end{proof}

\subsection{The invariant $r_0$ and semisimple groups}

\begin{definition} \label{def:r}
    Given a locally compact second countable group $G$, we say that 
    an irreducible unitary representation $V$ of $G$ is \emph{cohomological} if $H^\ast_c(G,V)\neq 0$.
    The \emph{cohomological dual} of $G$, denoted $\widehat{G}_{\cohom}$, is the subset of the unitary dual of $G$, $\widehat{G}$, consisting of (classes of) cohomological representations. 
    Note that the trivial representation ${\bf 1} \in \widehat{G}_{\cohom}$, as $H^0_c(G,{\bf 1})\cong \bbC$.
    We denote by $r_0(G)$ the minimal degree in which some non-trivial irreducible cohomological representation has a non-vanishing cohomology, that is
    \[ r_0(G)=\min \{r \mid \exists ~{\bf 1} \neq V \in \widehat{G}_{\cohom},~H^r_c(G,V)\neq 0  \}.\]
    In particular, $r_0(G)=\infty$ if and only if  $\widehat{G}_{\cohom}=\{{\bf 1}\}$.
    We denote by $r(G)$ the minimal degree in which some irreducible cohomological representation which has a compact kernel has a non-vanishing cohomology.
\end{definition}

For a compact group $G$, we have by Lemma~\ref{lem:compact}, $\widehat{G}_{\cohom}=\{{\bf 1}\}$, $r(G)=0$ and $r_0(G)=\infty$.
However, if $G$ is not compact, we clearly have $r(G)\geq r_0(G)$.
This will typically be the case in our considerations.
In \S\ref{subsec:cohomreps} and Appendix~\ref{sec:rG} we will explain how to compute explicitly the invariants $r_0(G)$ and $r(G)$ for semisimple groups. 

\begin{theorem} \label{thm:r0}
    Let $G \cong \prod_{i=1}^l \bfG_i(F_i)$ where each $F_i$ is a local field and each $\bfG_i$ is a connected, simply connected almost simple $F_i$-group.
    Then $G$ and all cocompact lattices in $G$ satisfy property $(T_{r_0(G)-1})$.
    Further, if $r_0(G)\geq 2$ then their degree $j$ cohomologies with unitary coefficients are always Hausdorff, for every $j\leq r_0(G)$.
\end{theorem}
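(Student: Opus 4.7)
The plan is to derive both statements from Lemma~\ref{lem:GammaG criterion} and Theorem~\ref{thm:red}, fed by the definition of $r_0(G)$. Assume $r_0(G)\ge 2$, the case $r_0(G)=1$ being empty. First I would fix a cocompact lattice $\Gamma<G$ (such lattices exist by Borel--Harish-Chandra and can be taken irreducible). Since $\Gamma$ acts properly, cocompactly and, after passage to a finite-index torsion-free subgroup, freely on the contractible product of the symmetric spaces and Bruhat--Tits buildings attached to the factors of $G$, it admits a finite classifying space and in particular satisfies $\FP_n(\bbQ)$ for every $n$. Each factor $\bfG_i(F_i)$ is perfect as an abstract group because $\bfG_i$ is simply connected almost simple, so $G$ has trivial (in particular compact) abelianization. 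By the very definition of $r_0(G)$, for every non-trivial irreducible unitary $G$-representation $U$ and every $0<k<r_0(G)$ one has $H^k_c(G,U)=0$, and a fortiori $\bar H^k_c(G,U)=0$.

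With these inputs, Lemma~\ref{lem:GammaG criterion}(b), applied with $n=r_0(G)-1$, directly yields $(T_{r_0(G)-1})$ for any cocompact lattice $\Gamma<G$, since every such lattice satisfies $\FP_\infty(\bbQ)$ by the argument above. To transfer $(T_{r_0(G)-1})$ to $G$ itself from the same lemma, I would establish the additional hypothesis $U^\Gamma=U^G$ for every unitary $G$-representation $U$. This is the standard Howe--Moore/Mautner argument: a $\Gamma$-fixed vector $v$ has a compact $G$-orbit, because the orbit map factors through the compact quotient $G/\Stab_G(v)$ of $G/\Gamma$; combining Howe--Moore for the simply-connected almost-simple factors $G_i$ with the irreducibility of $\Gamma$ (so that its projection to each $G_i$ is unbounded) and decomposing along the $G$-invariant splittings $V=V^{G_i}\oplus (V^{G_i})^{\perp}$ forces $v\in V^G$.

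For the Hausdorffness statement I would first handle $\Gamma$. For $j\le r_0(G)-1$ and any unitary $\Gamma$-representation $V$, the orthogonal decomposition $V=V^\Gamma\oplus(V^\Gamma)^\perp$ together with property $(T_{r_0(G)-1})$ reduces the computation to $H^j(\Gamma,V)\cong H^j(\Gamma,\bbC)\otimes V^\Gamma$, which is Hausdorff because $H^j(\Gamma,\bbC)$ is finite-dimensional (by $\FP_n(\bbQ)$). For $j=r_0(G)$, Theorem~\ref{thm:red} applies to $\Gamma$, which has $\FP_{r_0(G)}(\bbQ)$ and property $(T_{r_0(G)-1})$, yielding that $H^{r_0(G)}(\Gamma,V)$ is Hausdorff for every unitary $\Gamma$-representation $V$. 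To pass to $G$, I would invoke the Shapiro Lemma~\ref{lem:Shapiro}: for a unitary $G$-representation $V$, the isomorphism $I^2(V|_\Gamma)\cong V\,\bar\otimes\,L^2(G/\Gamma)$ produces the topological direct-sum decomposition
\[ H^j(\Gamma,V|_\Gamma)\cong H^j_c(G,V)\oplus H^j_c\bigl(G,V\,\bar\otimes\,L^2_0(G/\Gamma)\bigr), \]
and a topological direct summand of a Hausdorff topological vector space is itself Hausdorff, so $H^j_c(G,V)$ is Hausdorff for every $j\le r_0(G)$.

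The most delicate step, and the place where care is required, is the verification of $U^\Gamma=U^G$: this is exactly where the simply-connected almost-simple hypothesis is crucial (it is needed both for the triviality of the abelianization of $G$ and for the Howe--Moore property to apply on each factor) and where the irreducibility of the chosen lattice $\Gamma$ enters the picture. Everything else is a mechanical assembly of the tools established earlier in this section together with the standard finiteness properties of cocompact lattices.
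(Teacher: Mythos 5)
Your proposal follows the paper's proof step for step: existence of a cocompact lattice, $\FP_\infty(\bbQ)$, compact abelianization of $G$, Howe--Moore to get $U^\Gamma=U^G$, then Lemma~\ref{lem:GammaG criterion}(b) for property $(T_{r_0(G)-1})$, Theorem~\ref{thm:red} and the $H^j(\Gamma,U)\cong U^\Gamma\otimes H^j(\Gamma,\bbC)$ identification for Hausdorffness of the lattice cohomology, and the Shapiro Lemma to transfer Hausdorffness to $G$ (the paper uses the continuous injection $H^j_c(G,V)\hookrightarrow H^j(\Gamma,V)$, you use the topological direct sum, which is the same content). Two cosmetic points: the existence reference is Borel--Harder rather than Borel--Harish-Chandra, and the lattice need not be (and cannot always be) taken irreducible -- but your compact-orbit/Howe--Moore argument already works for any cocompact lattice, so the irreducibility hypothesis you introduce is superfluous rather than a gap.
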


\begin{proof}
By \cite{Borel-Harder} there exists a cocompact lattice in $G$.
Fix such a compact lattice $\Gamma$ and note that it has property $\FP$.
    By the Howe-Moore theorem, we have for every $G$-representation $U$, $U^\Gamma=U^G$.
    Also, $G$ is locally compact second countable and has a compact abelianization.
    Thus $G$ and $\Gamma$ satisfy property $(T_{r_0(G)-1})$ by Lemma~\ref{lem:GammaG criterion}.

    Assume now $r_0(G)\geq 2$.
    For every cocompact lattice $\Gamma<G$ and for every unitary $\Gamma$-representation $U$, $H^{r_0(G)}(\Gamma,U)$ is Hausdorff by Theorem~\ref{thm:red} and for $j<r_0(G)$,
    \[ H^{j}(\Gamma,U) \cong H^{j}(\Gamma,U^\Gamma) \cong U^\Gamma \otimes H^{j}(\Gamma,\bbC),\]
    and $H^{j}(\Gamma,\bbC)$ is finite dimensional by property $\FP$, so $H^{j}(\Gamma,U)$ is Hausdorff.
    Finally, for every unitary $G$-representation $V$ and for $j\leq r_0(G)$,
    we have by Shapiro Lemma~\ref{lem:Shapiro} a continuous injection $H_c^{j}(G,V) \hookrightarrow H^{j(G)}(\Gamma,V)$, thus $H_c^{j}(G,V)$ is Hausdorff.
\end{proof}

\begin{remark} \label{rem:p-adicT}
    In case all the fields $F_i$ in the setting of Theorem~\ref{thm:r0} are non-archimedean, that is $G$ is totally disconnected, the assumption that the groups $\bfG_i$ are simply connected could be removed.
    Indeed, this assumption was used only via the use of the Howe-Moore theorem, giving rise to the equation $U^\Gamma=U^G$, which is not needed in the totally disconnected case. The result in this case follows also from the work of Dymara-Januskiewicz~\cite{dymara+janus} which deals with the broader context of automorphism groups of buildings. 
\end{remark}

\begin{theorem} \label{thm:Lier0}
    Let $G$ be a connected semisimple Lie group with a finite center.
    Then $G$ and all cocompact lattices in $G$ satisfy property $(T_{r_0(G)-1})$.
    Furthermore, if $r_0(G)\geq 2$ and $j\leq r_0(G)$, then $j$-th cohomology of~$G$  with unitary coefficients is always Hausdorff.
\end{theorem}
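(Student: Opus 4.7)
The plan is to imitate the proof of Theorem~\ref{thm:r0} almost verbatim, substituting the real-Lie analogues of each ingredient used there. The three inputs I need are: (i) the existence of a cocompact lattice $\Gamma<G$ with $\FP$; (ii) compactness of the abelianization of $G$; and (iii) the identity $U^\Gamma=U^G$ for every unitary $G$-representation $U$. By a theorem of Borel, every connected semisimple Lie group with finite center contains a torsion-free cocompact lattice $\Gamma$; since $\Gamma$ acts freely, properly, and cocompactly on the contractible symmetric space $G/K$, it admits a finite Eilenberg--MacLane space and in particular has $\FP$. Because $G$ is connected semisimple we have $[G,G]=G$, so the abelianization is trivial, hence compact. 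Finally, the Howe-Moore theorem for connected semisimple Lie groups with finite center yields (iii), since $\Gamma$, being a lattice, has non-compact image in every non-compact simple factor.

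With these three facts in hand, part (b) of Lemma~\ref{lem:GammaG criterion} applies. By the very definition of $r_0(G)$, we have $\bar H^k_c(G,U)=0$ for every $0<k<r_0(G)$ and every non-trivial irreducible unitary $G$-representation $U$, which is exactly the hypothesis required by the lemma. The conclusion is that both $\Gamma$ and $G$ satisfy property $(T_{r_0(G)-1})$.

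For the Hausdorff statement, assume $r_0(G)\geq 2$. Since $\Gamma$ has $\FP$ and $(T_{r_0(G)-1})$, Theorem~\ref{thm:red} gives that $H^{r_0(G)}(\Gamma,V)$ is Hausdorff for every unitary $\Gamma$-representation $V$. In lower degrees $j<r_0(G)$, property $(T_{r_0(G)-1})$ combined with the decomposition $V=V^\Gamma\oplus(V^\Gamma)^\perp$ reduces the computation to $V^\Gamma\otimes H^j(\Gamma,\bbC)$, which is finite-dimensional (using $\FP$) and therefore Hausdorff. Now for $G$ itself, the Shapiro Lemma~\ref{lem:Shapiro} produces a continuous split injection $H^j_c(G,V)\hookrightarrow H^j(\Gamma,V)$ for every unitary $G$-representation $V$; since the target is Hausdorff for $j\leq r_0(G)$, so is the source.

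The only subtlety I anticipate is making sure that point (iii) is set up correctly: the Howe-Moore theorem requires the finite-center hypothesis, which is precisely why it appears in the statement, and it has to be applied factor-by-factor to handle possible compact simple factors of $G$ (on which $\Gamma$'s projection need not be unbounded). Once this is unpacked, the argument is a faithful transcription of the strategy of Theorem~\ref{thm:r0}, with Borel's construction of cocompact lattices standing in for the Borel--Harder theorem and real Howe-Moore standing in for its $S$-arithmetic counterpart.
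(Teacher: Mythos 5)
Your strategy is genuinely different from the paper's. The paper's own proof first mods out by the center and the compact factors via Lemma~\ref{lem: product with compact group}, realizes the resulting adjoint group as $\bfG(\bbR)^\circ$ for an $\bbR$-algebraic group $\bfG$, passes to the simply connected cover $\tilde G=\tilde\bfG(\bbR)$, \emph{applies} Theorem~\ref{thm:r0} to $\tilde G$, and descends back via Lemma~\ref{lem: product with compact group} (the kernel of $\tilde G\to G$ being finite). You instead rerun the three hypotheses of Lemma~\ref{lem:GammaG criterion} directly for the Lie group $G$, replacing Borel--Harder by Borel's theorem and the totally disconnected Howe--Moore by the archimedean one. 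Both routes arrive at Lemma~\ref{lem:GammaG criterion}, and your Hausdorff paragraph matches the paper's. What the paper's route buys is that it never has to reverify the Howe--Moore input at the Lie-group level; what yours buys is not having to invoke the algebraic-group machinery.

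However, there is a genuine gap in your treatment of compact factors, and your closing remark does not repair it. Your hypothesis (iii), namely $U^\Gamma=U^G$ for every unitary $G$-representation $U$, is simply false when $G$ has a compact simple factor $K'$: take $\Gamma$ a cocompact lattice projecting trivially (or just non-densely) to $K'$ and $U$ the pullback to $G$ of a non-trivial irreducible representation of $K'$; then $U^\Gamma\neq 0=U^G$. Appealing to Howe--Moore ``factor-by-factor'' cannot help, because Howe--Moore says nothing for a compact factor and, as the example shows, the desired identity genuinely fails. The correct repair is exactly the paper's very first sentence: use Lemma~\ref{lem: product with compact group} to replace $G$ by its quotient by the compact normal subgroup generated by the center and the compact factors, noting that this leaves both $r_0$ and the properties $(T_n)$, $[T_n]$ unchanged; only then does your point (iii), and with it the ``furthermore'' clause of Lemma~\ref{lem:GammaG criterion}(b), become available. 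One should also add the final step you left implicit: once $G$ is shown to have $(T_{r_0(G)-1})$, Lemma~\ref{lem:Tninduction} transfers the property to \emph{every} cocompact lattice, not just the one you chose. With the reduction inserted and this last sentence added, your argument is complete.
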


\begin{proof}
By Lemma~\ref{lem: product with compact group} we assume as we may that $G$ is center free and with no compact factors.
We identify $G$ with the identity component of the Lie group $\bfG(\bbR)$, where $\bfG$ is the identity component of the group of Lie algebra automorphism of the Lie algebra of $G$, thus a connected semisimple $\bbR$-algebraic group.
We consider the simply connected cover $\tilde{\bfG}\to \bfG$, set $\tilde{G}=\tilde{\bfG}(\bbR)$ and note that $G$ is the image of $\tilde{G}$ under the natural map $\tilde{\bfG}(\bbR)\to {\bfG}(\bbR)$.
By Theorem~\ref{thm:r0}, we have that $\tilde{G}$ satisfy property $(T_{r_0(G)-1})$ and by using again Lemma~\ref{lem: product with compact group} we conclude that ${G}$ satisfy property $(T_{r_0(G)-1})$.
It follows by Lemma~\ref{lem:Tninduction} that cocompact lattices in $G$ also satisfy property $(T_{r_0(G)-1})$.

The last part of the proof is verbatim the last paragraph of the proof of Theorem~\ref{thm:r0}.
\end{proof}

\section{Spectral gap properties of simple groups} \label{sec:SG}

The main purpose of this section is to discuss 
spectral gap properties of unitary $G$-representations of the form $L_0^2(G/\Gamma)\bar\otimes V$, where $V$ is a unitary $G$-representation and $\Gamma<G$ a lattice, with respect to every simple factor of~$G$. In higher rank situations, where every simple factor has property T, this is automatic, and the deep background results addressed in this section can be avoided. However, they are essential if the algebraic group $\bfG$ has $k$-rank $1$ or the semisimple Lie group~$G$ is a product of rank~$1$ factors in the main theorems in the introduction.  

In \S\ref{subsec: unitary dual} we recall some basics and set the terminology about the unitary dual. In~\S\ref{subsec: integrability} 
we discuss the relationship between $L^p$-integrability of matrix coefficients of unitary representations of algebraic or arithmetic groups and the spectral gap property. In~\S\ref{subsec: automorphic reps} we discuss and prove the spectral gap property in the case where $G$ is an adelic group. To this end, we prove some functoriality properties of the automorphic dual with respect to group homomorphisms. Finally, in~\S\ref{subsec: consequences lie} we draw some conclusions about the spectral gap property for semisimple groups from the previous results, which were obtained in the algebraic setting, by means of the Margulis' arithmeticity theorem.

\subsection{The unitary dual}\label{subsec: unitary dual}
 We direct the reader to \cite[\S7]{Folland} for an introduction to the unitary dual. We review some basic facts in order to fix the terminology. 
  
Let $G$ be a second countable locally compact topological group $G$. We denote by $\widehat{G}$ the \emph{unitary dual} of $G$, that is, the set of isomorphism classes of irreducible unitary $G$-representations. In an appropriate context, we may regard elements of $\widehat{G}$ as actual representations. We endow $\widehat{G}$ with the Fell topology.
The group $G$ is of \emph{type I} if the topology on $\widehat{G}$ separates the points, i.e. it is $T_0$. 
It is a fundamental fact that for type~I groups the Borel-Mackey structure on $\widehat{G}$ coincides with the Borel $\sigma$-algebra associated with the Fell topology, and this $\sigma$-algebra is standard.

Every unitary representation $\rho$ of $G$ decomposes as a direct integral over $\widehat{G}$ with respect to some measure class on $\widehat G$. 
The support of this measure class is the closure of irreducible representations that are weakly contained in~$V$. For unitary $G$-representations 
$\rho$ and $\pi$, weak containment $\rho\prec\pi$ is equivalent to the operator norm inequality $\norm{\rho(f)}\le \norm{\pi(f)}$ for every $f\in L^1(G)$. 
We say $\rho$ is \emph{weakly contained} in a subset $S$ of (not necessarily irreducible) unitary $G$-representations  (write $\rho\prec S$) if $\norm{\rho(f)}\le \sup_{\pi\in S}\norm{\pi(f)}$ for every $f\in L^1(G)$. The closure of a subset $S\subset\hat G$ in the Fell topology consists of all irreducible representations weakly contained in~$S$. 
A subset $S\subset \widehat G$ is an \emph{ideal} if 
$\pi\otimes\sigma$ is weakly contained in $S$ 
for every $\pi\in S$ and every unitary $G$-representation $\sigma$. 

\begin{definition}
For $X\subset \widehat{G}$, we denote by  $\overline{X}^\otimes$ the smallest closed ideal that contains~$X$. 
\end{definition}
%
Explicitly, we have 
\begin{equation}\label{eq: closed ideal explicit} \overline{X}^\otimes=\bigl\{\pi\in \widehat G\mid \pi\prec\{\rho\otimes\sigma\mid \rho\in X,~\sigma\text{ any unitary $G$-representation}\} \bigr\}.\end{equation}
The unitary dual is not (contravariantly) functorial with regard to a continuous group homomorphism $f\colon G\to H$ since the pullback $f^\ast\rho$ of an irreducible representation $\rho$ might not be irreducible. We define the \emph{pullback} of a subset $Y\in \widehat G$ under~$f$ as 
\[ f^\ast (Y)=\bigl\{ \pi\in \widehat G\mid \pi\prec \{f^\ast\rho\mid \rho\in Y\}\bigr\}.\]
The pullback $f^\ast Y$ is automatically closed. Moreover, the next inclusion follows directly from the definition. 

\begin{lemma}\label{lem: continuity of pullback}
We have $f^\ast(\overline{Y}^\otimes)\subset \overline{f^\ast(Y)}^\otimes$. 
\end{lemma}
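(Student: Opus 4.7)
The plan is to unfold all definitions and chain weak containments, using two elementary observations about the pullback of representations: (i) the pullback $f^\ast$ preserves weak containment (if $\rho_1\prec\rho_2$ as unitary $H$-representations, then $f^\ast\rho_1\prec f^\ast\rho_2$ as unitary $G$-representations); and (ii) pullback commutes with the Hilbertian tensor product, i.e.\ $f^\ast(\tau\otimes\sigma)=f^\ast\tau\otimes f^\ast\sigma$ for any two unitary $H$-representations $\tau,\sigma$. I would verify (i) via the matrix-coefficient characterization of weak containment: any matrix coefficient of $f^\ast\rho_i$ is just a matrix coefficient of $\rho_i$ composed with $f$, and since continuous images of compact sets are compact, uniform-on-compacts approximation on $H$ pulls back to uniform-on-compacts approximation on $G$. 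Observation (ii) is immediate at the level of representation spaces.

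Given (i) and (ii), the main argument proceeds as follows. Take $\pi\in f^\ast(\overline{Y}^\otimes)$; by the definition of the pullback of a set, there exists $\rho\in\overline{Y}^\otimes$ with $\pi\prec f^\ast\rho$. Next I would invoke the explicit description~\eqref{eq: closed ideal explicit}, which gives $\rho\prec\{\tau\otimes\sigma:\tau\in Y,\ \sigma\text{ any unitary $H$-representation}\}$. Applying (i) and (ii) together, $f^\ast\rho\prec\{f^\ast\tau\otimes f^\ast\sigma:\tau\in Y,\ \sigma\text{ any unitary $H$-representation}\}$, and transitivity of weak containment promotes this to $\pi\prec\{f^\ast\tau\otimes f^\ast\sigma\}$.

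To finish, for each $\tau\in Y$ I would decompose $f^\ast\tau$ as a direct integral of irreducible $G$-representations; every irreducible constituent is weakly contained in $f^\ast\tau$ and therefore, by the very definition of $f^\ast(Y)$, lies in $f^\ast(Y)$. Hence $f^\ast\tau\prec f^\ast(Y)$, and tensoring by $f^\ast\sigma$, which is \emph{some} unitary $G$-representation, gives $f^\ast\tau\otimes f^\ast\sigma\prec\{\pi'\otimes\sigma':\pi'\in f^\ast(Y),\ \sigma'\text{ any unitary $G$-representation}\}$. A last application of transitivity then places $\pi$ inside $\overline{f^\ast(Y)}^\otimes$ by~\eqref{eq: closed ideal explicit}, proving the inclusion. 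The whole argument is bookkeeping; I do not anticipate a genuine obstacle beyond keeping the directions of the weak-containment arrows straight.
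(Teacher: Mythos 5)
Your argument is essentially the paper's (the authors' own proof, left commented out in the source, is precisely this unfolding of definitions and chain of weak containments), and your direct-integral step at the end is a useful elaboration of a point the paper glosses over with ``which lies clearly in.'' One small slip is worth flagging: you assert that $\pi\in f^\ast(\overline{Y}^\otimes)$ yields a \emph{single} $\rho\in\overline{Y}^\otimes$ with $\pi\prec f^\ast\rho$. This is not what the definition of pullback gives: $\pi\prec\{f^\ast\rho:\rho\in\overline{Y}^\otimes\}$ is defined via $\sup_{\rho}\norm{f^\ast\rho(\phi)}$, and weak containment in a set need not be witnessed by a single member (already for $G=\bbZ$, the trivial character is weakly contained in any sequence of nontrivial characters converging to it, none of which individually contains it). Fortunately the rest of your argument never uses the single witness: you chain $\pi\prec\{f^\ast\rho\}$, each $f^\ast\rho\prec\{f^\ast\tau\otimes f^\ast\sigma\}$ by (i) and (ii), and transitivity of $\prec$ over sets, then substitute $\{f^\ast\sigma\}\subset\{\text{all unitary $G$-representations}\}$ and run the direct-integral step to land in $\overline{f^\ast(Y)}^\otimes$ via~\eqref{eq: closed ideal explicit}. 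So the slip is cosmetic; the proof goes through verbatim once you phrase the first step at the level of sets.
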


\subsection{Integrability of matrix coefficients}\label{subsec: integrability}

In this subsection, let $F$ be a local field, and let $\bfG$ be an almost simple, connected and simply connected $F$-algebraic group. 
We set $G=\bfG(F)$ and assume it is not compact.
The group $G$ is type~I by work of 
Harish-Chandra~\cite{harishchandra} in the archimedean and by work of Bernstein~\cite{Bernstein} in the non-archimedean case. 
The group $G$ has the \emph{Kunze-Stein property}, that is,  for every $1 \leq p<2$,
\[L^p(G)* L^2(G) \subseteq L^2(G).\] 
This fundamental fact was proved in the archimedean case by Cowling~\cite{Cowling} and in the non-archimedean case 
by Veca~\cite{Veca}.

\begin{definition}
Let $2\leq p \leq \infty$. A unitary $G$-representation $U$ is a \emph{$p$-type representation} if for every $u,v\in U$ and for every $q>p$ the matrix coefficient function $\langle gu,v \rangle$ is in $L^q(G)$.  
We denote by $\widehat{G}_p\subset\widehat G$ the subset of $p$-type representations.
\end{definition}
Note that $\widehat{G}_\infty=\widehat{G}$ and $\widehat{G}_2$ consists of the \emph{tempered representations}, that is, the ones that are weakly contained in the regular representation $L^2(G)$.

The following theorem is taken from \cite{Tim+Timo}. Its proof is based on a result by  
\cite{Samei-Wiersma}.

\begin{theorem} \label{thm:Samei-Wiersma}
    Let $F$ be a local field. Let $\bfG$ be an almost simple, connected and simply connected $F$-algebraic group and denote $G=\bfG(F)$.
    Then $\widehat{G}_p$ is a closed ideal of $\widehat{G}$ for every $2\leq p \leq \infty$.
\end{theorem}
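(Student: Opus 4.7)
The plan is to identify $\widehat{G}_p$ with the spectrum of a specific $C^*$-algebra quotient of $C^*(G)$, which will simultaneously yield closedness in the Fell topology and allow one to check the ideal property by a dense-subspace argument on matrix coefficients.

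First, I would reformulate the $p$-type condition in $C^*$-algebraic terms along the lines of Samei-Wiersma: define a $C^*$-seminorm on $L^1(G)$ by
\[ \|f\|_p = \sup\{\|\pi(f)\| : \pi \in \widehat{G}_p\} \]
and let $C^*_p(G)$ be the resulting $C^*$-completion. The crucial analytic input at this stage is the Kunze-Stein property of $G$: it bounds operator norms $\|\pi(f)\|$ for $p$-type $\pi$ in terms of $L^r$-type norms of $f$ for an appropriate $r$ depending on $p$, which is what forces $\|\cdot\|_p$ to be finite on $L^1(G)\cap L^r(G)$ and to enjoy the $C^*$-identity. Once this is in place, $\widehat{G}_p$ is identified with the spectrum of $C^*_p(G)$, viewed as a subset of $\widehat{G}$. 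Since the spectrum of any $C^*$-algebra quotient of $C^*(G)$ is closed in the Fell topology, closedness of $\widehat{G}_p$ follows immediately.

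For the ideal property I would take $\pi\in\widehat{G}_p$ together with an arbitrary unitary $G$-representation $\sigma$ and show that $\pi\otimes\sigma$ extends to a $*$-representation of $C^*_p(G)$. The starting observation is the factorization on simple tensors
\[ \langle (\pi\otimes\sigma)(g)(v\otimes w),\, u\otimes z\rangle = \langle \pi(g)v,u\rangle\cdot\langle \sigma(g)w,z\rangle, \]
where the first factor lies in $L^q(G)$ for every $q>p$ and the second is uniformly bounded by $\|w\|\|z\|$. Combined with the Kunze-Stein-based bound on $\|\pi(f)\|$ for $p$-type $\pi$, this lets one estimate $\|(\pi\otimes\sigma)(f)\|$ by the same $L^r$-type norm of $f$, so $\pi\otimes\sigma$ factors through $C^*_p(G)$. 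Every irreducible representation weakly contained in $\pi\otimes\sigma$ then also extends to $C^*_p(G)$, which places it in $\widehat{G}_p$ and proves the ideal property.

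The main obstacle will be verifying that $\|\cdot\|_p$ is a genuine $C^*$-seminorm on $L^1(G)$ and that this property is preserved under tensoring by arbitrary unitary representations; both require fairly delicate convolution estimates and rest crucially on the Kunze-Stein property, which is where the hypotheses that $\bfG$ is connected, simply connected, and almost simple enter the argument via the results of Cowling and Veca. Without Kunze-Stein one would be stuck at the level of dense-subspace matrix-coefficient estimates, which is strictly weaker than being $p$-type. This is precisely the content that is imported from Samei-Wiersma.
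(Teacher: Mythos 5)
The paper's own proof of this statement is a one-line citation to de~Laat--Siebenand \cite{Tim+Timo}, so any non-trivial sketch is ``a different route'' by default; what matters is whether your reconstruction is sound. The framework you describe (the $C^*$-completion $C^*_p(G)$, the role of the Kunze--Stein phenomenon, the factorization of matrix coefficients on simple tensors for the ideal property) is the right one, and it is indeed the one in the cited references. But there is a genuine gap, and it sits exactly at the step you treat as formal.

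The claim ``once this is in place, $\widehat{G}_p$ is identified with the spectrum of $C^*_p(G)$ \ldots{} closedness of $\widehat{G}_p$ follows immediately'' is circular. By construction, the spectrum of $C^*_p(G)$ is $\{\rho\in\widehat{G} : \rho\prec\widehat{G}_p\}$, which is precisely the Fell closure of $\widehat{G}_p$. Asserting that this spectrum equals $\widehat{G}_p$ \emph{is} the assertion that $\widehat{G}_p$ is closed; it does not follow from general $C^*$-theory or from the construction of the completion. The real content of the theorem is the upgrade step: if $\rho$ is weakly contained in $\widehat{G}_p$, then $\rho$ itself is $p$-type, i.e.\ \emph{all} (not merely a dense subspace of) matrix coefficients of $\rho$ lie in $L^q(G)$ for every $q>p$. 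The mechanism, which Samei--Wiersma and de~Laat--Siebenand supply, is the following: the Kunze--Stein phenomenon and Cowling--Haagerup--Howe-type arguments give a uniform bound $\|\pi(f)\|\leq C_s\|f\|_{L^s(G)}$ (for $s<2$ depending on $p$, and $f\in C_c(G)$) valid for every $\pi\in\widehat{G}_p$; this uniform bound passes to any $\rho$ in the Fell closure, i.e.\ $\|\rho(f)\|\leq C_s\|f\|_{L^s(G)}$; and then $L^s$--$L^{s'}$ duality on $|\langle\rho(f)u,v\rangle|\leq C_s\|f\|_{L^s}\|u\|\|v\|$ yields $\langle\rho(\cdot)u,v\rangle\in L^{s'}(G)$ for all $u,v$. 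This is where the hypotheses on $\bfG$ enter, via Cowling's and Veca's Kunze--Stein theorems. Your proposal signals this (``this is precisely the content that is imported from Samei--Wiersma'') but packages it as a side remark, while presenting the closedness as a formal consequence of the $C^*$-algebraic setup; the emphasis should be inverted. Two smaller points: the seminorm $\|\cdot\|_p$ is automatically a $C^*$-seminorm (a supremum of $C^*$-seminorms), so no Kunze--Stein input is needed to establish the $C^*$-identity, contrary to what you say; and the ideal-property argument via simple tensors only establishes a dense subspace of $L^{p+}$-coefficients on $\pi\otimes\sigma$, so it relies on the very same upgrade lemma and therefore inherits the same gap.
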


\begin{proof}
    This is \cite[Theorem 1.1]{Tim+Timo}; we only need to clarify that the space $\widehat{G}_{L^{p+}}$ considered in \emph{loc. cit.} coincides with  $\widehat{G}_p$ by~\cite[Proposition 4.2]{Tim+Timo}.
\end{proof}

\begin{theorem} \label{Cowling}
Let $F$ be a local field of characteristic 0 and let $\bfG$ be an almost simple, connected and simply connected $F$-algebraic group. 
Assume that $G=\bfG(F)$ is not compact.
In case $F$ is non-archimedean and $\rank_F(\bfG)=1$, we additionally assume that $\bfG$ is $F$-isomorphic to the $F$-group $\SL_2$ or to the $F$-group $\SU_3$ associated with a quadratic extension of $F$. 

Then for every closed subset $X\subset \widehat{G}$, ${\bf 1}\notin X$ if and only if there exists $2\leq p < \infty$ such that $X\subset \widehat{G}_p$.
In particular, ${\bf 1}\notin X$ if and only if ${\bf 1} \notin \overline{X}^\otimes$.
\end{theorem}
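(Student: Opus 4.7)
The backward implication is immediate: because $G$ is non-compact, the constant matrix coefficient of ${\bf 1}$ is not in any $L^q(G)$ with $q<\infty$, so ${\bf 1}\notin\widehat{G}_p$ for every finite $p$, and $X\subseteq\widehat{G}_p$ already forces ${\bf 1}\notin X$.

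For the forward direction, assume ${\bf 1}\notin X$. The plan is to produce a single exponent $p<\infty$ with $X\subseteq\widehat{G}_p$, splitting on the $F$-rank of $\bfG$. If $\rank_F(\bfG)\geq 2$, Kazhdan's theorem gives $G$ property~T, so ${\bf 1}$ is isolated in $\widehat{G}$; in this case one appeals to a uniform $L^p$-decay theorem for matrix coefficients---due to Cowling in the archimedean setting and to its non-archimedean analogues (see e.g.\ the works of Howe, Oh and Cowling--Haagerup)---providing an exponent $p_0=p_0(G)<\infty$ such that every non-trivial irreducible unitary representation of $G$ lies in $\widehat{G}_{p_0}$. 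This $p_0$ then works for any $X$ avoiding~${\bf 1}$.

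If $\rank_F(\bfG)=1$, the hypotheses reduce us either to the archimedean cases $\SO(n,1)$, $\SU(n,1)$, $\Sp(n,1)$, $F_{4(-20)}$ (the last two having property~T and thus reducing to the previous paragraph) or, in the non-archimedean setting, to $\SL_2(F)$ and $\SU_3(F)$. For the remaining non-property-T groups one uses the explicit description of the spherical unitary dual: the tempered part sits in $\widehat{G}_2$, the (possibly) isolated discrete series in $\widehat{G}_p$ with $p<2$, and the complementary series is parametrized by a real interval with closed endpoint at~${\bf 1}$ in such a way that the complementary-series representation with parameter $s$ lies in $\widehat{G}_{p(s)}$, with $p(s)\to\infty$ only as $s$ approaches the endpoint corresponding to~${\bf 1}$. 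Closedness of $X$ in the Fell topology combined with ${\bf 1}\notin X$ forces the parameters of the complementary series appearing in $X$ to remain bounded away from that endpoint, yielding the required uniform~$p$.

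The ``in particular'' statement then follows from Theorem~\ref{thm:Samei-Wiersma}: once $X\subseteq\widehat{G}_p$, the closed ideal $\overline{X}^\otimes$ is contained in $\widehat{G}_p$ as well, and since ${\bf 1}\notin\widehat{G}_p$ we conclude ${\bf 1}\notin\overline{X}^\otimes$; the reverse implication is trivial from $X\subseteq\overline{X}^\otimes$. The principal obstacle lies in the rank-$1$ non-archimedean case, where the required uniform control over spherical complementary series is available precisely for $\SL_2$ and $\SU_3$---this explains the restriction in the hypothesis.
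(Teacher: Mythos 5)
Your proposal follows essentially the same strategy as the paper's proof: handle the trivial direction via non-compactness, split the converse by $F$-rank, quote the uniform $L^p$-decay results in higher rank and rank-$1$ archimedean, and in the remaining rank-$1$ non-archimedean case argue via the explicit structure of the unitary dual of $\SL_2$ and $\SU_3$ that the only way to approach $\mathbf{1}$ is through spherical complementary series with parameter drifting to the endpoint. The ``in particular'' clause via Theorem~\ref{thm:Samei-Wiersma} matches the paper as well.

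There is, however, one genuine gap in the rank-$1$ non-archimedean case. You restrict attention to the \emph{spherical} unitary dual (``the explicit description of the spherical unitary dual\dots''), but a closed subset $X\subset \widehat G$ avoiding $\mathbf{1}$ can perfectly well contain non-spherical (ramified) complementary series representations. Your argument says nothing about why these lie in $\widehat{G}_p$ for a uniform $p$. The paper addresses exactly this point: using Keys' classification and explicit computation of the relevant functions, it shows that for a \emph{ramified} inducing character the complementary series interval has radius $1/2$, and by Borel--Wallach XI, Proposition~3.6 the corresponding matrix coefficients lie in $L^p$ for a $p$ depending only on the parameter bound, hence uniformly over all ramified characters. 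Only the unramified (spherical) case can drift toward $\mathbf{1}$, which then justifies your ``closedness forces parameters bounded away'' step. Without this uniform control over the non-spherical part, the forward implication is not established. A minor additional inaccuracy: discrete series are tempered and hence lie in $\widehat{G}_2$; the assertion that they lie in $\widehat{G}_p$ for some $p<2$ is not generally true and in any case unnecessary.
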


If $X=\{\pi\}\subset\widehat G$ then, in particular, 
$1\not\prec \pi$ implies that $1\not\prec \pi\otimes \sigma$ for every unitary $G$-representation. 

\begin{conjecture}
    The conditions regarding non-archimedean rank 1 groups in Theorem~\ref{Cowling} could be dropped.
\end{conjecture}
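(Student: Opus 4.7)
The conjecture proposes to drop the restriction to $\SL_2$ and $\SU_3$ in the non-archimedean rank-one case of Theorem~\ref{Cowling}. My plan is to establish the required uniform matrix coefficient decay for all simply connected almost simple $F$-groups of $F$-rank one (with $F$ non-archimedean) by exploiting Bruhat-Tits theory in a way that avoids case-by-case classification of the unitary dual. The overall target is: for every closed $X\subset \widehat{G}$ with $\mathbf{1}\notin X$, there exists $p<\infty$ with $X\subset \widehat{G}_p$, from which Theorem~\ref{thm:Samei-Wiersma} (closedness of $\widehat{G}_p$ as an ideal) concludes the argument.

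The first main step is to handle spherical representations. For any such $G=\bfG(F)$, Bruhat-Tits theory provides a proper cocompact action on a biregular tree $\mathcal{T}$ with good maximal compact vertex stabilizer $K$. By Matsumoto's isomorphism between the Iwahori-Hecke algebra and the affine Hecke algebra of type $\widetilde{A}_1$, the $K$-spherical functions admit the same explicit Macdonald expansion as in the $\SL_2$ case, with the only change being the substitution of the two Hecke parameters $(q_1,q_2)$ attached to the two orbits of $K$ on the boundary simplex. The spherical complementary series is then an interval in a single complex parameter $s$, and the $L^p$-decay of $\phi_s$ along the radial Cartan $A^+$ is explicitly controlled by $\Real(s)$. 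Since the endpoint $s=s_0$ corresponds precisely to $\mathbf{1}\in\widehat{G}$, the condition $\mathbf{1}\notin X$ forces uniform separation of the spherical parameters in $X$ from $s_0$, yielding a uniform $p=p(X)<\infty$ on the spherical portion of $X$.

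The second step extends the bound from spherical to arbitrary irreducible unitary $\pi\in X$. In the rank-one setting, every proper parabolic $P=LU$ has a Levi $L$ whose derived group is anisotropic (compact), so complementary series are built by parabolic induction from unitary characters of $L$ twisted by a real parameter. I would use Frobenius reciprocity and the Casselman-Jacquet pairing to identify the matrix coefficients of such a $\pi$ with matrix coefficients of an appropriate spherical function of the split rank-one torus $A\subset L$, and then transport the decay estimate from step one. For supercuspidal $\pi$ (which exist in the non-spherical case), Harish-Chandra's subrepresentation theorem and orthogonality of characters imply $\pi$ is tempered, hence in $\widehat{G}_2$. Combining these cases and invoking Theorem~\ref{thm:Samei-Wiersma} produces the required global $p$.

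The principal obstacle is that a uniform treatment of complementary series endpoints for \emph{all} rank-one $p$-adic simple groups beyond $\SL_2$ and $\SU_3$ has not, to my knowledge, been written down in the literature; the relevant computations appear only piecewise in work of Keys, Mui\'{c}, and others for specific forms (e.g.\ triality $D_4$, unramified quasi-split $\mathrm{Spin}$ forms). Making the dependence of the decay exponent on the Fell-topological distance from $\mathbf{1}$ fully uniform across the entire class may therefore require either a case enumeration or, more elegantly, a direct geometric proof via the tree: namely, showing that for any unit $K$-invariant vector $v\in\pi$ with $\pi$ non-trivial irreducible, the return probability on $\mathcal{T}$ of the random walk driven by $v$ decays subexponentially with rate bounded in terms of the spectral gap. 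I expect this tree-geometric route to be the cleanest way to circumvent classification issues and close the remaining gap in the proof of Theorem~\ref{Cowling}.
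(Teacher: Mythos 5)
This statement is explicitly labeled a \emph{conjecture} in the paper; the authors offer no proof of it, so there is no ``paper's own proof'' against which to compare your argument. What you have written is a research outline, not a proof, and you say as much yourself. The honest answer is therefore that your proposal, as it stands, does not establish the conjecture.

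Your plan does correctly identify the main obstruction: for non-archimedean rank-one $G$ other than $\SL_2$ and $\SU_3$, the unitarizability problem (i.e.\ the precise determination of the complementary series and isolated non-tempered representations) is not fully settled in the literature, whereas the paper's proof of Theorem~\ref{Cowling} in those two cases leans directly on Keys' complete classification of the unitary dual. Your Step~1 (spherical analysis via the Bruhat--Tits tree, the Macdonald formula with the two Hecke parameters $(q_1,q_2)$, and the explicit decay of $\phi_s$) is standard and sound; your observation that supercuspidals are tempered, hence in $\widehat{G}_2$, is also correct. The real gap sits exactly where you locate it: Step~2, extending uniform $L^p$-bounds from spherical to arbitrary non-tempered non-spherical unitary $\pi$. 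Invoking ``Frobenius reciprocity and the Casselman--Jacquet pairing'' does not by itself reduce the asymptotics of matrix coefficients of a parabolically induced $\pi$ to those of a spherical function: the growth exponents of the Langlands quotients depend on which induced representations are actually unitarizable, and that is precisely what is unclassified in general. Your alternative ``tree-geometric route'' (subexponential return probabilities controlled by the spectral gap) is an interesting idea but is not substantiated; without a concrete mechanism linking the Fell-topological distance of $\pi$ from $\mathbf{1}$ to a uniform decay rate for \emph{all} the induced series, the argument does not close. Until one either carries out a complete case-by-case analysis of the unitary duals of the remaining rank-one forms (along the lines of Keys and Mui\'c) or finds a genuinely classification-free argument, the conjecture remains open.
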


\begin{proof}
Since $G$ is not compact, ${\bf 1} \notin \widehat{G}_p$ for every $2\leq p < \infty$. 
Thus, for a closed subset $Y\subset \widehat{G}$, if $Y\subset \widehat{G}_p$ then ${\bf 1}\notin Y$.
The converse is less trivial.

If $\rank_F(\bfG)\geq 2$ and $F$ is archimedean, this is~\cite[Theorem 2.4.2]{Cowling79} and the general higher rank case was proved in~\cite{Oh}.
If $\rank_F(\bfG)=1$ and $F$ is archimedean, this is proved in~\cite[Theorem 2.5.2]{Cowling79}. See also the discussion in~\cite[\S5]{Tim+Timo}.
Hence it suffices to treat from now on the case that $\rank_F(\bfG)=1$ and $F$ is non-archimedean, in which case we additionally assumed that 
$\bfG$ is $\SL_2$ or $\SU(3)$ associated with a quadratic extension of~$F$. 

The Langlands classification associates with every non-trivial irreducible unitary representation an \emph{irreducible admissible representation}.
These are classified by means of \emph{Langlands parameters}, 
where to every irreducible admissible representation one associates a standard parabolic subgroup and a tempered representation of its Levi subgroup, see \cite[XI \S2]{Borel-Wallach}. Given this classification, a main difficulty in the description of the unitary dual is in determining which admissible representations are unitarizable.

By assumption, $G=\SL_2$ or $\SU(3)$.
In these cases, the unitarizability problem was solved completely in~\cite{Keys} 
from which we adopt the terminology used below. 
We may and will disregard tempered representations, as ${\bf 1} \notin \widehat{G}_2$.
Thus all representations considered below are parabolically induced.
Since $G$ is rank 1, we have a unique conjugacy class of proper parabolic subgroup.
We fix a proper parabolic $P=MN<G$ with unipotent radical $N$ and Levi subgroup~$M$.
Note that $G$ is quasi split of rank~1, thus $M$ is abelian and it is compact modulo a one dimensional torus. 
We fix a unitary character $\lambda$ of $M$ and for every $s\in \bbC$ we consider the character $\lambda_s$, as defined in \cite[\S1]{Keys}.
The corresponding induced representation is denoted $\Ind_P^G(\lambda_s)$.
It is unitarizable only if $s$ is purely imaginary or real, in which cases we say that the associated induced representation is in the \emph{principal series} or the \emph{complementary series} correspondingly.
We care only about the complementary series, as the principal series representations are all tempered. 
Thus we assume below that the parameter $s$ is real.
By \cite[XI, Proposition 3.6]{Borel-Wallach}, for all non-trivial representations the matrix coefficients are in $L^p$, and it follows from the consideration there that $p$ is uniformly bounded by means of $s$, independently of $\lambda$.

The general structure of the complementary series for quasi-split groups and corank 1 parabolics described by Shaidi in \cite[Theorem 8.1]{Shahidi} is applicable in our situation.
It follows that the complementary series associated with a unitary character $\lambda$ is given by a symmetric interval $[-\nu,\nu]$, where $\nu$ equals either $1/2$ or $1$,
and these values are determined by the zeros of certain functions~\cite[\S6]{Keys}.
These functions are computed explicitly in~\cite[\S5, Theorem]{Keys}.
They have a zero at $1/2$ unless the character $\lambda$ is unramified, in which case it has a zero at $1$. That is, the matrix coefficients of $\Ind_P^G(\lambda_s)$ are in $L^p$ for some fixed $p$, unless the representations are spherical, that is, they have a non-trivial vector fixed by the maximal compact subgroup $K<G$.

In the latter case, it follows as in~\cite[XI, Proposition 3.6]{Borel-Wallach} that the matrix coefficients are in $L^p$ if and only if the parameter $s$ is bounded away form 1, which exactly means that the representations do not have almost invariant vectors. 
Alternatively, this can be seen by the explicit computation of the \emph{spherical functions},
i.e the matrix coefficients for $K$-invariant vectors, given in \cite[Chapter II]{FTN}.
We deduce that, indeed, if ${\bf 1}\notin X$ then there exists $2\leq p < \infty$ such that $X\subset \widehat{G}_p$.

Finally, the ``in particular" part follows by Theorem~\ref{thm:Samei-Wiersma}.
\end{proof}

\subsection{On automorphic representations}
\label{subsec: automorphic reps}
Let $k$ be a number field and let $\bbA(k)$ be the ring of adeles of~$k$.
Let $\bfG$ be a connected almost simple  $k$-algebraic group.
Let $s$ be a place of $k$, set $G=\bfG(k_s)$ and consider $L^2\bigl(\bfG(\bbA(k)/\bfG(k)\bigr)$ as a unitary $G$-representation.
In this setting, the \emph{automorphic dual} of $G$ is defined as 
\[\widehat{G}_{\aut}=\Bigl\{\pi\in\widehat G\mid \pi\prec L^2\bigl(\bfG(\bbA(k)/\bfG(k)\bigr)\Bigr\}.\] 
Note that it depends on $k,\bfG$ and $s$, which are implicit in our notation.

If $G$ has property T, which is the case when $\bfG$ has no $k_s$-rank~1 factors, then $\{{\bf 1}\}$ is open in $\widehat{G}$, hence also in $\widehat{G}_{\aut}$. 
We are interested in generalizations of this property that are valid unconditionally.

The following theorem was proved for $k=\bbQ$ and $\bfG=\SL_2$ by Selberg in case $k_s=\bbR$ and by Gelbart-Jaquet~\cite{Gelbart-Jaquet} for the non-archimedean places.
Finally, using the a reduction procedure due to Burger-Sarnak~\ref{Burger-Sarnak}, Clozel proved the following.

\begin{theorem}[{\cite[Theorem 3.1]{Clozel}}] \label{Clozel}
Let $k$ be a number field and let $\bfG$ be connected and simply connected almost simple $k$-algebraic group.
Let $s$ be a place of $k$ and consider $G=\bfG(k_s)$.
Then $\{{\bf 1}\}$ is open in $\widehat{G}_{\aut}$. 
    \end{theorem}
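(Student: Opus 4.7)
The plan is to proceed by contradiction and reduce the question to the rank-one case via the Burger--Sarnak restriction principle, which brings the already-established Selberg/Gelbart--Jacquet bound for $\SL_2$ to bear. Concretely, suppose ${\bf 1}$ is not isolated in $\widehat{G}_{\aut}$ and extract a sequence $\pi_n \in \widehat{G}_{\aut} \setminus \{{\bf 1}\}$ converging to ${\bf 1}$ in the Fell topology; equivalently, each $\pi_n$ admits almost $G$-invariant unit vectors $v_n$. The goal is to use this almost-invariance, combined with the Burger--Sarnak principle and the $\SL_2$ input, to produce a genuine $\bfG(k_s)$-fixed vector in $\pi_n$ for $n$ large, contradicting $\pi_n \ne {\bf 1}$.

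The first step is to exhibit enough $k$-rational almost-simple subgroups $\bfH \subset \bfG$ isomorphic to $\SL_2$. When $\bfG$ is $k$-isotropic, each $k$-root $\alpha$ yields an $\SL_2$-triple $\bfH_\alpha = \langle \mathbf{U}_\alpha, \mathbf{U}_{-\alpha}\rangle$, and the family $\{\bfH_\alpha(k_s)\}_\alpha$ together with a maximal $k$-split torus topologically generates $\bfG(k_s)$ by Bruhat decomposition. If $\bfG$ is $k$-anisotropic (the case $\bfG(k_s)$ compact being vacuous since $\widehat{G}$ is then discrete), I would pass to a finite extension $k'/k$ over which $\bfG$ becomes isotropic, argue at a place $s'$ of $k'$ lying over $s$, and use restriction of scalars to transport the automorphic dual between $\bfG/k$ and $\operatorname{Res}_{k'/k}(\bfG_{k'})/k$.

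The second step is the Burger--Sarnak principle applied to each such $\bfH$: the support of $\pi_n|_{\bfH(k_s)}$ inside $\widehat{\bfH(k_s)}$ is contained in $\widehat{\bfH(k_s)}_{\aut}$. The Selberg bound at archimedean places and the Gelbart--Jacquet theorem at non-archimedean places assert that ${\bf 1}$ is isolated in $\widehat{\SL_2(k_s)}_{\aut}$. Since the restrictions $v_n|_{\bfH(k_s)}$ are almost $\bfH(k_s)$-invariant, the decomposition $\pi_n|_{\bfH(k_s)} = \pi_n^{\bfH(k_s)} \oplus (\pi_n^{\bfH(k_s)})^\perp$ together with the $\SL_2$-gap on the orthogonal complement forces the orthogonal component of $v_n$ to be small; in particular, $\pi_n$ must contain a non-zero $\bfH(k_s)$-fixed vector for every $n$ large enough. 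Running this over the generating family $\{\bfH_\alpha\}$ and invoking a Mautner-type spreading argument, compatible with the Howe--Moore theorem in this simply-connected, almost-simple setting, I would conclude that $\pi_n$ itself has a non-zero $\bfG(k_s)$-invariant vector, hence $\pi_n = {\bf 1}$ by irreducibility.

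The step I expect to be hardest is the $k$-anisotropic case: Burger--Sarnak works cleanly only for $k$-rational semisimple subgroups, so when $\bfG$ admits no proper such subgroups one must either carry out the argument over a splitting extension $k'/k$ and verify that the automorphic dual of $\operatorname{Res}_{k'/k}\bfG_{k'}$ at the place above $s$ matches $\widehat{\bfG(k_s)}_{\aut}$ (which ultimately relies on strong approximation and the identification of the adelic quotients), or alternatively exploit the Jacquet--Langlands correspondence to transfer from an anisotropic form to its quasi-split inner form, where the family of $\SL_2$-subgroups produced in the first step is actually available.
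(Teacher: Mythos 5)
The paper does not prove this statement; it is cited directly to Clozel's paper, and the paragraph preceding the theorem merely summarizes the history (Selberg, Gelbart--Jacquet for $\SL_2$, then Clozel via the Burger--Sarnak principle). So there is no internal proof to compare against, and I will instead assess your sketch on its own terms.

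Your overall strategy --- detect almost-invariance, restrict to small $k$-subgroups via Burger--Sarnak, use isolation of the trivial representation there, spread to $G$ by Howe--Moore --- is indeed the backbone of Clozel's argument, and your isotropic case is fine. But there is a genuine gap precisely where you predict the difficulty to be. When $\bfG$ is $k$-anisotropic but $\bfG(k_s)$ is still non-compact (the typical situation, e.g.\ anisotropic forms of $\SU_n$ or $\SL_1(D)$), the restriction-of-scalars reduction you propose does not transport the automorphic dual. The automorphic dual $\widehat{\bfG(k_s)}_{\aut}$ is defined through the automorphic spectrum of $\bfG$ over $k$; if you replace $\bfG/k$ by $\operatorname{Res}_{k'/k}(\bfG_{k'})$, then at the place $s$ you get a different local group $\prod_{s'\mid s}\bfG(k'_{s'})$ with a different automorphic dual, and there is no general identification with $\widehat{\bfG(k_s)}_{\aut}$ unless one proves a base-change compatibility that is far from automatic. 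In Clozel's proof the anisotropic case is handled instead by a case-by-case construction of a $k$-morphism $\phi\colon\bfH\to\bfG$ with central kernel, where $\bfH$ is a connected simply connected $k$-group whose base change to $k_s$ is $\SL_2$ or the quasi-split $\SU_3$ (this is exactly what the paper records as Theorem~\ref{Clozel-reduction}, extracted from the fourth through sixth paragraphs of Clozel's \S3.2). This reduction --- not restriction of scalars --- is the hard input your sketch is missing.

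A second, related omission: you use only the $\SL_2$ isolation theorem (Selberg, Gelbart--Jacquet), but in the rank-one non-archimedean case the reduction may land on $\SU_3$ over the unramified quadratic extension of $k_s$ rather than $\SL_2$. Clozel therefore also requires the isolation of the trivial representation in the automorphic dual of $\SU_3(k_s)$, which rests on different input (cf.\ the computations around Keys and Shahidi referenced in the paper's proof of Theorem~\ref{Cowling}). Finally, a small but real subtlety: Burger--Sarnak in the form you need --- that the pullback of $\widehat{G}_{\aut}\setminus\{{\bf 1}\}$ to $H$ lands in $\widehat{H}_{\aut}\setminus\{{\bf 1}\}$, not just in $\widehat{H}_{\aut}$ --- requires the strong approximation plus Howe--Moore argument the paper spells out in its proof of Theorem~\ref{Burger-Sarnak}; your sketch assumes it without noting it.
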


Theorem~\ref{Clozel} is equivalent to ${\bf 1} \notin \overline{\widehat{G}_{\aut}\setminus \{{\bf 1}\}}$
and it is trivial in case $G$ is compact.
The main result of this section is the following generalization.

\begin{theorem} \label{Clozel++}
Let $k$ be a number field and let $\bfG$ be connected and simply connected almost simple $k$-algebraic group.
Let $s$ be a place of $k$ and assume $G=\bfG(k_s)$ is non-compact.
Then we have ${\bf 1} \notin \overline{\widehat{G}_{\aut}\setminus\{{\bf 1}\}}^\otimes$.
\end{theorem}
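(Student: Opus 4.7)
The plan is to combine Clozel's Theorem~\ref{Clozel} with the closed tensor-ideal strengthening of Theorem~\ref{Cowling}. I would set $X := \overline{\widehat{G}_{\aut}\setminus\{{\bf 1}\}}$, a closed subset of $\widehat{G}$. By Clozel's Theorem applied to the non-compact group $G$, the trivial representation is isolated in $\widehat{G}_{\aut}$, so ${\bf 1}\notin X$. Theorem~\ref{Clozel++} asks precisely for ${\bf 1}\notin\overline{\widehat{G}_{\aut}\setminus\{{\bf 1}\}}^{\otimes}=\overline{X}^{\otimes}$, and this would follow immediately from the ``in particular'' clause of Theorem~\ref{Cowling}. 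The remaining work is to verify that the hypotheses of Theorem~\ref{Cowling} are met for $G=\bfG(k_s)$.

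When $\bfG$ is absolutely almost simple over $k$, the base change $\bfG\times_k k_s$ is absolutely almost simple and simply connected over $k_s$, so Theorem~\ref{Cowling} applies directly except possibly when $k_s$ is non-archimedean and $\rank_{k_s}\bfG=1$ with $\bfG\times_k k_s$ of the form $\SL_m(D)$ for a higher-degree central division algebra $D$. In this latter subcase, I would invoke the Burger-Sarnak restriction principle, as used in Clozel's original proof of Theorem~\ref{Clozel}, to transfer the problem to an $\SL_2$-subgroup of $\bfG(k_s)$ where Theorem~\ref{Cowling} applies. When $\bfG$ is $k$-almost simple but not absolutely so, I would write $\bfG=R_{l/k}(\bfH)$ with $\bfH$ absolutely almost simple simply connected over a finite extension $l$ of $k$, giving the decomposition $G=\prod_{t\mid s}\bfH(l_t)$. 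Non-compactness of $G$ forces some factor $\bfH(l_{t_0})$ to be non-compact, and the identification $\bfG(\bbA(k))/\bfG(k)\cong\bfH(\bbA(l))/\bfH(l)$ together with strong approximation would reduce the weak-containment claim ${\bf 1}\not\prec\pi\otimes\sigma$, for $\pi\in X$ and any unitary $G$-representation $\sigma$, to its $\bfH(l_{t_0})$-analogue by restricting $\pi\otimes\sigma$ to that factor, where the absolutely simple case already handled applies.

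The principal obstacle will be the Burger-Sarnak step in the rank-one non-archimedean division-algebra subcase: one must ensure that restricting the automorphic dual to an $\SL_2$-subgroup preserves enough of the closed-tensor-ideal structure to deduce ${\bf 1}\notin\overline{X}^{\otimes}$ from the analogous statement on the subgroup. A secondary difficulty is the careful use of strong approximation in the restriction-of-scalars case, which is needed to rule out the pathological possibility of a non-trivial automorphic $\pi$ whose components are trivial on every non-compact factor of $G$.
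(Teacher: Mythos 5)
Your plan follows the same path as the paper's proof: reduce to the $k$-absolutely almost simple case via restriction of scalars, combine Clozel's Theorem~\ref{Clozel} with the tensor-ideal version of the matrix-coefficient bounds (Theorem~\ref{Cowling}), and in the remaining non-archimedean rank-one case invoke the Burger--Sarnak principle on a $k$-subgroup supplied by Clozel's reduction. But the ``principal obstacle'' you flag at the end is not a side issue you can defer: it is the decisive step, and you leave it unresolved. What is needed is a compatibility of the closed tensor-ideal closure $\overline{(\,\cdot\,)}^{\otimes}$ with pullback. For the $k$-morphism $\phi\colon\bfH\to\bfG$ from the reduction and $f=\phi_{k_s}\colon H\to G$, one must know that $f^{\ast}\bigl(\overline{Y}^{\otimes}\bigr)\subset\overline{f^{\ast}(Y)}^{\otimes}$. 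The paper records this as Lemma~\ref{lem: continuity of pullback} (a formal consequence of the description~\eqref{eq: closed ideal explicit}), and the chain then runs: Theorem~\ref{Burger-Sarnak} gives $\phi_{k_s}^{\ast}\bigl(\widehat{G}_{\aut}\setminus\{{\bf 1}\}\bigr)\subset\widehat{H}_{\aut}\setminus\{{\bf 1}\}$, which is closed by Theorem~\ref{Clozel}; hence $\phi_{k_s}^{\ast}\bigl(\overline{\widehat{G}_{\aut}\setminus\{{\bf 1}\}}^{\otimes}\bigr)\subset\overline{\widehat{H}_{\aut}\setminus\{{\bf 1}\}}^{\otimes}$, which does not contain ${\bf 1}$ by Theorem~\ref{Cowling} applied to $H$; therefore ${\bf 1}\notin\overline{\widehat{G}_{\aut}\setminus\{{\bf 1}\}}^{\otimes}$. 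Without this lemma the reduction to a subgroup tells you nothing about the closed tensor ideal generated upstairs, and your plan stalls exactly where you predicted trouble.

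A secondary, more cosmetic point: the auxiliary group $\bfH$ in Clozel's reduction (Theorem~\ref{Clozel-reduction}) becomes over $k_s$ either $\SL_2$ \emph{or} the quasi-split $\SU_3$ of the unramified quadratic extension of $k_s$, not only $\SL_2$; and the rank-one non-archimedean groups outside the scope of Theorem~\ref{Cowling} are not only those of the form $\SL_m(D)$. The paper sidesteps this case analysis by applying the Clozel reduction uniformly to every non-archimedean rank-one group rather than carving out a division-algebra subcase.
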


In most cases, Theorem~\ref{Clozel++} follows immediately from Theorem~\ref{Clozel} using Theorem~\ref{Cowling}.
The cases not handled by Theorem~\ref{Cowling} could be treated by a combination of the following two results. 
The first one is an elaboration on a result which was proven by Burger-Sarnak~\cite{Burger-Sarnak} in the archimedean case and extended by Clozel-Ullmo~\cite{Clozel-Ullmo} to the non-archimedean case.

\begin{theorem} \label{Burger-Sarnak}
Let $k$ be a number field and let $\bfG$ and $\bfH$ be connected and simply connected almost simple $k$-algebraic groups.
Assume $\bfG$ is absolutely simple.
Let $s$ be a place of $k$ and consider $G=\bfG(k_s)$ and $H=\bfH(k_s)$.
If $\phi\colon\bfH \to \bfG$ is a $k$-morphism with central kernel, then the pullback under $\phi_{k_s}\colon H\to G$ of $\widehat{G}_{\aut}\bs\{{\bf 1}\}$ is contained in $\widehat{H}_{\aut}\setminus\{{\bf 1}\}$.
\end{theorem}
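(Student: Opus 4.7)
My plan is to factor $\phi$ through its image and reduce to two classical situations. Since $\ker\phi$ is central in the almost simple group $\bfH$, it is finite (and distinct from $\bfH$ itself), so $\phi$ factors canonically as a central $k$-isogeny $q\colon\bfH\twoheadrightarrow\bfH':=\phi(\bfH)$ followed by a closed $k$-immersion $\iota\colon\bfH'\hookrightarrow\bfG$. By the functoriality of pullback (Lemma~\ref{lem: continuity of pullback}), it suffices to establish the claim separately for each of $q_{k_s}$ and $\iota_{k_s}$, and then compose.

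For the embedding $\iota$ I would invoke the classical Burger--Sarnak theorem~\cite{Burger-Sarnak} at archimedean places and its extension by Clozel--Ullmo~\cite{Clozel-Ullmo} at non-archimedean ones, yielding $\iota_{k_s}^*(\widehat{G}_{\aut})\subset\widehat{\bfH'(k_s)}_{\aut}$. For the central isogeny $q$, the plan is to transfer $L^2$-automorphic vectors through the adelic lift $q_\bbA\colon\bfH(\bbA)\to\bfH'(\bbA)$, which has central adelic kernel and sends $\bfH(k)$ into $\bfH'(k)$. Pulling back along $q_\bbA$ realizes the $L^2$-spectrum of the image subgroup $q_\bbA(\bfH(\bbA))$ acting on $L^2(\bfH'(\bbA)/\bfH'(k))$ as a closed $\bfH(\bbA)$-equivariant piece of $L^2(\bfH(\bbA)/\bfH(k))$. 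The finiteness of $[\bfH'(\bbA)\colon q_\bbA(\bfH(\bbA))\cdot\bfH'(k)]$, controlled by the Galois cohomology $H^1(k,\ker q)$, then produces $q_{k_s}^*(\widehat{\bfH'(k_s)}_{\aut})\subset\widehat{H}_{\aut}$.

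For the ``minus ${\bf 1}$'' part of the statement I would argue by contradiction: if ${\bf 1}_H$ were weakly contained in some $\phi_{k_s}^*\pi$ with $\pi\in\widehat{G}_{\aut}\setminus\{{\bf 1}\}$, then $\pi$ would admit a sequence of $\phi(H)$-almost invariant unit vectors. Because $\phi(H)$ is Zariski dense in the connected almost simple $\bfH'\subset\bfG$, continuity of $\pi$ together with absolute simplicity of $\bfG$ and the Howe--Moore vanishing theorem applied at the place $s$ (where $\bfH'(k_s)$ is non-compact in the non-degenerate setting of the statement) force $\pi$ to be trivial, a contradiction.

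The main obstacle I anticipate lies in the adelic bookkeeping for the isogeny step: carefully analysing the image $q_\bbA(\bfH(\bbA))$ inside $\bfH'(\bbA)$ and the cokernel $\bfH'(\bbA)/(q_\bbA(\bfH(\bbA))\cdot\bfH'(k))$, whose structure is encoded by the Galois cohomology of $\ker q$. It is precisely at this point that the classical embedding Burger--Sarnak argument must be upgraded to accommodate central isogenies, and one must invoke strong approximation for the simply connected $\bfH$ to control the resulting finite-index obstructions.
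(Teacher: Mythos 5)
Your overall strategy — factor $\phi$ through its image $\bfH'=\phi(\bfH)$ as a central isogeny followed by a closed immersion, handle the closed immersion by citing Clozel's extension of the Burger--Sarnak restriction principle, and handle the isogeny by an explicit adelic bookkeeping argument with strong approximation — is exactly the route the paper takes for the inclusion $f^\ast(\widehat{G}_{\aut})\subset\widehat{H}_{\aut}$. Your isogeny step is a correct plan; the paper's equation~\eqref{eq:adelicSC} is precisely the outcome of the ``adelic bookkeeping'' you anticipate, with finiteness of the cokernel $D$ obtained from the finite-measure property of $\bfG(\bbA(k))/\bfG(k)$ rather than via Galois cohomology, but these are interchangeable.

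The gap is in the ``$\setminus\{\bf 1\}$'' part. You claim that if $\bf 1_H$ were weakly contained in $\phi_{k_s}^\ast\pi$ for some nontrivial $\pi\in\widehat{G}_{\aut}$, then $\pi$ would admit $\phi(H)$-almost invariant vectors, and that Howe--Moore then forces $\pi$ to be trivial. This inference is false: Howe--Moore gives mixing (decay of matrix coefficients at infinity), which excludes nonzero \emph{invariant} vectors, but does not exclude \emph{almost} invariant vectors. Spherical complementary series representations of a rank-one simple group are mixing yet approach $\bf 1$ in the Fell topology, so ``almost invariant for a noncompact subgroup $\Rightarrow$ trivial'' fails. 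What the paper actually uses at this point is Theorem~\ref{Clozel} (property $\tau$): $\{\bf 1\}$ is \emph{open} in $\widehat{H}_{\aut}$, equivalently $\widehat{H}_{\aut}\setminus\{\bf 1\}$ is closed. Given the first part, the support of each $\phi_{k_s}^\ast\pi$ lies in $\widehat{H}_{\aut}$; since $\phi_{k_s}^\ast\pi$ is ergodic (by strong approximation and Howe--Moore applied to the honestly mixing $\pi$, plus noncompactness of $\phi(H)$), and $\{\bf 1\}$ is open in $\widehat{H}_{\aut}$, the support avoids $\bf 1$. Closedness of $\widehat{H}_{\aut}\setminus\{\bf 1\}$ then shows that the \emph{closure} of the union of these supports — which is by definition the pullback — still avoids $\bf 1$. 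Your argument also misses this last step: even granting that no individual $\phi_{k_s}^\ast\pi$ weakly contains $\bf 1_H$, weak containment of $\bf 1_H$ in the whole family $\{\phi_{k_s}^\ast\pi\}_\pi$ is a stronger statement that requires the closedness just mentioned. In short, the missing ingredient is the spectral gap content of Theorem~\ref{Clozel}, which cannot be replaced by Howe--Moore.
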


\begin{proof}
We first claim that the pull back under $\phi_{k_s}\colon H\to G$ of $\widehat{G}_{\aut}$ is contained in $\widehat{H}_{\aut}$.
We denote the image of $\phi$ in $\bfG$ by $\bfH'$.
This is an almost simple, connected $k$-algebraic, which need not be simply connected.
We set $H'=\bfH'(k_s)$.
By \cite[Theorem 3.3]{Clozel}, the pull back of $\widehat{G}_{\aut}$ under the inclusion $H' \to G$ is contained in $\widehat{H'}_{\aut}$.
We regard $\phi$ as a map $\bfH \to \bfH'$, thus we are left to show that the pull back under $\phi_{k_s}\colon H\to G$ of $\widehat{H'}_{\aut}$ is contained in $\widehat{H}_{\aut}$. 

Since $\phi:\bfH \to \bfH'$ is a $k$-isogeny, we have that $\phi_{\bbA(k)}:\bfH(\bbA(k))\to \bfH'(\bbA(k))$ has a compact abelian kernel, which we denote by $C$, and an open image.
Note also that the commutator map $\bfH' \times \bfH' \to \bfH'$ factors via $\bfH$ and conclude that $\phi_{\bbA(k)}(\bfH(\bbA(k)))$ is an open subgroup of $\bfH'(\bbA(k))$ which contains the commutator, hence it is normal and $\phi_{\bbA(k)}$ has an abelian discrete cokernel. 
Since $\bfH'(\bbA(k))/\bfH'(k)$ has a finite measure, its quotient 
\[ D=\bfH'(\bbA(k))/(\bfH'(k)\cdot \phi_{\bbA(k)}(\bfH(\bbA(k))))\]
must be a finite abelian group.
    We thus have a Borel $\bfH(\bbA(k))$-equivariant isomorphism
    \[ \bfH'(\bbA(k))/\bfH'(k) \cong D \times \bfH(\bbA(k))/(\bfH(k)\cdot C), \]
    where the action on $D$ is trivial, 
    and we get a corresponding unitary $\bfH(\bbA(k))$-isomorphism 
    \begin{equation} \label{eq:adelicSC}
        L^2(\bfH'(\bbA(k))/\bfH'(k)) \cong L^2(D) \otimes L^2(\bfH(\bbA(k))/\bfH(k))^C, 
    \end{equation}
    where the action on $L^2(D)$ is trivial.
    This proves the claim, as the representation on the right hand side is clearly supported on $\widehat{H}_{\aut}$.

    We are left to show that the pullback in $\widehat{H}_{\aut}$ of $\widehat{G}_{\aut}\setminus\{{\bf 1}\}$ does not contain~$\bf 1$. 
    Let $\pi$ be a unitary $G$-representation 
    in $\widehat{G}_{\aut}\setminus\{{\bf 1}\}$. By the strong approximation theorem~\cite{Margulis}*{II, Theorem 6.8}, $\pi$ is ergodic. By the Howe-Moore theorem, $\pi$ is mixing. 
    Since the image of $H$ is non-compact in $G$, we conclude that the inflation $\phi_{k_s}^\ast\pi$ to~$H$ is ergodic. 
    Hence $\{\phi_{k_s}^\ast\pi\mid \pi\in \widehat{G}_{\aut}\setminus\{{\bf 1}\}\}\subset \widehat{H}_{\aut}\setminus\{{\bf 1}\}$. Since 
    $\widehat{H}_{\aut}\setminus\{{\bf 1}\}$ is closed by Theorem~\ref{Clozel} and the $\phi_{k_s}$-pullback of $\widehat{G}_{\aut}\setminus\{{\bf 1}\}$ is the closure 
    of $\{\phi_{k_s}^\ast\pi\mid \pi\in \widehat{G}_{\aut}\setminus\{{\bf 1}\}\}$, the proof is finished. 
\end{proof}

The second result was proven by Clozel in the course of the proof of Theorem~\ref{Clozel}.

\begin{theorem} \label{Clozel-reduction}
Let $k$ be a number field and let $\bfG$ be a connected and simply connected absolutely almost simple $k$-algebraic groups.
Let $s$ be a non-archimedean place of $k$ and assume $\rank_{k_s}(\bfG)=1$.
    Then there exists an almost simple, connected and simply connected $k$-algebraic group $\bfH$ 
    and a morphism of $k$-algebraic groups $\phi:\bfH\to \bfG$ which has a central kernel,
    such that the extension of scalars of $\bfH$ to $k_s$ is isomorphic to either $\SL_2$ or the the quasi split group $\SU_3$ associated with the unique unramified quadratic extension of~$k_s$.
\end{theorem}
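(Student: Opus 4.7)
The plan is to argue by case analysis on the absolute Dynkin type of $\bfG$ and the Tits diagram of $\bfG_{k_s}$. By the Tits classification over non-archimedean local fields, absolutely almost simple simply connected $k_s$-groups of relative rank~$1$ fall, up to central isogeny, into two families: those of relative type $A_1$, whose ``standard rank-one subgroup'' is a copy of $\SL_2(k_s)$, and those of relative type $BC_1$, whose standard rank-one subgroup is a copy of $\SU_3$ attached to some quadratic extension of $k_s$. The target $\bfH_{k_s}$ is the ``nice'' representative in each case, namely $\SL_2$ or the quasi-split $\SU_3$ attached to the \emph{unramified} quadratic extension of $k_s$; these are precisely the local forms for which Theorem~\ref{Cowling} is available.

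Concretely, I would first identify, inside $\bfG_{k_s}$, the standard rank-one subgroup generated by the unipotent radical of a minimal $k_s$-parabolic, its opposite, and a one-dimensional piece of the Levi; this is a $k_s$-form of $\SL_2$ or $\SU_3$ determined by the Tits index. I would then realize this local subgroup as the base change of a $k$-subgroup of $\bfG$ by producing an almost simple simply connected $k$-group $\bfH$ together with a $k$-morphism $\phi\colon\bfH\to\bfG$ with central kernel such that $\bfH_{k_s}$ is exactly the prescribed form. In the $A_1$ case, $\bfH$ is described by a global quaternion algebra over $k$; in the $BC_1$ case, by a hermitian form over a quadratic extension $K/k$ chosen so that $K\otimes_k k_s$ is the unique unramified quadratic extension of $k_s$. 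Their local behaviour at $s$ is arranged through the Hasse principle together with weak approximation for the Brauer group (respectively, for quadratic extensions of $k$).

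The key step is the final adjustment: ensuring that the local form at $s$ is the specific ``nice'' representative in the statement, rather than some other rank-one form (for instance one attached to a ramified quadratic extension, or an inner form arising from a higher-dimensional division algebra over $k_s$). This is the arithmetic content beyond the bare Tits classification and is the expected main obstacle; it is carried out by Clozel in~\cite{Clozel} during his proof of Theorem~\ref{Clozel}. The construction rests on standard existence results for quaternion algebras and for quadratic extensions of a number field with prescribed local invariants at $s$, combined with the fact that the centralizer in $\bfG$ of a suitable $k$-rational semisimple element can be used to globalize the local rank-one subgroup to a $k$-subgroup of $\bfG$.
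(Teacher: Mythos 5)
Both you and the paper ultimately lean on Clozel's case-by-case construction in \cite[\S 3.2]{Clozel}, so at that level the proposals agree. But the paper's proof splits on whether $\bfG$ is $k$-isotropic, not on the local Tits index of $\bfG_{k_s}$, and this is a real simplification you miss: if $\bfG$ is $k$-isotropic, a $k$-rational unipotent element and the Jacobson--Morozov theorem give a $k$-morphism $\SL_2\to\bfG$ with central kernel whose base change to $k_s$ is $\SL_2$, and one is done -- regardless of the relative root system of $\bfG_{k_s}$. In particular there is never a need to produce an $\SU_3$ when $\bfG$ is $k$-isotropic, even if $\bfG_{k_s}$ has a non-reduced relative root system. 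Only when $\bfG$ is $k$-anisotropic does one invoke Clozel's case-by-case reduction (the fourth through sixth paragraphs of \cite[\S 3.2]{Clozel}, resting on \cite[Theorem 1.1]{Clozel}), and there $\bfH$ can be taken to be a $k$-subgroup of $\bfG$.

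Two further points to flag. Your opening claim that relative-rank-one absolutely almost simple simply connected $k_s$-groups ``fall, up to central isogeny, into two families'' with $\SL_2(k_s)$ and $\SU_3$ as the standard rank-one subgroups is not correct -- for instance $\SL_2$ of a non-split quaternion algebra over $k_s$ has relative type $A_1$, its relative root subgroups generate the whole group, and it is not isogenous to $\SL_2$; this non-triviality is exactly why the theorem must single out $\SL_2$ and the unramified $\SU_3$ as the targets and why the isotropic/anisotropic dichotomy over $k$, rather than the local Tits index, is the right organizing principle. Also, the ``identify the standard rank-one subgroup of $\bfG_{k_s}$, globalize it, then adjust its local form at $s$'' framing is muddled: the theorem asks nothing about the standard rank-one subgroup of $\bfG_{k_s}$, and once a particular $k_s$-group has been realized as $\bfH_{k_s}$ for a $k$-group $\bfH$, there is nothing left to adjust at $s$. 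What is actually required, and what Clozel's construction supplies in the anisotropic case, is a direct construction of $\bfH$ over $k$ with the two required properties.
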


\begin{proof}
If $\bfG$ is $k$-isotropic then the theorem holds with $\bfH=\SL_2$ by Jacobson-Morozov,
we thus assume that $\bfG$ is anisotropic.
In this case the group $\bfH$ could be constructed as a $k$-subgroup of $\bfG$.
This is done by a case by case reduction in the fourth to sixth paragraphs of \cite[\S3.2]{Clozel}, based on \cite[Theorem 1.1]{Clozel} and its proof.
\end{proof}

\begin{remark}
    According to \cite[Remark 1.2]{Clozel}, it is expected that $\bfH$ in Theorem~\ref{Clozel-reduction} could be constructed as a $k$-subgroup of $\bfG$ also in case $\bfG$ is isotropic, but the proof is not known.
\end{remark}

We can now prove Theorem~\ref{Clozel++}, based on Theorem~\ref{Cowling}.

\begin{proof}[Proof of Theorem~\ref{Clozel++}]
As in the first paragraph of \cite[\S3.2]{Clozel}, we may assume that $\bfG$ is $k$-absolutely almost simple, by passing to a finite extension and forming a restriction of scalars~\cite[\S3.1.2]{Tits}.
The restriction of scalars is compatible with the structure of the adelic group as a topological group by~\cite[Example 4.2]{Conrad}.
    We note that $\widehat{G}_{\aut}\setminus\{{\bf 1}\}$ is closed by Theorem~\ref{Clozel}.
    By Theorem~\ref{Cowling} it suffices to consider the case that $s$ is non-archimedean and $\bfG$ is of $k_s$-rank 1. 

    Using Theorem~\ref{Clozel-reduction} we 
    fix an almost simple, connected $k$-algebraic group $\bfH$ 
    and a morphism of $k$-algebraic groups $\phi:\bfH\to \bfG$ which has a central kernel,
    such that the extension of scalars of $\bfH$ to $k_s$ is isomorphic to either $\SL_2$ or the the quasi split group $\SU_3$ with respect to the unique unramified quadratic extension of $k_s$.
By Theorem~\ref{Burger-Sarnak}, the pullback under $\phi_{k_s}:H\to G$ of $\widehat{G}_{\aut}\setminus\{{\bf 1}\}$ is contained in $\widehat{H}_{\aut}\setminus\{{\bf 1}\}$, which is closed by Theorem~
\ref{Clozel}. 
By Theorem~\ref{Cowling}, ${\bf 1} \notin \overline{\widehat{H}_{\aut}\setminus\{{\bf 1}\}}^{\otimes}$.
By Lemma~\ref{lem: continuity of pullback} we deduce that \[\phi_{k_s}^\ast\Bigl(\overline{\widehat{G}_{\aut}\setminus\{{\bf 1}\}}^{\otimes}\Bigr)\subset \overline{\widehat{H}_{\aut}\setminus\{{\bf 1}\}}^{\otimes}.\] 
This implies that 
${\bf 1} \notin \overline{\widehat{G}_{\aut}\setminus\{{\bf 1}\}}^{\otimes}$.
\end{proof}

The following is an immediate corollary of Theorem~\ref{Clozel++}.

\begin{cor} \label{cor:Clozel++}
Let $k$ be a number field and let $\bfG$ be a connected and simply connected almost simple $k$-algebraic groups.
Let $s$ be a place of $k$ such that $G=\bfG(k_s)$ is non-compact.
For every unitary $G$-representation $U$, the unitary $G$-representation  
    $L^2_0(\bfG(\bbA(k))/\bfG(k))\bar{\otimes} U$ does not have almost invariant vectors. 
\end{cor}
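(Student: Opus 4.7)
Here is the plan.

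The argument is essentially an unpacking of the definition of $\overline{\widehat{G}_{\aut}\setminus\{{\bf 1}\}}^\otimes$. First, I would observe that $L^2\bigl(\bfG(\bbA(k))/\bfG(k)\bigr)$ decomposes as a direct sum $\bbC\oplus L^2_0\bigl(\bfG(\bbA(k))/\bfG(k)\bigr)$, where the first summand carries the trivial $G$-representation and the second is the orthogonal complement. By definition of the automorphic dual, the support of the whole space as a $G$-representation lies in $\widehat{G}_{\aut}$. By Theorem~\ref{Clozel}, the trivial representation is isolated in $\widehat{G}_{\aut}$, so $\widehat{G}_{\aut}\setminus\{{\bf 1}\}$ is closed, and the support of $L^2_0\bigl(\bfG(\bbA(k))/\bfG(k)\bigr)$ is contained in $\widehat{G}_{\aut}\setminus\{{\bf 1}\}$.

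Next, I would use the direct integral decomposition
\[ L^2_0\bigl(\bfG(\bbA(k))/\bfG(k)\bigr)\cong \int^{\oplus}_{\widehat{G}_{\aut}\setminus\{{\bf 1}\}}\pi\, d\mu(\pi) \]
for some measure $\mu$ on the automorphic dual. Tensoring with~$U$ on the right gives
\[ L^2_0\bigl(\bfG(\bbA(k))/\bfG(k)\bigr)\bar\otimes U\cong \int^{\oplus}_{\widehat{G}_{\aut}\setminus\{{\bf 1}\}}(\pi\bar\otimes U)\, d\mu(\pi), \]
and the existence of almost invariant vectors in a direct integral is equivalent to the trivial representation being weakly contained in the family $\{\pi\bar\otimes U\mid \pi\in\mathrm{supp}(\mu)\}$.

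The final step is to recognize that, by the explicit description~\eqref{eq: closed ideal explicit} of the closed ideal generated by a set, weak containment of~${\bf 1}$ in some $\pi\bar\otimes U$ with $\pi\in\widehat{G}_{\aut}\setminus\{{\bf 1}\}$ means precisely ${\bf 1}\in \overline{\widehat{G}_{\aut}\setminus\{{\bf 1}\}}^\otimes$. This is forbidden by Theorem~\ref{Clozel++}, giving the desired contradiction. There is no real obstacle here once Theorem~\ref{Clozel++} is in hand; the only thing to check carefully is that the two natural notions of ``weakly contained in a family'' -- the one appearing in the definition of $\overline{X}^\otimes$ and the one describing almost invariant vectors in a direct integral -- are the same, which follows from the standard equivalence $\rho\prec S\iff \|\rho(f)\|\leq\sup_{\pi\in S}\|\pi(f)\|$ for every $f\in L^1(G)$.
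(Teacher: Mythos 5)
Your route is correct and is exactly what the paper intends by declaring this an ``immediate corollary'' of Theorem~\ref{Clozel++}: identify the support of $L^2_0\bigl(\bfG(\bbA(k))/\bfG(k)\bigr)$ as lying in $\widehat{G}_{\aut}\setminus\{{\bf 1}\}$, tensor with $U$, and unwind the description~\eqref{eq: closed ideal explicit} of $\overline{X}^\otimes$.

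The one step that is under-justified is the assertion that $\supp\bigl(L^2_0(\bfG(\bbA(k))/\bfG(k))\bigr)\subseteq\widehat{G}_{\aut}\setminus\{{\bf 1}\}$ follows from Theorem~\ref{Clozel} alone. Isolation of ${\bf 1}$ in $\widehat{G}_{\aut}$ does not by itself remove ${\bf 1}$ from the support of $L^2_0$: a priori the direct-integral measure of $L^2_0$, viewed as a $G$-representation, could assign positive mass to the isolated point $\{{\bf 1}\}$, which is the same as saying that $L^2_0$ could contain nonzero $G$-invariant vectors. Recall that $L^2_0$ is the orthogonal complement of the constants for the full adelic group, so $G$-invariance under the single local factor $G=\bfG(k_s)$ is a strictly weaker condition and is not automatic. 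What excludes it is the Strong Approximation Theorem, which gives ergodicity of the $G$-action on $\bfG(\bbA(k))/\bfG(k)$; the paper invokes exactly this in the analogous step of the proof of Theorem~\ref{thm:Clozel++arith}. Once $L^2_0$ is known to have no $G$-invariant vectors, the isolation of $\{{\bf 1}\}$ in $\widehat{G}_{\aut}$ does force $\supp(L^2_0)\subseteq\widehat{G}_{\aut}\setminus\{{\bf 1}\}$, and the rest of your argument --- tensoring preserves weak containment, then apply Theorem~\ref{Clozel++} --- goes through as you describe.
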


\subsection{Consequences regarding spectral gap}\label{subsec: consequences lie}

Recall the definition of $S$-arithmetic lattices given in \S\ref{sec: setup}.
The following theorem is a version of Corollary~\ref{cor:Clozel++} which is given in this setting. 

\begin{theorem} \label{thm:Clozel++arith}
Let $k$ be a number field and let $\bfG$ be an almost simple, connected and simply connected $k$-algebraic group.
Let $S$ be a set of places of $k$ that includes all the archimedean ones, and let $\Gamma<\bfG(k)$ be an $S$-arithmetic subgroup.
Let $G$ be the restricted product of the groups $\bfG(k_s)$, $s\in S$,
and consider $\Gamma$ as a lattice in $G$.
Fix $t\in S$ such that $G_t=\bfG(k_t)$ is non-compact
and let $U$ be a unitary $G_t$-representation $U$.
Then the unitary $G_t$-representation  
    $L^2_0(G/\Gamma)\bar{\otimes} U$ does not have almost invariant vectors. 
\end{theorem}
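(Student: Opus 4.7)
The plan is to realize the $G_t$-representation $L^2_0(G/\Gamma)\bar\otimes U$ as a subrepresentation of $L^2_0(\bfG(\bbA(k))/\bfG(k))\bar\otimes U$ and then appeal to Corollary~\ref{cor:Clozel++}. A preliminary commensurability step will reduce to the case where $\Gamma=\bfG(k)\cap K_f$ is $S$-congruence for some compact open subgroup $K_f\subset \bfG(\bbA_{S^c}(k))$: whenever $\Gamma'\subseteq\Gamma$ has finite index, pullback of functions along $G/\Gamma'\to G/\Gamma$ yields an isometric $G$-embedding $L^2_0(G/\Gamma)\hookrightarrow L^2_0(G/\Gamma')$, so absence of $G_t$-almost invariant vectors for $\Gamma'$ propagates to $\Gamma$.

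Next, strong approximation enters. Since $\bfG$ is connected, simply connected, and almost simple, and since $G$ contains the non-compact factor $G_t$, the Kneser--Platonov theorem asserts that $\bfG(k)\cdot G$ is dense in $\bfG(\bbA(k))$; combined with the openness of $G\times K_f$, this forces $\bfG(\bbA(k))=G\cdot K_f\cdot \bfG(k)$. Writing $\bfG(\bbA(k))=G\times \bfG(\bbA_{S^c}(k))$ and observing that $\bfG(k)\cap(G\times K_f)=\Gamma$ via the diagonal embedding, one obtains a $G$-equivariant homeomorphism
\[ (G\times K_f)/\Gamma \xrightarrow{\;\cong\;} \bfG(\bbA(k))/\bfG(k), \]
with $\Gamma$ acting diagonally on the right on the left-hand side. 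The right $K_f$-action on $(G\times K_f)/\Gamma$ transports to a $G$-commuting right $K_f$-action on $\bfG(\bbA(k))/\bfG(k)$, and a function is $K_f$-invariant precisely when it is the pullback of a function on $G/\Gamma$, giving a unitary $G$-isomorphism
\[ L^2\bigl(\bfG(\bbA(k))/\bfG(k)\bigr)^{K_f}\cong L^2(G/\Gamma). \]

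Because the constant functions on both quotients correspond under this identification, the isomorphism restricts to an isometric $G$-embedding $L^2_0(G/\Gamma)\hookrightarrow L^2_0(\bfG(\bbA(k))/\bfG(k))$. Tensoring with $U$ exhibits $L^2_0(G/\Gamma)\bar\otimes U$ as a $G_t$-subrepresentation of $L^2_0(\bfG(\bbA(k))/\bfG(k))\bar\otimes U$, which has no $G_t$-almost invariant vectors by Corollary~\ref{cor:Clozel++}; since absence of almost invariant vectors passes to subrepresentations, the conclusion follows. The chief subtlety will be correctly setting up the strong-approximation identification and tracking the $K_f$-invariants; the commensurability reduction is standard but needs a little care when the congruence subgroup property is not available for $\Gamma$.
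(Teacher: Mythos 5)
Your plan is close in spirit to the paper's: realize $L^2(G/\Lambda)$ for a congruence subgroup $\Lambda$ as the $K_f$-invariants in $L^2(\bfG(\bbA(k))/\bfG(k))$ via strong approximation, embed $L^2_0$, and invoke the adelic spectral-gap results. The genuine structural difference is that you apply Corollary~\ref{cor:Clozel++} directly, which already packages the tensoring with $U$; the paper instead first establishes absence of $G_t$-almost invariant vectors for $L^2_0(G/\Lambda)$ alone via Theorem~\ref{Clozel}, transfers it to $\Gamma$, and only then handles the tensor step, with a case split between the archimedean or higher $k_t$-rank situation (Theorem~\ref{Cowling}) and the non-archimedean $k_t$-rank $1$ situation (Theorems~\ref{Clozel-reduction}, \ref{Burger-Sarnak}, and \ref{Cowling}). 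Your route avoids that case split, which is a modest streamlining provided the rest holds up.

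The gap is in the commensurability reduction. Your pullback observation gives one implication: if $\Gamma'\subset\Gamma$ has finite index and $L^2_0(G/\Gamma')\bar\otimes U$ has no almost invariant vectors, then neither does the subrepresentation $L^2_0(G/\Gamma)\bar\otimes U$. But the congruence subgroup $\Lambda=\bfG(k)\cap K_f$ you want to reduce to is only \emph{commensurable} with $\Gamma$, not contained in it, and --- absent the congruence subgroup property --- the natural common finite-index subgroup $\Gamma\cap\Lambda$ need not itself be a congruence subgroup. So after passing upward from $\Gamma\cap\Lambda$ to $\Gamma$ you are left to verify the claim for $\Gamma\cap\Lambda$, which requires passing \emph{downward} from $\Lambda$; and for a finite-index subgroup $\Delta\subset\Lambda$ one has $L^2_0(G/\Delta)\cong L^2_0(G/\Lambda)\oplus\Ind_\Lambda^G W$ where $W$ is the nontrivial part of $\ell^2(\Lambda/\Delta)$, so the new summand must also be controlled. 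This is precisely what the paper handles by invoking Kleinbock--Margulis \cite{KM99}*{Lemma~3.1}, which shows the spectral gap of $L^2_0(G/\cdot)$ is a commensurability invariant. You should either cite that lemma and perform the commensurability reduction \emph{before} tensoring with $U$ (then tensor via Theorem~\ref{Cowling}, mirroring the paper), or supply an argument controlling the $\Ind_\Lambda^G W$ summand after tensoring. As written, the reduction does not close.
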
 

\begin{proof}
As in the first paragraph of \cite[\S3.2]{Clozel}, we may assume that $\bfG$ is $k$-absolutely almost simple, by passing to a finite extension and forming a restriction of scalars~\cite[\S3.1.2]{Tits}.
The restriction of scalars is compatible with the structure of the adelic group as a topological group by~\cite[Example 4.2]{Conrad}.

We let $G'$ be the restricted product of the groups $\bfG(k_s)$, $s\notin S$,
and identify $\bfG(\bbA(k))\cong G\times G'$.
We let $K$ be a compact open subgroup of $G'$ and let $\Lambda$ be the $S$-congruence subgroup of $\bfG(k)$ obtained by intersecting it with $K$.
We regard
$G \times K$ as an open subgroup of $\bfG(\mathbb{A}(k))$.
We view $\Lambda$ as a lattice in it and obtain the identification
\[ (G \times K)/\Lambda \hookrightarrow \bfG(\mathbb{A}(k))/\bfG(k) \]
and accordingly 
\[ L^2\bigl(G/\Lambda\bigr) \cong L^2\bigl((G \times K)/\Lambda\bigr)^K \hookrightarrow L^2\bigl(\bfG(\mathbb{A}(k))/\bfG(k)\bigr), \]
where in the left hand side we regard $\Lambda$ as a lattice in $G$.
We conclude that the $G_t$-representation $L^2(G/\Lambda)$ is supported on $\widehat{(G_t)}_{\aut}$.
By the Strong Approximation Theorem, \cite{Margulis}*{II, Theorem 6.8}, $G_t$ acts ergodically on $G/\Lambda$, thus $L^2_0(G/\Lambda)$ is supported on $\widehat{(G_t)}_{\aut}\setminus\{{\bf 1}\}$.
We conclude by Theorem~\ref{Clozel} that $L^2_0(G/\Lambda)$
does not have $G_t$-almost invariant unit vectors.
Using \cite[Lemma~3.1]{KM99}, we get that also $L^2_0(G/\Gamma)$
does not have $G_t$-almost invariant unit vectors, as $\Gamma$ and $\Lambda$ are commensurable.
If either $t$ is archimedean or $\bfG$ is of higher $k_t$-rank, the theorem now follows by Theorem~\ref{Cowling}.

We now assume that $t$ is non-archimedean and $\rank_{k_t}(\bfG)=1$
and we alter the above proof as in the proof of Theorem~\ref{Clozel++}.
    Using Theorem~\ref{Clozel-reduction} we 
    fix an almost simple, connected $k$-algebraic group $\bfH$ 
    and a morphism of $k$-algebraic groups $\phi:\bfH\to \bfG$ which has a central kernel,
    such that the extension of scalars of $\bfH$ to $k_t$ is isomorphic to either $\SL_2$ or the the quasi-split group $\SU_3$ with respect to the unique unramified quadratic extension of $k_t$.
    We set $H=H(k_{t})$ and conclude by Theorem~\ref{Burger-Sarnak} and Theorem~\ref{Clozel} that $L^2_0(\bfG(\bbA(k))/\bfG(k))$ has no $H$-almost invariant vectors.
    We deduce that its subrepresentation $L^2_0(G/\Lambda)$ has no $H$-almost invariant vectors,
    and by \cite[Lemma~3.1]{KM99} we conclude that $L^2_0(G/\Gamma)$ has no $H$-almost invariant vectors.
    Finally, by Theorem~\ref{Cowling}, we conclude that for every unitary representation $U$ of $G_i$,
The $G_i$-representation $L^2_0(G/\Gamma)\bar{\otimes} U$ does not have $H$-almost invariant vectors.
In particular, it does not have $G_t$-almost invariant vectors. 
\end{proof}

The following theorem is a variant of Theorem~\ref{thm:Clozel++arith} which applies for general lattices, not assuming arithmeticity a priori.
Of course, it relies on arithmeticity a posteriori, via Margulis' Arithmeticity Theorem.

\begin{theorem} \label{thm:specgap}
Let $G$ be a semisimple group, that is $G \cong \prod_{i=1}^l \bfG_i(F_i)$, where each $F_i$ is a local field of characteristic zero and each $\bfG_i$ is a connected almost simple $F_i$-group. We assume that each $\bfG_i$ is simply connected and that the groups $G_i=\bfG_i(F_i)$ are non-compact. 
Let $\Gamma<G$ be an irreducible lattice.
If $G$ has rank 1, we also assume that $G$ is a Lie group.
Then for every $i$ and a unitary representation $U$ of $G_i$,
The unitary $G_i$-representation
$L^2_0(G/\Gamma)\bar{\otimes} U$ does not have almost invariant unit vectors.
\end{theorem}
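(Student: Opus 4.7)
The strategy is to reduce to Theorem~\ref{thm:Clozel++arith} whenever $\Gamma$ is $S$-arithmetic, which by Margulis' arithmeticity theorem covers all higher-rank cases, and to handle the remaining non-arithmetic rank-one case by combining a classical spectral gap statement for $L^2_0(G/\Gamma)$ with Theorem~\ref{Cowling}.

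When $\rank G\ge 2$, Margulis' arithmeticity theorem identifies $\Gamma$, up to commensurability and modulo a compact normal subgroup, with an $S$-arithmetic lattice $\bfH(\calO_S)$ inside an adelic group $\bfH(\bbA_S(k))$, where $k$ is a number field, $\bfH$ is a connected, simply connected, absolutely almost simple $k$-algebraic group, and $S$ is a set of places of $k$ containing all archimedean ones. Under this identification, each simple factor $G_i$ of $G$ corresponds to a factor $\bfH(k_t)$ of the adelic group. Applying Theorem~\ref{thm:Clozel++arith} to this $S$-arithmetic model and to the pulled-back representation $U$ yields the absence of $\bfH(k_t)$-almost invariant vectors in $L^2_0(\bfH(\bbA_S(k))/\bfH(\calO_S))\bar\otimes U$, and \cite[Lemma~3.1]{KM99} together with the natural surjection from the adelic group onto $G$ transfers the conclusion across the commensurability to $L^2_0(G/\Gamma)\bar\otimes U$ and $G_i$.

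When $\rank G=1$ and $G$ is Lie, $G=G_i$ is a single rank-one simple Lie group. If $\Gamma$ is arithmetic, the argument above applies verbatim with $S$ the archimedean places. If $\Gamma$ is non-arithmetic, then by the arithmeticity theorems of Corlette and Gromov--Schoen the group $G$ is necessarily locally isomorphic to $\SO(n,1)$ or $\SU(n,1)$, and one first establishes by classical spectral theory on $\Gamma\backslash G/K$ that $L^2_0(G/\Gamma)$ alone has no $G$-almost invariant vectors: in the cocompact case $L^2(G/\Gamma)$ decomposes discretely with finite multiplicities and only finitely many complementary-series constituents appear, bounded away from $\bf 1$, while in the non-cocompact finite-covolume case the standard cuspidal, residual and Eisenstein decompositions keep the nontrivial spectrum away from the trivial representation. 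Letting $X\subset \widehat G$ be the closed support of $L^2_0(G/\Gamma)$, the absence of almost invariant vectors reads ${\bf 1}\notin X$; Theorem~\ref{Cowling}, whose hypotheses hold since $G$ is a rank-one Lie group, then upgrades this to ${\bf 1}\notin \overline X^\otimes$, and by the explicit description~\eqref{eq: closed ideal explicit} of the closed ideal this is exactly the desired absence of $G$-almost invariant vectors in $L^2_0(G/\Gamma)\bar\otimes U$ for every unitary $G$-representation $U$.

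The principal obstacle is the non-arithmetic rank-one case, where Margulis arithmeticity fails and one must invoke classical spectral theory to secure the bare spectral gap of $L^2_0(G/\Gamma)$. Once that gap is in hand, Cowling's theorem propagates it automatically to every tensor product $L^2_0(G/\Gamma)\bar\otimes U$, unifying the cases.
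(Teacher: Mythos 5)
Your proposal is correct and follows essentially the same strategy as the paper: Margulis arithmeticity plus Theorem~\ref{thm:Clozel++arith} in higher rank, and in rank one a bare spectral gap for $L^2_0(G/\Gamma)$ upgraded via Theorem~\ref{Cowling} to arbitrary tensor coefficients. The one structural difference is that you split the rank-one case into arithmetic and non-arithmetic subcases and invoke Corlette and Gromov--Schoen to pin $G$ to $\SO(n,1)$ or $\SU(n,1)$; this is unnecessary, since the bare spectral gap for $L^2_0(G/\Gamma)$ holds for \emph{every} lattice in a rank-one simple Lie group (as recorded in~\cite[Lemma~3]{Bekka}, cf.~\cite[III, Corollary~1.10 and Remark~1.12]{Margulis}), which is the uniform reference the paper uses without any arithmeticity dichotomy. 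Your heuristic justification of the spectral gap via ``only finitely many complementary-series constituents bounded away from $\bf 1$'' is morally right but is itself the nontrivial content of the cited lemma, so one should simply cite it rather than re-derive it. Apart from this detour, the argument is the paper's.
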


\begin{proof}
    Assume first that $G$ is a Lie group of rank~1.
    Then $L^2_0(G/\Gamma)$ has a spectral gap by \cite[Lemma~3]{Bekka}
    (cf.~\cite[III, Corollary~1.10 and Remark~1.12]{Margulis}).
    We thus get by Theorem~\ref{Cowling} that for every unitary representation $U$ of $G$,
$L^2_0(G/\Gamma)\bar{\otimes} U$ does not have almost invariant vectors for $G$.
See also \cite[Lemma 4]{Bekka}.

We now assume that $G$ is of higher rank.
We may assume that each $\bfG_i$ is $F_i$-absolutely almost simple, by passing to a finite extension and forming a restriction of scalars, see \cite[\S3.1.2]{Tits}.
By Margulis' Arithmeticity Theorem \cite[IX, Theorem (1.11)]{Margulis}, $\Gamma$ is an arithmetic lattice as defined in \cite[IX, Definition (1.4)]{Margulis}.
Since each $\bfG_i$ is absolutely almost simple and simply connected, by \cite[IX, Remark (1.3)(i)]{Margulis}, this means (by a standard abuse of notation) that 
there exist a number field $k$ over which $\bfG$ is defined and a finite set of places $S$, containing all archimedean places,
such that the fields $F_i$ are the completions $k_s$ for $s\in S$ over which $\bfG$ is isotropic, and $\Gamma$ is $S$-arithmetic.
Denoting by $K$ the product of the groups $\bfG(k_s)$ running over all $s\in S$ such that $\bfG$ is $S$-anisotropic, we identify $\prod_{s\in S}\bfG(k_s) \cong G\times K$ and view $\Gamma$ as a lattice in $G\times K$. 

Let $U$ be a unitary representation of~$G_i$.
By Theorem~\ref{thm:Clozel++arith}, the unitary $G_i$-representation $L^2_0((G\times K)/\Gamma) \bar{\otimes} U$ does not have almost invariant unit vectors.
Identifying 
\[ L^2_0(G/\Gamma) \bar{\otimes} U\cong (L^2_0((G\times K)/\Gamma) \bar{\otimes} U)^K < L^2_0((G\times K)/\Gamma) \bar{\otimes} U, \]
we conclude that the unitary $G_i$-representation $L^2_0(G/\Gamma) \bar{\otimes} U$ 
 does not have almost invariant unit vectors.
\end{proof}

\begin{cor} \label{cor:Liespecgap}
Let $G$ be a connected semisimple Lie group with finite center and with no compact factors.
Let $\Gamma<G$ be an irreducible lattice.
Then for every unitary representation $U$ of $G$,
$L^2_0(G/\Gamma)\bar{\otimes} U$ does not have almost invariant vectors for every factor of $G$.
\end{cor}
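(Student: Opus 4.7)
My strategy is to deduce the corollary from Theorem~\ref{thm:specgap} by passing to a finite algebraic cover, the obstacle being that the corollary's $G$ need not itself be a product of real points of simply connected almost simple $\bbR$-algebraic groups.

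Fix a simple factor $G_i$ of $G$ and set $Z_i = Z(G_i)$, a finite central subgroup. Decompose the $G$-representation $V := L^2_0(G/\Gamma) \bar{\otimes} U$ into $Z_i$-isotypic summands under the diagonal action: $V = \bigoplus_{\chi \in \widehat{Z_i}} V_\chi$, each summand a $G$-subrepresentation. For each nontrivial $\chi$, the compact group $Z_i$ acts on $V_\chi$ through a nontrivial character, precluding any $Z_i$-almost invariant (and hence $G_i$-almost invariant) vectors. It therefore suffices to exclude $G_i$-almost invariant vectors in the trivial summand $V^{Z_i}$, on which the $G_i$-action factors through $G_{i,\mathrm{ad}} = G_i/Z_i$.

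Next, identify the adjoint form $G_{\mathrm{ad}} = G/Z(G)$ with the identity component of $\bfG^{\mathrm{ad}}(\bbR)$, where $\bfG^{\mathrm{ad}} = \mathrm{Aut}(\mathfrak g)^0$ is the adjoint $\bbR$-algebraic group of the Lie algebra of $G$. Let $\tilde\bfG \to \bfG^{\mathrm{ad}}$ be the algebraic simply connected cover, and set $\tilde G := \tilde\bfG(\bbR) = \prod_j \tilde\bfG_j(\bbR)$. Since simply connected semisimple $\bbR$-algebraic groups have connected real points, $\tilde G$ is a connected Lie group of the form required by Theorem~\ref{thm:specgap}, and the natural map $\tilde G \to G_{\mathrm{ad}}$ is a finite central surjection. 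The preimage $\tilde\Gamma$ of $\Gamma_{\mathrm{ad}} := \Gamma Z(G)/Z(G) \subset G_{\mathrm{ad}}$ is an irreducible lattice in $\tilde G$ satisfying $\tilde G/\tilde\Gamma \cong G_{\mathrm{ad}}/\Gamma_{\mathrm{ad}}$.

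Finally, apply Theorem~\ref{thm:specgap} to $\tilde G$, $\tilde\Gamma$, and the pullback through $\tilde G \to G_{\mathrm{ad}}$ of the appropriate piece of $V^{Z_i}$. The theorem gives no $\tilde G_j$-almost invariant vectors, where $\tilde G_j$ is the factor of $\tilde G$ covering $G_{i,\mathrm{ad}}$, and by surjectivity of $\tilde G_j \to G_{i,\mathrm{ad}}$ this transfers to no $G_i$-almost invariant vectors in $V^{Z_i}$, completing the proof. The main technical obstacle lies in precisely matching the $\chi$-isotypic summands of $V^{Z_i}$ with representations of $\tilde G$, since these summands factor through $G/Z_i$ rather than through the full adjoint form; this requires either iterating the central decomposition over all factors or directly analyzing the compatibility of Theorem~\ref{thm:specgap}'s conclusion across the intermediate central extensions $Z(G)/Z_i$.
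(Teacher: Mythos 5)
Your overall strategy---pass to the algebraic simply connected cover of the adjoint form and apply Theorem~\ref{thm:specgap}---is the same as the paper's, and several of your intermediate observations (connectivity of $\tilde G$, the identification $\tilde G/\tilde\Gamma \cong G_{\mathrm{ad}}/\Gamma_{\mathrm{ad}}$) are correct. However, your handling of the center via the $Z_i$-isotypic decomposition has a genuine gap, which you only partially flag. The trivial summand $V^{Z_i}$ of $V = L^2_0(G/\Gamma)\bar{\otimes} U$ factors through $G/Z_i$, not through the full adjoint form $G_{\mathrm{ad}} = G/Z(G)$, so it cannot be pulled back through the covering $\tilde G \to G_{\mathrm{ad}}$; there is no well-defined ``appropriate piece'' without further decomposition. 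Your suggested fix of iterating the central decomposition over all factors does not close the gap: for $j\ne i$, the nontrivial $Z_j$-isotypic summands carry a nontrivial central character for $Z_j$, but $Z_j\not\subset G_i$, so that nontriviality says nothing about $G_i$-almost-invariant vectors. You are left with summands that factor through intermediate quotients strictly between $G/Z_i$ and $G_{\mathrm{ad}}$ for which no argument is given. An additional difficulty, even for those summands that do factor through $G_{\mathrm{ad}}$, is that a $Z(G)$-isotypic piece of the tensor $L^2_0(G/\Gamma)\bar{\otimes} U$ is not manifestly of the shape $L^2_0(\tilde G/\tilde\Gamma)\bar{\otimes}\tilde U$ required to invoke Theorem~\ref{thm:specgap}.

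The paper sidesteps all of this with a single preliminary reduction: by \cite[Lemma~3.1]{KM99}, the conclusion is unaffected by replacing $\Gamma$ with the commensurable lattice $\Gamma Z(G)$, so one may assume $Z(G)\subset\Gamma$, i.e., that $G$ is effectively center-free. After this reduction the covering map $\tilde G\to G$ exists (precisely the obstruction you were trying to route around), the identification $G/\Gamma\cong\tilde G/\tilde\Gamma$ holds for the entire representation, and Theorem~\ref{thm:specgap} applies directly without any isotypic decomposition. This commensurability reduction is the key idea missing from your proof. Note that a connected semisimple Lie group with finite center need not be a quotient of $\tilde{\bfG}(\bbR)$ at all---for example, the connected triple cover of $\PSL_2(\bbR)$ is not a quotient of $\SL_2(\bbR)$---which is exactly why the center must first be absorbed into the lattice before the algebraic cover can be brought into play.
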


\begin{proof}
Using \cite[Lemma~3.1]{KM99} we may assume that $G$ is center free.
We identify $G$ with the identity component of the Lie group $\bfG(\bbR)$, where $\bfG$ is the identity component of the group of Lie algebra automorphism of the Lie algebra of $G$, thus a connected semisimple $\bbR$-algebraic group (cf.~\cite{zimmer}*{Proposition~3.1.6 on p.~35}).
Considering the simply connected cover $\tilde{\bfG}\to \bfG$, setting $\tilde{G}=\tilde{\bfG}(\bbR)$ and letting $\tilde{\Gamma}$ be the preimage of $\Gamma$ in $\tilde{G}$ under the natural map $\tilde{G}\to G$,
we identify $G/\Gamma \cong \tilde{G}/\tilde{\Gamma}$ and the corollary follows  
from Theorem~\ref{thm:specgap}, applied to $\tilde{G}$.
\end{proof}

\section{Cohomology of semisimple groups and their cocompact lattices} \label{sec:CSS}

In this section we study the cohomology with unitary coefficients of semisimple groups and their cocompact lattices.
In \S\ref{subsec:cohomreps} we survey the cohomology of irreducible representations of semisimple groups. This subsection is mostly expository. 
In \S\ref{sec:lowdeg} we prove versions of some of the main theorems in the introduction for uniform  lattices of semisimple groups. These version are not superseded by the main theorems, which hold for non-uniform lattices as well, as they have better bounds on the degrees in some cases.

\subsection{Irreducible cohomological representations} \label{subsec:cohomreps}

The purpose of this subsection, which is primarily expository, is to establish the cohomological dual and the associated invariants $r$ and $r_0$, defined in Definition~\ref{def:r}, for semisimple groups.

The case of algebraic groups over non-archimedean local fields is well understood due to the following result of Casselman (see also the historical survey by the end of this section).

\begin{theorem}[{\cite[Theorem~2]{Casselman}, \cite[XI, Theorem 3.9]{Borel-Wallach}}] \label{thm:Casselman}
    Let $F$ be a non-archimedean local field, let $\bfG$ a connected almost simple $F$-algebraic group
    and denote $G=\bfG(F)$. Assume $G$ is non-compact.
    Then we have $\widehat{G}_{\cohom}=\{{\bf 1}, \St\}$, where $\St$ denotes the Steinberg representation of $G$.
    Further we have
    \[ H^\ast_c(G,{\bf 1})=H^0_c(G,{\bf 1})=\bbC, \quad H^\ast_c(G,\St)=H^{\rank_F(\bfG)}_c(G,\St)=\bbC. \]
    In particular, we have $r_0(G)=r(G)=\rank_F(\bfG)$.
\end{theorem}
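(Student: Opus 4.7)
The plan is to assemble the theorem from the cited results of Casselman~\cite{Casselman} and Borel-Wallach~\cite{Borel-Wallach}, with the Bruhat-Tits building as the central geometric object. The Bruhat-Tits building $X$ of $\bfG$ over $F$ is a contractible polysimplicial complex of dimension $d=\rank_F(\bfG)$ on which $G$ acts properly with compact open stabilizers; consequently, the $G$-equivariant cochain complex on $X$ is a length-$d$ resolution that computes the continuous cohomology of any smooth $G$-module, and in particular bounds the continuous cohomological dimension by $d$. The first step is Casselman's result that, for an irreducible unitary representation $V$, passing to the subspace of smooth vectors induces an isomorphism on continuous cohomology, which reduces the problem to the category of admissible smooth representations where Jacquet functors are available.

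The classification $\widehat{G}_{\cohom}=\{{\bf 1},\St\}$ is then the content of \cite[XI, Theorem~3.9]{Borel-Wallach}, proved via Casselman's analysis of cohomology in terms of Jacquet modules with respect to a minimal parabolic $P=MN$: one shows that for an irreducible admissible unitary $V$, non-vanishing of some $H^j_c(G,V)$ forces the Jacquet module $V_N$ to contain a specific character of $M$, and a case-by-case inspection leaves only ${\bf 1}$ and $\St$ as possibilities. For $V={\bf 1}$ the equality $H^0_c(G,{\bf 1})=\bbC$ is tautological, while vanishing in positive degrees is a direct consequence of the same Jacquet module analysis. For $V=\St$, one uses its classical realization as the top reduced compactly supported cohomology of the Bruhat-Tits building; combined with Casselman's duality---equivalently, the self-duality of $\St$ together with Poincar\'e-Lefschetz duality on the contractible $d$-dimensional complex $X$---this identifies $H^*_c(G,\St)$ with $\bbC$ concentrated in degree~$d$.

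The identity $r_0(G)=r(G)=\rank_F(\bfG)$ then follows immediately from the definitions: $\St$ is the unique non-trivial element of $\widehat{G}_{\cohom}$, it has compact kernel (it is faithful modulo the finite center of $G$), and its cohomology sits in degree exactly~$d$. The main obstacle is the classification step, which rests on Casselman's Jacquet module machinery and is the heart of the cited references; once it is in hand, the remaining assertions amount to a formal unwinding of the definitions of $r_0(G)$, $r(G)$ and $\widehat{G}_{\cohom}$.
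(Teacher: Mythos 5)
Your proposal is essentially correct: since the paper does not prove Theorem~\ref{thm:Casselman} but cites it from Casselman~\cite{Casselman} and Borel-Wallach~\cite{Borel-Wallach}, the appropriate task is to outline how those references establish it, and your sketch does this faithfully. The reduction from unitary to smooth admissible representations, the Jacquet-module classification identifying $\{{\bf 1},\St\}$ as the cohomological dual, and the computation placing $H^*_c(G,\St)$ in the single degree $d=\rank_F\bfG$ all match the cited sources.

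Two minor imprecisions worth flagging. First, the phrase ``Poincar\'e-Lefschetz duality on the contractible $d$-dimensional complex'' is informal: the Bruhat-Tits building is not a manifold, and the duality actually used is the Borel-Serre duality for the building (equivalently, the identification of $\St$ with the top reduced compactly supported cohomology, together with the length-$d$ equivariant cochain resolution). Second, the parenthetical ``(it is faithful modulo the finite center)'' overstates slightly; what one actually needs and what follows from the structure theory is that the kernel of $\St$, being a proper closed normal subgroup of $G$ that cannot contain the finite-index subgroup $G^+$ generated by unipotents (else $\St$ would be finite-dimensional), must lie in the finite center, hence is compact. That compactness is exactly what the definition of $r(G)$ requires, and with ${\bf 1}$ excluded because its kernel $G$ is non-compact, the equalities $r_0(G)=r(G)=\rank_F\bfG$ follow formally as you say.
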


The case of connected semisimple Lie groups with finite center was treated by Vogan and Zuckerman \cite{Vogan-Zuckerman}, following Kumaresan \cite{Kumaresan}, see also \cite[\S4]{Vogan-survey}.
We now survey their theory.

Let $G$ be a non-compact, connected semisimple Lie group with finite center.
Denote by $\mathfrak{g}_0$ the Lie algebra of $G$ and by $\mathfrak{g}$ its complexification.
Let $K$ be a choice of a maximal compact subgroup in $G$
and let $\theta:\mathfrak{g}\to \mathfrak{g}$ be the associated Cartan involution.
Denote by $\mathfrak{g}=\mathfrak{k}\oplus \mathfrak{p}$ the associated eigenspace decomposition, thus $\mathfrak{k}$ is the complexification of the Lie algebra of $K$ and $\mathfrak{p}$ is its orthogonal complement with respect to the Killing form.

A complex parabolic subalgebra $\mathfrak{q}$ of $\mathfrak{g}$ is said to be \emph{$\theta$-stable} if $\theta(\mathfrak{q})=\mathfrak{q}$ and $\mathfrak{l}=\mathfrak{q} \cap \bar{\mathfrak{q}}$ is a complex Levi subalgebra of $\mathfrak{q}$, see \cite[Definition 4.1]{Vogan-survey}.
For each such $\theta$-stable subalgebra $\mathfrak{q}$ an irreducible $(\mathfrak{g},K)$-module $A_{\mathfrak{q}}$ is constructed in \cite[Theorem 2.5]{Vogan-Zuckerman}.
It is proven in \cite[Theorem 1.3]{Vogan} that the modules $A_{\mathfrak{q}}$ are unitarizable, that is for every $\theta$-stable subalgebras $\mathfrak{q}$ there exists an irreducible unitary $G$-representation $U_{\mathfrak{q}}$ which associated $(\mathfrak{g},K)$-module is isomorphic to $A_{\mathfrak{q}}$.
This unitarizability result was unknown at the time \cite{Vogan-Zuckerman} was written,
so our exposition differs from \cite{Vogan-Zuckerman} by taking this into account.
By \cite[Proposition 4.4]{Vogan-survey} there are only finitely many $K$-conjugacy classes of $\theta$-stable parabolic subalgebras in $\mathfrak{g}$ and $U_{\mathfrak{q}}$ depends only on the $K$-conjugacy class of $\mathfrak{g}$ by \cite[Theorem 4.6]{Vogan-survey}.
In \cite[Theorem 4.1]{Vogan-Zuckerman} it is shown that if $U$ is a cohomological irreducible unitary representation of $G$ then $U\cong U_{\mathfrak{q}}$ for some $\theta$-stable subalgebras $\mathfrak{q}$.
The cohomology of $U_{\mathfrak{q}}$ is given in \cite[Theorem 3.3]{Vogan-Zuckerman}, in view of \cite[Proposition 3.2(a)]{Vogan-Zuckerman} and the discussion above, by the formula
\begin{equation} \label{eq:U_q}
    H^\ast_c(G,U_{\mathfrak{q}}) \cong \Hom_{\mathfrak{l}\cap\mathfrak{k}}(\wedge^{\ast-r}(\mathfrak{l} \cap \mathfrak{p}),\bbC),
\end{equation}
where $r$ is the complex dimension of the intersection of $\mathfrak{p}$ with the nilpotent radical of $\mathfrak{q}$. The algebra $\mathfrak{l}$ and the number $r$ in \eqref{eq:U_q} depend implicitly on the choice of the $\theta$-stable subalgebra $\mathfrak{q}$.
In case $\mathfrak{q}=\mathfrak{g}$, we have $U_{\mathfrak{g}}\cong \bbC$ is the trivial representation and $r=0$, otherwise $r>0$.

It follows that $r_0(G)$ is the minimal value of $r$, running over all proper $\theta$-stable subalgebras of $\mathfrak{g}$. 
Note that this value depends on $G$ only up to a local isomorphism.
For almost simple Lie groups it was computed in \cite{Enright}*{Theorem 7.2} and \cite{Kumaresan}*{Theorem 2} (for complex Lie groups) and \cite{Vogan-Zuckerman}*{Theorem 8.1} (for real Lie groups).
For the convenience of the reader, the full table is given in Appendix~\ref{sec:rG}.
Note in particular that we always have $r_0(G)=r(G)\geq \rank(G)$.
The following is a corollary of the above discussion.

\begin{theorem}[Vogan-Zuckerman] \label{thm:Vogan-Zuckerman}
    Let $G$ be a non-compact, connected almost simple Lie group with finite center.
    Then $\widehat{G}_{\cohom}$ consists exactly of the representations $U_{\mathfrak{q}}$ associated with $\theta$-stable subalgebras of $\mathfrak{g}$ as described above. In particular, $\widehat{G}_{\cohom}$ is finite.
    For every representation $U_{\mathfrak{q}}$ the cohomology is given by the formula \eqref{eq:U_q}. In particular, it is finite dimensional. 
    The value $r_0(G)=r(G)$ is given explicitly by the table in Appendix~\ref{sec:rG}.
    In particular, $r_0(G)=r(G)\geq \rank(G)$.
\end{theorem}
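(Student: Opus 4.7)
The proof proceeds by assembling the results surveyed immediately above the theorem statement, so the plan is largely one of organization rather than fresh argument. First I would take an irreducible unitary $G$-representation $U$ with $H^{\ast}_c(G,U)\neq 0$ and invoke \cite[Theorem~4.1]{Vogan-Zuckerman}: its associated $(\mathfrak{g},K)$-module, which is irreducible and cohomological, must be isomorphic to some $A_{\mathfrak{q}}$ for a $\theta$-stable parabolic subalgebra $\mathfrak{q}\subset\mathfrak{g}$. Because $A_{\mathfrak{q}}$ is unitarizable \cite[Theorem~1.3]{Vogan} and the unitary completion is unique, we conclude $U\cong U_{\mathfrak{q}}$, giving the first assertion. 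Conversely, each $U_{\mathfrak{q}}$ is irreducible and cohomological, as follows from formula~\eqref{eq:U_q}, so $\widehat{G}_{\cohom}$ consists precisely of the representations $U_{\mathfrak{q}}$.

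Next I would establish finiteness of $\widehat{G}_{\cohom}$. By \cite[Theorem~4.6]{Vogan-survey}, the isomorphism class of $U_{\mathfrak{q}}$ depends only on the $K$-conjugacy class of $\mathfrak{q}$, and \cite[Proposition~4.4]{Vogan-survey} asserts that there are only finitely many such classes. This gives finiteness. The cohomology formula is then a direct quotation of \cite[Theorem~3.3]{Vogan-Zuckerman} combined with \cite[Proposition~3.2(a)]{Vogan-Zuckerman}, and finite-dimensionality of $H^{\ast}_c(G,U_{\mathfrak{q}})$ is immediate from the right-hand side of~\eqref{eq:U_q}, since $\Hom_{\mathfrak{l}\cap\mathfrak{k}}(\wedge^{\ast-r}(\mathfrak{l}\cap\mathfrak{p}),\bbC)$ is a subspace of a finite-dimensional exterior algebra.

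For the computation of $r_0(G)=r(G)$, I would argue as follows. Each non-trivial $U_{\mathfrak{q}}$ has trivial kernel because $G$ is almost simple and $U_{\mathfrak{q}}$ is non-trivial (the kernel is a closed normal subgroup; being non-trivial would force it to contain the identity component modulo the finite center, contradicting non-triviality of the representation). Hence the distinction between $r_0(G)$ and $r(G)$ collapses, and both equal the minimum of the integer~$r$ from~\eqref{eq:U_q} taken over proper $\theta$-stable parabolic subalgebras. This minimum depends on $G$ only up to local isomorphism and is tabulated: the complex case is \cite[Theorem~2]{Kumaresan} and \cite[Theorem~7.2]{Enright}, and the real case is \cite[Theorem~8.1]{Vogan-Zuckerman}. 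The resulting values are collected in Appendix~\ref{sec:rG}.

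The remaining inequality $r_0(G)\geq \rank(G)$ is read off from that table case-by-case. The main (very minor) obstacle in writing this out is not mathematical but bookkeeping: one must align conventions between Vogan-Zuckerman, Vogan's survey, and Kumaresan/Enright, in particular the convention that $\mathfrak{g}$ denotes the complexified Lie algebra (as flagged in Section~\ref{sec: setup}), and track the shift by $r$ in~\eqref{eq:U_q} when verifying that the minimal degree of non-vanishing cohomology for a non-trivial $U_{\mathfrak{q}}$ is exactly~$r$ (the term $\wedge^{0}(\mathfrak{l}\cap\mathfrak{p})=\bbC$ at $\ast=r$ always contributes, so the minimum is attained).
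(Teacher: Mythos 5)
Your proposal is correct and mirrors the paper's own treatment, which presents the theorem explicitly as ``a corollary of the above discussion'' assembled from the same chain of references (Vogan--Zuckerman Theorems~2.5, 3.3, 4.1 and Proposition~3.2(a), Vogan's unitarizability theorem, Vogan's survey Propositions~4.4 and 4.6, and the tables of Kumaresan/Enright and Vogan--Zuckerman for the rank bound). One small wording slip: a non-trivial $U_{\mathfrak{q}}$ need not have \emph{trivial} kernel, only a kernel contained in the finite center (since $G$ is almost simple every proper closed normal subgroup is central), but compactness of the kernel is all that is used, so the conclusion $r_0(G)=r(G)$ stands.
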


We let $G_1,\ldots, G_m$ be the almost simple factors of $G$
and 
decompose accordingly $\mathfrak{g}=\mathfrak{g}_1\oplus \cdots \oplus \mathfrak{g}_m$.
We note that the $\theta$-stable parabolic subalgebras of $\mathfrak{g}$ are decomposed accordingly into direct sums of $\theta$-stable parabolic subalgebras of the $\mathfrak{g}_i$'s,
thus, with an obvious notation, \eqref{eq:U_q} gives
\begin{equation} \label{eq:U_qss}
    H^*_c(G,U_{\mathfrak{q}}) \cong \bigotimes_{i=1}^m \Hom_{\mathfrak{l}_i\cap\mathfrak{k}_i}(\wedge^{*-r_i}(\mathfrak{l}_i \cap \mathfrak{p}_i),\bbC) \cong \bigotimes_{i=1}^m H^*_c(G_i,U_{\mathfrak{q}_i}).
\end{equation}

We now turn to consider a general semisimple group, as we set up in \S\ref{sec: setup}.
The following theorem describes its cohomological dual by means of its simple factors.

\begin{theorem}[Product Formula] \label{thm:productformula}
Let $G \cong \prod_{i=1}^l G_i$ where for each $i$, $F_i$ is a local field, $\bfG_i$ is a connected almost simple $F_i$-group and $G_i=\bfG_i(F_i)$.
Under the identification $\widehat{G} \cong \prod_{i=1}^l \widehat{G_i}$ given by the exterior Hilbertian product, we have $\widehat{G}_{\cohom} \cong \prod_{i=1}^l \widehat{(G_i)}_{\cohom}$.
In particular, $\widehat{G}_{\cohom}$ is finite.
For every representation $V\cong \bar{\otimes} V_i\in \widehat{G}_{\cohom}$ the cohomology is given by 
\begin{equation} \label{eq:prodfor}
    H^*_c(G,V) \cong \bigotimes_{i=1}^l H^*_c(G_i,V_i).
\end{equation}
In particular, it is finite dimensional. 
We have $r_0(G)=\min_i r_0(G_i)$ and $r(G)=\sum_i r(G_i)$. In particular, $r_0(G)\geq \min_i\rank_{F_i}(\bfG_i)$ and $r(G)\geq \rank(G)$.
\end{theorem}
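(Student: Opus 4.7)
The plan is to reduce the theorem to the simple-factor cases handled by Theorems~\ref{thm:Casselman} and~\ref{thm:Vogan-Zuckerman} via two standard ingredients: the external tensor product decomposition of the unitary dual of a product of type~I groups, and a K\"unneth-type formula for continuous cohomology of external tensor products. The first ingredient is immediate: each $G_i$ is type~I (Harish-Chandra in the archimedean case, Bernstein in the non-archimedean case, as recalled in~\S\ref{subsec: integrability}), so every irreducible unitary $G$-representation factors as an external Hilbertian tensor product $V=V_1\bar{\otimes}\cdots\bar{\otimes}V_l$ with each $V_i\in\widehat{G_i}$, giving $\widehat{G}\cong\prod_i\widehat{G_i}$.

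The core step is the K\"unneth formula~\eqref{eq:prodfor} for irreducible $V$. I would induct on $l$, reducing to $G=G_1\times G_2$ with $V=V_1\bar{\otimes}V_2$, and apply the Hochschild-Serre spectral sequence~\cite{Blanc}*{Theorem~9.1} associated with $G_2\lhd G$:
\[ E_2^{p,q}=H^p_c\bigl(G_1,H^q_c(G_2,V)\bigr)\Longrightarrow H^{p+q}_c(G,V). \]
As a $G_2$-module, $V$ is a Hilbertian sum of copies of $V_2$ parametrised by a Hilbert basis of $V_1$ (on which $G_2$ acts trivially), yielding an identification $H^q_c(G_2,V)\cong V_1\bar{\otimes}H^q_c(G_2,V_2)$ as a $G_1$-module with trivial $G_1$-action on the second tensor factor, provided $H^q_c(G_2,V_2)$ is Hausdorff. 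By Theorems~\ref{thm:Casselman} and~\ref{thm:Vogan-Zuckerman}, $H^q_c(G_2,V_2)$ is either finite-dimensional (when $V_2$ is cohomological) or identically zero, so the hypothesis holds. This reduces the $E_2$ page to $H^p_c(G_1,V_1)\otimes H^q_c(G_2,V_2)$, and degeneration at $E_2$ follows from comparing with the external cup product map $H^*_c(G_1,V_1)\otimes H^*_c(G_2,V_2)\to H^*_c(G,V)$, which is compatible with the edge homomorphisms and is bijective in total dimension once both sides are known to be finite-dimensional.

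From~\eqref{eq:prodfor} all remaining assertions are formal. A tensor product of complex vector spaces vanishes iff some factor does, so $\widehat{G}_{\cohom}\cong\prod_i\widehat{(G_i)}_{\cohom}$, and its finiteness is inherited from each factor. For the invariants, write $V=\bar{\otimes}V_i\in\widehat{G}_{\cohom}$ and let $s_i=\min\{q:H^q_c(G_i,V_i)\neq 0\}$; then the minimal degree with $H^n_c(G,V)\neq 0$ equals $\sum_i s_i$. If $V$ is non-trivial, some $V_{i_0}$ is non-trivial, so $s_{i_0}\geq r_0(G_{i_0})$ while $s_i\geq 0$ otherwise; taking $V_{i_0}$ extremal and the other $V_i=\bbC$ realises equality, giving $r_0(G)=\min_i r_0(G_i)$. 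For $r(G)$, the identity $\ker V\cap(\{e\}\times\cdots\times G_i\times\cdots\times\{e\})=\ker V_i$ shows that compactness of $\ker V$ forces compactness of each $\ker V_i$; conversely, finiteness of the centre of each $\bfG_i$ controls the ``scalar slack'' in the tensor-product kernel and yields the reverse implication, so the same book-keeping gives $r(G)=\sum_i r(G_i)$. The stated inequalities against $\rank_{F_i}(\bfG_i)$ and $\rank(G)$ then follow from the corresponding simple-factor bounds.

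The main obstacle I anticipate is the intermediate topological identification $H^q_c(G_2,V_1\bar{\otimes}V_2)\cong V_1\bar{\otimes}H^q_c(G_2,V_2)$ equipped with the correct Fr\'{e}chet $G_1$-module structure. Because the second tensor factor is either zero or finite-dimensional by our case analysis, this reduces to an explicit chain-level computation; once done, Hochschild-Serre convergence and $E_2$-degeneration are formal from the product structure, and the remaining corollaries fall out automatically.
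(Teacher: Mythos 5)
Your argument takes a genuinely different route from the paper. For the K\"unneth formula~\eqref{eq:prodfor}, the paper does not run a Hochschild--Serre argument at all: in the purely archimedean case it simply reads off equation~\eqref{eq:U_qss} from the Vogan--Zuckerman formula for $A_{\mathfrak q}$-modules, and for the remaining cases it cites the ready-made K\"unneth results of Borel--Wallach, namely XII~Corollary~3.2 (Lie $\times$ totally disconnected) and X~Corollary~6.2 (totally disconnected $\times$ totally disconnected), then reduces to these by peeling off the archimedean block. You instead try to rederive the K\"unneth formula from scratch via the Blanc spectral sequence for $G_2\lhd G_1\times G_2$. In principle this is a reasonable alternative, and the observation that Hausdorffness of $H^q_c(G_2,V_2)$ (which holds by Theorems~\ref{thm:Casselman} and~\ref{thm:Vogan-Zuckerman}) is exactly what makes Blanc's hypothesis applicable is correct.

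However, as written your degeneration step contains a circularity. You assert that the $E_2$-degeneration ``follows from comparing with the external cup product map \(\dots\) which is bijective in total dimension once both sides are known to be finite-dimensional.'' But the finite dimensionality of $H^*_c(G,V)$ you need for this counting argument is precisely what the convergence of a \emph{degenerating} spectral sequence would yield; a priori the spectral sequence only gives the inequality $\dim H^n_c(G,V)\le\sum_{p+q=n}\dim E_2^{p,q}$, and matching dimensions gives bijectivity of a linear map only if you already know equality of total dimensions, which is equivalent to degeneration. Likewise, compatibility of the cup product with the edge homomorphisms controls only the corner entries $E_2^{n,0}$ and $E_2^{0,n}$, not the interior ones. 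The standard way to force degeneration here is either to invoke the specific chain-level identifications available in each case (this is exactly what Borel--Wallach's XII~3.2 and X~6.2 do, exploiting the $(\mathfrak g,K)$-complex in the Lie case and the complex of locally constant cochains in the totally disconnected case) or to run a genuine double-complex Eilenberg--Zilber argument in the Fr\'{e}chet category; a naive cup-product comparison does not suffice. There is also a secondary gap you flag yourself: the identification $H^q_c(G_2,V_1\bar\otimes V_2)\cong V_1\bar\otimes H^q_c(G_2,V_2)$ as Fr\'{e}chet $G_1$-modules is not automatic for Hilbertian tensor products (continuous cohomology does not commute with infinite direct sums); it does hold here because $H^q_c(G_2,V_2)$ is finite-dimensional and Hausdorff, but that needs to be proved at the cochain level rather than asserted. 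With these two points tightened, your approach would work and would be more uniform than the paper's case split; as written, though, the degeneration is not justified. The formal deductions from~\eqref{eq:prodfor} of the bijection $\widehat G_{\cohom}\cong\prod_i\widehat{(G_i)}_{\cohom}$, and of the formulas for $r_0(G)$ and $r(G)$ (including the kernel bookkeeping using finiteness of the centers), are correct and match the paper's brief remark that these are formal consequences of~\eqref{eq:prodfor}.
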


\begin{proof}
We first prove equation~\eqref{eq:prodfor}.
If all $F_i$ are archimedean, that is, $G$ is a Lie group, this is equation\eqref{eq:U_qss}. 
By~\cite[XII, Corollary 3.2]{Borel-Wallach} we have
\[ H^*_c(G' \times G'',V'\bar{\otimes} V'' ) \cong H^*_c(G',V') \otimes H^*_c(G'',V''),\]
where $G'$ is a Lie group and $G''$ a totally disconnected group.
Therefore we may assume that all $F_i$ are non-archimedean, and the equation follows by induction using \cite[X, Corollary 6.2]{Borel-Wallach}.
This completes the proof of equation~\eqref{eq:prodfor}.

  The bijection $\widehat{G}_{\cohom} \cong \prod_{i=1}^l \widehat{(G_i)}_{\cohom}$ and the formulas for $r_0(G)$ and $r(G)$ are formal consequences of equation~\eqref{eq:prodfor} 
  and the finiteness of $\widehat{G}_{\cohom}$ and finite dimensionality of the cohomologies follow from Lemma~\ref{lem:compact}, Theorem~\ref{thm:Vogan-Zuckerman} and Theorem~\ref{thm:Casselman}.
\end{proof}

We finish this subsection with an adelic version of the product formula.

\begin{theorem} \label{thm:adeliccohom}
    Let $k$ be a number field and let $\bfG$ be connected almost simple $k$-algebraic group.
    Then the cohomological dual of the corresponding adelic group  $\widehat{\bfG(\bbA(k))}_{\cohom}$  is countable and discrete.
Each representation in it is the pullback of a cohomological representation of $\prod_S \bfG(k_s)$ for some finite set $S$ consisting of places of $k$.
In particular,
each representation in it has a finite dimensional cohomology. 
More precisely, for every $V\in \widehat{\bfG(\bbA(k))}_{\cohom}$ there exist a finite set $S$ of places of $k$,
and for each $s\in S$ a cohomological representations $V_s$ of $\bfG(k_s)$ such that $V\cong \bar{\otimes}_S V_s$ and 
\[ H^*(\bfG(\bbA(k)),V) \cong \otimes_S H^*_c(\bfG(k_s),V_s). \]
\end{theorem}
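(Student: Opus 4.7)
The plan is to exploit Flath's restricted tensor product decomposition for irreducible unitary representations of the adelic group, combined with K\"unneth-type formulas for continuous cohomology, to reduce the study of $\widehat{\bfG(\bbA(k))}_{\cohom}$ to its local factors, where Theorem~\ref{thm:Casselman} and Theorem~\ref{thm:Vogan-Zuckerman} (together with Lemma~\ref{lem:compact} at the compact places) give complete answers.

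I would first fix a finite set $S_0$ of places of $k$ containing all archimedean ones, all places where $\bfG$ has bad reduction, and all $s$ where $\bfG$ is $k_s$-anisotropic; outside $S_0$ each $G_s \defq \bfG(k_s)$ is noncompact, admits the compact open subgroup $K_s \defq \bfG(\calO_s)$, and $\bfG(\bbA(k))$ is the restricted product of the $G_s$ with respect to the $K_s$. By Flath's theorem any irreducible unitary $\bfG(\bbA(k))$-representation $V$ decomposes canonically as $V \cong \hat\otimes'_s (V_s, \xi_s)$ with $V_s \in \widehat{G_s}$ and $\xi_s$ a $K_s$-fixed unit vector for every $s$ outside some finite set $S \supset S_0$.

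Now assume $V$ is cohomological. For an arbitrary finite $S' \supset S$ I would split $\bfG(\bbA(k)) = G_S \times G_{S'\setminus S} \times G^{S'}$ and correspondingly $V = V_S \bar\otimes V_{S'\setminus S} \bar\otimes V^{S'}$, apply the K\"unneth formulas of~\cite[XII, Corollary 3.2]{Borel-Wallach} and~\cite[X, Corollary 6.2]{Borel-Wallach}, and use Theorem~\ref{thm:productformula} to factor the cohomology. Non-vanishing of $H^*_c(\bfG(\bbA(k)),V)$ then forces $H^*_c(G_s,V_s) \neq 0$ for every $s \in S'\setminus S$. By Theorem~\ref{thm:Casselman} each such $V_s$ is either trivial or Steinberg; since Steinberg is non-spherical but $V_s$ has a $K_s$-fixed vector, $V_s$ must be trivial. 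As $S'$ is arbitrary, $V_s = \bbC$ for every $s \notin S$, so $V$ is pulled back from $G_S$ via the natural projection, and the explicit tensor factorization of $H^*$ then follows directly from Theorem~\ref{thm:productformula}.

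Countability is now formal: a cohomological representation is parametrized by a finite subset $S$ of the countable set of places together with, for each $s\in S$, a choice from the finite set $\widehat{G_s}_{\cohom}$. Discreteness follows because each such $V$ has a nonzero vector fixed by the compact open subgroup $K^S = \prod_{s\notin S} K_s$, and within the locally closed subset of representations with such a fixed vector, the cohomological ones form a discrete subfamily parametrized by the finite data at the remaining places. The main technical obstacle I anticipate is the K\"unneth step for the infinite restricted product factor $G^{S'}$: the cited Borel--Wallach results apply only to finite products, so one must present $V^S$ as the smooth colimit of its $K^{S'}$-fixed subspaces as $S' \supset S$ enlarges, observe via Lemma~\ref{lem:compact} that the compact factor $K^{S'}$ contributes only in degree zero, and verify that continuous cohomology commutes with this colimit. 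Once this foundational step is set up carefully, the remainder of the argument is essentially bookkeeping.
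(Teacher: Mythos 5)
Your local decomposition and countability arguments closely match the paper's: the restricted tensor product structure, the K\"unneth factorization, Casselman's theorem that the cohomological dual of a nonarchimedean simple factor is $\{{\bf 1},\St\}$, and the observation that Steinberg is not spherical — all appear in the paper's proof in essentially the same roles.

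The discreteness argument, however, has a genuine gap. You write that ``within the locally closed subset of representations with such a fixed vector, the cohomological ones form a discrete subfamily parametrized by the finite data at the remaining places,'' but parametrization by finite data does not by itself imply discreteness in the Fell topology — inequivalent irreducibles can weakly contain one another, and the family need not be $T_1$. The paper's proof of discreteness requires two substantive steps that your sketch does not address. First, one must rule out that $V$ is weakly contained in the sum $U'_S$ of cohomological representations that are non-spherical at some place outside $S$; the paper does this by restricting to the compact group $K_S=\prod_{s\notin S}\bfG(\calO_s)$, noting $V$ is $K_S$-trivial, and using property T of the compact group $K_S$ to force a $K_S$-invariant vector in $U'_S$, which contradicts the presence of a Steinberg factor at some place outside $S$. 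Second, once $V$ is weakly contained in the \emph{finite} direct sum $U_S$, one must conclude $V$ is equivalent to one of its summands. This is far from automatic — weak containment of irreducibles in irreducibles does not in general imply equivalence — and the paper invokes \cite[Lemma~4]{Bekka-Cowling} here. Without these two ingredients the discreteness claim does not follow.

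A side remark: you flag the K\"unneth step for the restricted product as a technical obstacle and propose a colimit argument, whereas the paper handles it by peeling off finite factors and citing \cite[XII, Corollary 3.2]{Borel-Wallach} and \cite[X, Corollary 6.2]{Borel-Wallach} (which apply to a product of two factors, one of which may be an infinite restricted product). Your colimit route is plausible but diverges from the paper; either way this is not where the real difficulty of the theorem lies.
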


\begin{proof}
Set $G=\bfG(\bbA(k))$. 
By \cite[Theorem 3.4(1)]{Conrad}, $\bfG$ is defined over $\calO_{S_0}$, where $S_0$ is a finite set of places of $k$ containing all archimedean ones.
We recall the description of $\widehat{G}$ as given in \cite[\S6]{Moore} or \cite{Tadic} (see also the useful introduction of \cite{Bekka-Cowling}).
Every irreducible representation $V$ of $G$ is of the form 
$\bar{\otimes} V_s$, where for every place $s$ of $k$, $V_s$ is an irreducible representation of $G_s=\bfG(k_s)$ which is spherical for almost every $s$.
Here spherical means that $V^{\bfG(\calO_s)}\neq 0$, which is a well defined expression for $s\notin S_0$.

For every finite set of places $S$ we write $G_S=\prod_S G_s$ and let $G'_S$ be the restricted product of the places $s\notin S$. Similarly, we write $V_S=\bar{\otimes}_{s\in S} V_s$ and $V'_S=\bar{\otimes}_{s\notin S} V_s$.
We claim that 
\[ H^\ast_c(G,V) \cong H^\ast_c(G_S,V_S)\otimes H^\ast_c(G'_S,V'_S). \]
For $S_\infty$ being the archimedean places, this is \cite[XII, Corollary 3.2]{Borel-Wallach}
and for every finite set $S$ of non-archimedean places, by induction using \cite[X, Corollary 6.2]{Borel-Wallach}, we have 
\[ H^\ast_c(G'_{S_\infty},V'_{S_\infty}) \cong H^\ast_c(G_S,V_S) \otimes H^\ast_c(G'_{S_\infty\cup S},V'_{S_\infty\cup S}). \]
The claim now follows by Theorem~\ref{thm:productformula}.

It follows that for every $V\in \widehat{G}_{\cohom}$, for every $s$, $V_s\in \widehat{(G_s)}_{\cohom}$.
In particular, for  every non-archimedean $s$, $V_s$ is either trivial or the Steinberg representation of $G_s$.
Since the Steinberg representation, by construction, is not spherical, we get that  $V_s$ is the trivial representation of $G_s$ for almost every place~$s$,.
The only part of the theorem which does not follow immediately is that $\widehat{G}_{\cohom}$ is discrete.

To this end, we fix a cohomological representation $V$ and show that it is closed in $\widehat{G}_{\cohom}$,
that is, it is not weakly contained in $U$, where $U$ be the direct sum of all other cohomological representations.
Assume the contrary.
Let $S$ be a finite set of places that includes $S_0$ and also all places with $V_s$ non-trivial.
Decompose $U=U_S\oplus U'_S$, where $U_S$ is the sum of all cohomological representations (other than $V$) which are $G_s$-trivial for every $s\notin S$ and $U'_S$ is its complement.

We claim that $V$ is weakly contained in $U_S$.
Let $K_S<G'_S$ be the product of the compact subgroups $\bfG(\mathcal{O}_s)<G_s$, $s\notin S$.
Assume $V$ is weakly contained in $U'_S$.
Then it also weakly contained there as a $K_S$-representation.
But $V_S$ is $K_S$-trivial and $K_S$ is compact, hence has property T.
It follows that $U'_S$ contains a non-trivial $K_S$-invariant vector.
But, by the construction of $U'_S$, each of its $G$-irreducible subrepresentations has a place $s\notin S$ in which the local representation is Steinberg, thus has no $K_s$-invariant vector.
This is a contradiction, and we deduce that $V$ is not weakly contained in $U'_S$.
Thus, $V$ is weakly contained in $U_S$, proving the claim.

We deduce that there is a $G$-irreducible representation $V_0$ in $U_S$ such that $V$ is weakly contained in $V_0$, as $U_S$ is a direct sum of finitely many $G$-irreducible representation.
By \cite[Lemma 4]{Bekka-Cowling}, we get that $V$ is equivalent to $V_0$, contradicting the definition of $U$. This contradicts the assumption that $V$ is weakly contained in $U$
and the proof is now complete.
\end{proof}

The following is an immediate corollary of Theorems \ref{thm:adeliccohom}, \ref{thm:Vogan-Zuckerman} and \ref{thm:Casselman} which pertains to the trivial representation $V=\bbC$.

\begin{cor} \label{cor:adeletriv}
        Let $k$ be a number field and let $\bfG$ be connected almost simple $k$-algebraic group.
        Then the map 
        \[ H_c^*(\mathbf{G}(k\otimes \bbR),\bbC)\to H_c^*(\mathbf{G}(\mathbb{A}(k)),\bbC), \]
        associated with the projection map $\mathbb{A}(k)\to k\otimes \bbR$ stemming from the identification of $k\otimes \bbR$ as the algebra of infinite adeles,
        is a continuous isomorphism.
\end{cor}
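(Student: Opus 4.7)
\emph{Proof plan.} The trivial representation $\bbC$ is irreducible and cohomological (since $H^0_c(\bfG(\bbA(k)),\bbC)=\bbC\neq 0$), so I would apply Theorem~\ref{thm:adeliccohom} directly to $V=\bbC$. Every local component $V_s=\bbC$ is trivial, and Theorem~\ref{thm:Casselman} gives $H^*_c(\bfG(k_s),\bbC)=\bbC$ concentrated in degree~$0$ at each non-archimedean place $s$. Inspecting the proof of Theorem~\ref{thm:adeliccohom}, one may take $S=S_\infty$ (the set of archimedean places), yielding
\[ H^*_c(\bfG(\bbA(k)),\bbC)\cong\bigotimes_{s\in S_\infty}H^*_c(\bfG(k_s),\bbC)\cong H^*_c(\bfG(k\otimes\bbR),\bbC), \]
where the last isomorphism is the finite Künneth formula for $\bfG(k\otimes\bbR)=\prod_{s\in S_\infty}\bfG(k_s)$ (Borel--Wallach XII, Corollary~3.2).

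I would then identify this composite isomorphism with the inflation map induced by the projection $\bfG(\bbA(k))\to\bfG(k\otimes\bbR)$. The isomorphism of Theorem~\ref{thm:adeliccohom} is assembled from iterated Künneth isomorphisms of Borel--Wallach X, Corollary~6.2 and XII, Corollary~3.2, each of which is natural with respect to pullback along projection onto either factor. Explicitly, for a product $G_1\times G_2$ the Künneth decomposition identifies the inflation along the projection onto $G_1$ with the map $x\mapsto x\otimes 1$, where $1\in H^0_c(G_2,\bbC)\cong\bbC$ is the canonical generator. Iterating this observation through the Künneth decomposition of $\bfG(\bbA(k))$ identifies the inflation map with the isomorphism assembled above, and the continuity assertion is automatic since all cohomology groups in sight are finite-dimensional by Theorems~\ref{thm:Vogan-Zuckerman} and~\ref{thm:Casselman}.

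The only point requiring some care is the collapse of the non-archimedean tail: one must verify, using Theorem~\ref{thm:Casselman} together with the restricted-product argument internal to the proof of Theorem~\ref{thm:adeliccohom}, that $H^*_c(\bfG(\bbA_f(k)),\bbC)\cong\bbC$ is concentrated in degree~$0$. Granted this, the $s\in S_\infty$ factor alone carries the entire cohomology, and the Künneth inclusion $x\mapsto x\otimes 1$ from $H^*_c(\bfG(k\otimes\bbR),\bbC)$ into $H^*_c(\bfG(\bbA(k)),\bbC)$ is the desired inflation isomorphism.
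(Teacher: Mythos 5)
Your proof is correct and follows the paper's intended route: specialize Theorem~\ref{thm:adeliccohom} to $V=\bbC$, use Theorem~\ref{thm:Casselman} to collapse the non-archimedean tail to $\bbC$ in degree zero, apply the K\"unneth formula at the archimedean places, and track naturality to identify the resulting isomorphism with inflation along the projection. The paper dispatches the corollary in one sentence by citing Theorems~\ref{thm:adeliccohom}, \ref{thm:Vogan-Zuckerman} and \ref{thm:Casselman}; you have correctly unpacked what "immediate" means, including the two points that genuinely require attention — the collapse of $H^*_c(\bfG(\bbA_f(k)),\bbC)$ and the naturality of the K\"unneth decomposition with respect to pullback along a factor projection.
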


\subsection{Low degree cohomology and cocompact lattices} \label{sec:lowdeg}

Recall the definitions of the invariant $r_0$ and $r$ given in Definition~\ref{def:r}.
These invariants are computed explicitly; see \S\ref{subsec:cohomreps} and the  Appendix~\ref{sec:rG}.
In Theorem~\ref{thm:r0} we related the invariant $r_0$ to higher property T.
If $G$ is simple, this is the best we can do. However, in general $r(G)\geq r_0(G)$, and we can do better.
This subsection addresses the case of semisimple groups that are not necessarily simple.
Most theorems in this section come in two flavors: one for a semisimple group as in the setup given in \S\ref{sec: setup} and one for connected semisimple Lie groups.  

\begin{theorem} \label{thm:vanishss-general}
Let $G \cong \prod_{i=1}^l \bfG_i(F_i)$ be a non-compact semisimple group where, for each $i$, $F_i$ is a local field, $\bfG_i$ is a connected almost simple $F_i$-group. Let $G_i=\bfG_i(F_i)$.  
Let $V$ be a unitary $G$-representation such that $V^{G_i}=0$ for each non-compact factor $G_i$.  
    Then $\bar{H}^j_c(G,V)=0$ for every $j<r(G)$. 
    If, for each non-compact factor $G_i$, $V$ has no almost invariant vectors as a $G_i$-representation, then $H^j_c(G,V)=0$ for every $j<r(G)$.
\end{theorem}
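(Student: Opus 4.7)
My plan is to reduce the general problem to irreducible unitary representations and then exploit the multiplicativity of continuous cohomology under the product decomposition $G=\prod_{i=1}^l G_i$.

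First I would use the direct integral decomposition $V \cong \int_{\widehat{G}}^{\oplus} V_\pi\, d\mu(\pi)$, which is legitimate since each $\bfG_i(F_i)$ is of type~I (as recalled in \S\ref{subsec: integrability}). The $G_i$-invariants of a unitary direct integral are the integral of the fiberwise invariants, so the hypothesis $V^{G_i}=0$ for each non-compact $G_i$ passes $\mu$-almost everywhere to the components $V_\pi$. Arguing as in the proof of Lemma~\ref{lem:directint} via \cite[Theorem~7.2]{Blanc}, this reduces the first assertion to the case where $V$ is irreducible.

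An irreducible unitary $G$-representation factors as $V \cong \bar\otimes_{i=1}^l V_i$ with $V_i\in\widehat{G_i}$; the hypothesis becomes $V_i \ne \bbC$ for every non-compact~$G_i$. Since $\bfG_i$ is almost simple with finite centre, the kernel of such a non-trivial $V_i$ is central, hence compact, placing $V_i$ in the class of representations governed by $r(G_i)$. By Theorem~\ref{thm:productformula},
\[ H^j_c(G,V)\;\cong\; \bigoplus_{j_1+\cdots+j_l=j}\;\bigotimes_{i=1}^l H^{j_i}_c(G_i,V_i). \]
A non-zero summand demands that each $V_i$ be cohomological, and by Definition~\ref{def:r} together with Theorems~\ref{thm:Vogan-Zuckerman} and~\ref{thm:Casselman}, this forces $j_i \ge r(G_i)$ for each non-compact~$G_i$ (for compact $G_i$, Lemma~\ref{lem:compact} forces $j_i=0$, matching $r(G_i)=0$). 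Summing gives $j \ge \sum_i r(G_i) = r(G)$, so for $j<r(G)$ every summand vanishes. Combined with the previous reduction, this yields $\bar H^j_c(G,V)=0$ for every unitary $V$ with $V^{G_i}=0$ on each non-compact factor.

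For the unreduced assertion, under the stronger no-almost-invariant-vectors hypothesis the task reduces to showing that $H^j_c(G,V)$ is Hausdorff for $j<r(G)$, so that the reduced vanishing just established forces unreduced vanishing. I would do this by an iterated Hochschild--Serre argument along $G=\prod_i G_i$, combined with the $L^p$-integrability/Kunze--Stein machinery of \S\ref{subsec: integrability}: the absence of $G_i$-almost invariant vectors in $V$ yields, via Theorem~\ref{Cowling}, that $V$ sits in $\widehat{G_i}_{p_i}$ for some finite $p_i$, and this spectral gap is used to close the images of the cochain-level differentials contributing to each factor-wise stage of the spectral sequence.

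\textbf{Main obstacle.} The delicate point is the passage from reduced to unreduced cohomology. Reduced cohomology respects direct integrals (Blanc's theorem), whereas unreduced cohomology does not, so a fibre-wise argument is unavailable at the unreduced level. Moreover, an individual rank-one factor $G_i$ need not enjoy Kazhdan's property~T, so the closed-image property of the coboundary operators cannot be extracted from a single-factor property-T argument but must be distilled from the product structure and the spectral-gap machinery developed in~\S\ref{sec:SG}.
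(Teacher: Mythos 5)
The first half of your proposal, establishing $\bar H^j_c(G,V)=0$ for $j<r(G)$, is essentially the paper's argument: pass to a direct integral of irreducibles, use~\cite[Theorem~7.2]{Blanc} for compatibility of reduced cohomology with direct integrals, and combine the product formula (Theorem~\ref{thm:productformula}) with the definition of $r(G)$ (or equivalently, as you spell out, the K\"unneth decomposition forcing $j_i\ge r(G_i)$ in each factor). Your more explicit bookkeeping of the irreducible case is fine and amounts to what the paper compresses into ``by definition of $r(G)$''.

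The second half, however, is where you have a genuine gap, and the sketch you offer does not close it. You propose to show Hausdorffness of $H^j_c(G,V)$ via an iterated Hochschild--Serre spectral sequence and the Kunze--Stein/$L^p$-integrability machinery (Theorem~\ref{Cowling}). Two concrete problems. First, Theorem~\ref{Cowling} requires $\bfG_i$ to be \emph{simply connected}, which is not assumed in Theorem~\ref{thm:vanishss-general}, so you cannot invoke it as stated. Second, and more fundamentally, knowing that $V$ lies in $\widehat{G_i}_{p_i}$ (i.e.\ that matrix coefficients are $L^{p_i+\epsilon}$) does not in any visible way make the coboundary operators on the continuous bar complex of $G$ have closed range; the passage from spectral gap to closed image at the cochain level is precisely the step you would have to supply, and there is no standard Hochschild--Serre argument that provides it.

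The paper's route is quite different and is worth noting as the missing idea. It first chooses a cocompact lattice $\Lambda=\Lambda_1\times\cdots\times\Lambda_l$ in $G$ (product of cocompact lattices in the factors, available by Borel--Harder), then deduces from the reduced vanishing for $G$, via the Shapiro Lemma~\ref{lem:Shapiro}, that $\bar H^j(\Lambda,U)=0$ whenever $U^{\Lambda_i}=0$ for all~$i$. The crucial step from reduced to unreduced vanishing is the ultrapower argument of Theorem~\ref{thm:gensh}: if $H^j(\Lambda,V)\ne 0$, then $\bar H^j(\Lambda,V_\omega)\ne 0$ for an ultrapower $V_\omega$; the no-almost-invariant-vectors hypothesis (transported from $G_i$ to $\Lambda_i$ via~\cite[III, Corollary~(1.10)]{Margulis}) forces $(V_\omega)^{\Lambda_i}=0$, so the reduced vanishing applies to $V_\omega$, a contradiction. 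Finally, Shapiro gives the injection $H^j_c(G,V)\hookrightarrow H^j(\Lambda,V)=0$. Note that this argument requires $\Lambda$ to have the appropriate finiteness property $\FP_n(\bbQ)$ (so the cochain complex is built from finitely generated modules, which is what makes the ultrapower argument work), and this is exactly why the detour through a cocompact lattice --- rather than working directly with $G$ --- is essential. Your proposal never passes to a lattice, never introduces the ultrapower functor, and this is the substance you are missing.
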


\begin{proof}
    Let $G^{is}$ be the product of the non-compact almost simple factors of $G$. Let $G^{an}$ be the product of the compact almost simple factors. By Lemma~\ref{lem: product with compact group} we have \[H^\ast_c(G,V)\cong H_c^\ast\bigl(G^{is}, V^{G^{an}}\bigr),\]
    and $V^{an}$ has no invariant vectors or no almost invariant vectors, respectively, for any almost simple factor of $G^{is}$. 
    Therefore we may and will assume that $G$ has no compact factors. 
    
    By~\cite[Theorem F.5.3]{Tbook}, $V$ decomposes as a direct integral $\int_X V_x$ of irreducible unitary representations over a standard measure space~$X$. For every factor, $V_x^{G_i}=0$ almost everywhere.
       Let $j<r(G)$. By definition of $r(G)$ given in Definition~\ref{def:r}, we have  ${H}^j_c(G,V_x)=0$ almost everywhere. 
       Hence $\bar{H}^j_c(G,V)=0$ by the compatibility of reduced cohomology with direct integral decompositions~\cite[Theorem 7.2]{Blanc}.
This completes the proof of the first part of the theorem.

  Each $G_i$ possesses a cocompact lattice $\Lambda_i$~\cite{Borel-Harder}. Let $\Lambda=\Lambda_1\times \cdots \times \Lambda_l$, which is 
    a cocompact lattice in $G$. 
    We claim that for every unitary $\Lambda$-representation~$U$ with $U^{\Lambda_i}=0$ for each $\Lambda_i$ we have $\bar{H}^j_c(\Lambda,U)=0$ for every $j<r(G)$:  
    Since $(L^2(G_i/\Lambda_i)\bar{\otimes} U)^{G_i}\cong U^{\Lambda_i}=0$, we obtain that 
    \[ I^2(U)^{G_i} \cong \Bigl(L^2(G/\Lambda)\bar{\otimes} U\Bigr)^{G_i} \cong \Bigl(L^2(G_1/\Lambda_1)\bar{\otimes}\cdots \bar{\otimes}L^2(G_l/\Lambda_l)\bar{\otimes} U\Bigr)^{G_i}=0.\]
By the Shapiro Lemma~\ref{lem:Shapiro} and by the first part of the theorem we obtain that 
\[\bar{H}^j(\Lambda,U)=\bar{H}^j_c(G,I^2(U))=0. 
\]
Next we assume that, for any factor $G_i$, $V$ has no almost invariant vectors as a $G_i$-representation.

Let $j<r(G)$. 
    We first show that ${H}^j(\Lambda,V)=0$.
    Otherwise, as in the proof of Theorem~\ref{thm:gensh} we would obtain that  $\bar{H}^j_c(\Lambda,V_\omega)\neq 0$ for the ultrapower representation $V_\omega$
    associated with a fixed ultrafilter $\omega$ on $\bbN$.     
    By \cite[III, Corollary (1.10)]{Margulis}, $V$ has no almost invariant vectors as a $\Lambda_i$-representation. Hence $(V_\omega)^{\Lambda_i}=0$ for $i\in\{1,\dots,l\}$ and, by the above argument,  $\bar H^j(\Lambda, V_\omega)=0$, a contradiction -- if one neglects that the ultrapower $V_\omega$ is not separable. But this can be resolved with Lemma~\ref{lem: non-separable reps}. So we get that ${H}^j(\Lambda,V)=0$ for $j<r(G)$. 

    Finally, by the Shapiro Lemma~\ref{lem:Shapiro}, ${H}^j_c(G,V)$ injects into ${H}^j_c(\Lambda,V)$ and it follows that ${H}^j_c(G,V)=0$.
    This finishes the proof.
\end{proof}

\begin{remark}
The proof of the previous theorem shows that it suffices to assume that $V^{G^{an}}$ has no (almost) invariant factors with respect to any non-compact factor. 
\end{remark}

\begin{theorem} \label{thm:Lie-vanishss-general}
Let $G$ be a non-compact connected semisimple Lie group with finite center. 

Let $V$ be a unitary $G$-representation such that for each almost simple factor $G_i$ of $G$, $V^{G_i}=0$.
    Then $\bar{H}^j_c(G,V)=0$ for every $j<r(G)$.
    
    If for each almost simple factor $G_i$ of $G$, $V$ has no almost invariant vectors as a $G_i$-representation then ${H}^j_c(G,V)=0$ for every $j<r(G)$.
\end{theorem}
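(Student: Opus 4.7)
The proof is a direct Lie-group analogue of Theorem~\ref{thm:vanishss-general} and follows the same three-step strategy. First, Lemma~\ref{lem: product with compact group} lets me pass to the quotient by the product of compact almost simple factors and assume all almost simple factors $G_1,\dots,G_l$ of $G$ are non-compact.

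For the reduced vanishing, I would decompose $V$ as a direct integral $\int_X V_x\,d\mu(x)$ of irreducible unitary representations via~\cite[Theorem F.5.3]{Tbook}. The hypothesis forces $V_x^{G_i}=0$ for a.e.~$x$ and every $i$, so under the product decomposition $V_x\cong\bar{\otimes}_i V_{x,i}$ every factor is non-trivial. Because each $G_i$ is connected, almost simple, and has finite center, every non-trivial irreducible unitary $G_i$-representation has finite (central) kernel; therefore the kernel of the $G$-action on $V_x$ is compact. By the definition of $r(G)$ (Definition~\ref{def:r}) combined with the product formula of Theorem~\ref{thm:productformula}, $H^j_c(G,V_x)=0$ for every $j<r(G)$, and the compatibility of reduced cohomology with direct integrals~\cite[Theorem 7.2]{Blanc} yields $\bar H^j_c(G,V)=0$ in the same range.

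For the unreduced statement, Borel's theorem provides a cocompact lattice $\Lambda_i$ in each factor $G_i$, and $\Lambda=\Lambda_1\times\cdots\times\Lambda_l$ is a cocompact lattice in $G$ admitting a finite Eilenberg--MacLane space; in particular it satisfies property~$\FP$. As in the proof of Theorem~\ref{thm:vanishss-general}, I would first check that for any unitary $\Lambda$-representation $U$ with $U^{\Lambda_i}=0$ for every $i$, $\bar H^j(\Lambda,U)=0$ for $j<r(G)$; this is deduced from $I^2(U)^{G_i}=0$ (using the product decomposition $G/\Lambda=\prod_i G_i/\Lambda_i$), the Shapiro Lemma~\ref{lem:Shapiro}, and the reduced vanishing of the previous paragraph. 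The Shapiro injection $H^j_c(G,V)\hookrightarrow H^j(\Lambda,V)$ then reduces the unreduced statement to showing $H^j(\Lambda,V)=0$ for $j<r(G)$. If this failed, the ultrapower construction of Theorem~\ref{thm:gensh} would produce, for a fixed non-principal ultrafilter $\omega$ on $\bbN$, an element $0\neq x\in\bar H^j(\Lambda,V_\omega)$; by~\cite[III, Corollary 1.10]{Margulis}, the assumption that $V$ has no $G_i$-almost invariant vectors forces $V$ to have no $\Lambda_i$-almost invariant vectors either, so $(V_\omega)^{\Lambda_i}=0$ for every $i$. Lemma~\ref{lem: non-separable reps} now yields a separable $\Lambda$-subrepresentation $W\subset V_\omega$ with $\bar H^j(\Lambda,W)\neq 0$; since $W\subset V_\omega$, also $W^{\Lambda_i}=0$ for every $i$. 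The result of the first sentence of this paragraph then forces $\bar H^j(\Lambda,W)=0$, a contradiction.

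No conceptual new obstacle arises: the Lie-theoretic inputs (Borel's existence of cocompact lattices, Howe--Moore style analysis of kernels of irreducibles, the Vogan--Zuckerman--Borel--Wallach product formula) are already compiled in~\S\ref{subsec:cohomreps}, and the ultrapower machinery of~\S\ref{sec:finiteness} is tailored for this situation. The only delicate point, the non-separability of the ultrapower, is precisely what Lemma~\ref{lem: non-separable reps} is designed to handle.
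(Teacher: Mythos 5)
Your proof follows the same overall strategy as the paper's (which simply reduces to Theorem~\ref{thm:vanishss-general} after a normalization and then runs its three-step argument), but your reduction step is incomplete in a way that actually breaks two of the ingredients you later rely on. You apply Lemma~\ref{lem: product with compact group} only to quotient out the compact almost simple factors; this leaves $G$ with a possibly non-trivial finite center, so $G$ is merely an \emph{almost} direct product of its simple factors, not a genuine product $G_1\times\cdots\times G_l$. The paper instead quotients by the finite center to reach the adjoint case, where $G$ really does split as a product. Without that step, the tensor decomposition $V_x\cong\bar\otimes_i V_{x,i}$ is only valid after pulling back to the universal covering product, and -- more seriously -- the set $\Lambda_1\times\cdots\times\Lambda_l$ you form from Borel--Harder cocompact lattices $\Lambda_i<G_i$ is not a subgroup of $G$ when the $G_i$ share central elements, so the Shapiro computation via $G/\Lambda=\prod_i G_i/\Lambda_i$ does not make sense as written.

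The fix is exactly the lemma you already invoke: the finite center $Z(G)$ is a compact normal subgroup, so Lemma~\ref{lem: product with compact group} gives $H^\ast_c(G,V)\cong H^\ast_c(G/Z(G),V^{Z(G)})$, and the hypothesis $V^{G_i}=0$ passes to $V^{Z(G)}$ since $(V^{Z(G)})^{\bar G_i}\subset V^{G_i}=0$. The adjoint group $G/Z(G)$ decomposes as a genuine product, after which your remaining steps (direct integral, compactness of the kernel of each $V_x$, definition of $r(G)$, product cocompact lattice, ultrapower, Lemma~\ref{lem: non-separable reps}) all go through verbatim and coincide with the paper's argument. Everything else in your write-up is correct, and your observation that the compactness of $\ker(G\curvearrowright V_x)$ plus Definition~\ref{def:r} already yields $H^j_c(G,V_x)=0$ for $j<r(G)$ -- without passing through Theorem~\ref{thm:productformula} -- is precisely what the paper does too.
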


\begin{proof}
Lemma~\ref{lem: product with compact group} allows us to take the quotient by the center and reduce the proof to the adjoint case. Then $G$ decomposes as a product $G=G_1\times \dots\times G_n$ still satisfying the assumption. Now we follow verbatim the proof of Theorem~\ref{thm:vanishss-general}. 
\end{proof}

\begin{theorem} \label{thm:isoss}
Let $G \cong \prod_{i=1}^l \bfG_i(F_i)$ be a semisimple group where, for each $i$, $F_i$ is a local field, $\bfG_i$ is a connected, simply connected, almost simple $F_i$-group and $G_i=\bfG_i(F_i)$ is non-compact. 
Let $\Gamma<G$ be an irreducible cocompact lattice.
    Let $V$ be a unitary $G$-representation.
    Then for every $j<r(G)$ the restriction map $H^j_c(G,V)\to H^j(\Gamma,V)$ is a isomorphism.
\end{theorem}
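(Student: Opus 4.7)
The strategy is to combine the Shapiro Lemma with the vanishing result of Theorem~\ref{thm:vanishss-general} and the spectral gap result of Theorem~\ref{thm:specgap}, all already at hand.

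First, I would apply the Shapiro Lemma~\ref{lem:Shapiro} to the cocompact lattice $\Gamma<G$ to obtain the canonical decomposition
\[ H^j_c(\Gamma,V) \;\cong\; H^j_c(G,V) \;\oplus\; H^j_c\bigl(G,\, V\bar\otimes L^2_0(G/\Gamma)\bigr), \]
under which the restriction map is the inclusion of the first summand. The theorem therefore reduces to the vanishing statement
\[ H^j_c\bigl(G,\, V\bar\otimes L^2_0(G/\Gamma)\bigr)=0 \quad\text{for every } j<r(G). \]

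Setting $W = V\bar\otimes L^2_0(G/\Gamma)$, I would then apply Theorem~\ref{thm:vanishss-general} to $W$. Its (unreduced) vanishing conclusion yields exactly what is wanted, provided that $W$ has no $G_i$-almost invariant unit vectors for every non-compact simple factor $G_i$ of $G$. This spectral gap property is supplied directly by Theorem~\ref{thm:specgap}, whose hypotheses (each $\bfG_i$ simply connected, each $G_i$ non-compact, and $\Gamma$ irreducible) are all part of our standing assumptions.

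The one delicate point is that Theorem~\ref{thm:specgap} requires $G$ to be a Lie group in case $\rank(G)=1$. The uncovered case is therefore that of a single non-archimedean almost simple factor $G=\bfG(F)$ with $\rank_F(\bfG)=1$. In this case Theorem~\ref{thm:Casselman} gives $r(G)=1$, so only the degree $j=0$ statement must be addressed; and there the restriction map $H^0_c(G,V)=V^G \to V^\Gamma=H^0(\Gamma,V)$ is an isomorphism by the Howe-Moore theorem, since $\Gamma$ is an infinite discrete -- hence non-compact -- subgroup of the almost simple non-compact group~$G$. The only real obstacle, accordingly, is observing that this $p$-adic rank~$1$ corner case requires no additional spectral gap input; once this is noted, the proof amounts to assembling Lemma~\ref{lem:Shapiro}, Theorem~\ref{thm:vanishss-general}, and Theorem~\ref{thm:specgap} as described.
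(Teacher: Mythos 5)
Your proposal follows exactly the same route as the paper's two-line proof: Shapiro's Lemma~\ref{lem:Shapiro} reduces the claim to the vanishing of $H^j_c\bigl(G,\,V\bar\otimes L^2_0(G/\Gamma)\bigr)$ for $j<r(G)$, which is then obtained by feeding the spectral gap from Theorem~\ref{thm:specgap} into Theorem~\ref{thm:vanishss-general}. Your extra observation about the single non-archimedean rank-one factor -- a case excluded from Theorem~\ref{thm:specgap} and passed over silently in the paper -- is correct and worth making explicit: by Theorem~\ref{thm:Casselman} one has $r(G)=1$ there, so only $j=0$ is at stake, and Howe--Moore gives $V^\Gamma=V^G$ directly.
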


\begin{proof}
By the Shapiro Lemma~\ref{lem:Shapiro}, for every $j$, the restriction map $H^j_c(G,V) \to H^j(\Gamma,V)$ is an isomorphism if and only if  ${H}^j_c(G,V\bar{\otimes} L^2_0(G/\Gamma))=0$.
This follows from Theorem~\ref{thm:vanishss-general}, using Theorem~\ref{thm:specgap}.
\end{proof}

\begin{theorem} \label{thm:Lie-isoss}
Let $G$ be a connected semisimple Lie group with finite center and no compact factors. Let $\Gamma<G$ be an irreducible cocompact lattice.
    Let $V$ be a unitary $G$-representation.
    Then for every $j<r(G)$ the restriction map $H^j_c(G,V)\to H^j(\Gamma,V)$ is a isomorphism.
\end{theorem}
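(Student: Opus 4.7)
The plan is to adapt the short proof of Theorem~\ref{thm:isoss} to the Lie group setting, replacing each input by its Lie-group analog that has been established earlier. The only ingredient that is genuinely specific to the Lie case is the spectral gap statement, and that has been packaged into Corollary~\ref{cor:Liespecgap}, which in turn relies on Margulis arithmeticity when $\rank(G)\geq 2$ and on Bekka's argument when $\rank(G)=1$.

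First, I would invoke the Shapiro Lemma~\ref{lem:Shapiro}. Since $\Gamma<G$ is a cocompact lattice, the restriction of $V$ to $\Gamma$ and its reinduction satisfies $I^2(V)\cong V\bar{\otimes} L^2(G/\Gamma)$, and the latter splits as $V\oplus \bigl(V\bar{\otimes} L^2_0(G/\Gamma)\bigr)$ with the restriction homomorphism corresponding to the inclusion of the first summand. Consequently, for a fixed $j$, the restriction map $H^j_c(G,V)\to H^j(\Gamma,V)$ is an isomorphism if and only if $H^j_c\bigl(G,V\bar{\otimes} L^2_0(G/\Gamma)\bigr)=0$.

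Next, I would apply the Lie-group vanishing theorem (Theorem~\ref{thm:Lie-vanishss-general}) to the unitary $G$-representation $W\defq V\bar{\otimes} L^2_0(G/\Gamma)$. To invoke the stronger (non-reduced) conclusion, one has to check the hypothesis that $W$ has no $G_i$-almost invariant vectors for each almost simple factor $G_i$ of $G$. This is precisely the statement of Corollary~\ref{cor:Liespecgap}: the spectral gap of $L^2_0(G/\Gamma)$ for every simple factor persists after tensoring with an arbitrary unitary $G$-representation. Combined with Theorem~\ref{thm:Lie-vanishss-general}, this yields $H^j_c(G,W)=0$ for every $j<r(G)$, which together with the Shapiro step completes the proof.

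I do not anticipate any real obstacle: the entire argument is formal once Corollary~\ref{cor:Liespecgap} and Theorem~\ref{thm:Lie-vanishss-general} are available, and all three inputs---Shapiro, vanishing, and spectral gap---have already been established in the text. The only mild point worth checking is that when $G$ has center and one reduces to the center-free case in order to apply Corollary~\ref{cor:Liespecgap}, the lattice $\Gamma$ is replaced by its image, which is still an irreducible cocompact lattice; this is standard and uses the same covering trick as in the proof of Corollary~\ref{cor:Liespecgap} via Lemma~\ref{lem: product with compact group}.
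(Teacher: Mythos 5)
Your proof is correct and follows exactly the same route as the paper: pass through the Shapiro Lemma~\ref{lem:Shapiro} to reduce to the vanishing of $H^j_c\bigl(G,V\bar{\otimes}L^2_0(G/\Gamma)\bigr)$, then apply Theorem~\ref{thm:Lie-vanishss-general} after checking the no-almost-invariant-vectors hypothesis via Corollary~\ref{cor:Liespecgap}. The only small remark is that your final caveat about reducing to the center-free case is unnecessary here, since Corollary~\ref{cor:Liespecgap} is already stated for connected semisimple Lie groups with finite center, so it applies directly to $G$ as given.
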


\begin{proof}
    The proof is the same as the proof of Theorem~\ref{thm:isoss}, using Theorem~\ref{thm:Lie-vanishss-general} instead of Theorem~\ref{thm:vanishss-general} and Corollary~\ref{cor:Liespecgap} instead of Theorem~\ref{thm:specgap}.
\end{proof}

\begin{definition}\label{def: G-continuous vectors}
Let $\Gamma$ be a lattice in a semisimple group or semisimple Lie group $G$ with the almost direct product decomposition $G=G_1\cdots G_l$ into almost simple factors. Assume that the projection $\pr_i$ of $\Gamma$ on $G/G_i$ is dense for each $i\in\{1,\dots,l\}$. Let $V$ be a unitary $\Gamma$-representation. 
A vector $v\in V$ is \emph{$G/G_i$-continuous} if 
$\norm{\gamma_pv-v}\to 0$ for every sequence $(\gamma_p)$ in $\Gamma$ with $\pr_i(\gamma_p)\to e$. A vector is \emph{$G$-continuous} if it is a finite sum of $G/G_i$-continuous vectors where $i$ runs over $1,\dots, l$.  
\end{definition}

Some remarks on $G$-continuous vectors are in order. Let $V_i$ be the subspace of $G/G_i$-continuous vectors. It is easy to see that $V_i$ is closed. Since $\Gamma$ projects densely to $G/G_i$, the $\Gamma$-action on $V_i$ extends uniquely to a $G$-action via 
$g\cdot v=\lim_{j\to\infty} \gamma_j v$ for any sequence $(\gamma_j)$ with $\pr_i(\gamma_j)=\pr_i(g)$. The $G$-action on $V_i$ is strongly continuous and unitary.
The closed embedding of Hilbert spaces\[V\to I^2(V),~v\to (g\mapsto g^{-1}v),\]  restricts to an isomorphism $V_i\cong I^2(V)^{G_i}$ of unitary $G$-representations~\cite{BBHP}*{Section~5.1}. One sees inductively that the sum $V_0=V_1+\dots +V_n$ is closed in~$V$ by using the fact that $V_1+\dots+V_{n-1}$ is $G_n$-invariant and the orthogonal projection of $V_n$ onto $V_1+\dots+V_{n-1}$ lies in $V_n$. 
In the extreme case of $l=1$, we have $V_0=V^G$.

\begin{theorem} \label{thm:ssGamma}
    Let $G \cong \prod_{i=1}^l G_i$ where for each $i$, $F_i$ is a local field, $\bfG_i$ is a connected simply connected almost simple $F_i$-algebraic group and $G_i=\bfG_i(F_i)$ is non-compact. 
Let $\Gamma<G$ be an irreducible cocompact lattice.
    Assume $G$ has property T.
    Let $V$ be a unitary $\Gamma$-representation, and let $V_0\subset V$ be the subspace of $G$-continuous vectors. 
    Then for every $j<r(G)$ we have $H^j(\Gamma,V_0^\perp)=0$, and 
    the map 
    \[ H^j_c(G,V_0) \to H^j(\Gamma,V_0) \to H^j(\Gamma,V) \]
    is an isomorphism. Here the first map comes from the restriction from $G$ to $\Gamma$ and the second map comes from the inclusion of $V_0$ in $V$.
\end{theorem}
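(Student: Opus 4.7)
The plan is to establish the vanishing $H^j(\Gamma,V_0^\perp)=0$ for $j<r(G)$ first, and then deduce the isomorphism of the composition from Theorem~\ref{thm:isoss} combined with the long exact sequence associated with the short exact sequence of unitary $\Gamma$-representations $0\to V_0\to V\to V_0^\perp\to 0$.

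For the vanishing, the key input is Shapiro's Lemma~\ref{lem:Shapiro}, which identifies $H^j(\Gamma,V_0^\perp)$ with $H^j_c(G,I^2(V_0^\perp))$. I will verify the hypotheses of Theorem~\ref{thm:vanishss-general} for the unitary $G$-representation $I^2(V_0^\perp)$. Using the identification $W_i\cong I^2(W)^{G_i}$ from the discussion preceding the theorem, applied to the $\Gamma$-representation $W=V_0^\perp$, the space of $G_i$-fixed vectors in $I^2(V_0^\perp)$ coincides with $(V_0^\perp)_i$, the subspace of $G/G_i$-continuous vectors lying in $V_0^\perp$. By the defining inclusion $V_i\subset V_0$, any $G/G_i$-continuous vector that simultaneously lies in $V_0^\perp$ must be zero. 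Hence $I^2(V_0^\perp)^{G_i}=0$ for every $i$.

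To apply Theorem~\ref{thm:vanishss-general} in its unreduced form, I need the stronger statement that $I^2(V_0^\perp)$ has no almost $G_i$-invariant vectors. Here the property T assumption on $G$ enters: since the $\bfG_i$ are simply connected and almost simple, $G$ is a direct product of the $G_i$, so each $G_i$ is a quotient of $G$ and inherits property T. Hence the absence of almost $G_i$-invariant vectors in $I^2(V_0^\perp)$ is equivalent to the absence of $G_i$-invariant vectors, which we have just verified. Theorem~\ref{thm:vanishss-general} then gives $H^j_c(G,I^2(V_0^\perp))=0$ for $j<r(G)$, proving the first claim. For the second, Theorem~\ref{thm:isoss} applied to the unitary $G$-representation $V_0$ yields $H^j_c(G,V_0)\cong H^j(\Gamma,V_0)$ for $j<r(G)$, and the long exact sequence together with the vanishing of $H^i(\Gamma,V_0^\perp)$ for all $0\le i<r(G)$ forces $H^j(\Gamma,V_0)\cong H^j(\Gamma,V)$ in the same range.

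The main subtlety I anticipate is ensuring the identification $I^2(V_0^\perp)^{G_i}\cong(V_0^\perp)_i$ is applied correctly: $V_0^\perp$ is only a $\Gamma$-representation, not a $G$-representation, so one must invoke the version of this identification valid for general unitary $\Gamma$-representations, as recorded in the paragraph before the theorem. A secondary point is the passage of property T from $G$ to each factor $G_i$, which uses the simply connected hypothesis to ensure $G$ is a genuine direct product. Once these two points are granted, the argument reduces to an essentially formal combination of the Shapiro isomorphism, the vanishing result of Theorem~\ref{thm:vanishss-general}, the cocompact restriction isomorphism of Theorem~\ref{thm:isoss}, and the long exact sequence in $\Gamma$-cohomology.
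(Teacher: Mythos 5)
Your proposal matches the paper's proof in every essential respect: vanishing of $H^j(\Gamma,V_0^\perp)$ via the Shapiro isomorphism, the identification $I^2(V_0^\perp)^{G_i}\cong (V_0^\perp)_i=V_i\cap V_0^\perp=0$, upgrading this to no almost-invariant vectors via property~T of each factor $G_i$, applying Theorem~\ref{thm:vanishss-general}, and then invoking Theorem~\ref{thm:isoss} for the $G\to\Gamma$ isomorphism. The only cosmetic difference is that you invoke a long exact sequence where the paper (implicitly) uses the direct-sum decomposition $H^j(\Gamma,V)=H^j(\Gamma,V_0)\oplus H^j(\Gamma,V_0^\perp)$, which is slightly more direct since the short exact sequence of Hilbert $\Gamma$-modules splits.
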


\begin{proof}
    Since each $\bf G_i$ is simply connected, the lattice $\Gamma$ satisfies the density assumption in Definition~\ref{def: G-continuous vectors} according to~\cite{Margulis}*{Theorem~(6.7) on p.~102}.  
    The theorem follows from Theorem~\ref{thm:isoss}, once we show $H^j(\Gamma,V_0^\perp)=0$.
    We thus assume that $V=V_0^\perp$, that is $V_0=0$, and will show that $H^j(\Gamma,V)=0$.
    Every almost simple factor $G_i$ of $G$ has property T since $G$ does.
    By~\cite[Proposition 5.1]{BBHP} or the discussion above , $I^2(V)^{G_i}\cong V_i =0$. Hence $I^2(V)$ has no almost invariant vectors as a $G_i$-representation. 
    Theorem~\ref{thm:vanishss-general} implies that $H^j_c(G,I^2(V))=0$.
    We conclude by the Shapiro Lemma that indeed, $H^j(\Gamma,V)=0$.
\end{proof}

\begin{theorem} \label{thm:Lie-ssGamma}
Let $G$ be a connected semisimple Lie group with finite center and without compact factors. Let $\Gamma<G$ be an irreducible cocompact lattice.
    Assume $G$ has property T.
    Let $V$ be a unitary $\Gamma$-representation, 
    and let $V_0\subset V$ be the subspace of $G$-continuous vectors.
    Then for every $j<r(G)$ we have $H^j(\Gamma,V_0^\perp)=0$.
    In particular, the maps 
    \[ H^j_c(G,V_0) \to H^j(\Gamma,V_0) \to H^j(\Gamma,V) \]
    are isomorphisms,
    where the first map comes from the restriction from $G$ to $\Gamma$ and the second map comes from the inclusion of $V_0$ in $V$.
\end{theorem}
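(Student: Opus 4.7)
The plan is to mimic the proof of Theorem~\ref{thm:ssGamma} verbatim, substituting the Lie-group analogs of the ingredients in the right places. Since $\Gamma$ is irreducible in the Lie group sense, its projection to $G/G_i$ is dense for every almost simple factor $G_i$, so the space $V_0$ of $G$-continuous vectors of Definition~\ref{def: G-continuous vectors} is well defined, and the remarks following that definition give an isomorphism $V_i \cong I^2(V)^{G_i}$ of unitary $G$-representations, where $V_i\subset V_0$ is the subspace of $G/G_i$-continuous vectors.

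First I would observe that once we establish $H^j(\Gamma,V_0^\perp)=0$ for every $j<r(G)$, the ``In particular'' part is automatic: the long exact sequence associated with the short exact sequence $0\to V_0\to V\to V_0^\perp\to 0$ gives that $H^j(\Gamma,V_0)\to H^j(\Gamma,V)$ is an isomorphism in that range, while $V_0$ is by construction the restriction of a unitary $G$-representation and so Theorem~\ref{thm:Lie-isoss} gives that $H^j_c(G,V_0)\to H^j(\Gamma,V_0)$ is an isomorphism for $j<r(G)$.

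It therefore remains to prove the vanishing. Replacing $V$ by $V_0^\perp$, we may assume $V_0=0$, hence in particular $V_i=0$ for every~$i$, and we must show $H^j(\Gamma,V)=0$ for $j<r(G)$. By the Shapiro Lemma~\ref{lem:Shapiro}, this is equivalent to $H^j_c(G,I^2(V))=0$ in that range. Using $I^2(V)^{G_i}\cong V_i=0$, the representation $I^2(V)$ has no $G_i$-invariant vectors for any almost simple factor $G_i$ of $G$. Since $G$ has property T, each factor $G_i$ inherits property T, so $I^2(V)$ has no almost $G_i$-invariant vectors either. Theorem~\ref{thm:Lie-vanishss-general} then yields $H^j_c(G,I^2(V))=0$ for $j<r(G)$, completing the argument.

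The only subtlety to check, and the step that is somewhat less automatic than in Theorem~\ref{thm:ssGamma}, is the identification $V_i\cong I^2(V)^{G_i}$ in the Lie setting without a simple-connectedness assumption on the factors; this uses only that $\Gamma$ projects densely to $G/G_i$, which holds by irreducibility. Everything else is a direct transcription of the proof of Theorem~\ref{thm:ssGamma}, with Theorem~\ref{thm:Lie-isoss} replacing Theorem~\ref{thm:isoss} and Theorem~\ref{thm:Lie-vanishss-general} replacing Theorem~\ref{thm:vanishss-general}.
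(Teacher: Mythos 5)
Your proof is correct and takes essentially the same approach as the paper's, which simply transcribes the proof of Theorem~\ref{thm:ssGamma} with Theorem~\ref{thm:Lie-isoss} and Theorem~\ref{thm:Lie-vanishss-general} substituted in the appropriate places. The only cosmetic point is that since $V=V_0\oplus V_0^\perp$ is an orthogonal direct sum of unitary $\Gamma$-representations, the cohomology splits directly, so invoking a long exact sequence is unnecessary; and your remark about the identification $V_i\cong I^2(V)^{G_i}$ correctly notes that it needs only the density of $\pr_i(\Gamma)$, which in the Lie case is part of the definition of an irreducible lattice.
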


\begin{proof}
    The proof is the same as the proof of Theorem~\ref{thm:ssGamma}, using Theorem~\ref{thm:Lie-isoss} instead of Theorem~\ref{thm:isoss} and Theorem~\ref{thm:Lie-vanishss-general} instead of Theorem~\ref{thm:vanishss-general}.
\end{proof}

\begin{conjecture}
Theorems~\ref{thm:ssGamma} and \ref{thm:Lie-ssGamma} hold true for every semisimple group of higher rank, with or without property T.
\end{conjecture}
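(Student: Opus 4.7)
The plan is to mimic the proof of Theorem~\ref{thm:ssGamma} (and its Lie analogue Theorem~\ref{thm:Lie-ssGamma}) and to isolate the single place where property T is used, namely the upgrade from vanishing of $G_i$-invariant vectors to vanishing of $G_i$-almost invariant vectors in $I^2(V_0^\perp)$. Concretely, after reducing to the case $V=V_0^\perp$, the Shapiro Lemma~\ref{lem:Shapiro} identifies $H^j(\Gamma,V)$ with $H^j_c(G,I^2(V))$, and the identification $V_i\cong I^2(V)^{G_i}$ coming from the discussion preceding Theorem~\ref{thm:ssGamma} yields $I^2(V)^{G_i}=0$ for each almost simple factor $G_i$. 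In Theorem~\ref{thm:ssGamma}, property T was invoked precisely here to promote this to the absence of $G_i$-almost invariant vectors, which then feeds into Theorem~\ref{thm:vanishss-general} to give $H^j_c(G,I^2(V))=0$ for $j<r(G)$.

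Accordingly, the main task is to establish the following spectral gap statement for irreducible cocompact lattices in higher-rank semisimple groups, which would replace property T in the argument: if $\Gamma<G=G_1\cdots G_l$ is an irreducible cocompact lattice with $\rank(G)\geq 2$, and $W$ is a unitary $\Gamma$-representation without $G$-continuous vectors, then $I^2(W)$ carries no $G_i$-almost invariant vectors for any non-compact factor $G_i$. Granting this statement, applying Theorem~\ref{thm:vanishss-general} (respectively Theorem~\ref{thm:Lie-vanishss-general}) to $I^2(V_0^\perp)$ immediately yields the required vanishing, and the conclusions of Theorems~\ref{thm:ssGamma} and~\ref{thm:Lie-ssGamma} follow verbatim.

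To attack this spectral gap statement, I would first dispose of the factors $G_i$ that themselves have property T, where the upgrade is tautological since $I^2(W)^{G_i}=0$ already rules out almost invariant vectors. The remaining case is that of rank-one factors sitting inside a higher-rank product. Here Margulis' arithmeticity theorem~\cite{Margulis}*{IX, Theorem~(1.11)} realizes $\Gamma$ as an $S$-arithmetic lattice in the $k$-points of a suitable $k$-group $\bfG$, and one would then try to transfer the spectral gap problem for $I^2(W)$ to an adelic spectral gap problem in the spirit of the proofs of Theorem~\ref{thm:Clozel++arith} and Theorem~\ref{thm:specgap}. The hope is that the hypothesis $V_0=0$ translates into a disjointness statement between the $G_i$-spectrum of $I^2(W)$ and the trivial representation at an automorphic level, opening the door to Clozel's theorem (Theorem~\ref{Clozel}), the Burger--Sarnak restriction principle (Theorem~\ref{Burger-Sarnak}), and the $L^p$-ideal description of Theorem~\ref{Cowling}.

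The main obstacle, and the reason this remains a conjecture, is that, unlike in Corollary~\ref{cor:Liespecgap} and Theorem~\ref{thm:specgap}, the coefficient module $W$ is a genuine $\Gamma$-representation that does not extend to $G$; consequently $I^2(W)$ is not of the form $L^2_0(G/\Gamma)\bar\otimes U$ for a $G$-representation $U$, and the automorphic machinery of \S\ref{sec:SG} does not apply directly. A natural alternative is an ultrapower argument in the spirit of Theorem~\ref{thm:gensh}: promote a sequence of $G_i$-almost invariant unit vectors in $I^2(W)$ to a genuine $G_i$-invariant vector in the ultrapower $I^2(W)_\omega$, observe that such a vector corresponds to a $G/G_i$-continuous vector in $W_\omega$ (with Lemma~\ref{lem: non-separable reps} keeping the argument within separable subrepresentations), and then derive a contradiction with the assumption $V_0=0$ using higher-rank rigidity of the $\Gamma$-action on $W_\omega$. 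Extracting such rigidity for $G/G_i$-continuous vectors in ultrapowers of unitary representations of higher-rank lattices is the crux of the difficulty, and amounts essentially to the spectral gap conjecture alluded to at the end of the introduction.
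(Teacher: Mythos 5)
This statement is a conjecture in the paper, with no proof given; you have correctly recognized that and have not claimed to close it. Your reduction is accurate: the only role of property T in Theorems~\ref{thm:ssGamma} and~\ref{thm:Lie-ssGamma} is to promote the vanishing $I^2(V_0^\perp)^{G_i}=0$ to the absence of $G_i$-almost invariant vectors, after which Theorem~\ref{thm:vanishss-general} (resp.\ Theorem~\ref{thm:Lie-vanishss-general}) gives the required cohomological vanishing via the Shapiro Lemma. You have also correctly identified the genuine obstruction: for a $\Gamma$-representation $W$ that does not extend to $G$, the induction $I^2(W)$ is not of the form $L^2_0(G/\Gamma)\bar\otimes U$ with $U$ a unitary $G$-representation, so the automorphic-dual machinery of \S\ref{sec:SG} (Clozel's theorem, Burger--Sarnak, the $L^p$-ideal structure) does not apply as stated, and the ultrapower route of Theorem~\ref{thm:gensh} requires exactly the missing spectral gap input. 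This is consistent with the paper's own remark after Conjecture~\ref{conj2} that Conjectures~\ref{conj1} and~\ref{conj2}, to which the present conjecture is closely allied, would follow from a spectral gap conjecture for higher rank lattices. In short: your analysis is a faithful account of why this is a conjecture rather than a theorem, and you have not introduced any incorrect steps; you have simply, and correctly, not produced a proof.
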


The following generalization of a theorem of~\cite[XII, Proposition 3.5]{Borel-Wallach} is an adelic version of Theorem~\ref{thm:vanishss-general}.

\begin{theorem} \label{thm:adelvanish}
    Let $k$ be a number field and let $\bfG$ be a connected almost simple $k$-group.
    Let $V$ be a unitary representation of $\bfG(\bbA(k))$.
    Assume that for every place $s$ of $k$ such that $\bfG(k_s)$ is non-compact, $V$ does not have almost invariant vectors as a $\bfG(k_s)$-representation.
    Then $H^\ast_c(\bfG(\bbA(k)),V)=0$.
\end{theorem}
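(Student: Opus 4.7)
The plan is to decompose $V$ into irreducible constituents and exploit the explicit description of the adelic cohomological dual provided by Theorem~\ref{thm:adeliccohom}. Concretely, I would write $V\cong\int V_x\,d\mu(x)$ as a direct integral of irreducible unitary representations of $G=\bfG(\bbA(k))$ and argue that $V_x$ fails to be cohomological for $\mu$-almost every~$x$, so that $H^\ast_c(G,V_x)=0$ identically.

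The crux of the argument is a local--global contradiction. If $V_x$ were cohomological, then by Theorem~\ref{thm:adeliccohom} we would have $V_x\cong\bar\otimes_s V_{x,s}$ with each local factor $V_{x,s}$ cohomological at the place~$s$; the restricted tensor product structure forces $V_{x,s}$ to possess a $\bfG(\calO_s)$-fixed vector at almost every non-archimedean place, and Casselman's Theorem~\ref{thm:Casselman}, together with the non-sphericality of the Steinberg representation, then forces $V_{x,s}$ to be \emph{trivial} at almost every such place. On the other hand, weak containment $V_x\prec V$ restricts to weak containment $V_x|_{G_s}\prec V|_{G_s}$ at each place, so the hypothesis — that $V$ has no almost invariant vectors as a $G_s$-representation for any non-compact~$s$ — forces $V_{x,s}$ to be \emph{non-trivial} at every non-compact~$s$. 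Since $\bfG$ is reductive over $k$ it is quasi-split, and thus isotropic over $k_s$, for almost every place, so $\bfG(k_s)$ is non-compact at almost every place (see the Setup section); combining the two conclusions yields the required contradiction. The compatibility of reduced continuous cohomology with direct integrals (\cite[Theorem~7.2]{Blanc}, as invoked in Lemma~\ref{lem:directint}) then delivers $\bar H^\ast_c(G,V)=0$.

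The main obstacle is upgrading reduced vanishing to unreduced vanishing in all degrees. Here I would exploit the discreteness of the cohomological dual from Theorem~\ref{thm:adeliccohom} to split off the isotypic component of $V$ for each cohomological $\pi\in\widehat G$ as an orthogonal direct summand; the same contradiction as above shows that every such summand vanishes, so the ``cohomological part'' of $V$ is zero. For the complementary non-cohomological part, combining a Hochschild--Serre spectral sequence (as in Lemma~\ref{lem: product with compact group}) for a decomposition $G=G_S\times G^S$ indexed by a sufficiently large finite set~$S$ of places, the large-range unreduced vanishing on $G_S$ furnished by Theorem~\ref{thm:vanishss-general}, and an ultrapower argument in the spirit of Theorem~\ref{thm:gensh} — noting that the absence of $G_s$-almost invariant vectors passes to ultrapowers — should rule out non-Hausdorff classes degree by degree.
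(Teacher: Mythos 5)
Your first step — direct integral decomposition, then using Theorem~\ref{thm:adeliccohom}, Casselman, and the quasi-split-at-almost-every-place fact to show that a.e.\ constituent $V_x$ is non-cohomological, hence $\bar H^\ast_c(G,V)=0$ — is sound, and it is a genuinely different route from what the paper does. The paper never decomposes $V$ at all; instead it writes $G=\bfG(\bbA(k))$ as the colimit of the open subgroups $G_A\times G_S\times K_S$ over finite sets of places $S\supset S_0$, identifies the cochain complex $C(G^{\bullet+1},V)$ as the inverse limit of the cochain complexes over these subgroups, checks the Mittag--Leffler condition via Dugundji's extension theorem, and then applies Milnor's $\lim$--$\lim^1$ exact sequence. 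Because Theorem~\ref{thm:vanishss-general} delivers \emph{unreduced} vanishing $H^n_c(G_A\times G_S\times K_S,V)=0$ for all $n<|S|\le r(G_S)$, both the $\lim$ and the $\lim^1$ terms at level $n$ vanish once $|S|>n+1$, and the unreduced vanishing $H^n_c(G,V)=0$ drops out directly, with no need to pass through reduced cohomology at all.

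The genuine gap in your proposal is the second half, where you try to promote the reduced vanishing to unreduced vanishing. Both tools you invoke fail here. The ultrapower argument of Theorem~\ref{thm:gensh} is intrinsically a statement about a \emph{discrete} group $\Gamma$ with property $\FP_n(\bbQ)$: that hypothesis is what lets one realize the cochain complex in low degrees as a complex of Hilbert spaces $V^{m_k}$ with bounded adjoint differentials, so that the Open Mapping Theorem produces the sequence of unit cocycles that survives in the ultrapower. For the locally compact group $G=\bfG(\bbA(k))$ the cochain complex consists of Fr\'echet spaces of continuous functions, not Hilbert spaces, so there is no notion of a unit cocycle and no ultrapower to take; and the only obvious discrete group to pass to, $\bfG(k)$, is not finitely generated and hence is very far from $\FP_n(\bbQ)$, so Theorem~\ref{thm:gensh} does not apply to it either. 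Similarly, the Hochschild--Serre spectral sequence for $G=G_S\times G^S$ would have $E_2$-page $H^p_c(G_S,H^q_c(G^S,V))$; you have no control over the coefficient modules $H^q_c(G^S,V)$ (which need not even be Hilbert spaces, so Theorem~\ref{thm:vanishss-general} does not apply to them), and, as the paper points out in the proof of Lemma~\ref{lem: product with compact group}, the convergence of this spectral sequence itself requires the relevant cohomologies to be Hausdorff — which is exactly what you are trying to prove. To close this gap you would essentially be forced back to the colimit/Milnor argument of the paper, which sidesteps the Hausdorffness issue entirely by proving unreduced vanishing one finite level at a time.
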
 

\begin{proof}
The group $\bfG$ is quasi-split, hence isotropic, over $k_s$ for almost every place $s$ of $k$~\cite{conrad-reductive}*{Ex.~5.5.3}.
Denote by $A$ the finitely many places $s$ such that $\bfG$ is anisotropic over $k_s$
and set $G_A=\prod_{s\in A} \bfG(k_s)$.
By \cite[Theorem 3.4(1)]{Conrad}, $\bfG$ is defined over $\calO_{S_0}$, where $S_0$ is a finite set of $k$ containing all archimedean ones.
Let $S$ be a finite set of places of $k$ which includes $S_0\setminus A$ and is disjoint from $A$.
Set $G_S=\prod_{s\in S} \bfG(k_s)$, $K_S= \prod_{s\notin S\cup A} \bfG(\mathcal{O}_s)$, consider $G_A\times G_S \times K_S$ as an open subgroup of the adele group $G=\mathbf{G}(\bbA(k))$
and note that 
\[ G\cong \colim_S G_A\times G_S \times K_S. \]
Therefore the $n$-cochain group $C(G^n,V)\cong C(G^{n+1},V)^G$ is the inverse limit 
\[C(G^n,V) \cong \lim_S C((G_A\times G_S \times K_S)^n,V). \]
The structure maps of the limit are surjective since we extend any continuous map on $(G_A\times G_S \times K_S)^n$ to $(G_A\times G_{S'}\times K_{S'})^n$, where $S\subset S'$, by Dugundji's generalization~\cite{dugundji}*{Theorem~4.1} of Tietze's extension theorem. So the inverse system on the right satisfies the Mittag-Leffler condition.
Milnor's exact sequence for inverse systems~\cite{weibel}*{Theorem~3.5.8 on p.~83} now gives
\[ 0\to {\lim}^1_S H^{n-1}_c\bigl(G_A\times G_S \times K_S,V\bigr) \to H^n_c\bigl(G,V\bigr) \to \lim_S H^n_c\bigl(G_A\times G_S \times K_S,V\bigr) \to 0.
\]
By Theorem~\ref{thm:vanishss-general}, 
$H^n_c\bigl(G_A\times G_S \times K_S,V\bigr)=0 $ 
for every $n<|S|\leq r(G_S)$ from which 
we conclude that $H^\ast_c\bigl(G,V\bigr)=0$.
\end{proof}

\section{Polynomial cohomology of semisimple groups} \label{sec:PCH}

Cohomology involving growth restrictions on cochains can be found in a variety of contexts. Instances include bounded cohomology, $\ell^2$-cohomology, and the de Rham cohomology of moderately growing differential forms. In this section, we study polynomial cohomology (see~\S\ref{subsec: definition polynomial cohomology} and\S\ref{subsec: homological algebra for polynomial}), i.e.~the group cohomology of polynomially bounded cochains, as it applies to arithmetic lattices. Connes and Moscovici first introduced polynomial cohomology in their work on the Novikov conjecture~\cite{connes+moscovici}. 
In~\S\ref{subsec: filling functions} we discuss filling functions and isoperimetric inequalities of lattices in semisimple groups. A crucial ingredient for this paper are two theorems from geometric group theory: Theorem~\ref{thm: leuzinger-young} by Leuzinger-Young and Theorem~\ref{thm: bestvina-eskin-wortman} by Bestvina-Eskin-Wortman presented in~\S\ref{subsec: filling functions}. 

Isoperimetric inequalities in groups and polynomial cohomology will be related first in Proposition~\ref{prop: continuous chain contraction from filling} and then in Proposition~\ref{prop: comparison map and polynomial filling rank} in~\S\ref{subsec: comparison map}. The relation between polynomial filling functions and the fact that the comparison map from polynomial to ordinary cohomology is an isomorphism was first observed by Ogle~\cite{ogle}. Ji-Ramsey~\cite{ji+ramsey} clarified that Ogle's definition coincides (up to polynomial equivalence) with the usual definition (see Lemma~\ref{lem: polynomial equivalence}). 
In the proof of~\cite{ji+ramsey}*{Theorem~2.6} they implicitly use however another definition of polynomial filling function which corresponds to the contractibility of the chain complex~\eqref{eq: bornological iso between completions}. We believe that this causes a problem for the $(1)\Rightarrow (3)$-direction of~\cite{ji+ramsey}*{Theorem~2.6}. The aforementioned Proposition~\ref{prop: comparison map and polynomial filling rank}, which we prove from scratch, generalizes the $(3)\Rightarrow (1)$-direction of~\cite{ji+ramsey}*{Theorem~2.6}.  

A major insight of the present paper is to use tools from geometric group theory and the polynomial cohomology to prove a version of the Shapiro lemma in low degrees for non-uniform lattices in~\S\ref{subsec:Shapiro++}. One may see this as a far-reaching generalization of ideas of Shalom about $1$-cohomology and unitary induction of square-integrable lattices~\cite{Shalom}.

\subsection{Definition of polynomial cohomology}
\label{subsec: definition polynomial cohomology}
Let $G$ be a compactly generated locally compact second countable group. Let $l\colon G\to [0,\infty]$ be the word length function associated with a compact generating set in $G$. Let $V$ be a unitary $G$-representation. 

We consider the (continuous) homogeneous bar complex $C(G^{\ast+1}, V)$ for $G$ with coefficients in $V$.  A cochain $f\in C(G^{n+1}, V)$ is \emph{polynomial} if there are constants $C>0$ and $k\in\bbN$ such that 
\[ \norm{f(g_0,\dots,g_n)}\le C\cdot \bigl(1+l(g_0)+\dots+l(g_n)\bigr)^k\]
for all $(g_0,\dots,g_n)\in G^{n+1}$. 
The polynomial cochains form a $G$-invariant subcomplex of the homogeneous bar resolution $C(G^{\ast+1}, V)$ of continuous cochains which we call the \emph{polynomial chain complex} $C_\pol\bigl(G^{n+1}, V\bigr)$. 
The 
diagonal $G$-action on the homogeneous bar resolution restricts to a $G$-action on the polynomial chain complex. 
The \emph{polynomial cohomology} of $G$ with coefficients in a unitary $G$-representation $V$ is defined as 
  \[ H_\pol^n(G;V)=H^n\bigl(C_\pol^\ast(G, V)^G\bigr),\]
  see~\cite{connes+moscovici}*{p.~384}. 
The inclusion of the polynomial chain complex into the homogeneous bar complex induces a natural homomorphism 
\[ H_\pol^\ast(G; V)\to H^\ast_c(G; V),\]
which we call the \emph{comparison map}. 

Since all word metrics of a compactly generated group are quasi-isometric the definition of polynomial cohomology is independent of the choice of~$l$.

In the case of a semisimple group $G$ in the sense of Setup~\S\ref{sec: setup} or in the case of a semisimple Lie group $G$ with finite center we will pass from the polynomial cohomology of a lattice $\Gamma<G$ to the polynomial cohomology of $G$ and back. Thus it is important that the restriction of a word length function of $G$ is a word length function of $\Gamma$. This is provided by the following remarkable theorem of Lubotzky-Mozes-Raghunathan~\cite{lubotzky+mozes+raghunathan} which we will use implicitly throughout. Note that the interesting case is the one of non-uniform lattices. 

\begin{theorem}[Lubotzky-Mozes-Raghunathan]\label{thm: lubotzky-mozes-raghunathan}
Let $G$ be a semisimple group of higher rank or a semisimple Lie group of higher rank with finite center. Let $\Gamma<G$ be an irreducible lattice. Then the restriction of a word length function of $G$ to $\Gamma$ is a word length function of~$\Gamma$. 
\end{theorem}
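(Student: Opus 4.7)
The statement is a deep theorem whose proof occupies the bulk of Lubotzky--Mozes--Raghunathan's paper, so I can only sketch the overall strategy. The required statement is equivalent to the inclusion $\Gamma\hookrightarrow G$ being a quasi-isometric embedding with respect to word metrics. One direction, namely $l_G\le C\cdot l_\Gamma$, is trivial because a finite generating set of $\Gamma$ is bounded in any word metric of $G$. The content is the reverse inequality $l_\Gamma(\gamma)\le C\cdot l_G(\gamma)+D$ for all $\gamma\in\Gamma$, and the uniform case is handled by the Milnor--\v{S}varc lemma because $\Gamma\backslash G$ is then compact. Henceforth I would assume $\Gamma$ is non-uniform.

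My first reduction would invoke Margulis' arithmeticity theorem (valid because $G$ has higher rank and $\Gamma$ is irreducible) to pass to an $S$-arithmetic lattice $\mathbf{G}(\mathcal{O}_S)$ in the $S$-adelic group, after passing to finite extensions and using restriction of scalars to reduce to the absolutely almost simple simply connected case. After that I would fix a standard reduction-theoretic description of a fundamental domain: a finite union of translates of Siegel sets $\Omega = \omega K A_t^+ \omega$, where $\omega$ is a bounded set in the unipotent radical $U$ of a minimal parabolic and $A_t^+$ is a shifted Weyl chamber. Then $d_G(e,\gamma)$ is essentially controlled by the $A$-component of $\gamma$ in the corresponding $KAK$ decomposition.

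The heart of the matter is to produce, for every $\gamma\in\Gamma$, a word of length $O(d_G(e,\gamma))$ in a fixed finite generating set of $\Gamma$ that represents $\gamma$. The key idea would be to exploit that, in higher rank, for every restricted root $\alpha$ the root subgroup $U_\alpha$ meets $\Gamma$ in a lattice of $U_\alpha$ (by the arithmetic structure), and that these lattices can be conjugated into one another by bounded elements of $\Gamma$ via the Weyl group action (using that the Weyl group of the relative root system in higher rank is rich enough). The plan would be: starting from $\gamma\in\Gamma$, write $\gamma = u_1 a u_2$ using a suitable Bruhat-type decomposition, then successively reduce the $A$-part by multiplying by short words built from commutators of unipotent elements in $\Gamma$. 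The crucial combinatorial input is that in higher rank one can reach any element of the diagonal $A\cap\Gamma$ by a word of length linear in its translation length, because commutators of elements from distinct root groups allow one to descend along a chain of roots.

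The principal obstacle is precisely this last step: producing the short unipotent expressions efficiently, i.e.\ proving what is sometimes called \emph{bounded generation of the $A$-direction by unipotents}. In rank one this fails, which is why the hypothesis $\rank(G)\ge 2$ is essential and why the proof must genuinely use the existence of at least two non-commuting positive root subgroups. The required estimate is quantitative, and getting the linear (rather than merely polynomial) dependence on $d_G(e,\gamma)$ requires very careful bookkeeping with the Iwasawa decomposition and the Chevalley commutator relations. Once this linear bound is established inside a Siegel set, a final clean-up using a finite set of coset representatives for the Siegel decomposition of $G$ yields the desired quasi-isometric embedding.
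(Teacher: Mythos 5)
The paper does not prove the Lubotzky--Mozes--Raghunathan theorem from scratch; it cites it as an external input. What the paper actually does is the following: for $G$ a semisimple group in the sense of \S\ref{sec: setup} (a product $\prod_i \bfG_i(F_i)$), the statement is exactly the main theorem of~\cite{lubotzky+mozes+raghunathan} and is simply cited. For $G$ a connected semisimple Lie group of higher rank with finite center $Z$ the paper gives a short reduction: by Remark~\ref{rem: QI remark on G}, $G/Z$ sits as a finite-index subgroup of a semisimple group $G'$ in the sense of \S\ref{sec: setup}, so $G$ is quasi-isometric to $G'$ compatibly with the lattice inclusion (the composite $\Gamma\to\bar\Gamma<G/Z<G'$ has finite kernel and realizes $\bar\Gamma$ as a lattice in $G'$), and since any metric quasi-isometric to a word metric is again a word metric, the property passes back to $\Gamma<G$. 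Your proposal is not wrong, but it is aimed at a different target: you sketch the internal argument of LMR's paper (arithmeticity, Siegel sets, bounded generation of the diagonal direction by unipotents), which the present paper deliberately treats as a black box. As a sketch of LMR's proof your outline is at a sensible high level but, as you acknowledge, the crux---the linear-length unipotent expressions inside a Siegel set---is exactly what occupies the bulk of their paper and your sketch does not engage with it in detail.

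Two concrete things your proposal omits that the paper does address. First, you never explain why the Lie group case (where $G$ is a connected group with finite center, which is neither literally algebraic nor a restricted product) follows from the algebraic case; this is the only part of the statement for which the paper supplies an argument rather than a citation, and the argument is the quasi-isometry reduction just described. Second, you appeal to Margulis arithmeticity to reduce to the $S$-arithmetic case, which is indeed the route LMR take, but this step is not needed for the paper's own reduction and should be flagged as part of LMR's proof rather than of the paper's. If the intent was to reprove LMR, your sketch is missing the quantitative heart; if the intent was to reproduce the paper's proof, you have proved the wrong thing.
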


The above theorem is proved in the context of semisimple groups as in~\S\ref{sec: setup}. But it follows for semisimple Lie groups with finite center from that fact 
that every metric quasi-isometric to a word metric is a word metric and the following remark. 

\begin{remark}\label{rem: QI remark on G}
If $G$ is a connected semisimple Lie group with finite center $Z$, then $G/Z$ is a finite index subgroup of 
a semisimple group in the sense of Setup~\ref{sec: setup} (See~\cite{zimmer}*{Proposition~3.1.6 on p.~35}. In particular, such $G$ is quasi-isometric to a semisimple group in the sense of Setup~\ref{sec: setup}. 
\end{remark}

\subsection{Homological algebra for polynomial cohomology}\label{subsec: homological algebra for polynomial}

For the case of discrete groups we review a characterization of polynomial cohomology in the sense of relative homological algebra over the Fr\'{e}chet algebra $\calS(\Gamma)$ defined below. This approach to polynomial cohomology was first taken  by Ji~\cite{ji}, and was much developed much further by Meyer in the setting of bornological modules~\cites{meyer, meyer-derived}. 

Relative homological algebra over a Fr\'{e}chet algebra $A$ is quite similar to the relative homological for continuous group cohomology as in~\cite{Blanc}. A (continuous) \emph{$A$-module} is a Fr\'{e}chet space $M$ with a continuous bilinear map $A\times M\to M$ satisfying the usual properties. Morphisms between $A$-modules are continuous $A$-linear maps. The category of $A$-modules is not an abelian category but an exact category in the sense of Quillen where the class of extensions are the ones with a continuous $\bbC$-linear section. Therefore the notions of exact sequences, resolutions and projectivity are understood in the sense of relative homological algebra, that is, assuming the existence of continuous $\bbC$-linear sections or chain contractions. We tacitly assume this when speaking about $A$-resolutions and exactness of a sequence of $A$-modules. 
For instance, the exactness of a chain complex of $A$-modules 
\[ 0\leftarrow M_0\xleftarrow{\partial_0} M_1\xleftarrow{\partial_1}\dots\xleftarrow{\partial_{n-1}} M_n\]
means that there are $\bbC$-linear continuous sections of the maps $\im(\partial_{i-1})\leftarrow M_i$. Equivalently, there is a $\bbC$-linear continuous contracting chain homotopy up to degree~$n$. 
We also tacitly assume a $A$-linear map to be continuous. 

Let $\hat\otimes$ denote the projective tensor product. A \emph{free $A$-module} is a module of the form $A\hat\otimes M$ with the obvious $A$-action. Every free module is projective. 
See~\cites{meyer, meyer-derived} for a background reference. 

For a function $f\colon\Gamma\to\bbC$ we set $p_k(f)= \sum_{\gamma\in\Gamma} |f(\gamma)|(1+l(\gamma))^k$. 
Let $\calS(\Gamma)$ be the convolution algebra 
\[ \calS(\Gamma)=\bigl\{ f\colon \Gamma\to \bbC \mid \forall_{k\in\bbN}~p_k(f)<\infty \bigr\}.\]
This is a Fr\'{e}chet algebra with regard to the family of norms $p_k$. The group algebra $\bbC\Gamma$ densely embeds into $\calS(\Gamma)$. 
The trivial $\calS(\Gamma)$-module $\bbC$ has a canonical free $\calS(\Gamma)$-resolution 
\[ 0\leftarrow \bbC\leftarrow\calS(\Gamma)\leftarrow \calS(\Gamma)^{\otimes^2}\leftarrow \calS(\Gamma)^{\otimes^3}\leftarrow\dots\]
which is the completion of the standard ``bar" $\bbC\Gamma$-resolution of~$\bbC$. 

Let $X$ be a free simplicial $\Gamma$-complex whose skeleta are cocompact. We equip $X$ with the path length metric that on each $d$-simplex is the one of the standard Euclidean $d$-simplex. 
We choose a base vertex $v_0$ in $X$. Each (ordered) simplex $s$ is weighted by the distance of its first vertex to $v_0$ which we denote by $w(s)$. We define the chain complex $\calS C_\ast^\cell(X)$ as the completion of the complex cellular chain complex $C_\ast^\cell(X,\bbC)$ with regard to the family of norms $(p_k^X)_{k\ge 1}$, where for a cellular $d$-chain $c=\sum_\sigma a_\sigma\cdot\sigma$ we set 
\begin{equation}\label{eq: space norm family} p^X_k(c) = \sum_\sigma |a_\sigma|(1+w(\sigma))^k.
\end{equation}
The $d$-skeleton of $X$ only has finitely many orbits of cells, and each cellular chain group is a finitely generated free $\bbC\Gamma$-module. If $\sigma_0$ is a $d$-simplex then the function $\gamma\mapsto w(\gamma \sigma_0)$ is equivalent to the length function~$l$. This implies that the choice of a $d$-simplex in each orbit yields a continuous isomorphism $\calS(\Gamma)^{l_d}\cong \calS C_d^\cell(X)$ where $l_d$ is the number of orbits.
Therefore the inclusion $C_\ast^\cell(X,\bbC)\hookrightarrow\calS C_\ast^\cell(X)$ induces an isomorphism 
\begin{equation}\label{eq: bornological iso between completions}
\calS(\Gamma)\otimes_{\bbC\Gamma} C_\ast^\cell(X,\bbC)\cong\calS(\Gamma)^{l_\ast}\cong \calS C_\ast^\cell(X).
\end{equation}
\begin{lemma}\label{lem: bounded maps to Hilbert space}
Let $V$ be a unitary $\Gamma$-representation. 
The inclusion \[ \bbC[\Gamma^{n+1}]=\bbC[\Gamma]\otimes\dots\otimes\bbC[\Gamma]\subset\calS(\Gamma)\,\hat\otimes\dots\hat\otimes\,\calS(\Gamma) \]
induces an isomorphism 
\[ C_\pol(\Gamma^{n+1},V)^\Gamma\cong 
\hom_{\calS(\Gamma)}\bigl(\calS(\Gamma)\,\hat\otimes\dots\hat\otimes\,\calS(\Gamma), V\bigr). \]
\end{lemma}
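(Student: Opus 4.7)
The plan is to exhibit mutually inverse maps between the two sides by restriction to, respectively extension from, the dense subspace $\bbC[\Gamma^{n+1}]\subset \calS(\Gamma)^{\hat{\otimes}(n+1)}$. Given $\phi\in\hom_{\calS(\Gamma)}(\calS(\Gamma)^{\hat{\otimes}(n+1)}, V)$, the restriction $f_\phi(g_0,\dots,g_n):=\phi(\delta_{g_0}\otimes\cdots\otimes\delta_{g_n})$ is a function $\Gamma^{n+1}\to V$. Conversely, a polynomial cochain $f$ determines a $\bbC$-multilinear map on $\bbC[\Gamma]^{n+1}$, and we want this to extend continuously to the projective tensor product; one then checks these two procedures are mutual inverses landing in the correct spaces.

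The main technical point is the correspondence between the polynomial growth of $f$ and the continuity of $\phi_f$. The seminorms defining $\calS(\Gamma)$ are $p_k(a)=\sum_\gamma |a(\gamma)|(1+l(\gamma))^k$, so by the universal property of the projective tensor product, every continuous seminorm on $\calS(\Gamma)^{\hat{\otimes}(n+1)}$ is dominated by some tensor product $p_{k_0}\otimes\cdots\otimes p_{k_n}$, which evaluates on elementary tensors to $\prod_i(1+l(g_i))^{k_i}$. The elementary inequality
\[ (1+l(g_0)+\cdots+l(g_n))^k\leq\prod_{i=0}^n(1+l(g_i))^k\leq (1+l(g_0)+\cdots+l(g_n))^{(n+1)k}, \]
following from $1+\sum l(g_i)\leq \prod(1+l(g_i))$, shows that the growth condition defining $C_\pol$ is polynomially equivalent to the factorized condition. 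The factorized bound is precisely what is needed for $\phi_f$ to extend continuously via multilinearity, and conversely continuity of a given $\phi$ forces the factorized bound on its restriction to $\bbC[\Gamma^{n+1}]$.

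Second, the $\calS(\Gamma)$-module structure on $\calS(\Gamma)^{\hat{\otimes}(n+1)}$ relevant here is the diagonal one, $\gamma\cdot(\delta_{g_0}\otimes\cdots\otimes\delta_{g_n})=\delta_{\gamma g_0}\otimes\cdots\otimes\delta_{\gamma g_n}$. The change of coordinates sending $\delta_{g_0}\otimes\delta_{g_1}\otimes\cdots\otimes\delta_{g_n}$ to $\delta_{g_0}\otimes\delta_{g_0g_1}\otimes\cdots\otimes\delta_{g_0g_n}$ intertwines the diagonal action with left multiplication on the first factor and is bi-continuous on the completion by $l(g_0g_i)\leq l(g_0)+l(g_i)$ and its inverse estimate, so $\calS(\Gamma)^{\hat{\otimes}(n+1)}$ is a free (in particular projective) diagonal $\calS(\Gamma)$-module. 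Under this action, $\calS(\Gamma)$-linearity of $\phi_f$, spelled out on $\delta$-basis elements, reads $f(\gamma g_0,\dots,\gamma g_n)=\gamma f(g_0,\dots,g_n)$, which is precisely the $\Gamma$-equivariance imposed by $(\,\cdot\,)^\Gamma$ on the left-hand side; the converse extension from $\Gamma$-equivariance to $\calS(\Gamma)$-linearity goes through by $\bbC$-linearity and continuity.

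The only mild obstacle lies in the norm bookkeeping, namely the bi-continuity of the change-of-coordinates map and the equivalence between the sum and product forms of polynomial growth, both of which reduce to the triangle inequality for the word length. Once these are in hand, mutual invertibility of $f\leftrightarrow\phi_f$ follows immediately from the density of $\bbC[\Gamma^{n+1}]$ in $\calS(\Gamma)^{\hat{\otimes}(n+1)}$.
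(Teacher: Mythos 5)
Your proof is correct and takes essentially the same approach as the paper: both rest on identifying $\calS(\Gamma)^{\hat\otimes(n+1)}$ with the completion of $\bbC[\Gamma^{n+1}]$ under weighted $\ell^1$-seminorms. The paper asserts this directly via the family $q_k$ built from the weight $(1+l(g_0)+\cdots+l(g_n))^k$, while you derive it from the projective tensor seminorms $p_{k_0}\hat\otimes\cdots\hat\otimes p_{k_n}$ together with the elementary equivalence between the sum and product weights, and you also spell out the diagonal module structure and its correspondence with $\Gamma$-invariance, which the paper leaves implicit.
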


\begin{proof}
This is an immediate consequence of the fact that the projective tensor product $\calS(\Gamma)\hat\otimes\dots\hat\otimes\calS(\Gamma)$ is the completion of $\bbC[\Gamma^{n+1}]$ with respect to the family~$(q_k)$ 
\[ q_k\bigl(\sum_{\bar\gamma\in\Gamma^{n+1}} a_{\bar\gamma} (\gamma_0,\dots,\gamma_n)\bigr)=\sum_{\bar\gamma\in\Gamma^{n+1}}|a_{\bar\gamma}|\bigl(1+l(\gamma_0)+\dots+l(\gamma_n)\bigr)^k\]
of norms. 
\end{proof}

\begin{prop}\label{prop: Schwartz chain contraction and comparison map}
Let $X$ be a contractible free simplicial $\Gamma$-complex with finite $(d+1)$-skeleton such that $0\leftarrow\bbC\leftarrow \calS C^\cell_0(X)\leftarrow\dots\leftarrow \calS C_{d+1}^\cell(X)$ is exact. Let $V$ be a unitary $\Gamma$-representation. 
Then the comparison map $H^i_\pol(\Gamma, V)\to H^i(\Gamma, V)$ is an isomorphism for $0\le i\le d$ and a monomorphism for $i=d+1$.  
\end{prop}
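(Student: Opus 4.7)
The plan is to work in the framework of relative homological algebra over the Fr\'{e}chet algebra $\calS(\Gamma)$, computing both $H^\ast_\pol(\Gamma,V)$ and $H^\ast(\Gamma,V)$ through projective resolutions of the trivial module $\bbC$. Note first that any unitary $\Gamma$-representation $V$ extends canonically to a continuous $\calS(\Gamma)$-module via the absolutely convergent sum $f\cdot v\defq\sum_{\gamma\in\Gamma}f(\gamma)\pi(\gamma)v$, so the relevant $\hom_{\calS(\Gamma)}$-spaces are well defined.

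The $\calS(\Gamma)$-bar complex $P_\ast\defq\calS(\Gamma)^{\hat\otimes \ast+1}$ is a full projective $\calS(\Gamma)$-resolution of $\bbC$, and Lemma~\ref{lem: bounded maps to Hilbert space} yields $C_\pol(\Gamma^{\ast+1},V)^\Gamma\cong\hom_{\calS(\Gamma)}(P_\ast,V)$, so this Hom complex computes $H^\ast_\pol(\Gamma,V)$. On the other hand, by~\eqref{eq: bornological iso between completions} each $\calS C_n^\cell(X)$ is free of rank $l_n$ over $\calS(\Gamma)$, and the exactness hypothesis makes the truncation $\calS C_{\le d+1}^\cell(X)$ a partial projective $\calS(\Gamma)$-resolution of $\bbC$ of length $d+1$. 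Choosing representatives of the $\Gamma$-orbits of $n$-cells produces tautological identifications
\[ \hom_{\calS(\Gamma)}\bigl(\calS C_n^\cell(X),V\bigr)\cong V^{l_n}\cong \hom_{\bbC\Gamma}\bigl(C_n^\cell(X,\bbC),V\bigr) \]
compatible with the cellular differentials, and the right-hand complex computes $H^\ast(\Gamma,V)$ since $X$ is a contractible free $\Gamma$-CW complex.

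By the comparison theorem of relative homological algebra, the two partial projective $\calS(\Gamma)$-resolutions $P_{\le d+1}$ and $\calS C_{\le d+1}^\cell(X)$ admit mutually chain-homotopy-inverse lifts of $\id_\bbC$; applying $\hom_{\calS(\Gamma)}(-,V)$ and passing to cohomology gives isomorphisms of $H^i$ in every degree $0\le i\le d+1$. For $0\le i\le d$, truncating the bar resolution at degree $d+1$ does not affect the $i$-th cohomology, so this yields the desired isomorphism $H^i_\pol(\Gamma,V)\xrightarrow{\cong}H^i(\Gamma,V)$. For $i=d+1$, truncation only produces an injection $H^{d+1}_\pol(\Gamma,V)\hookrightarrow H^{d+1}(\hom_{\calS(\Gamma)}(P_{\le d+1},V))$ (a cocycle class in the full complex is sent to its class modulo the same coboundaries in the truncation), hence the comparison map is injective in degree $d+1$. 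The step requiring most care is matching this abstract map with the natural comparison map induced by the inclusion $C_\pol(\Gamma^{\ast+1},V)\hookrightarrow C(\Gamma^{\ast+1},V)$ of cochains; this is a standard naturality argument exploiting the $\bbC\Gamma$-equivariance of $\bbC[\Gamma^{\ast+1}]\hookrightarrow\calS(\Gamma)^{\hat\otimes \ast+1}$ and the essential uniqueness up to chain homotopy of lifts in the comparison theorem, applied coherently to the three resolutions involved.
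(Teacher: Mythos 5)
Your overall strategy — compute both cohomologies via $\calS(\Gamma)$-resolutions and compare — is the same as the paper's, and the degrees $i\le d$ are handled correctly. But there is a real gap in the degree-$(d+1)$ step.

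You invoke the comparison theorem for the two \emph{partial} $\calS(\Gamma)$-resolutions $P_{\le d+1}$ (truncated bar) and $\calS C^{\cell}_{\le d+1}(X)$, and conclude that applying $\hom_{\calS(\Gamma)}(-,V)$ yields isomorphisms on $H^i$ for all $0\le i\le d+1$. This is not what the comparison theorem delivers. For two partial projective resolutions of length $d+1$, one obtains chain maps $\phi,\psi$ lifting $\id_{\bbC}$ and a chain homotopy $h_i\colon P_i\to Q_{i+1}$ only for $i\le d$; constructing $h_{d+1}$ would require both a degree-$(d+2)$ target and exactness at degree $d+1$, and the hypothesis of the proposition only asserts exactness of the truncation $0\leftarrow\bbC\leftarrow\calS C^{\cell}_0\leftarrow\dots\leftarrow\calS C^{\cell}_{d+1}$, which is exactness at degrees $\le d$. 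Dualizing, the cochain-homotopy identity at cohomological degree $i$ needs both $h_{i-1}^\ast$ and $h_i^\ast$; it therefore holds only for $i\le d$, giving cohomology isomorphisms only in that range. The top-degree groups $H^{d+1}(\hom(P_{\le d+1},V))=\hom(P_{d+1},V)/\mathrm{im}(\delta^d)$ and $H^{d+1}(\hom(\calS C^{\cell}_{\le d+1},V))$ are \emph{not} the derived-functor $\mathrm{Ext}^{d+1}$; they are cokernels that genuinely depend on the chosen partial resolution, and there is no a priori isomorphism between them. Your subsequent chain of inclusions and the ``hence the comparison map is injective in degree $d+1$'' therefore does not go through.

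The paper's proof sidesteps exactly this by \emph{extending} the truncated complex $\calS C^{\cell}_{\le d+1}(X)$ to a full projective $\calS(\Gamma)$-resolution $S_\ast$ (possible since each term is free and the truncation is exact), and introducing the chain map $f_\ast\colon\calS(\Gamma)\otimes_{\bbC\Gamma}C^{\cell}_\ast(X,\bbC)\to S_\ast$ that is the identity in degrees $\le d+1$. All comparisons are then between \emph{full} resolutions (where homotopies exist in every degree), and the degree-count is shifted to the single cochain map $f^\ast$: since $f^\ast$ is an isomorphism of cochain groups in degrees $\le d+1$ and merely a cochain map beyond, it induces isomorphisms on cohomology in degrees $\le d$ and a monomorphism in degree $d+1$. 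That extension-and-$f_\ast$ device is the essential ingredient missing from your argument, and it also makes the naturality issue you flag at the end tractable: the paper builds an explicit homotopy-commutative triangle of chain maps rather than appealing to ``standard naturality'' of the comparison theorem for partial resolutions, which, as above, does not behave well in the top degree.
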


\begin{proof}
    The truncation of the cellular chain complex 
    \begin{equation}\label{eq: truncated cellular chain complex} 0\leftarrow \bbC\leftarrow C_0^\cell(X,\bbC)\leftarrow\dots\leftarrow C_{d+1}^\cell(X,\bbC)
    \end{equation}
is exact and consists of finitely generated free $\bbC\Gamma$-modules. 
By assumption and~\eqref{eq: bornological iso between completions}, the complex~\eqref{eq: truncated cellular chain complex} stays exact after tensoring with $\calS(\Gamma)$; we extend it to a free $\calS(\Gamma)$-resolution $S_\ast$ of $\bbC$ so that $S_i=\calS(\Gamma)\otimes_{\bbC\Gamma} C_i^\cell(X,\bbC)$ for $i\le d+1$.  
Using projectivity of $\calS(\Gamma)\otimes_{\bbC\Gamma} C_\ast^\cell(X,\bbC)$ and exactness of $S_\ast$, we obtain a $\calS(\Gamma)$-chain map 
\[f_\ast\colon\calS(\Gamma)\otimes_{\bbC\Gamma} C_\ast^\cell(X,\bbC)\to S_\ast\] that is the identity up to degree $d+1$. 
We choose a $\bbC\Gamma$-linear chain map 
\[ C_\ast^\cell(X,\bbC)\to \bbC[\Gamma^{\ast+1}]=\bbC\Gamma^{\otimes^{\ast+1}} \]
between the two projective $\bbC\Gamma$-resolutions. It extends to a $\calS(\Gamma)$-chain map 
\[ \calS(\Gamma)\otimes_{\bbC\Gamma} C_\ast^\cell(X,\bbC)\to \calS(\Gamma)^{\otimes^{\ast+1}}.\]
By the fundamental theorem of homological algebra, we obtain a $\calS(\Gamma)$-chain homotopy equivalence 
\[ S_\ast\to \calS(\Gamma)^{\otimes^{\ast+1}}\]
so that all three chain maps form a homotopy-commutative triangle. Hence we obtain an induced homotopy-commutative diagram: 
\[
\begin{tikzcd}\hom_{\calS(\Gamma)}\bigl(\calS(\Gamma)^{\otimes^{\ast+1}}, V\bigr)\arrow[r, "\simeq"]\arrow[dd]&\hom_{\calS(\Gamma)}\bigl(S_\ast, V\bigr)\arrow[d, "f^\ast"] \\
 & \hom_{\calS(\Gamma)}\bigl(\calS(\Gamma)\otimes_{\bbC\Gamma} C_\ast^\cell(X,\bbC), V\bigr)\arrow[d, "\cong"]\\
\hom_{\bbC\Gamma}\bigl(\bbC\Gamma^{\otimes^{\ast+1}}, V\bigr)\arrow[r, "\simeq"]& \hom_{\bbC\Gamma}\bigl(C_\ast^\cell(X,\bbC), V\bigr)
\end{tikzcd}
\]
The left vertical restriction map induces the comparison map. Every arrow except the left vertical and $f^\ast$ are chain homotopy equivalences or chain isomorphisms. 
It is clear that $f^\ast$ is an isomorphism up to degree~$d+1$ and hence induces a cohomology isomorphism up to degree~$d$ and a cohomology monomorphism in degree~$d+1$. 
Thus so does the left vertical map too.
\end{proof}

\begin{prop}\label{prop: comparison iso from geometric exactness}
Let $X$ be a contractible free simplicial $\Gamma$-complex with cocompact $(d+1)$-skeleton. Let $V$ be a unitary $\Gamma$-representation. 
Let $h_\ast: C_\ast^\cell(X,\bbC)\to C_{\ast+1}^\cell(X,\bbC)$ be a chain contraction such that $h_i$ is continuous for every $0\le i\le d$ where $C_\ast^\cell(X,\bbC)$ carries the subspace topology from $\calS C_\ast^\cell(X)$. 
Then the comparison map $H^{d+1}_\pol(\Gamma, V)\to H^{d+1}(\Gamma, V)$ is surjective. 
\end{prop}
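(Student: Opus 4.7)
The plan is to construct a $\bbC\Gamma$-chain map $\beta_\ast\colon\bbC[\Gamma^{\ast+1}]\to C_\ast^\cell(X,\bbC)$ in degrees $0\le\ast\le d+1$ such that each $\beta_i$ extends to a continuous $\calS(\Gamma)$-linear map $\beta_i\colon\calS(\Gamma)^{\hat\otimes^{i+1}}\to\calS C_i^\cell(X)$, and then to represent any prescribed class in $H^{d+1}(\Gamma,V)$ by pulling a cellular cocycle back along $\beta_{d+1}$.

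I would build $\beta_i$ by induction on $i$. After choosing a base vertex $v_0\in X$, set $\beta_0(1)=v_0$; given $\beta_i$, define
\[ \beta_{i+1}(1,\gamma_1,\dots,\gamma_{i+1}) := h_i\bigl(\beta_i\partial(1,\gamma_1,\dots,\gamma_{i+1})\bigr), \]
extended $\bbC\Gamma$-linearly. The contracting-homotopy identity $\partial h_i+h_{i-1}\partial=\id$ verifies the chain-map relation $\partial\beta_{i+1}=\beta_i\partial$. Schwartz-continuity of $\beta_{i+1}$ would follow from continuity of the bar differential, of the $\calS(\Gamma)$-action on $\calS C_\ast^\cell$, and of $h_i$; the last holds by hypothesis as long as $i\le d$, which is precisely what allows the induction to proceed through degree $d+1$ but no further.

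Next, let $[c]\in H^{d+1}(\Gamma,V)$ be represented by a $\bbC\Gamma$-cocycle $c\colon C_{d+1}^\cell(X,\bbC)\to V$. Because $V$ is unitary and $c$ is $\bbC\Gamma$-linear, $\|c(\sigma)\|_V$ is constant on each of the finitely many $\Gamma$-orbits of $(d+1)$-cells, so $\|c(\cdot)\|_V$ is dominated by $p_1^X$ and $c$ extends to a continuous map $\tilde c\colon\calS C_{d+1}^\cell\to V$. Then $\tilde c\circ\beta_{d+1}\colon\calS(\Gamma)^{\hat\otimes^{d+2}}\to V$ is continuous and $\calS(\Gamma)$-linear, so by Lemma~\ref{lem: bounded maps to Hilbert space} the cochain $c':=c\circ\beta_{d+1}$ lies in $C_\pol(\Gamma^{d+2},V)^\Gamma$.

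The cocycle verification is the step I expect to be the main subtlety, because at first glance it looks like it should require Schwartz-exactness of $\calS C_\ast^\cell$ at level $d+1$, which is exactly what we lack. Instead I would argue as follows: for $y\in\bbC[\Gamma^{d+3}]$, $(\delta c')(y)=c(\beta_{d+1}\partial y)$, and the chain-map property gives $\partial(\beta_{d+1}\partial y)=\beta_d\partial^2 y=0$, so $\beta_{d+1}\partial y\in\ker\partial_{d+1}^{C^\cell}$. Since $X$ is contractible, the \emph{ordinary} cellular chain complex is exact in every degree, so $\beta_{d+1}\partial y$ is a genuine cellular boundary and is annihilated by $c$. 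Once $c'$ is a polynomial cocycle, the fundamental lemma of homological algebra identifies $[c']$ with $[c]$ in $H^{d+1}(\Gamma,V)$, producing the required preimage and establishing surjectivity of the comparison map.
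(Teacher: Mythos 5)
Your proposal is correct and follows essentially the same route as the paper: you construct the same inductively-defined chain map $g_\ast=h_{\ast-1}\circ g_{\ast-1}\circ\partial$ from the bar resolution to $C_\ast^{\cell}(X,\bbC)$ (continuous through degree $d+1$ by the continuity of $h_0,\dots,h_d$), pull back a cellular cocycle, and apply Lemma~\ref{lem: bounded maps to Hilbert space} to conclude it is polynomial. The only cosmetic difference is the order of presentation — the paper starts from a bar cocycle $z$ and replaces it by $z'\circ g_{d+1}$, whereas you start from the cellular cocycle; your observation that the cocycle condition for $c\circ\beta_{d+1}$ needs only ordinary (not Schwartz) exactness of $C_\ast^{\cell}(X,\bbC)$ is correct.
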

In the context of relative homological algebra, the exactness 
of the sequence of $\calS(\Gamma)$-modules in Proposition~\ref{prop: Schwartz chain contraction and comparison map} assumes the existence of a continuous $\bbC$-linear homotopy. The assumption of the above Proposition says that the continuous linear homotopy that makes $0\leftarrow\bbC\leftarrow \calS C^\cell_0(X)\leftarrow\dots\leftarrow \calS C_{d+1}^\cell(X)$ exact restricts to the dense subspaces $C_\ast^\cell(X,\bbC)$.  

\begin{proof}
We endow $\bbC\Gamma^{\otimes^{\ast+1}}$ with the subspace topology inherited from its dense embedding into $\calS(\Gamma)^{\hat\otimes^{\ast+1}}$. 

Inductively and using $h_\ast$, we construct a $\bbC\Gamma$-chain map $g_\ast\colon \bbC\Gamma^{\otimes^{\ast+1}}\to C_\ast^\cell(X,\bbC)$ that is continuous up to degree~$d+1$. In degree zero it comes from a choice of a base vertex in~$X$. 
Suppose $g_0, \dots, g_k$ are  already constructed. The $\bbC\Gamma$-module $\bbC\Gamma^{\otimes^{k+1}}$ is free. We define $g_{k+1}$ on the $\bbC\Gamma$-basis $\{1\}\times\Gamma^{k+1}\subset\bbC[\Gamma^{k+2}]\subset \bbC\Gamma^{\otimes^{k+2}}$ as 
$h_k\circ g_k\circ \partial$ and extend it by equivariance to a $\bbC\Gamma$-map. 
Since each $h_i$ is continuous for $i\le d$, 
the maps $g_0, \dots, g_{d+1}$ are continuous.  As a map between projective $\bbC\Gamma$-resolutions, $g_\ast$ is automatically a chain homotopy equivalence. 

Let $z\in \hom_{\bbC\Gamma}\bigl(\bbC\Gamma^{\otimes^{d+2}}, V\bigr)$ be a $(d+1)$-cocycle. Up to adding a coboundary to $z$, we may assume that there is cocycle $z'\in \hom_{\bbC\Gamma}\bigl(C_{d+1}^\cell(X,\bbC), V\bigr)$ such that $z'\circ g_{d+1}=z$. The cocycle $z'$ is automatically continuous. By continuity of $g_{d+1}$, the cocycle $z$ is continuous which means that $z$ is polynomial by Lemma~\ref{lem: bounded maps to Hilbert space}.  
\end{proof}

\subsection{Filling functions of lattices in semisimple groups}\label{subsec: filling functions}

We review the filling functions of a group which are higher-dimensional analogs of the Dehn function. 

Let $X$ be a simplicial complex or a Riemannian manifold. In the former case we equip every simplex with the metric of the Euclidean standard simplex and $X$ with the induced path length metric. 

A \emph{Lipschitz singular chain} in $X$ is a singular chain such that each singular simplex is Lipschitz. 
The chain complex of Lipschitz singular chains $C_\ast^\lip(X)$ is a subcomplex of the singular chain complex $C_\ast(Y)$. 
Let $c=\sum_{i=1}^n a_i\sigma_i$ be a Lipschitz singular $(d-1)$-chain. The \emph{$(d-1)$-volume} $\vol^{d-1}(\sigma_i)$ of the singular $(d-1)$-simplex $\sigma_i\colon \Delta^{d-1}\to X$ is defined as 
\[ \vol^{d-1}(\sigma_i)=\int_{\Delta^{d-1}}|J_{\sigma_i}(x)|dx.
\]
The Jacobian is almost everywhere defined by Rademacher's theorem. The \emph{mass} of~$c$ is defined as 
\[ \mass(c)=\sum_{i=1}^n |a_i|\cdot \vol^{d-1}(\sigma_i).\]
The \emph{filling volume} of a (integral) cycle $c\in C_{d-1}^\lip(X,\bbZ)$
is defined as 
\begin{equation*}
\filling^d_X(c)=\inf_{\substack{b\in C^\lip_d(X,\bbZ)\\ \partial b=c}}\mass(b)
\end{equation*}
Let $X$ be $(d-1)$-connected. 
We define the $d$-dimensional \emph{filling function} $\filling^d_X$ of $X$ as 
\begin{equation}\label{eq: geometric filling volume}
\filling^d_X(v)=\sup_{\substack{c\in C_{d-1}^\lip(X,\bbZ)\\\partial c=0,  \mass(c)\le v}} \filling^d(c). 
\end{equation}

We say that the filling function is \emph{polynomial} if there is some $k\in\bbN$ and $C>0$ such that $\filling^d(v)\le C\cdot v^k$ for every $v\ge 1$. 

Next we review two combinatorial analogs of the filling function for a simplicial complex~$X$ or, slightly more general, for a CW-complex $X$ where each cell is isometric to a convex Euclidean polyhedron and the attaching maps are isometries. On each cellular chain group $C_i^\cell(X)$ of $X$ we consider both the $\ell^1$-norm $\norm{\_}$ with respect to the basis of $i$-cells and the norm $p_1^X$ defined in~\eqref{eq: space norm family}. 
These norms take integer values on $C^\cell_d(X,\bbZ)$.

The \emph{combinatorial filling volume} of an (integral) cycle $c\in C_{d-1}^\cell(X,\bbZ)$ is defined as 
\[ \combfilling^d(c)=\min_{\substack{b\in C^\cell_d(X,\bbZ)\\ \partial b=c}}\norm b.\]
The \emph{weighted combinatorial filling volume} of a cycle $c\in C_{d-1}^\cell(X,\bbZ)$ is defined as 
\[ \combweightedfilling^d(c)=\min_{\substack{b\in C^\cell_d(X,\bbZ)\\ \partial b=c}}p_1^X(b).\]

If $X$ is $(d-1)$-connected, we define the $d$-th \emph{combinatorial filling function} $\combfilling^d_X$ by 
\begin{equation}\label{eq: combinatorial filling volume}
\combfilling^d_X(v)=\sup_{\substack{c\in C_{d-1}^\cell(X,\bbZ)\\\partial c=0,  \norm{c}\le v}} \combfilling^d(c). 
\end{equation}
The weighted version is defined as 
\begin{equation}\label{eq: combinatorial weighted filling volume}
\combweightedfilling^d_X(v)=\sup_{\substack{c\in C_{d-1}^\cell(X,\bbZ)\\\partial c=0,  p_1^X(c)\le v}} \combweightedfilling^d(c). 
\end{equation}

The following well-known fact is a consequence of the Federer-Fleming deformation technique. See~\citelist{\cite{abrams_etal}*{Section~2}\cite{epstein}*{10.3}} for statements and further references. 

\begin{theorem}\label{thm: federer-fleming thm}
Let $X$ be a contractible free simplicial $\Gamma$-complex with cocompact $(d+1)$-skeleton or a Riemannian manifold on which $\Gamma$ acts cocompactly and properly discontinuously by isometries. 
Then 
\[ \filling_X^i\approx \combfilling_X^i\]
for $2\le i\le d$. 
\end{theorem}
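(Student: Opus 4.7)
My plan is to reduce the statement to the classical Federer-Fleming deformation theorem, treating the Riemannian and simplicial cases in a unified way. In the Riemannian setting, the first step is to fix a $\Gamma$-equivariant smooth triangulation $K$ of $X$ (after passing to a torsion-free finite-index subgroup if necessary); cocompactness guarantees that the simplices of $K$ fall into finitely many isometry types up to the $\Gamma$-action, so the identity $K\to X$ is a $\Gamma$-equivariant piecewise-smooth bi-Lipschitz equivalence with uniform constants. Hence $\filling^i_X\approx \filling^i_K$, and the combinatorial filling function is defined on $K$. So it suffices to treat the polyhedral CW case, which I address next.

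For the inequality $\combfilling^i_X\lesssim \filling^i_X$, I would take a cellular $(i-1)$-cycle $c$ of norm at most $v$, view it as a Lipschitz singular cycle with $\mass(c)\le C v$ (where $C$ bounds the $(i-1)$-volume of an $(i-1)$-cell), and fill it by a Lipschitz $i$-chain $b$ with $\mass(b)\le \filling^i_X(Cv)$. Then the Federer-Fleming deformation would be applied to $b$ relative to its already-cellular boundary $c$, producing a cellular $i$-chain $b'$ with $\partial b'=c$ and $\norm{b'}\le C'\mass(b)$. The reverse inequality $\filling^i_X\lesssim \combfilling^i_X$ is almost symmetric: given a Lipschitz $(i-1)$-cycle $c$ of mass at most $v$, Federer-Fleming should produce a cellular cycle $c'$ with $\norm{c'}\le Cv$, together with an $i$-chain $h$ such that $\partial h=c-c'$ and $\mass(h)\le C'' v$; a cellular filling $b'$ of $c'$ with $\norm{b'}\le \combfilling^i_X(Cv)$, viewed as a Lipschitz chain with $\mass(b')\le C'''\norm{b'}$, then yields the Lipschitz filling $b=b'+h$ of $c$ of mass $\lesssim \combfilling^i_X(Cv)+v$.

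The main technical obstacle is the Federer-Fleming deformation itself. The key construction is to iteratively push singular mass out of the top skeleton by radially projecting from a generic point in each top cell, simultaneously bounding the $(i-1)$-mass of the projected chain and the $i$-mass of the deformation chain in terms of the original mass of the chain being projected. The uniform polyhedral geometry coming from cocompactness keeps these projection constants uniform across cells, which is what makes the passage quantitative. I would quote the precise form of Federer-Fleming in the references~\cite{epstein}*{10.3} and~\cite{abrams_etal}*{\S2} rather than reprove it, and observe that the restriction $i\ge 2$ reflects exactly what the deformation delivers in low dimensions relative to the ambient connectivity $d+1\ge i+1$ of the skeleton being used.
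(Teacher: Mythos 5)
The paper does not give its own proof of this theorem; it is stated as a ``well-known fact'' and the argument is delegated entirely to the cited references (Epstein et al.~\S10.3 and Abrams--Brady--Dani--Duchin--Young~\S2). Your reconstruction of the standard Federer--Fleming argument is correct and is precisely the argument those references carry out: the two-sided comparison via the deformation operator $P$ and homotopy $Q$ (using that $P$ preserves an already-cellular boundary for the direction $\combfilling\lesssim\filling$, and that $\partial Q(c)=c-P(c)$ on cycles for the direction $\filling\lesssim\combfilling$), together with the bi-Lipschitz reduction from the Riemannian to the polyhedral case via an equivariant triangulation, where cocompactness gives the uniform constants and equivalence of the cellular $\ell^1$-norm with mass. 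One minor imprecision: the restriction $2\le i\le d$ is not really about what the deformation ``delivers in low dimensions'' but simply about where the definitions apply ($i=1$ is the trivial connectedness case, and $i\le d$ ensures the $i$- and $(i+1)$-skeleta lie inside the cocompact $(d+1)$-skeleton so that the polyhedral geometry has uniform bounds); this does not affect the validity of your argument.
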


The following is an easier result than the previous theorem. The combinatorial weighted filling function is asymptotically bounded by the square of the combinatorial filling function. 

\begin{lemma}[\cite{ji+ramsey}*{Corollary~2.5}]\label{lem: polynomial equivalence}
 Let $X$ be a contractible free simplicial $\Gamma$-complex with cocompact $(d+1)$-skeleton. For $i\le d$, the function  $\combfilling_X^i$ is polynomial if and only if $\combweightedfilling_X^i$ is polynomial. 
 \end{lemma}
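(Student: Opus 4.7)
The inequality $\norm{c}\le p_1^X(c)$ valid for every chain gives immediately $\combfilling^i(c)\le\combweightedfilling^i(c)$ pointwise, but the bounds on the input cycles satisfy the same inequality in the wrong direction, so neither implication is automatic. The plan is, for both directions, to reduce first to cycles with connected carrier: if the carrier of $c$ decomposes into connected components $C_1,\ldots,C_m$ and $c=c_1+\cdots+c_m$ accordingly, then each $\partial c_j$ is supported in $C_j$, so $\partial c=0$ forces each $c_j$ to be a cycle; since $\norm{\cdot}$ and $p_1^X(\cdot)$ are additive over this decomposition while $\sum v_j^k\le(\sum v_j)^k$ for $k\ge 1$, a polynomial filling bound on connected-carrier cycles yields the polynomial bound in general.

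For the implication ``$\combweightedfilling^i_X$ polynomial $\Rightarrow$ $\combfilling^i_X$ polynomial'', I would exploit the cocompactness of the $\Gamma$-action on the $d$-skeleton together with the $\Gamma$-invariance of the unweighted norm. Given a connected-carrier cycle $c$ with $\norm{c}\le v$, its carrier is a connected subcomplex with $O(v)$ cells of bounded diameter, hence of diameter at most $Dv$ for some $D$ depending only on $X$. Cocompactness produces $R>0$ such that every cell $\sigma$ admits some $\gamma\in\Gamma$ with $w(\gamma^{-1}\sigma)\le R$; applying this to any $\sigma\in\supp(c)$, the chain $\gamma^{-1}c$ is supported in the ball of radius $R+Dv$ around $v_0$, so $p_1^X(\gamma^{-1}c)\le(1+R+Dv)\norm{c}\le Cv^2$. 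The polynomial weighted bound then produces $b'$ with $\partial b'=\gamma^{-1}c$ and $p_1^X(b')\le C'v^{2k}$, and $\gamma b'$ is an unweighted filling of $c$ with $\norm{\gamma b'}\le p_1^X(b')\le C'v^{2k}$.

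The converse is the main obstacle and requires an additional geometric argument. Assume $\combfilling^i_X(v)\le Cv^k$ and fix a connected-carrier cycle $c$ with $p_1^X(c)\le v$. Then $\norm{c}\le v$, and since $1+w(\sigma)\le p_1^X(c)$ for every $\sigma\in\supp(c)$, the carrier of $c$ lies within distance $v+O(1)$ of $v_0$. Pick a minimum-$\norm{\cdot}$ filling $b$ of $c$, so $\norm{b}\le Cv^k$. The key step is that the carrier $B$ of any such optimal filling is connected and contains the carrier of $c$: decomposing $B$ into connected components $B_1,\ldots,B_s$ and $b=\sum b_j$ accordingly, each $\partial b_j$ is supported in $B_j$, so the equation $\partial b=c$ combined with the connectedness of the carrier of $c$ forces all but one of the $\partial b_j$ to vanish; the corresponding $b_j$ would then be a nonzero cycle whose removal from $b$ produces a strictly smaller filling, contradicting optimality. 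Granted this, $B$ is a connected subcomplex with $O(\norm{b})$ cells, hence of diameter at most $D'\norm{b}$, so every cell of $\supp(b)$ lies within distance $O(v)+D'\norm{b}\le C_1 v^k$ of $v_0$. Therefore $p_1^X(b)\le(1+C_1 v^k)\norm{b}\le C_2 v^{2k}$, which yields the desired polynomial weighted bound.
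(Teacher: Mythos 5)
Your proof is correct, and for the forward implication it is genuinely (if mildly) different in structure from the paper's. Where you first reduce to a cycle $c$ with connected carrier, then show by an exchange argument that the $\norm{\cdot}$-optimal filling $b$ has connected carrier containing that of $c$, and finally bound $p_1^X(b)$ via the diameter of the carrier of $b$ measured from the fixed basepoint $v_0$, the paper instead works with a general $c$ and decomposes the optimal filling $b$ directly into vertex-disjoint connected pieces $b_p$; each piece has $\partial b_p\neq 0$ (else removing $b_p$ would contradict optimality), so it contains a cell $v_p$ of $c$ and the weight of every cell of $b_p$ is then controlled by $w(v_p)$ plus $\norm{b_p}_1$, summed over $p$. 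The two arguments use the same ingredients -- integrality to pass from $\ell^1$-norm to cell count, connectedness to pass from cell count to diameter, and a cell of $c$ as an anchor for weights -- but your reduction to connected $c$ and your centring at $v_0$ are steps the paper avoids by keeping local anchor cells $v_p$ inside each component. Note also that the paper proves only the $\Rightarrow$ implication (the one it needs, deferring the rest to Ji--Ramsey); your treatment of the $\Leftarrow$ direction, which exploits $\Gamma$-cocompactness to translate a connected cycle into a bounded neighbourhood of $v_0$ and then translates the resulting weighted filling back, is a clean addition and uses an ingredient (the group action) that plays no role in the paper's half of the argument.
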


The $\Rightarrow$-implication of this lemma is needed in the proof of Proposition~\ref{prop: continuous chain contraction from filling}. To rectify a minor oversight in the proof of~\cite{ji+ramsey}*{Corollary~2.5} we present the argument below. 

\begin{proof}[Proof of $\Rightarrow$-implication]
Suppose that $f=\combfilling^i_X$ is polynomial. Let $c$ be an integral $(i-1)$-cycle, and let $b$ be an integral $i$-chain so that 
$\partial b=c$ and $\norm{b}_1=f(\norm{c}_1)$. We can write $b$ as a sum $\sum_{p=1}^s b_p=\sum_{p=1}^s\sum_\sigma b_{\sigma,p}\sigma$ of chains such that the simplices $\sigma$, $b_{\sigma,p}\ne 0$, appearing in each $b_p$ form a connected subcomplex and no simplex of $b_p$ has a common vertex with a simplex of $b_q$ for any $p\ne q$. Pick a simplex $v_p$ in each $\partial b_p$. 
Since there is no cancellation between simplices coming from $b_p$ and $b_q$ for $p\ne q$ we have 
$\sum_{i=1}^s \bigl(w(v_p)-1\bigr)\le p_1^X(c)$. Hence $\sum_{i=1}^s w(v_p)\le p_1^X(c)+s\le p_1^X(c)+\norm{b}_1$. 
Let $N_p$ be the number of simplices in $b_p$. By integrality we have $N_p\le \norm{b_p}_1$. 
Therefore, each vertex of a simplex in $b_p$ has at most distance $\norm{b_p}_1$ from each vertex of $v_p$. 
We conclude that 
\begin{align*}
    p_1^X(b)=\sum_{p=1}^s\sum_\sigma |b_{\sigma,p}|w(\sigma)
    &\le \sum_{p=1}^s\sum_\sigma |b_{\sigma,p}|\bigl(\norm{b_p}_1 +w(v_p)\bigr)\\
    &\le  \norm{b}_1^2+\sum_p^s\sum_\sigma |b_{\sigma,p}|w(v_p)\\
    &\le  \norm{b}_1^2+\norm{b}_1\cdot\sum_{p=1}^s w(v_p)\\
    &\le  \norm{b}_1^2+\norm{b}_1\cdot \bigl(p_1^X(c)+\norm{b}_1\bigr)\\
    &\le f(\norm{c}_1)^2+f(\norm{c}_1)\cdot \bigl(p_1^X(c)+f(\norm{c}_1\bigr)\bigr)\\
    &\le f\bigl(p_1^X(c)\bigr)^2+f\bigl(p_1^X(c)\bigr)\cdot \bigl(p_1^X(c)+f\bigr(p_1^X(c)\bigr)\bigr).
\end{align*}
Hence $\combweightedfilling^i_X(v)\le f(v)^2+f(v)\cdot (v+f(v))$ is also polynomial. 
\end{proof}

\begin{defn} \label{def:polyrank}
Let $\Gamma$ be a finitely generated group. We define 
    the \emph{polynomial filling rank} as 
    \begin{multline*}\frank\Gamma=\sup\bigl\{d\in\{3,4,\dots\}\mid \text{$\Gamma$ is of type $F_{d–1}$ and}\\\text{$\filling_\Gamma^i$ is polynomial for all $2\le i<d$} \bigr\}\in\bbN\cup\{\infty\}
    \end{multline*}
 provided the set over which we take the supremum is non-empty. 
 Otherwise we set $\frank\Gamma=2$.  
\end{defn}

The following statement is well known. 

\begin{prop}\label{prop: polynomial filling rank for uniform lattices}
A uniform lattice in a semisimple group or connected semisimple Lie group with finite center has infinite polynomial filling rank. 
\end{prop}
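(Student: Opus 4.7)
The plan is to realise $\Gamma$ as acting cocompactly on a CAT(0) space of which we control the filling functions. In the connected semisimple Lie group case, I would take $X=G/K$ with $K$ a maximal compact subgroup; this is a complete, simply connected Riemannian manifold of non-positive curvature, hence CAT(0). In the general semisimple group case (Setup \S\ref{sec: setup}), $X$ is the product of the symmetric spaces attached to the archimedean factors and the Bruhat--Tits buildings attached to the non-archimedean ones, which is again a complete CAT(0) space. In either case $\Gamma$ acts properly, isometrically and cocompactly on the contractible space $X$, so $\Gamma$ admits a cocompact $K(\Gamma,1)$ and is in particular of type $F_\infty$. Equipping $X$ with a $\Gamma$-invariant polyhedral structure with cocompact skeleta (a smooth $\Gamma$-triangulation of the symmetric space, together with the natural simplicial/polyhedral structure of the building factors), we obtain a contractible free simplicial $\Gamma$-complex with cocompact skeleta in all dimensions; this is a model in which the filling functions $\filling^i_\Gamma$ can be computed.

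Next I would invoke the Euclidean isoperimetric inequality in CAT(0) spaces (S.~Wenger's theorem): there are constants $C_i$ such that every integral Lipschitz $(i-1)$-cycle $c$ in $X$ bounds an integral Lipschitz $i$-chain $b$ with $\mass(b)\le C_i\cdot \mass(c)^{i/(i-1)}$. This bound is uniform in the location of $c$ in $X$, so the geometric filling function satisfies $\filling^i_X(v)\le C_i\, v^{i/(i-1)}$ for every $i\ge 2$, and is therefore polynomial in the sense of~\S\ref{subsec: filling functions}. By the Federer--Fleming deformation theorem (Theorem~\ref{thm: federer-fleming thm}), the combinatorial filling function $\combfilling^i_X$ is polynomially equivalent to $\filling^i_X$ for every $2\le i<\infty$, hence also polynomial. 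Since $X$ is a cocompact classifying space for $\Gamma$, the function $\combfilling^i_X$ agrees up to polynomial equivalence with $\combfilling^i_\Gamma$, which in turn is polynomially equivalent to $\filling^i_\Gamma$. Consequently $\filling^i_\Gamma$ is polynomial for every $i\ge 2$, and combined with $\Gamma$ being of type $F_\infty$ this gives $\frank\Gamma=\infty$ by Definition~\ref{def:polyrank}.

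The only non-elementary ingredient is the CAT(0) Euclidean isoperimetric inequality, and this is where I expect the main work to sit when writing the argument carefully, since one must match Wenger's language of integral currents with the notion of Lipschitz singular cycles used in~\eqref{eq: geometric filling volume}. In the purely Riemannian case a direct argument is also available by combining geodesic coning onto a large convex ball with the $\CAT(0)$ comparison for triangles, which iteratively reduces the $(i-1)$-volume at a controlled polynomial cost; in the presence of building factors the cleanest route is to quote the CAT(0) result directly. No arithmeticity or spectral input is required in the uniform case, which is why the proposition is strictly easier than its non-uniform counterpart treated via Leuzinger--Young and Bestvina--Eskin--Wortman.
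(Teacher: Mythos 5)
Your proposal is correct and follows essentially the same route as the paper: act cocompactly and properly on the CAT(0) space that is the product of the relevant symmetric spaces and Bruhat–Tits buildings, invoke Wenger's Euclidean isoperimetric inequality for CAT(0) spaces to see that the geometric filling functions are polynomial in every degree, and then transfer to the combinatorial filling functions via the Federer–Fleming deformation theorem (Theorem~\ref{thm: federer-fleming thm}). The paper first reduces the Lie-group case to the semisimple-group case of Setup~\S\ref{sec: setup} via Remark~\ref{rem: QI remark on G} rather than handling the two cases in parallel as you do, but this is a presentational difference, not a mathematical one.
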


\begin{proof}
By Remark~\ref{rem: QI remark on G} we may assume that the ambient group $G$ is semisimple in the sense of~\S\ref{sec: setup}.  
Let $\Lambda<G$ be a uniform lattice in~$G\cong\prod_{i=1}^l \bfG_i(F_i)$. Depending on whether $F_i$ is archimedean or not, let $X_i$ be the symmetric space or the Bruhat-Tits building associated with $\bfG_i(F_i)$. The product $X=X_1\times\dots \times X_l$ is a CAT(0)-space on which $\Lambda$ acts properly and cocompactly by isometries. Thus the filling functions satisfy $\filling^n_X(v)\lessapprox v^{\frac{n}{n-1}}$ for $n\ge 2$ like in the corresponding Euclidean situation~\cite{wenger}. In particular, they are polynomial.  
By Theorem~\ref{thm: federer-fleming thm} the polynomial filling rank of $\Lambda$ is infinite.
\end{proof}

The case of non-uniform lattices is the subject of the next two deep theorems. 

\begin{theorem}[Leuzinger-Young~\cite{leuzinger+young}*{Theorem~1.2}]\label{thm: leuzinger-young}
Let $\Gamma$ be an non-uniform irreducible lattice in a connected semisimple Lie group~$G$ with a finite center and without compact factors. 
Then 
\[ \frank\Gamma=\max\{2, \rank G\}.\]
If $\rank G\ge 3$, then 
\[ \filling^i_\Gamma(v)\approx v^\frac{i}{i-1} \]
for all $2\le i< \rank G$ and there is constant $c>0$ such that 
\[ \filling^{\rank G}_\Gamma(v)\gtrapprox \exp\bigl(c\cdot v^\frac{1}{\rank G-1}\bigr). \]
\end{theorem}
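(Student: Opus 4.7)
The plan is to translate the group-theoretic statement into a geometric one about the symmetric space $X=G/K$ with horoball neighborhoods removed, and then estimate filling volumes there. Concretely, by the reduction theory of Borel and Garland-Raghunathan, there is a $\Gamma$-invariant collection $\{H_\alpha\}$ of disjoint open horoballs in $X$ indexed by $\Gamma$-orbits of $\bbQ$-rational parabolic subgroups such that $X^{\mathrm{thick}}\defq X\setminus\bigcup_\alpha H_\alpha$ is a manifold with boundary on which $\Gamma$ acts properly, freely, and cocompactly by isometries. By Milnor--\v{S}varc, $\Gamma$ is quasi-isometric to $X^{\mathrm{thick}}$, and one checks that the geometric filling functions of $X^{\mathrm{thick}}$ (for sufficiently large thickness) are asymptotically equivalent to $\filling^i_\Gamma$ via Theorem~\ref{thm: federer-fleming thm} applied to a $\Gamma$-equivariant triangulation. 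So the task reduces to estimating $\filling^i_{X^{\mathrm{thick}}}$.

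For the upper bound in degrees $2\le i<\rank G$, let $c\in C_{i-1}^{\lip}(X^{\mathrm{thick}},\bbZ)$ be a cycle of mass at most~$v$. Since $X$ itself is CAT(0), Wenger's Euclidean isoperimetric inequality produces a filling $b$ in $X$ with $\mass(b)\lesssim v^{i/(i-1)}$. The issue is that $b$ may enter the horoballs $H_\alpha$; we must push it out while controlling the mass. On each horoball, a contracting projection onto the boundary $\partial H_\alpha$ multiplies volume by at most a factor depending on how far one projects and on the dimensional ratio between the filling dimension~$i$ and the horospherical dimension, which equals $\dim X-1$. The key geometric input — and this is exactly why the rank appears — is that inside a horoball at a $\bbQ$-parabolic of rank~$r$, one has a product of a horosphere with a Euclidean factor of dimension $r-1$, and the Federer-Fleming--type deformation remains mass-controlled precisely when $i<\rank G$. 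A careful version of the argument of Leuzinger--Young gives $\filling^i_{X^{\mathrm{thick}}}(v)\lesssim v^{i/(i-1)}$, and together with the matching lower bound coming from filling round spheres in a maximal flat one obtains the $\approx$ asymptotic.

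For the lower bound at $i=\rank G$, construct for each large parameter $v$ an $(i-1)$-cycle $c_v$ in $X^{\mathrm{thick}}$ of mass $\lesssim v$ as follows: pick a maximal $\bbQ$-flat $F\cong\bbR^{\rank G}$ in $X$ passing through a sequence of horoball boundaries, and take $c_v$ to be (roughly) the boundary of a large polytope in $F\cap X^{\mathrm{thick}}$, i.e., the boundary of the set where the Weyl-chamber horospherical coordinates are bounded by some level~$t$. A short computation shows $\mass(c_v)\asymp t^{i-1}$. Any filling $b$ of $c_v$ inside $X^{\mathrm{thick}}$ must, by an intersection/linking argument with the $\Gamma$-equivariant horoball structure, project onto a region of $F$ of diameter $t$; however this region in $X^{\mathrm{thick}}$ has exponentially many pieces after identifying points by $\Gamma$, because horospherical displacement produces exponential volume distortion. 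One therefore obtains a lower bound of the form $\filling^i(c_v)\gtrsim e^{c'\cdot t}\asymp\exp(c\cdot v^{1/(i-1)})$.

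The main obstacle, and the technical heart of Leuzinger--Young, is the quantitative deformation step in the upper bound: one must produce an \emph{equivariant} way of pushing Lipschitz chains of dimension $i$ out of the horoball family so that the total additional mass contributed by each horoball excursion is bounded linearly in the mass already there, and this fails precisely at $i=\rank G$ because the horospherical cross-section in a rank-$r$ horoball has a Euclidean direction of codimension $r-1$. This is where one uses Lipschitz extension theory and the geometry of the Tits building to globalize local retractions. The lower bound is conceptually cleaner once the flat geometry is identified, but it too requires care to ensure the chosen cycles do not admit an accidental short filling.
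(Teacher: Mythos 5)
The paper does not prove this theorem; it is quoted verbatim from Leuzinger--Young, and the only thing the authors add is the neighbouring remark that the hypothesis in that reference that $G$ be centerfree can be dropped because filling functions are quasi-isometry invariants. You have instead tried to re-derive the Leuzinger--Young theorem itself, which is a long Annals-length argument, so there is no ``paper's proof'' to line your sketch up against --- the assessment has to be of the sketch on its own terms. Your setup is right: passing to the thick part $X^{\mathrm{thick}}$, identifying $\filling^i_\Gamma$ with $\filling^i_{X^{\mathrm{thick}}}$ via Milnor--\v{S}varc and the Federer--Fleming comparison (Theorem~\ref{thm: federer-fleming thm}), and the flavour of the lower bound at degree $\rank G$ (cycles seated deep in a flat near the cusps whose fillings must accumulate exponential mass back in the thick part) are all in the right direction.

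The genuine gap is the upper bound. Your proposal is to fill the cycle in the ambient CAT(0) space $X$ by Wenger's Euclidean isoperimetric inequality and then ``push out'' of the horoballs by a contracting projection whose mass distortion is controlled by a ``dimensional ratio.'' That is not how the distortion works: the geodesic/horospherical projection from depth $t$ inside a horoball onto $\partial H_\alpha$ expands the horospherical directions by a factor on the order of $e^t$, so the mass added in the push-out is not linearly controlled by the mass already there, and the claim that the deformation ``remains mass-controlled precisely when $i<\rank G$'' is exactly the hard content, not a consequence of a dimension count. Moreover, the way the rank enters the true argument is through the \emph{real} rank --- via the geometry of maximal flats and the Euclidean building at infinity --- not through the $\bbQ$-rank of the parabolic attached to a horoball, as your phrase ``horoball at a $\bbQ$-parabolic of rank~$r$'' suggests; for many irreducible lattices $\rank_\bbQ < \rank_\bbR(G)$, so the two are not interchangeable. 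Leuzinger--Young's actual upper bound does not proceed by retracting an ambient filling; it is an inductive decomposition of cycles adapted to reduction theory, filling pieces separately inside structured cusp neighbourhoods and gluing. You correctly identify the upper-bound control as the technical heart, but the mechanism you propose for it would not survive: the exponential distortion of horospherical projection is precisely the obstacle the real proof is engineered to avoid.
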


\begin{theorem}[Bestvina-Eskin-Wortman~\cite{bestvina+eskin+wortman}*{Corollary~5}]\label{thm: bestvina-eskin-wortman}
Let $k$ be a global field and let $\bfG$ be a connected, absolutely almost simple, isotropic $k$-algebraic group. 
Let $S$ be a finite set of places of $k$ that includes all archimedean ones
and let $\Gamma<\bfG(k)$ be an $S$-arithmetic subgroup.
Then $\frank \Gamma \ge |S|-2$. 
\end{theorem}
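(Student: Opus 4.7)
The plan is to realise $\Gamma$ as acting cocompactly on a contractible space $X^{(t)}$ on which explicit polynomial-volume fillings can be constructed up to dimension $|S|-3$. Form the product $X=\prod_{s\in S}X_s$, where $X_s$ is the Riemannian symmetric space associated with $\bfG(k_s)$ at archimedean places and the Bruhat--Tits building associated with $\bfG(k_s)$ at non-archimedean places; the group $\Gamma$ acts properly on the contractible, non-positively curved space $X$ with finite covolume. Classical reduction theory parametrises the ends of $\Gamma\backslash X$ by $\Gamma$-conjugacy classes of proper parabolic $k$-subgroups $\bfP<\bfG$ and assigns to each such class a horospherical neighbourhood in $X$. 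Deleting these neighbourhoods to sufficient depth yields a $\Gamma$-invariant subspace $X^{(t)}\subseteq X$ that is contractible, carries a $\Gamma$-equivariant cell structure, and on which $\Gamma$ acts cocompactly. This already gives $\Gamma$ type $F_\infty$, and in particular type $F_{|S|-3}$, so the nontrivial content is the polynomial filling statement.

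The key step is the construction of polynomial combinatorial fillings for integral cycles of dimension $i\le|S|-3$ in $X^{(t)}$. In the interior of $X^{(t)}$, where the ambient geometry is quasi-isometric to a product of $\operatorname{CAT}(0)$ spaces and locally finite affine buildings, Wenger's isoperimetric inequality gives Euclidean-type fillings, which are polynomial in every degree. Near a cusp associated with a parabolic $\bfP$, the corresponding horosphere is a fibre bundle whose base has dimension $\rank_{k_s}\bfP$ at each $s$, and whose fibre is a nilpotent product on which polynomial fillings are elementary. One builds the global filling of a cycle $c$ via a multi-scale coning procedure: the portion of $c$ inside a cusp region is pushed back to the boundary of the truncation along the horospherical flow, using the unipotent radical of $\bfP$ to control the swept-out volume polynomially in $\mass(c)$; the cycle left in the interior is then filled by Wenger; the two pieces are stitched together by a Federer--Fleming deformation, which also allows us to invoke Theorem~\ref{thm: federer-fleming thm} to pass from geometric to combinatorial fillings on $\Gamma$.

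The principal obstacle, and the source of the precise threshold $|S|-2$, is coordinating fillings that traverse several cusps simultaneously. The overlap combinatorics of the horoball cover of $X$ is encoded by the rational Tits complex $\Delta_{\calO_S}$, whose topological dimension is essentially $|S|-1$. Below this dimension, any cycle in $X^{(t)}$ can be homotoped off enough horoball directions, by means of the coordinate-wise geodesic contractions on the $X_s$, to reach the interior; at dimension $|S|-2$ this fails, and indeed cycles that detect the top homology of $\Delta_{\calO_S}$ provide lower bounds obstructing polynomial fillings, analogous to the archimedean estimates of Leuzinger--Young recalled in Theorem~\ref{thm: leuzinger-young}. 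The technical heart of the argument is organising the coning so that the polynomial exponent survives the transitions between cusp regions and the interior; once this is done, the bound $\frank\Gamma\ge|S|-2$ follows directly from the definition of $\frank$.
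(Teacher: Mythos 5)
The paper does not prove this statement: it is cited directly from \cite{bestvina+eskin+wortman}*{Corollary~5}, and the only argument it supplies is the remark after the theorem observing that Bestvina--Eskin--Wortman state the bound for $\Gamma=\bfG(\calO_S)$ and that the extension to an arbitrary $S$-arithmetic subgroup follows from the quasi-isometry invariance of filling functions. Your proposal instead sketches a from-scratch re-proof of the Bestvina--Eskin--Wortman theorem; as written it is a strategy outline rather than a proof, and it contains two concrete errors.

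First, the source of the threshold $|S|-2$ is misidentified: the rational Tits building of $\bfG$ over $k$ has dimension $\rank_k\bfG-1$, which does not depend on $S$ at all; the number $|S|$ enters through the number of factors in $X=\prod_{s\in S}X_s$, not through the dimension of a building. The bound is also only a lower bound, not an equality --- for $S=S_\infty$ archimedean, Theorem~\ref{thm: leuzinger-young} gives $\frank\Gamma=\rank G\ge|S|$, so the claim that ``at dimension $|S|-2$ this fails'' cannot be right. Second, the assertion that the interior of $X^{(t)}$ is quasi-isometric to a product of $\operatorname{CAT}(0)$ spaces, so that Wenger's inequality applies, proves too much: if $X^{(t)}$ were quasi-isometric to a $\operatorname{CAT}(0)$ space it would have polynomial filling functions in every degree, contradicting the exponential lower bound of Leuzinger--Young at degree $\rank G$. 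The actual mechanism fills cycles in the ambient $\operatorname{CAT}(0)$ space $X$ and retracts the filling into $X^{(t)}$; the polynomial control on the cost of this retraction near the cusps is precisely the technical heart of Bestvina--Eskin--Wortman's argument, and you assert it (``using the unipotent radical of $\bfP$ to control the swept-out volume polynomially'') without proof. That estimate is not a gap in the paper, which simply cites the result, but it is the essential missing piece of your argument.
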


\begin{remark}
       In~\cite{leuzinger+young}*{Theorem~1.2} it is assumed that $G$ is centerfree and in \cite{bestvina+eskin+wortman}*{Corollary~5} it is assumed that $\Gamma=\bfG(\calO_S)$.
       Since the filling functions are quasi-isometry invariants these assumptions could be relaxed as above. 
       Note also that what Bestvina-Eskin-Wortman call an \emph{$m$-dimensional isoperimetric inequality} corresponds to an \emph{$(m-1)$-dimensional isoperimetric inequality} in the paper by Leuzinger-Young. 
\end{remark}
 
\begin{prop}\label{prop: continuous chain contraction from filling}
Let $d=\frank\Gamma-2$. 
  Let $X$ be a contractible free simplicial $\Gamma$-complex with cocompact $(d+1)$-skeleton. Then there is a chain contraction $h_\ast: C_\ast^\cell(X,\bbC)\to C_{\ast+1}^\cell(X,\bbC)$ such that $h_i$ is continuous for every $0\le i\le d$ when $C_\ast^\cell(X,\bbC)$ carries the subspace topology from $\calS C_\ast^\cell(X)$. 
\end{prop}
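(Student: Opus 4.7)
The plan is to construct $h_i$ by induction on $i$ (starting with a chosen base vertex $v_0$ and $h_{-1}(1)=v_0$), defining $h_i(\sigma)$ on each $i$-cell $\sigma$ as an integral chain filling the cycle $\sigma-h_{i-1}(\partial\sigma)$ and then extending linearly. The polynomial bounds on these fillings will come from the weighted isoperimetric inequality.

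The first step is to extract the isoperimetric input. Since $\frank\Gamma=d+2$, the filling functions $\filling^i_\Gamma$ are polynomial for $2\le i\le d+1$. As $X$ has cocompact $(d+1)$-skeleton, it is a valid geometric model for these filling functions in that range, so $\filling^i_X$ is polynomial for $2\le i\le d+1$. Theorem~\ref{thm: federer-fleming thm} transfers this to $\combfilling^i_X$, and Lemma~\ref{lem: polynomial equivalence} then gives the polynomial bound on the weighted version $\combweightedfilling^i_X$ for $2\le i\le d+1$; in degree~$1$ the weighted isoperimetric bound is immediate from choosing edge-paths.

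Next I would carry out the induction. For each vertex $v$, choose an integral edge-path $h_0(v)$ from $v_0$ to $v$, so that $p_1^X(h_0(v))$ is polynomial in $w(v)$. Assume $h_0,\dots,h_{i-1}$ are defined with $p_1^X(h_j(\sigma))$ polynomial in $w(\sigma)$ and satisfying the chain-contraction identity. For an $i$-cell $\sigma$, the chain $c_\sigma\defq\sigma-h_{i-1}(\partial\sigma)$ is an integral $i$-cycle by a direct computation using $\partial^2=0$ and the inductive identity. Since $\partial\sigma$ is a signed sum of faces of weight at most $w(\sigma)+1$, the inductive estimate forces $p_1^X(c_\sigma)\le P_i(w(\sigma))$ for a polynomial $P_i$. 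The polynomial bound on $\combweightedfilling^{i+1}_X$ then furnishes an integral chain $h_i(\sigma)\in C_{i+1}^\cell(X,\bbZ)$ with $\partial h_i(\sigma)=c_\sigma$ and $p_1^X(h_i(\sigma))$ polynomial in $w(\sigma)$. Extending by linearity yields $h_i$ on all of $C_i^\cell(X,\bbC)$.

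Continuity is then a formal consequence. The crucial observation is that for an \emph{integral} chain $\eta$, the bound $p_1^X(\eta)\le M$ forces $w(\tau)\le M$ for every cell $\tau$ in the support of $\eta$, hence $p_k^X(\eta)\le M^{k+1}$. Consequently, the polynomial estimate on $p_1^X(h_i(\sigma))$ upgrades to a polynomial estimate on every $p_k^X(h_i(\sigma))$ in $w(\sigma)$, and linear extension gives $p_k^X(h_i(c))\le C_{i,k}\,p_{m_{i,k}}^X(c)$ for appropriate constants $C_{i,k}$ and exponents $m_{i,k}$. This is exactly continuity in the subspace topology inherited from $\calS C^\cell_\ast(X)$. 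The main technical point to control is tracking the growth of the exponents through the induction; the essential conceptual input is the equivalence between unweighted and weighted polynomial filling provided by Lemma~\ref{lem: polynomial equivalence}, without which one could not pass from a bound on $\filling^i_\Gamma$ to a bound on $p_1^X$.
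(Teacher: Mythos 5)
Your proposal follows essentially the same argument as the paper: an inductive construction of the $h_i$ via integral fillings of the cycles $\sigma - h_{i-1}(\partial\sigma)$, with the polynomial bound on $p_1^X$ supplied by the polynomial weighted combinatorial filling function (Theorem~\ref{thm: federer-fleming thm} plus Lemma~\ref{lem: polynomial equivalence}), and continuity upgraded from $p_1^X$ to all $p_k^X$ using the observation that integrality bounds $p_k^X(\eta)$ by a power of $p_1^X(\eta)$. This matches the paper's proof step for step.
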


\begin{proof}
Recall that the topology on $\calS C_\ast^\cell(X)$ is given by the family of norms in~\eqref{eq: space norm family}. 
Note that a homomorphism $f\colon C_i^\cell(X,\bbC)\to C_{i+1}^\cell(X,\bbC)$ is continuous 
if for every $r\in\bbN$ there is $s\in\bbN$ and $c>0$ such that $p_r^X(f(x))\le c\cdot p_s^X(x)$ for every $i$-chain~$x$, or equivalently, for every $i$-cell $x$. Now suppose that $f$ is integral, that is, $f$ restricts to a homomorphism $C_i^\cell(X,\bbZ)\to C_{i+1}^\cell(X,\bbZ)$. Then continuity of $f$ already follows if there is $s\in\bbN$, $c>0$ such that $p_1^X(f(e))\le c\cdot p_s^X(e)$ for every $i$-cell~$e$ since for an integral chain $f(e)$ we have $p_r^X(f(e))\le p_1^X(f(e))^r$ and for every cell $e$ we have $p_s^X(e)^r=p_{r+s}^X(e)$. 

We construct $h_\ast$ inductively. We define $h_{-1}\colon\bbC\to C_0^\cell(X,\bbC)$ to be the map that sends $1$ to a base vertex $v_0$. For a vertex $v$,  let $h_0(v)\in C_1^\cell(X,\bbC)$ be a simplicial geodesic  from $v$ to $v_0$. Obviously, $h_0$ is continuous, integral and $\partial h_0+h_{-1}\partial=\id$.  

Suppose there are continuous and integral homomorphisms $h_i\colon C_i^\cell(X,\bbC)\to C_{i+1}^\cell(X,\bbC)$ for $0\le i\le k-1$ such that 
$\partial h_i+h_{i-1}\partial=\id$  
for every $0\le i\le k-1$. If $k-1<d$, for every $k$-cell $e\in C_k^\cell(X,\bbZ)\subset C_k^\cell(X,\bbC)$ we choose an \emph{optimal}  
boundary $b\in C^\cell_{k+1}(X,\bbZ)$, that is $\combweightedfilling_X^{k+1}(e')=p_1^X(b)$, of the $k$-cycle 
$e'=e-h_{k-1}(\partial e)$. 
By Theorem~\ref{thm: federer-fleming thm} and Lemma~\ref{lem: polynomial equivalence}, the function $\combweightedfilling_X^i$ is polynomial for $1\le i<\frank\Gamma$.
Since $k+1\le d+1<\frank\Gamma$ there is an exponent $s\in \bbN$ and $c>0$ such that 
\[ p_1^X(b)\le c\cdot p_1^X(e')^s\]
for every $k$-cell~$e$. If $k-1\ge d$ we pick any coboundary~$b$. 
By setting $h_k(e)=b$ for every $k$-cell~ $e$ we obtain a homomorphism $C_k^\cell(X,\bbC)\to C_{k+1}^\cell(X,\bbC)$. Let $k-1<d$. By continuity of $h_{k-1}$ there is an exponent $t\in\bbN$ and $\tilde c>0$ such that $p_1^X(e')\le \tilde c\cdot p_t^X(e)$ for every $k$-cell~$e$. 
Hence we obtain that 
\[ p_1^X\bigl(h_k(e)\bigr)=p_1^X(b)\le c\cdot p_1^X(e')^s\le c\cdot \tilde c^s\cdot p_t^X(e)^s=c\cdot \tilde c^s\cdot p_{t+s}^X(e)\]
for every $k$-cell~$e$. This means that $h_k$ is continuous. 
\end{proof}

\subsection{The comparison map and the polynomial filling rank}\label{subsec: comparison map}

We combine the results of Leuzinger-Young and Bestvina-Eskin-Wortman on filling functions of arithmetic lattices with the homological results in Subsection~\ref{subsec: homological algebra for polynomial} to prove the bijectivity of the comparison map in low degrees.

\begin{prop}\label{prop: comparison map and polynomial filling rank}
Let $\Gamma$ be a finitely generated group. Let $V$ be a unitary $\Gamma$-representation. Then the comparison map $H^i_\pol(\Gamma, V)\to H^i(\Gamma,V)$ is an isomorphism for every $0\le i<\frank\Gamma$. 
\end{prop}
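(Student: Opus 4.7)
The plan is to assemble the three preceding propositions. I would set $d=\frank\Gamma-2$ when $\frank\Gamma<\infty$, and treat the case $\frank\Gamma=\infty$ by running the same argument separately for each finite choice of $d$ (the proofs of the three propositions use only the polynomial filling estimates in degrees $\le d+1$ and cocompactness of the $(d+1)$-skeleton, so each goes through for any $d$ with $d+1<\frank\Gamma$). It suffices in all cases to prove the comparison map is an isomorphism in degrees $0\le i\le d+1$. Because $\Gamma$ is of type $F_{d+1}$ (from finite generation when $\frank\Gamma=2$, and from the definition of $\frank\Gamma$ otherwise), I would fix a contractible free simplicial $\Gamma$-complex $X$ whose $(d+1)$-skeleton is cocompact, for instance the universal cover of a $K(\Gamma,1)$ with finite $(d+1)$-skeleton.

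Next I would apply Proposition~\ref{prop: continuous chain contraction from filling} to $X$ to produce a chain contraction $h_\ast\colon C_\ast^\cell(X,\bbC)\to C_{\ast+1}^\cell(X,\bbC)$ such that each $h_i$ with $0\le i\le d$ is continuous in the subspace topology inherited from $\calS C_\ast^\cell(X)$. By density of $C_\ast^\cell(X,\bbC)$ in $\calS C_\ast^\cell(X)$ and completeness of the Schwartz spaces, these $h_i$ extend uniquely to continuous $\bbC$-linear maps $\tilde h_i\colon\calS C_i^\cell(X)\to\calS C_{i+1}^\cell(X)$, and the homotopy identity $\partial h_i+h_{i-1}\partial=\id$ passes to the closure. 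Hence the truncated augmented Schwartz complex
\[ 0\leftarrow\bbC\leftarrow \calS C^\cell_0(X)\leftarrow\dots\leftarrow \calS C_{d+1}^\cell(X) \]
is exact in the relative sense of this subsection, which is the hypothesis of Proposition~\ref{prop: Schwartz chain contraction and comparison map}; that proposition then supplies isomorphism of the comparison map for $0\le i\le d$ and injectivity at $i=d+1$.

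Finally, the same partial continuous chain contraction produced by Proposition~\ref{prop: continuous chain contraction from filling} is exactly the input required by Proposition~\ref{prop: comparison iso from geometric exactness}, which provides surjectivity of the comparison map at $i=d+1$. Combining the two, the comparison map is bijective in every degree $0\le i\le d+1=\frank\Gamma-1$, which is the claim. The main obstacle — and the reason the proposition has real content — is not this bookkeeping step, but Proposition~\ref{prop: continuous chain contraction from filling} itself, which in turn rests on the polynomial isoperimetric input of Leuzinger-Young (Theorem~\ref{thm: leuzinger-young}) and Bestvina-Eskin-Wortman (Theorem~\ref{thm: bestvina-eskin-wortman}) transported to the weighted combinatorial filling function via the Federer-Fleming machinery of Theorem~\ref{thm: federer-fleming thm} and Lemma~\ref{lem: polynomial equivalence}.
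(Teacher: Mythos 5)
Your proof is correct and reproduces the paper's approach in full detail: the paper's own proof is a one-line citation of Propositions~\ref{prop: Schwartz chain contraction and comparison map}, \ref{prop: comparison iso from geometric exactness} and~\ref{prop: continuous chain contraction from filling}, and you have correctly supplied the glue — the choice $d=\frank\Gamma-2$, the completion/density argument converting the partial continuous contraction of $C_\ast^\cell(X,\bbC)$ into exactness of the truncated Schwartz complex $0\leftarrow\bbC\leftarrow\calS C_0^\cell(X)\leftarrow\dots\leftarrow\calS C_{d+1}^\cell(X)$, the combination of injectivity from Proposition~\ref{prop: Schwartz chain contraction and comparison map} with surjectivity from Proposition~\ref{prop: comparison iso from geometric exactness} at the top degree, and the reduction of $\frank\Gamma=\infty$ to finite $d$. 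Nothing is missing.
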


\begin{proof}
 The statement follows from Propositions~\ref{prop: Schwartz chain contraction and comparison map} and~\ref{prop: comparison iso from geometric exactness} and~\ref{prop: continuous chain contraction from filling}. 
\end{proof}

As a consequence of Proposition~\ref{prop: polynomial filling rank for uniform lattices} we obtain: 

\begin{cor}\label{cor: comparison map uniform lattices}
Let $\Lambda<G$ be a uniform lattice in either a semisimple group or a connected semisimple Lie group with finite center. Let $V$ be a unitary $\Lambda$-representation. Then the comparison map $H^i_\pol(\Lambda, V)\to H^i(\Lambda,V)$ is an isomorphism for every $i\ge 0$. 
\end{cor}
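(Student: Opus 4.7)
The plan is to combine the two immediately preceding results, namely Proposition~\ref{prop: polynomial filling rank for uniform lattices} and Proposition~\ref{prop: comparison map and polynomial filling rank}, which have already done all the real work.

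First I would invoke Proposition~\ref{prop: polynomial filling rank for uniform lattices} to conclude that $\frank \Lambda = \infty$. This is the geometric input: a uniform lattice $\Lambda$ acts properly, cocompactly, and by isometries on the product of symmetric spaces and Bruhat--Tits buildings associated with the factors of $G$ (after reducing, via Remark~\ref{rem: QI remark on G}, to the setup of~\S\ref{sec: setup}), and this product is a CAT(0) space with polynomial (in fact $v^{n/(n-1)}$) filling functions in every degree $n\ge 2$. So $\Lambda$ is of type $F_\infty$ with polynomial filling functions in every degree.

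Second, having $\frank \Lambda = \infty$, I would apply Proposition~\ref{prop: comparison map and polynomial filling rank} to the finitely generated group $\Lambda$ with coefficients in the unitary representation $V$. This yields that the comparison map
\[ H^i_\pol(\Lambda, V) \longrightarrow H^i(\Lambda, V) \]
is an isomorphism for every $0 \le i < \frank \Lambda = \infty$, which is exactly the claim.

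There is no obstacle here since both cited propositions are already proved in full generality above; the corollary is really just a statement that the upper bound on the range of degrees in Proposition~\ref{prop: comparison map and polynomial filling rank} becomes vacuous in the uniform-lattice case, thanks to the CAT(0) geometry.
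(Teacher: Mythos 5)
Your proposal is exactly the paper's argument: the corollary is stated as an immediate consequence of Proposition~\ref{prop: polynomial filling rank for uniform lattices} (which gives $\frank\Lambda=\infty$) combined with Proposition~\ref{prop: comparison map and polynomial filling rank}. Nothing further is needed.
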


As a consequence of Theorem~\ref{thm: leuzinger-young} and Theorem~\ref{thm: bestvina-eskin-wortman} we obtain the following two corollaries. 

\begin{cor}
Let $\Gamma$ be an irreducible lattice in a connected semisimple Lie group~$G$ with a finite center and without compact factors. 
Let $V$ be a unitary $\Gamma$-representation. 
Then the comparison map $H^i_\pol(\Gamma, V)\to H^i(\Gamma,V)$ is an isomorphism for every $0\le i<\max\{2,\rank G\}$. 
\end{cor}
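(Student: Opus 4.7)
The plan is to reduce the statement directly to Proposition~\ref{prop: comparison map and polynomial filling rank}, which asserts that the comparison map $H^i_\pol(\Gamma,V)\to H^i(\Gamma,V)$ is an isomorphism for every $0\le i<\frank\Gamma$. Thus it suffices to show that
\[ \frank\Gamma \;\ge\; \max\{2,\rank G\} \]
for every irreducible lattice $\Gamma$ in a connected semisimple Lie group $G$ with finite center and without compact factors.

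First I would split into cases according to whether $\Gamma$ is uniform. If $\Gamma$ is a uniform lattice, then Proposition~\ref{prop: polynomial filling rank for uniform lattices} already gives $\frank\Gamma=\infty$, so the desired inequality is automatic and the corollary follows at once from Proposition~\ref{prop: comparison map and polynomial filling rank}. If $\Gamma$ is non-uniform, then Theorem~\ref{thm: leuzinger-young} of Leuzinger--Young applies and yields the equality $\frank\Gamma=\max\{2,\rank G\}$, which again gives the needed inequality.

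Combining the two cases, the hypothesis of Proposition~\ref{prop: comparison map and polynomial filling rank} is satisfied in the required range $0\le i<\max\{2,\rank G\}$, and the comparison map is an isomorphism in this range. Since all the ingredients are already proved, no further obstacle arises; the only subtlety worth flagging is the trivial lower bound $\frank\Gamma\ge 2$ built into Definition~\ref{def:polyrank}, which ensures the statement is non-vacuous (giving at least $H^0$ and $H^1$) in the rank-$1$ case where $\rank G=1$.
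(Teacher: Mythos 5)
Your proposal is correct and matches the paper's intended argument exactly: the paper states this corollary immediately after Proposition~\ref{prop: comparison map and polynomial filling rank} as a consequence of Theorem~\ref{thm: leuzinger-young} (non-uniform case) and Proposition~\ref{prop: polynomial filling rank for uniform lattices} (via Corollary~\ref{cor: comparison map uniform lattices}, for the uniform case), and your case split reproduces precisely that reasoning.
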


\begin{cor}
Let $k$ be a global field and let $\bfG$ be a connected, absolutely almost simple, isotropic $k$-algebraic group. 
Let $S$ be a finite set of places of $k$ that includes all archimedean ones
and let $\Gamma<\bfG(k)$ be an $S$-arithmetic subgroup.
Let $V$ be a unitary $\Gamma$-representation. 
Then the comparison map $H^i_\pol(\Gamma, V)\to H^i(\Gamma,V)$ is an isomorphism for every $0\le i<\max\{2,|S|-2\}$. 
\end{cor}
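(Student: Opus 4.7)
The plan is to combine the immediately preceding results: Proposition~\ref{prop: comparison map and polynomial filling rank} gives an isomorphism between polynomial and ordinary cohomology in degrees strictly below the polynomial filling rank $\frank\Gamma$, so it suffices to show that $\frank\Gamma \ge \max\{2,|S|-2\}$ under the hypotheses of the corollary.

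First, I would note that $\frank\Gamma \ge 2$ holds unconditionally, directly from Definition~\ref{def:polyrank}, since the convention there sets $\frank\Gamma = 2$ whenever the supremum is taken over an empty set. This already handles the degrees $i<2$ in $\max\{2,|S|-2\}$ regardless of $|S|$.

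Next, I would verify that Theorem~\ref{thm: bestvina-eskin-wortman} (Bestvina--Eskin--Wortman) applies to $\Gamma$: it is an $S$-arithmetic subgroup of a connected, absolutely almost simple, isotropic $k$-algebraic group $\bfG$ with $S$ a finite set of places containing all archimedean ones. These are precisely the hypotheses of that theorem, hence $\frank\Gamma \ge |S|-2$.

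Combining the two lower bounds gives $\frank\Gamma \ge \max\{2,|S|-2\}$, so Proposition~\ref{prop: comparison map and polynomial filling rank} yields that the comparison map
\[ H^i_\pol(\Gamma,V) \to H^i(\Gamma,V) \]
is an isomorphism for every $0 \le i < \max\{2,|S|-2\}$, as desired. There is no serious obstacle: the work was done in establishing Theorem~\ref{thm: bestvina-eskin-wortman} and Proposition~\ref{prop: comparison map and polynomial filling rank}; this corollary is a direct specialization.
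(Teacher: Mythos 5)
Your proposal is correct and matches exactly what the paper intends: the paper states this as a direct consequence of Theorem~\ref{thm: bestvina-eskin-wortman} combined with Proposition~\ref{prop: comparison map and polynomial filling rank}, and does not write out a separate proof. Your added observation that $\frank\Gamma\ge 2$ holds by the convention in Definition~\ref{def:polyrank} is the right way to justify the $\max\{2,\cdot\}$ in the range, which the paper leaves implicit.
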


\subsection{The passage from the lattice to the semisimple group}

We examine the transition between the polynomial cohomology of semisimple (Lie) groups and the polynomial cohomology of their lattices.

To this end, we need a good understanding of unitary induction in terms of fundamental domains and lattice cocycles. 
Let $\Gamma<G$ be a lattice in a locally compact second countable group~$G$. A choice of a measurable $\Gamma$-fundamental domain $X\subset G$ determines a measurable function~$\alpha\colon G\times X\to\Gamma$, called the \emph{lattice cocycle} for $X$, defined by 
\[ \alpha(g,x)=\gamma \Leftrightarrow g^{-1}x\gamma\in X.\]
We always assume that $1\in X$. 

We obtain a left $G$-action on $X$ via \[g\cdot x\defq gx\alpha(g^{-1},x).\] 
The lattice cocycle satisfies the \emph{cocycle equation}
\begin{equation}\label{eq: cocycle equation} \alpha(gh,x)=\alpha(g,x)\alpha(h,g^{-1}\cdot x). 
\end{equation}
The Borel isomorphism $X\to G/\Gamma$ is equivariant with respect to this action and the left translation action on the target. 

Let $V$ be a unitary $\Gamma$-representation. 
We turn $L^2(X,V)$ into a unitary $G$-representation via the left $G$-action: 
\begin{equation}\label{eq: unitary induction formula} (g\cdot f)(x)=\alpha(g,x)f(g^{-1}\cdot x).
\end{equation}
It is straightforward to verify that the unitary isomorphism
\begin{equation*}
    \omega\colon L^2(X,V)\xrightarrow{\cong} I^2 (V), ~ 
    \omega(f)(g)=\alpha(g^{-1},1)f\bigl(g\alpha(g^{-1},1)\bigr) 
\end{equation*}
is $G$-equivariant. A similar statement is true for $I^2_{\loc}(V)$ and $L^2_{\loc}(X,V)$. 

\begin{definition}\label{defi: universally integrable}
  Let $\Gamma$ be a finitely generated lattice in a compactly generated locally compact second countable group~$G$ with a Haar measure $\mu$. The lattice $\Gamma<G$ is \emph{universally integrable} if there is a measurable fundamental domain $X\subset G$ of $\Gamma$ such that for the associated lattice cocycle $\alpha\colon G\times X\to \Gamma$ the following holds. 
  \begin{enumerate}[(a)]
  \item The restriction of the word metric in $G$ to $\Gamma$ is a quasi-isometric to the word metric of $\Gamma$. We denote the word length in $G$ (and $\Gamma$) by $l$.  
  \item There are constants $C,D>0$ such that \[l(\alpha(g,x))\le C+D\cdot (l(g)+l(x))\] for every $x\in X$ and every $g\in G$. 
  \item For every $g\in G$ and every $p>0$ we have  
\[ \int_X l\bigl(\alpha(g,x)\bigr)^p d\mu(x)<\infty.\]
\end{enumerate}  
In this case, we say that $X$ and $\alpha$ are \emph{universally integrable}
fundamental domain or lattice cocycle, respectively. 
\end{definition}

\begin{prop}\label{prop: universally integrable}
The following four types of lattices are universally integrable: 
\begin{enumerate}
\item A uniform lattice in a locally compact second countable group. 
\item An irreducible lattice in semisimple group of higher rank. 
\item An irreducible lattice in a connected semisimple Lie group of higher rank and with finite center. 
\item If $\Gamma<G$ is universally integrable and $\Gamma\to K$ is a homomorphism to a compact group, then the diagonal embedding $\Gamma\hookrightarrow G\times K$ is a universally integrable lattice. 
\end{enumerate}
\end{prop}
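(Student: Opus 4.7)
The plan is to treat the four cases separately. Cases (1), (3) and (4) admit direct arguments or reductions, whereas (2) is the substantive case and hinges on reduction theory for arithmetic lattices in higher rank.

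For (1), I choose a measurable fundamental domain $X$ with compact closure $\bar X$, which exists since $G/\Gamma$ is compact. Property (a) is the classical Milnor--\v{S}varc lemma. For (b), fix a symmetric compact generating set $K_G$ of $G$ containing $\bar X$: if $l_G(g)=n$ then $g^{-1}x\in K_G^{n+1}$ and
\[\alpha(g,x)=(g^{-1}x)^{-1}\cdot\bigl(g^{-1}x\,\alpha(g,x)\bigr)\in K_G^{n+2},\]
so another application of Milnor--\v{S}varc yields $l_\Gamma(\alpha(g,x))\le C+D\cdot l_G(g)$, which implies (b). Property (c) is then immediate since $l|_X$ is bounded and $\mu(X)<\infty$.

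For (2), by Margulis arithmeticity $\Gamma$ is $S$-arithmetic, and I take $X$ to be a Siegel-type fundamental domain adapted to the $S$-arithmetic structure. Property (a) is precisely Theorem~\ref{thm: lubotzky-mozes-raghunathan}. For (b), the definition of $\alpha$ gives $\bar x\defq g^{-1}x\,\alpha(g,x)\in X$, so $\alpha(g,x)=x^{-1}g\bar x$ and $l_G(\alpha(g,x))\le l_G(x)+l_G(g)+l_G(\bar x)$. The key reduction-theoretic input is that the retraction onto the Siegel set is coarsely Lipschitz in the $G$-word metric, bounding $l_G(\bar x)$ linearly in $l_G(x)+l_G(g)$; combining with Theorem~\ref{thm: lubotzky-mozes-raghunathan} transports the estimate to the $\Gamma$-word metric and yields (b). For (c), I first verify the auxiliary moment estimate $\int_X l(x)^p\,d\mu(x)<\infty$ for every $p$, exploiting that on a Siegel set the word length grows at most polynomially in the cusp-depth coordinates while the Haar density decays exponentially in those same coordinates; binomial expansion of $(C+D(l(g)+l(x)))^p$ then deduces (c) from (b).

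Case (3) reduces to (2) via Remark~\ref{rem: QI remark on G}: after factoring out the finite center, $G$ becomes a finite-index subgroup of a semisimple group in the sense of Section~\ref{sec: setup}, and the image of $\Gamma$ is an irreducible lattice there; the fundamental domain and cocycle lift back to $G$ via a Borel section, since a finite central extension contributes only a bounded additive error to word lengths and a finite multiplicative factor to the measure. For (4), take $\tilde X=X\times K$ as a fundamental domain for the diagonal embedding; the associated lattice cocycle equals $\alpha$ because the $K$-coordinate is absorbed automatically, so (a) holds since the compact $K$-factor contributes only a bounded additive error to the word length on $\Gamma$, while (b) transfers by the same comparability and (c) follows from Fubini together with $\mu_K(K)<\infty$. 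The main obstacle throughout is the Lipschitz-retraction property and polynomial-moment bounds on Siegel sets in (2); while both are implicit in the reduction-theoretic techniques underlying \cite{lubotzky+mozes+raghunathan} and \cite{leuzinger+young}, assembling precise statements in the full $S$-arithmetic generality of Setup~\ref{sec: setup} is the technical heart of the proposition.
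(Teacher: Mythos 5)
Your proof takes essentially the same route as the paper's: Milnor--\v{S}varc / quasi-isometry for~(1), Siegel-set reduction theory for the cocycle estimates (b),(c) in~(2), a reduction of (3) to (2) via Remark~\ref{rem: QI remark on G}, and the direct check that $X\times K$ is a $\Gamma$-fundamental domain whose cocycle satisfies $\alpha'((g,k),(x,l))=\alpha(g,x)$ for~(4). The one thing to adjust is the reference for the Siegel-set estimates in~(2): the paper delegates properties (b) and (c) to Shalom~\cite{Shalom}*{Section~2}, which treats lattice cocycle integrability directly via the Borel--Harish-Chandra--Behr--Harder reduction theorem, whereas~\cite{leuzinger+young} concerns filling functions and does not by itself supply the cocycle moment bounds, and your informal reformulation of the key input as a ``coarse Lipschitz retraction onto the Siegel set'' should be checked against Shalom's actual statement rather than taken as the established formulation.
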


\begin{proof}
The first statement is easy to verify since $\Gamma\hookrightarrow G$ is a quasi-isometry. 
The second statement is a consequence of Theorem~\ref{thm: lubotzky-mozes-raghunathan} for (a); for (b) and (c) we refer to Shalom's paper~\cite{Shalom}*{Section~2} for a proof  and the relevant references. 
 It is a consequence of the reduction theorem of Borel-Harish-Chandra-Behr-Harder that constructs a fundamental domain by generalized Siegel sets. 
The third statement is an easy consequence of the second and Remark~\ref{rem: QI remark on G}. 

For the fourth statement, note that $G$ and $G\times K$ are quasi-isometric. 
Let $\alpha\colon G\times X\to\Gamma$ be a universally integrable lattice cocycle. Then $X\times K$ is a $\Gamma$-fundamental domain of $G\times K$. The associated cocycle $\alpha'\colon (G\times K)\times (X\times K)\to\Gamma$ satisfies $\alpha'((g,k),(x,l))=\alpha(g,x)$. Therefore $\alpha'$ is universally integrable. 
\end{proof}

\begin{prop}\label{prop: G complex for polynomial lattice}
Let $\Gamma<G$ be a universally integrable lattice.  Then the restriction $C_\pol(G^{\ast+1}, V)\to C_\pol(\Gamma^{\ast+1}, V)$ is a $\Gamma$-equivariant chain homotopy equivalence. 
\end{prop}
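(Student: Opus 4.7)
The plan is to show that both $C_\pol(G^{\ast+1},V)$ and $C_\pol(\Gamma^{\ast+1},V)$ are $\Gamma$-equivariant resolutions of $V$ in the appropriate sense of relative homological algebra, and that the restriction map is a morphism of such resolutions lifting $\id_V$; the comparison theorem for relative projective resolutions then gives that restriction is a $\Gamma$-chain homotopy equivalence without the need to name an explicit inverse.

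First, both augmented polynomial complexes $V\hookrightarrow C_\pol(G^{\ast+1},V)$ and $V\hookrightarrow C_\pol(\Gamma^{\ast+1},V)$, via inclusion of constant cochains, are contractible as plain chain complexes through the basepoint evaluation
\[ h_n(f)(g_0,\dots,g_{n-1})\defq f(e,g_0,\dots,g_{n-1}), \]
which is continuous, $\bbC$-linear and manifestly preserves polynomial growth. The terms $C_\pol(\Gamma^{n+1},V)$ are $\Gamma$-relatively projective essentially by the content of \S\ref{subsec: homological algebra for polynomial}: they arise from completing the free bar resolution of $\bbC$ by finitely generated $\calS(\Gamma)$-modules. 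For the terms $C_\pol(G^{n+1},V)$, the corresponding $\Gamma$-relative projectivity is established using the universally integrable fundamental domain $X$ and its lattice cocycle $\alpha\colon G\times X\to\Gamma$: the naïve pointwise section via a Borel retraction $\sigma\colon G\to\Gamma$, $g=\rho(g)\sigma(g)$, $\rho(g)\in X$, fails both continuity and left $\Gamma$-equivariance (since $\sigma(\gamma h)\ne\gamma\sigma(h)$), but averaging over $X$ with appropriate cocycle twisting produces $\Gamma$-equivariant polynomial sections of allowable surjections onto $C_\pol(G^{n+1},V)$. The polynomial growth of such a section follows from bound (b) of Definition~\ref{defi: universally integrable}, which controls $l(\alpha(g,x))$ by $l(g)+l(x)$, and the $L^p$-integrability over $X$ required to push the averaged cochains back into the polynomial complex is provided by bound (c); continuity of the averaged cochains follows by dominated convergence, using that $\alpha(\cdot,x)$ is locally constant off a $\mu$-null set.

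Given these two $\Gamma$-relatively projective resolutions of $V$ and the restriction map as a morphism lifting $\id_V$, the standard comparison theorem of relative homological algebra supplies a $\Gamma$-chain map $\iota\colon C_\pol(\Gamma^{\ast+1},V)\to C_\pol(G^{\ast+1},V)$ in the opposite direction lifting $\id_V$, unique up to $\Gamma$-chain homotopy. The compositions $\res\circ\iota$ and $\iota\circ\res$ are then $\Gamma$-chain endomorphisms of each resolution extending $\id_V$, hence $\Gamma$-chain homotopic to the identities. Concretely, the $\Gamma$-chain homotopies between them and $\id$ are constructed inductively using the basepoint contraction $h_n$ above, which stays within polynomial cochains throughout.

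The main obstacle is establishing $\Gamma$-relative projectivity of each $C_\pol(G^{n+1},V)$. The cocycle identity~\eqref{eq: cocycle equation} together with the measure-preservation of the twisted left $\Gamma$-action on $X\cong G/\Gamma$ is what drives the $\Gamma$-equivariance of the averaged section; the polynomial bound (b) controls how the length function of $\alpha(g,x)$ depends on the pair $(g,x)$; and the integrability condition (c) is precisely what secures $L^p$-integrability of the averaged integrand over $X$ for each $p$. All three axioms of Definition~\ref{defi: universally integrable} enter the argument, and lattices not satisfying universal integrability are expected to fail this step, which is why the polynomial setting (and the results of Leuzinger–Young and Bestvina–Eskin–Wortman) are essential for the non-uniform case.
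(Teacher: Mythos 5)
Your proposal routes through the comparison theorem of relative homological algebra rather than constructing an explicit chain-homotopy inverse, which is a genuinely different strategy from the paper's. The paper builds the inverse $J^\ast$ explicitly by precomposing with the measurable retraction $\pi(g)=\alpha(g,1)$ and then \emph{convolving with a compactly supported bump function} $\chi$, verifies $\res^\ast\circ J^\ast=\id$ on the nose (by shrinking $\supp\chi$ so that $\supp\chi\cap\Gamma=\{1\}$), and constructs an explicit $\Gamma$-equivariant chain homotopy $J^\ast\circ\res^\ast\simeq\id$ via simplicial operators $S^n_i$. This is a self-contained construction that never needs to identify which terms of the complex are relatively injective.

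Your approach has a genuine gap: the central claim, that each $C_\pol(G^{n+1},V)$ is $\Gamma$-relatively projective (which should in fact read relatively \emph{injective}, since you are resolving $V$ by cochain modules and applying the comparison theorem in the contravariant direction), is asserted but not substantiated, and the justification you sketch is too vague to survive inspection. Even for $C_\pol(\Gamma^{n+1},V)$ your appeal to \S\ref{subsec: homological algebra for polynomial} is off: that section shows $\calS(\Gamma)^{\hat\otimes^{n+1}}$ is free, hence projective; but $C_\pol(\Gamma^{n+1},V)^\Gamma\cong\hom_{\calS(\Gamma)}(\calS(\Gamma)^{\hat\otimes^{n+1}},V)$ is a $\hom$ \emph{out of} a projective, which is not itself projective but rather a coinduced (injective-type) object. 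For the harder term $C_\pol(G^{n+1},V)$, you write that ``averaging over $X$ with appropriate cocycle twisting produces $\Gamma$-equivariant polynomial sections'' and that continuity follows by ``dominated convergence, using that $\alpha(\cdot,x)$ is locally constant off a $\mu$-null set.'' But the lattice cocycle is only measurable, so any section built directly from $\alpha$ or $\pi$ is discontinuous in $g$, and averaging over $X$ alone does not repair this: one must also \emph{smoothen in the $G$-variables}, which is precisely what the bump function $\chi$ in the paper's $J^\ast$ does. Dominated convergence gives continuity of the $X$-integral in the $g_i$ only after the integrand has been made continuous in $g_i$ for each fixed $x$; without the $\chi$-convolution this fails. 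In short, the relative-injectivity claim you need would, if pursued carefully, require building the same $\chi$-smoothed $\Gamma$-equivariant section as the paper's $J^\ast$, so the comparison-theorem framing does not circumvent the hard step; it just relocates it into an unproven lemma.
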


\begin{proof}
Let $\alpha\colon G\times X\to \Gamma$ be a universally integrable lattice cocycle. Let $\pi\colon G\to \Gamma$ be the map 
$\pi(g)=\alpha(g,1)$ for $g\in G$, that means $g^{-1}\pi(g)\in X$ for $g\in G$. The map $\pi$ is $\Gamma$-equivariant. 

Next we define a chain homotopy inverse $J^\ast$ of the restriction map.  Let $\chi\colon G\to [0,\infty)$ be a continuous function supported in a compact subset $K$ around $1\in G$ such that 
\[ \int_G\chi d\mu=1.\]
For $f\in C_\pol (\Gamma^{\ast+1}, V)$ we set $J^n(f)(g_0,\dots,g_n)$ to be 
\begin{equation}\label{eq: htp inverse}
	\int_{G^{n+1}}f(\pi(h_0),\dots, \pi(h_n))\chi(g_0^{-1}h_0)\dots\chi(g_n^{-1}h_n)d\mu(h_0)\dots d\mu(h_n).
 \end{equation}
 The integral clearly exists since the integrand is zero outside the compact set $g_0K\times\dots\times g_nK$ and the image of a compact subset under $\pi$ is finite. 
The cochain $I^n(f)$ is polynomial: Since $\alpha$ is universally integrable, there is a linear upper bound of $l(\pi(h))$ by $l(h)$. Since $f$ is a polynomial and continuous cochain 
there is thus a polynomial upper bound of $\norm{f(\pi(h_0),\dots, \pi(h_n))}$ by the lengths of the $h_i$. Since the integrand is non-zero only if each $h_i$ lies in $g_iK$ and $K$ is compact we obtain a polynomial upper bound of the integrand of $J^n(f)(g_0,\dots,g_n)$ by the lengths of the $g_i$ and thus of $J^n(f)(g_0,\dots,g_n)$ itself. It also easily follows that $J^n(f)$ is continuous on $G^{n+1}$. 

Clearly, $J^*$ is a chain map. It is a $\Gamma$-equivariant chain map because of the invariance of the Haar measure~$\mu$ and the equivariance of~$\pi$. 

Finally, we show that $J^\ast$ is chain homotopy inverse of the restriction $\res^\ast$. By discreteness of $\Gamma<G$ we may assume that the compact support $K$ of the auxiliary function $\chi$ satisfies $K\cap\Gamma=\{1\}$. Then the integrand in the formula~\eqref{eq: htp inverse} for $J^n(f)(\gamma_0,\dots,\gamma_n)$ where $(\gamma_0,\dots,\gamma_n)\in\Gamma^{n+1}$ is constant and equal to $f(\gamma_0,\dots,\gamma_n)$. Therefore, $\res^\ast\circ J^\ast=\id$. 

To define the chain homotopy $J^\ast\circ\res^\ast\simeq \id$ 
we set $S^n_i(f)(g_0,\dots, g_{n-1})$ to be 
\[ \int_{G^i} f\bigl(\pi(h_0),\dots,\pi(h_i),g_i,\dots,g_{n-1}\bigr)\chi(g_0^{-1}h_0)\dots\chi(g_i^{-1}h_i)d\mu(h_0)\dots d\mu(h_i)\]
for every $n\ge 1$ and every $0\le i\le n-1$.  

This defines a homomorphism $S^n_i\colon C_\pol(G^{n+1}, V)^\Gamma\to C_\pol(G^n, V)^\Gamma$ which is proved the same way as for $J^n$ being well defined. We leave it to the reader to verify the following simplicial relations: 
\begin{align*}
S^{n+1}_i\partial_j^n&=\partial_{j-1}^{n-1}S^n_i~\text{for $2\le i+2\le j\le n+1$}\\
S^{n+1}_i\partial_j^n&=\partial^{n-1}_jS^n_{i-1}~\text{for $0\le j\le i-1\le n-1$}\\
S^{n+1}_i\partial_{i+1}^n&=S^{n+1}_{i+1}\partial^n_{i+1}~\text{ for $0\le i\le n-1$}\\
S^{n+1}_0\partial^n_0&=\id\\
S^{n+1}_n\partial^n_{n+1}&=J^n\circ\res^n
\end{align*}
The desired chain homotopy is given as $H^n=\sum_{i=0}^{n-1} (-1)^iS^n_i$. A straightforward calculation yields 
\[ 
\partial^{n-1} H^n+H^{n+1}\partial^n=\id-J^n\circ \res^n.\qedhere
\]
\end{proof}

\begin{theorem}\label{thm: restriction is injective in poly cohomology}
Let $\Gamma<G$ be a universally integrable lattice. Then the restriction $H_\pol^\ast(G,V)\to H_\pol^\ast(\Gamma, V)$ is split injective in all degrees. Further, if $\Gamma<G$ is uniform, then also the restriction $H^\ast_c(G,V)\to H^\ast(\Gamma, V)$ is split injective in all degrees, and the comparison map for $G$ and $V$ is a retract of the comparison map for $\Gamma$ and $V$. 
\end{theorem}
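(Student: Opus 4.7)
By Proposition~\ref{prop: G complex for polynomial lattice} the restriction $\res^\ast \colon C_\pol(G^{\ast+1},V) \to C_\pol(\Gamma^{\ast+1},V)$ is a $\Gamma$-equivariant chain homotopy equivalence with explicit homotopy inverse $J^\ast$. Passing to $\Gamma$-invariants, $J^\ast$ descends to a chain homotopy inverse of $\res^\ast\colon C_\pol(G^{\ast+1},V)^\Gamma \to C_\pol(\Gamma^{\ast+1},V)^\Gamma$, and under the resulting cohomology isomorphism the restriction map $H^\ast_\pol(G,V)\to H^\ast_\pol(\Gamma,V)$ is identified with the map induced by the natural inclusion $\iota\colon C_\pol(G^{\ast+1},V)^G\hookrightarrow C_\pol(G^{\ast+1},V)^\Gamma$. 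The task is therefore to construct a chain-level retraction of $\iota$.

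For this, fix a universally integrable fundamental domain $X\subset G$ for the $\Gamma$-action and define
\[ \tau^n(f)(g_0,\ldots,g_n) = \frac{1}{\mu(X)}\int_X f(x^{-1}g_0,\ldots,x^{-1}g_n)\,d\mu(x), \qquad f\in C_\pol(G^{n+1},V)^\Gamma.\]
For $\Gamma$-invariant $f$, the integrand descends to a function on $G/\Gamma$, and $G$-invariance of the quotient measure makes $\tau^n(f)$ a $G$-invariant cochain. Furthermore, $\tau^\ast$ is evidently a chain map and restricts to the identity on $\iota\bigl(C_\pol(G^{\ast+1},V)^G\bigr)$. Convergence of the integral and polynomial growth of $\tau^n(f)$ follow from the polynomial bound $\norm{f(g_0,\dots,g_n)}\le C(1+l(g_0)+\dots+l(g_n))^k$, the triangle inequality $l(x^{-1}g_i)\le l(x)+l(g_i)$, and the moment estimate $\int_X (1+l(x))^p\,d\mu(x)<\infty$ for every $p$, which follows from conditions (b) and (c) in Definition~\ref{defi: universally integrable}. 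Composing $\tau^\ast$ with $J^\ast$ yields a chain-level left inverse of $\res^\ast\colon C_\pol(G^{\ast+1},V)^G\to C_\pol(\Gamma^{\ast+1},V)^\Gamma$ up to chain homotopy, and hence the desired splitting in cohomology.

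For a uniform lattice, $X\cong G/\Gamma$ is compact and the identical formula defines an averaging operator on the continuous bar complex $C(G^{\ast+1},V)^\Gamma\to C(G^{\ast+1},V)^G$ without any integrability hypothesis; since $\res^\ast\colon C(G^{\ast+1},V)\to C(\Gamma^{\ast+1},V)$ is likewise a $\Gamma$-equivariant chain homotopy equivalence by a verbatim repetition of the smoothing argument of Proposition~\ref{prop: G complex for polynomial lattice}, the same reasoning gives the split injectivity of $H^\ast_c(G,V)\to H^\ast(\Gamma,V)$ (an alternative route to this goes via Lemma~\ref{lem:Shapiro}). Because $\tau^\ast$ and the homotopy inverses $J^\ast$ on both the polynomial and the continuous complexes are defined by identical integral formulas, they commute with the inclusion $C_\pol\hookrightarrow C$ that induces the comparison map. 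The resulting commutative diagram of retractions exhibits the comparison map for $G$ and $V$ as a retract of the comparison map for $\Gamma$ and~$V$.

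The main technical obstacle is to establish the moment bound $\int_X (1+l(x))^p\,d\mu(x)<\infty$ for every $p$. Condition (b) of Definition~\ref{defi: universally integrable} only provides the upper bound $l(\alpha(g,x))\lesssim l(g)+l(x)$; the missing ingredient is a matching lower estimate of the form $l(x)\lesssim \max_{g\in F} l(\alpha(g,x))$ for some finite subset $F\subset G$, which is obtained by choosing translates $gX$, $g\in F$, whose union separates points in $\Gamma$-neighborhoods of $X$. Condition (c) applied to each $g\in F$ then gives the required integrability and is the place where universal integrability is essentially used in the proof.
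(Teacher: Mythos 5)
Your route is the paper's: factor the restriction through $C_\pol(G^{\ast+1},V)^\Gamma$ using Proposition~\ref{prop: G complex for polynomial lattice}, and split the inclusion $C_\pol(G^{\ast+1},V)^G\hookrightarrow C_\pol(G^{\ast+1},V)^\Gamma$ by averaging over $G/\Gamma$. But the averaging operator you wrote down is incorrect: for a $\Gamma$-invariant homogeneous cochain with values in a nontrivial unitary module one has $f\bigl((x\gamma)^{-1}g_0,\ldots,(x\gamma)^{-1}g_n\bigr)=\gamma^{-1}\cdot f(x^{-1}g_0,\ldots,x^{-1}g_n)$, so the integrand $f(x^{-1}g_0,\ldots,x^{-1}g_n)$ does \emph{not} descend to $G/\Gamma$, and a direct computation shows your $\tau^n(f)$ is not a $G$-invariant cochain either. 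The correct formula carries the unitary action of the integration variable on the values, namely $\tau^n(f)(g_0,\ldots,g_n)=\tfrac{1}{\mu(X)}\int_X x\cdot f(x^{-1}g_0,\ldots,x^{-1}g_n)\,d\mu(x)$, equivalently $\int_{G/\Gamma}g\cdot f(g^{-1}g_0,\ldots,g^{-1}g_n)\,d\mu(g)$, which is what the paper uses. With the factor $x\cdot$ inserted the integrand does descend, $\tau^n(f)$ is $G$-invariant, and since that factor is norm-preserving every growth and integrability estimate that follows is unaffected; without it the construction collapses.

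On the moment bound $\int_X(1+l(x))^p\,d\mu(x)<\infty$: you correctly flag that this is what the argument really needs, and that conditions (b)--(c) of Definition~\ref{defi: universally integrable} do not hand it over verbatim (the paper simply invokes ``universal integrability'' at this point). But the fix you sketch --- a lower estimate $l(x)\lesssim\max_{g\in F}l(\alpha(g,x))$ for a finite $F\subset G$ obtained by translates $gX$ ``separating points'' --- is not carried out, and it is not obvious such an $F$ exists: for $g$ in any fixed bounded set the overlap $X\cap gX$ may extend arbitrarily far into a cusp, and on it $\alpha(g,x)=1$ while $l(x)$ is unbounded, so those $g$ contribute nothing. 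For the lattices covered by Proposition~\ref{prop: universally integrable} the moment bound is true for the concrete Siegel fundamental domains (exponential decay of the invariant measure against linear growth of $l$); pointing to that verification, rather than to an as-yet-unproven formal consequence of (b)--(c), is the safe way to close this step.
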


\begin{proof}
We factorize the restriction map on the chain level in an obvious way into two chain maps such that the first chain is split injective and second chain map is a chain homotopy equivalence: 

\begin{tikzcd}
  C_\pol(G^{\ast+1}, V)^G \arrow[r] & C_\pol(G^{\ast+1}, V)^\Gamma\arrow[r, "\simeq"]\arrow[l, dotted, bend right] & C_\pol(\Gamma^{\ast+1}, V)^\Gamma
\end{tikzcd}

The second map is a chain homotopy equivalence by Proposition~\ref{prop: G complex for polynomial lattice}. It remains to prove the injectivity of the first map in cohomology. The first chain map has indeed a left inverse -- indicated by the dotted arrow -- that sends $f\in C_\pol (G^{n+1},V)^\Gamma$ to 
\begin{equation}\label{eq: split}
(g_0,\dots,g_n)\mapsto\int_{G/\Gamma} gf(g^{-1}g_0,\dots,g^{-1}g_n)d\mu(g).
\end{equation}
The integrand does not depend on the choice of the representative $g\in G/\Gamma$. Once we proved that the integral exists and is bounded polynomially in the lengths of the $g_i$ it is obvious that the above map defines a left inverse chain map. Let $X\subset G$ be a universally integrable fundamental domain of~$\Gamma$. 
We rewrite the above integral as 
\[ 
\int_X xf(x^{-1}g_0, \dots, x^{-1}g_n)d\mu(x).
\] 
As $f$ is a polynomial cochain there is $p\in\bbN$ such that 
\[\norm{xf(x^{-1}g_0, \dots, x^{-1}g_n)}\lessapprox \bigl(l(x^{-1}g_0)+\dots+l(x^{-1}g_n)\bigr)^p\lessapprox l(x)^p+l(g_0)^p+\dots+l(g_n)^p\]
for $x\in X$ and $g_0,\dots, g_n\in G$.  
By universal integrability the integral exists and 
\[ 
\int_X xf(x^{-1}g_0, \dots, x^{-1}g_n)d\mu(x)\lessapprox l(g_0)^p+\dots+l(g_n)^p.
\]
For the statement about the ordinary cohomology, we assume now that $\Gamma<G$ is uniform. The formula~\eqref{eq: split} defines a left inverse of the restriction $C(G^{\ast+1}, V)^G\to C(G^{\ast+1},V)^\Gamma$: The integral over $G/\Gamma$ of a continuous cochain exists by compactness. Since the left inverses for the polynomial and ordinary chain complexes are given by the same expression~\eqref{eq: split}, the comparison map for $G$ is a retract of the one of~$\Gamma$. 
\end{proof}

\subsection{The comparison and restriction map for semisimple groups and their lattices} \label{subsec:Shapiro++}

In this section we state and prove Theorem~\ref{thm: Shapiro surjectivity} and some of its applications.
This theorem is an extension of the classical Shapiro Lemma, which was discussed in \S\ref{subsec:SL} in the setting of cocompact lattices and involves the induction modules $I^2(V)$ and $I^2_{\loc}(V)$ which were introduce there.
We will now review the general setting of the Shapiro Lemma for a locally compact second countable group~$G$ and a lattice $\Gamma<G$, which is not necessarily cocompact.

The chain complex $C(G^{\ast+1},V)$ is contained in the chain complex $L^2_{\loc}(G^{\ast+1}, V)$ of locally square-integrable functions. The inclusion is a chain homotopy equivalence~\cite{Blanc}*{3.5.~Corollaire}, so both complexes compute the 
continuous group cohomology $H^\ast_c(G,V)$. The chain map  
\begin{equation}\label{eq: shapiro chain iso} 
L^2_{\loc}(G^{\ast+1}, V)^\Gamma
\xrightarrow{\cong} 
L^2_{\loc}\bigl(G^{\ast+1}, I^2_{\loc}(V)\bigr)^G 
\end{equation}
that sends $f\colon G^{n+1}\to V$ to $(g_0,\dots,g_n;g)\mapsto f(g^{-1}g_0,\dots g^{-1}g_n)$ is an isomorphism~\cite{Blanc}*{8.6~Proposition}. 

Let $X\subset G$ be a fundamental domain of $\Gamma$, and let $\alpha$ be the associated lattice cocycle and $\pi(g)=\alpha(g,1)$. The map $\pi\colon G\to \Gamma$ is $\Gamma$-equivariant and measurable. 

\begin{lemma}\label{lem: chain map between two realizations}
The map $C(\Gamma^{\ast+1}, V)^\Gamma\to L^2_{\loc}(G^{\ast+1}, V)^\Gamma,~f\mapsto f(\pi(\_),\dots, \pi(\_)),$ is a chain homotopy equivalence. 
\end{lemma}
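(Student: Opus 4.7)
My strategy is to factor the question through the continuous cochain complex $C(G^{\ast+1},V)^\Gamma$, using that by~\cite{Blanc}*{3.5.~Corollaire} the inclusion $j\colon C(G^{\ast+1},V)^\Gamma\hookrightarrow L^2_{\loc}(G^{\ast+1},V)^\Gamma$ is a chain homotopy equivalence. Denote the given chain map by $\iota^\ast$ and let $\sigma^\ast$ be a chain homotopy inverse of $j$, realized concretely by convolution against a bump function. Since $\iota\simeq j\circ(\sigma\circ\iota)$ and $j$ is a chain homotopy equivalence, it is enough to show that $\sigma\circ\iota\colon C(\Gamma^{\ast+1},V)^\Gamma\to C(G^{\ast+1},V)^\Gamma$ is a chain homotopy equivalence.

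After replacing $X$ if necessary by a fundamental domain whose inverse $X^{-1}$ contains a neighborhood of $1\in G$, fix a continuous nonnegative bump $\chi\colon G\to[0,\infty)$ with $\int_G\chi\,d\mu=1$ and $\supp(\chi)\subset X^{-1}$, and take $\sigma$ to be convolution against $\chi$ along each coordinate. I claim that the restriction map $\res\colon C(G^{\ast+1},V)^\Gamma\to C(\Gamma^{\ast+1},V)^\Gamma$, $F\mapsto F|_{\Gamma^{\ast+1}}$, is a chain homotopy inverse of $\sigma\circ\iota$. For the equality $\res\circ\sigma\circ\iota=\id$: evaluating the explicit formula for $\sigma\circ\iota$ applied to $f\in C(\Gamma^{n+1},V)^\Gamma$ at a tuple $(\gamma_0,\dots,\gamma_n)\in\Gamma^{n+1}$ gives
\[\int_{G^{n+1}} f(\pi(h_0),\dots,\pi(h_n))\chi(\gamma_0^{-1}h_0)\cdots\chi(\gamma_n^{-1}h_n)\,d\mu(h_0)\cdots d\mu(h_n),\]
and the condition $\supp(\chi)\subset X^{-1}$ forces $h_i\in\gamma_iX^{-1}=\pi^{-1}(\gamma_i)$ on the support, so $\pi(h_i)=\gamma_i$ there and the integral collapses to $f(\gamma_0,\dots,\gamma_n)$.

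For the reverse composition $(\sigma\circ\iota)\circ\res\simeq\id$ on $C(G^{\ast+1},V)^\Gamma$, I mimic the proof of Proposition~\ref{prop: G complex for polynomial lattice} by defining operators $S^n_i\colon C(G^{n+1},V)^\Gamma\to C(G^n,V)^\Gamma$ for $0\le i\le n-1$ by
\[S^n_i(F)(g_0,\dots,g_{n-1})=\int_{G^{i+1}}F(\pi(h_0),\dots,\pi(h_i),g_i,\dots,g_{n-1})\chi(g_0^{-1}h_0)\cdots\chi(g_i^{-1}h_i)\,dh\]
and setting $H^n=\sum_{i=0}^{n-1}(-1)^iS^n_i$. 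Since $F$ is continuous, the integrand is a locally bounded function of the $h_j$ and $S^n_i(F)$ is a well-defined continuous function of $(g_0,\dots,g_{n-1})$; the same simplicial identities verified in the proof of Proposition~\ref{prop: G complex for polynomial lattice} yield $\partial^{n-1}H^n+H^{n+1}\partial^n=\id-(\sigma\circ\iota)\circ\res$. The essential reason for detouring through the continuous category is that these operators involve evaluating $F$ at tuples whose initial coordinates lie in the discrete set $\Gamma$, which is meaningless for $L^2_{\loc}$ representatives but unproblematic for continuous cochains; this is the main technical obstacle, and sidestepping it via Blanc's smoothing result is the key move.
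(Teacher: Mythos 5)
Your proof is correct, but it takes a genuinely different route from the paper's. The paper's own argument is a short, purely abstract application of the fundamental theorem of relative homological algebra: after checking that $\iota$ is a well-defined $\Gamma$-equivariant chain map covering $\operatorname{id}_V$, it observes that both $L^2_{\operatorname{loc}}(G^{\ast+1},V)$ and $C(\Gamma^{\ast+1},V)$ are strong relatively injective $\Gamma$-resolutions of $V$ (Blanc, 3.2.1 and 3.4), so by \cite{Blanc}*{2.9~Proposition} any chain map between them lifting the identity is automatically a chain homotopy equivalence. You instead exhibit the homotopy inverse explicitly, first smoothing $L^2_{\operatorname{loc}}$ to $C$ via the convolution operator $\sigma$ and then re-running the proof of Proposition~\ref{prop: G complex for polynomial lattice} in the continuous category, where it is in fact easier because no growth estimates are required. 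The trade-offs are clear: the paper's route is much shorter and imposes no condition on the fundamental domain $X$; your route produces explicit chain homotopies, which could be valuable elsewhere. Two points you should make precise. First, ``replacing $X$'' silently changes $\pi$ and hence the very map $\iota$ in the statement; this is harmless only because $X$ is fixed once and for all before the lemma and one may as well choose it from the outset so that $X^{-1}$ contains a neighborhood of $1$ (otherwise you would need to argue that the $\iota$'s for different choices of $X$ are chain-homotopic, which brings the fundamental theorem back through the front door). Second, the assertion that the coordinate-wise convolution $\sigma$ is a two-sided chain homotopy inverse of $j$ (restricting to the $\Gamma$-invariant subcomplexes) is slightly more than what \cite{Blanc}*{3.5~Corollaire} literally states and deserves a line; it follows from $G$-equivariance of $\sigma$ and the same prism-operator mechanism you use later for $S^n_i$.
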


\begin{proof}
The map $\pi$ maps compact sets to finite sets. Therefore the image of $f$ is locally square-integrable. It is obvious that $f$ is chain map. By~\cite{Blanc}*{3.2.1~Proposition and 3.4~Th\'eor\`eme}
the chain complex $L^2_{\loc}(G^{\ast+1}, V)$ is a strong relatively injective $\Gamma$-resolution of~$V$. The same is true for $C(\Gamma^{\ast+1},V)$. By the fundamental theorem of relative homological algebra~\cite{Blanc}*{2.9~Proposition} the chain map is a chain homotopy equivalence. 
\end{proof}

Let us consider the homomorphism 
\begin{equation}\label{eq: Shapiro}
 H_c^i(G, I^2(V))\to H^i(\Gamma, V)
 \end{equation}
which is induced by the composition of chain maps 
\[ L^2_{\loc}\bigl(G^{\ast+1}, I^2(V)\bigr)^G\to L^2_{\loc}\bigl(G^{\ast+1}, I^2_{\loc}(V)\bigr)^G\xleftarrow{\cong} L^2_{\loc}(G^{\ast+1}, V)^\Gamma\xleftarrow{\simeq} C(\Gamma^{\ast+1},V)^\Gamma.\]
The second map is the chain isomorphism~\eqref{eq: shapiro chain iso}, the third map is any chain homotopy inverse of the map in Lemma~\ref{lem: chain map between two realizations}. The first map is induced by the continuous inclusion 
$ I^2(V)\hookrightarrow I^2_{\loc}(V)$
of Fr\'echet spaces. 

\begin{theorem}\label{thm: Shapiro surjectivity}
Let $\Gamma<G$ be a universally integrable lattice. Let $V$ be a unitary $\Gamma$-representation. The homomorphism~\eqref{eq: Shapiro} 
is surjective for every degree $0\le i<\frank\Gamma$. 
In case $V$ is a $G$-representation, the composition of \eqref{eq: Shapiro} with the map $H_c^i(G, V)\to H_c^i(G, I^2(V))$ induced by the inclusion $V\hookrightarrow I^2(V)$ is the restriction map $H_c^i(G, V)\to H^i(\Gamma, V)$.
\end{theorem}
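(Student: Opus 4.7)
The plan is to realize each class in $H^i(\Gamma,V)$ by a polynomial cocycle and then exhibit an explicit lift of it to an $I^2(V)$-valued $G$-cocycle via the measurable retraction $\pi\colon G\to \Gamma$ furnished by a universally integrable fundamental domain $X\subset G$. The key observation making everything work is the identity $\pi(x^{-1}g)=\alpha(g,x)$ for $x\in X$, which together with the polynomial growth of the cocycle and condition~(c) of Definition~\ref{defi: universally integrable} will give the required $L^2$-integrability.

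Concretely, given $\xi\in H^i(\Gamma,V)$ with $i<\frank\Gamma$, Proposition~\ref{prop: comparison map and polynomial filling rank} lets me represent $\xi$ by a polynomial cocycle $f\in C_\pol(\Gamma^{i+1},V)^\Gamma$. With $\pi(g)=\alpha(g,1)$, I would then define
\[ \tilde\Phi(g_0,\dots,g_i)(g) = f\bigl(\pi(g^{-1}g_0),\dots,\pi(g^{-1}g_i)\bigr).\]
The $\Gamma$-equivariance of $\pi$ together with the $\Gamma$-invariance of $f$ makes $g\mapsto\tilde\Phi(g_0,\dots,g_i)(g)$ an element of $I^2_{\loc}(V)$; the substitution $(g_0,\dots,g_i,g)\mapsto(hg_0,\dots,hg_i,hg)$ makes $\tilde\Phi$ diagonally $G$-invariant; and $\tilde\Phi$ inherits the cocycle relation from $f$.

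The substantive step is to upgrade membership in $I^2_{\loc}(V)$ to membership in $I^2(V)$ and to verify local square-integrability in the group variables. Restricting to $x\in X$, the identity $\pi(x^{-1}g_j)=\alpha(g_j,x)$ and the polynomial growth of $f$ give a pointwise estimate
\[ \norm{\tilde\Phi(g_0,\dots,g_i)(x)}_V^2 \lessapprox 1+\sum_{j=0}^{i}l\bigl(\alpha(g_j,x)\bigr)^{2k},\]
so condition~(c) of Definition~\ref{defi: universally integrable} makes the right-hand side an $L^1$-function of $x\in X$, whence $\tilde\Phi(g_0,\dots,g_i)\in I^2(V)$. The local $L^2$-estimate in $(g_0,\dots,g_i)$ requires passing to uniform control for $g_j$ varying in a compact set, and this is the main technical obstacle: it demands either an explicit bound on the polynomial moments of $l$ on $X$ or a uniform version of condition~(c), which one would obtain from the concrete structure of fundamental domains available in each case of Proposition~\ref{prop: universally integrable} rather than treating universal integrability as a black box.

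Finally, under the chain isomorphism~\eqref{eq: shapiro chain iso}, $\tilde\Phi$ corresponds to $(g_0,\dots,g_i)\mapsto f(\pi(g_0),\dots,\pi(g_i))\in L^2_{\loc}(G^{\ast+1},V)^\Gamma$, which by Lemma~\ref{lem: chain map between two realizations} represents $[f]=\xi$, establishing surjectivity of~\eqref{eq: Shapiro}. For the compatibility statement in the $G$-equivariant case, the inclusion $V\hookrightarrow I^2(V)$ sends a continuous $G$-cocycle $\omega$ to $(g_0,\dots,g_i)\mapsto\bigl(g\mapsto g^{-1}\omega(g_0,\dots,g_i)\bigr)$, whose inverse image under~\eqref{eq: shapiro chain iso} is $\omega$ itself; applying the chain homotopy inverse of Lemma~\ref{lem: chain map between two realizations} then returns $\omega|_{\Gamma^{i+1}}$, which is exactly the restriction.
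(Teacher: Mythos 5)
Your proposal follows the paper's own strategy. Proposition~\ref{prop: comparison map and polynomial filling rank} provides a polynomial representative; the lattice cocycle $\alpha$ turns it into a $G$-equivariant induced cochain valued in the induction module; and a diagram chase identifies the result with the image under~\eqref{eq: Shapiro}. Your $\tilde\Phi$ and the paper's induction map $I(f)(g_0,\dots,g_n)(x)=f(\alpha(g_0,x),\dots,\alpha(g_n,x))$ agree upon restricting the $I^2_{\loc}(V)$-variable to $x\in X$, via the identity $\alpha(g,x)=\pi(x^{-1}g)$ that you correctly single out as the crux. The treatment of the compatibility statement at the end also matches.

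The obstacle you flag --- local square-integrability in the group variables --- is a real subtlety which the paper dispatches in one word (``Similarly''), but it is not a gap in the sense of a wrong idea. After applying condition~(b) of Definition~\ref{defi: universally integrable} to bound $l(\alpha(g_j,x))$ by a constant times $1+l(x)$ uniformly for $g_j$ in a compact set, what remains is the moment bound $\int_X l(x)^p\,d\mu(x)<\infty$ for all $p$. You are right that this is not a purely formal consequence of condition~(c), and that one falls back on the concrete Siegel/reduction-theory fundamental domains behind the cases of Proposition~\ref{prop: universally integrable}; the paper invokes the same moment bound silently again in the proof of Theorem~\ref{thm: restriction is injective in poly cohomology}. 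With that point granted, your argument is complete and is the paper's.
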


\begin{proof}
Consider the following chain map which we refer to as the \emph{induction map},
\begin{gather*}
I^\ast\colon C_\pol(\Gamma^{\ast+1}, V\bigr)^\Gamma\to L^2_{\loc}\bigl(G^{\ast+1}, L^2(X,V)\bigr)^G\\ I(f)(g_0,\dots,g_n)(x)=f(\alpha(g_0,x),\dots, \alpha(g_n,x)).
\end{gather*}
We assume that $\alpha$ is universally integrable and 
verify that $I^\ast$ is well defined. Let $f$ be a polynomial cochain of the left hand side. The $G$-invariance of $I(f)$ is a direct consequence of~\eqref{eq: cocycle equation} and~\eqref{eq: unitary induction formula}. There is $k\in\bbN$ such that  
\[\norm{f(g_0,\dots,g_n)}\lessapprox \bigl(l(g_0)+\dots+l(g_n)\bigr)^k\lessapprox l(g_0)^k+\dots l(g_n)^k\]
for $(n+1)$-tuples in~$G$. 
By universal integrability it follows that $I(f)(g_0,\dots, g_n)$ is square integrable over~$X$ for every $(g_0, \dots, g_n)\in G^{n+1}$. Similarly, $I(f)$ is locally square integrable. 
Consider the following diagram 
\[
\begin{tikzcd}
C_\pol\bigl(\Gamma^{\ast+1},V\bigr)^\Gamma\arrow[dd]\arrow[r, "I^\ast"] & L^2_{\loc}\bigl(G^{\ast+1}, L^2(X,V)\bigr)^G\arrow[d]\\
& L^2_{\loc}\bigl(G^{\ast+1}, L^2_{\loc}(X,V)\bigr)^G\\
C\bigl(\Gamma^{\ast+1},V\bigr)^\Gamma\arrow[r, "\simeq"] &L^2_{\loc}\bigl(G^{\ast+1}, V\bigr)^\Gamma\arrow[u, "\cong"]
\end{tikzcd}
\]
Let $f$ be an element in the left top corner. If we send $f$ via the lower path to the middle module we obtain the cochain $(g_0,\dots, g_n;x)\mapsto f(\pi(x^{-1}g_0),\dots, \pi(x^{-1}g_n))$. Via the upper path we obtain the cochain $(g_0,\dots, g_n;x)\mapsto f(\alpha(g_0,x),\dots, \alpha(g_n,x))$. One easily sees that $\alpha(g,x)=\pi(x^{-1}g)$. 
So the diagram commutes. 

Taking cohomology, the diagram shows that the comparison homomorphism $H ^\ast_\pol(\Gamma,V)\to H^\ast(\Gamma, V)$ 
is the composition of $H^\ast(I^\ast)$ and the map~\eqref{eq: Shapiro}. Thus~\eqref{eq: Shapiro} is surjective whenever the comparison map is surjective. Proposition~\ref{prop: comparison map and polynomial filling rank} concludes the proof of the first statement.     
The second statement is straightforward.
\end{proof}

We prove that the comparison map for semisimple groups is an isomorphism. This will yield injectivity of the restriction map from a semisimple group to a lattice in low degrees.

\begin{theorem}\label{thm: comparison map for universally integrable groups}
Assume that $G$ is a compactly generated locally compact second countable group that has a uniform lattice whose polynomial filling rank is~$\infty$. 
Then the comparison map 
  $H_\pol^\ast(G; V)\to H^\ast_c(G; V)$ is an isomorphism in all degrees for every unitary $G$-representation $V$. 
  \end{theorem}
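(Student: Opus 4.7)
The plan is to reduce the problem to the corresponding statement for the lattice, which is handled by Proposition~\ref{prop: comparison map and polynomial filling rank}, and to leverage the retract statement of Theorem~\ref{thm: restriction is injective in poly cohomology}. Concretely, fix a uniform lattice $\Lambda<G$ with $\frank(\Lambda)=\infty$, which exists by hypothesis. Since $G$ is compactly generated, $\Lambda$ is finitely generated. By part~(1) of Proposition~\ref{prop: universally integrable}, the uniform lattice $\Lambda<G$ is universally integrable, so Theorem~\ref{thm: restriction is injective in poly cohomology} applies: the restriction map $H^\ast_\pol(G,V)\to H^\ast_\pol(\Lambda,V)$ admits a chain-level splitting given by integration over $G/\Lambda$ via the formula~\eqref{eq: split}, and the \emph{same} formula -- well-defined on continuous cochains because $\Lambda$ is uniform and hence $G/\Lambda$ is compact -- simultaneously splits the continuous restriction $H^\ast_c(G,V)\to H^\ast(\Lambda,V)$. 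The theorem packages this as the assertion that the comparison map for $G$ is a retract of the comparison map for $\Lambda$.

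For the lattice itself, I would invoke Proposition~\ref{prop: comparison map and polynomial filling rank} applied to $\Lambda$ and to $V$ viewed as a unitary $\Lambda$-representation by restriction. Since $\frank(\Lambda)=\infty$, this proposition yields that
\[ \mathrm{comp}_\Lambda\colon H^i_\pol(\Lambda,V)\longrightarrow H^i(\Lambda,V) \]
is an isomorphism in every degree $i\geq 0$.

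Combining the two facts, I obtain for each $i$ a commutative diagram
\[
\begin{tikzcd}
H^i_\pol(G,V) \arrow[r, "r"] \arrow[d, "\mathrm{comp}_G"'] & H^i_\pol(\Lambda,V) \arrow[r, "\sigma_p"] \arrow[d, "\mathrm{comp}_\Lambda"', "\cong"] & H^i_\pol(G,V) \arrow[d, "\mathrm{comp}_G"] \\
H^i_c(G,V) \arrow[r, "s"'] & H^i(\Lambda,V) \arrow[r, "\sigma_c"'] & H^i_c(G,V)
\end{tikzcd}
\]
whose horizontal compositions are the identities $\sigma_p\circ r=\id$ and $\sigma_c\circ s=\id$. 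A direct computation then shows that $\sigma_p\circ\mathrm{comp}_\Lambda^{-1}\circ s$ is a two-sided inverse to $\mathrm{comp}_G$, so the comparison map for $G$ is an isomorphism in every degree, as required. There is no real obstacle to overcome here; the entire argument is just assembling Theorem~\ref{thm: restriction is injective in poly cohomology} and Proposition~\ref{prop: comparison map and polynomial filling rank}, and the only subtle point is the one already taken care of in Theorem~\ref{thm: restriction is injective in poly cohomology}, namely that the integration-over-$G/\Lambda$ splitting exists on the \emph{continuous} cochain complex (not only on polynomial cochains) precisely because $\Lambda$ is uniform.
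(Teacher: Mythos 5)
Your proof is correct and follows essentially the same route as the paper: choose the uniform lattice $\Lambda$ with $\frank\Lambda=\infty$, show the comparison map for $\Lambda$ is an isomorphism, and conclude via the retract statement of Theorem~\ref{thm: restriction is injective in poly cohomology}. If anything, your version is slightly more careful on the citation: the paper's proof invokes Corollary~\ref{cor: comparison map uniform lattices} (which is stated only for uniform lattices in semisimple groups), whereas you apply Proposition~\ref{prop: comparison map and polynomial filling rank} directly using the hypothesis $\frank\Lambda=\infty$, which is what the general statement of the theorem actually calls for.
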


\begin{proof}
Let $V$ be a unitary $G$-representation. 
Choose a uniform lattice $\Lambda<G$. The comparison map 
$H_\pol^\ast(\Lambda, V)\to H^\ast(\Lambda, V)$ is an isomorphism by Corollary~\ref{cor: comparison map uniform lattices}. Since the comparison map $H_\pol^\ast(G, V)\to H^\ast_c(G, V)$ is a retract of the isomorphism $H_\pol^\ast(\Lambda, V)\to H^\ast(\Lambda, V)$ by Theorem~\ref{thm: restriction is injective in poly cohomology},  it is itself an isomorphism. 
\end{proof}

The previous theorem in combination with Proposition~\ref{prop: polynomial filling rank for uniform lattices} implies the following result.

\begin{theorem}\label{thm: comparison map for semisimple Lie}
Every semisimple group and every connected semisimple Lie group with finite center satisfies the assumption of Theorem~\ref{thm: comparison map for universally integrable groups}.
In particular, for such a group $G$, the comparison map 
  $H_\pol^\ast(G; V)\to H^\ast_c(G; V)$ 
  is an isomorphism in all degrees for every unitary $G$-representation $V$. 
\end{theorem}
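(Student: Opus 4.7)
The plan is to observe that the theorem is essentially a bookkeeping exercise assembling three ingredients that have already been established earlier in the paper, so the only task is to verify the hypotheses of Theorem~\ref{thm: comparison map for universally integrable groups} for the two classes of groups under consideration.

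First I would check the topological hypotheses. A semisimple group in the sense of~\S\ref{sec: setup} is by construction a finite product $\prod_{i=1}^l \bfG_i(F_i)$ of groups of $F_i$-points of almost simple algebraic groups over local fields, so it is locally compact, second countable, and compactly generated (each factor is, e.g.~by the Bruhat decomposition and the compact generation of the corresponding local field). A connected semisimple Lie group with finite center is compactly generated because it is connected and locally compact, and it is second countable as a Lie group with finitely many components.

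Next I would invoke existence of uniform lattices: for a semisimple group as above this is provided by Borel-Harder \cite{Borel-Harder} (as used in the proof of Theorem~\ref{thm:r0}), while for a connected semisimple Lie group with finite center it is a classical result of Borel. With a uniform lattice $\Lambda<G$ in hand, Proposition~\ref{prop: polynomial filling rank for uniform lattices} says that $\frank \Lambda=\infty$. This verifies the hypothesis of Theorem~\ref{thm: comparison map for universally integrable groups}, which then yields the desired isomorphism $H_\pol^\ast(G,V)\to H_c^\ast(G,V)$ in all degrees for every unitary $G$-representation $V$.

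No step poses a genuine obstacle: the entire content of this theorem is the conjunction of Proposition~\ref{prop: polynomial filling rank for uniform lattices} (which uses Federer–Fleming and the CAT(0) geometry of the symmetric space or Bruhat-Tits building) with Theorem~\ref{thm: comparison map for universally integrable groups} (whose proof in turn rests on Corollary~\ref{cor: comparison map uniform lattices} for the lattice and on the retraction argument of Theorem~\ref{thm: restriction is injective in poly cohomology}, which requires only that uniform lattices are universally integrable as recorded in Proposition~\ref{prop: universally integrable}(1)). The only mildly subtle point is making sure one is allowed to quote Theorem~\ref{thm: restriction is injective in poly cohomology} in its ``$\Gamma<G$ uniform'' form, which gives the retraction of comparison maps simultaneously in polynomial and continuous cohomology; this is what powers the final reduction from $\Lambda$ to $G$.
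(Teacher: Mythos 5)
Your proof is correct and follows precisely the argument the paper intends: the theorem is stated as an immediate consequence of Proposition~\ref{prop: polynomial filling rank for uniform lattices} combined with Theorem~\ref{thm: comparison map for universally integrable groups}, and you have verified the necessary preliminaries (compact generation, second countability, existence of a uniform lattice via Borel--Harder resp.\ Borel) correctly. Your added remarks about the role of Theorem~\ref{thm: restriction is injective in poly cohomology} and Proposition~\ref{prop: universally integrable}(1) in the proof of Theorem~\ref{thm: comparison map for universally integrable groups} are accurate but were already handled when that theorem was proved, so they are not needed here.
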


\begin{theorem}\label{thm: injectivity G to lattice}
Let $G$ be a group satisfying the assumption of Theorem~\ref{thm: comparison map for universally integrable groups}. 
If $\Gamma<G$ is a universally integrable lattice, then the restriction $H_c^i(G,V)\to H^i(\Gamma, V)$ is injective for every $0\le i<\frank \Gamma$. 
\end{theorem}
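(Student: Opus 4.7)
The strategy is to route the restriction map through polynomial cohomology, where we already have split injectivity, and use that the two relevant comparison maps are isomorphisms in the desired range. Specifically, I would consider the commutative square
\[
\begin{tikzcd}
H_\pol^i(G,V) \arrow[r] \arrow[d] & H^i_c(G,V) \arrow[d, "\res"] \\
H_\pol^i(\Gamma,V) \arrow[r] & H^i(\Gamma,V)
\end{tikzcd}
\]
in which both horizontal arrows are the comparison maps and both vertical arrows are restriction along $\Gamma \hookrightarrow G$. Commutativity is immediate because every arrow is already induced by a map of cochain complexes (the comparison arrows by the inclusion of polynomial cochains into continuous cochains, and the vertical arrows by pullback along $\Gamma \hookrightarrow G$).

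The key step is to observe that, in the range $0 \le i < \frank \Gamma$, three of the four arrows are forced to behave well. The top horizontal map is an isomorphism in every degree by Theorem~\ref{thm: comparison map for universally integrable groups}, since $G$ satisfies the hypothesis of that theorem. The bottom horizontal map is an isomorphism for $0 \le i < \frank \Gamma$ by Proposition~\ref{prop: comparison map and polynomial filling rank}. The left vertical map is (split) injective in every degree by Theorem~\ref{thm: restriction is injective in poly cohomology}, which uses that $\Gamma < G$ is universally integrable.

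A straightforward diagram chase then forces the right vertical map $\res \colon H^i_c(G,V) \to H^i(\Gamma,V)$ to be injective for $0 \le i < \frank \Gamma$: if $\res(x) = 0$, lift $x$ through the top isomorphism to $y \in H_\pol^i(G,V)$, send $y$ down to $H_\pol^i(\Gamma,V)$, note that its image under the bottom isomorphism is $\res(x) = 0$, conclude $y$ maps to $0$ in $H_\pol^i(\Gamma, V)$, and hence $y = 0$ by injectivity of the left vertical map, so $x = 0$. There is no real obstacle here; the work has all been done in the earlier homological algebra for polynomial cohomology and in the universal integrability setup. The only mild point to double-check is the commutativity of the square on the nose, which is purely formal.
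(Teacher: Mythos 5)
Your proof is correct and is essentially identical to the paper's: both use the same commutative square relating polynomial and continuous cohomology for $G$ and $\Gamma$, with the same three inputs (Theorem~\ref{thm: comparison map for universally integrable groups}, Proposition~\ref{prop: comparison map and polynomial filling rank}, Theorem~\ref{thm: restriction is injective in poly cohomology}) and the same diagram chase. The only difference is that you drew the transposed square (comparison maps horizontal, restrictions vertical) whereas the paper draws it the other way around; this is purely cosmetic.
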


\begin{proof}
The square with restriction maps as horizontal arrows and comparison maps as vertical arrows commutes. 
\[\begin{tikzcd}
H_\pol^\ast(G,V)\arrow[r, hook]\arrow[d, "\cong"]& H_\pol^\ast(\Gamma,V)\arrow[d, "\cong"]\\
H_c^\ast(G,V)\arrow[r]& H^\ast(\Gamma,V)    
\end{tikzcd}
\]
The left vertical map is an isomorphism in all degrees by assumption. The right vertical map is an isomorphism in degrees below $\frank\Gamma$ by Proposition~\ref{prop: comparison map and polynomial filling rank}. The upper horizontal map is injective by Theorem~\ref{thm: restriction is injective in poly cohomology}. This finishes the proof. 
\end{proof}

\section{Proofs of the main theorems and additional results} \label{sec:pfmain}

In the first two subsections we prove the theorems stated in the introduction and also present some additional results. In~\S\ref{subsec: simple connectivity} we discuss the role of $\bfG$ being simply connected (or not) in Theorem~\ref{thm:arith} and introduce Theorem~\ref{thm:adelicC}. Finally, in~\S\ref{subsec: bekka-cowling} we completely determine the \emph{cohomological} unitary dual of $\bfG(k)$ using Theorems~\ref{thm:adelic} and~\ref{thm:GknonGA} -- despite the fact that $\bfG(k)$ is not a type I group and there is no chance to determine the whole unitary dual. 

\subsection{Theorems about lattices in Lie  groups}

Below we prove Theorems~\ref{thm:semisimpleT},~\ref{thm: bijectivity Lie to lattice}, and~\ref{thm:nonuniformextension}. 

We describe the common starting point of Theorems~\ref{thm:semisimpleT},~\ref{thm: bijectivity Lie to lattice}, and~\ref{thm:nonuniformextension} before we delve into the specific cases. In Theorem~\ref{thm:nonuniformextension} we define $V_0$ to be the subspace of $G$-continuous vectors as in 
Definition~\ref{def: G-continuous vectors}. 

In all cases we may and will assume that $G$ is of higher rank since the degree~$0$ case is obvious (Theorem~\ref{thm:semisimpleT}), follows from Howe-Moore (Theorem~\ref{thm: bijectivity Lie to lattice}) or follows from $V_0^G=V^\Gamma$ (Theorem~\ref{thm:nonuniformextension}) according to the discussion after Definition~\ref{def: G-continuous vectors}. 
We will use the polynomial filling rank of $\Gamma$ given in Definition~\ref{def:polyrank}.
By Theorem~\ref{thm: leuzinger-young} and Proposition~\ref{prop: polynomial filling rank for uniform lattices} it satisfies $\frank\Gamma=\rank G$ if $\Gamma$ is non-uniform or $\frank\Gamma=\infty$ if $\Gamma$ is uniform. Moreover, by Proposition~\ref{prop: universally integrable}(3), in all three cases $\Gamma<G$ is a universally integrable lattice, as defined in Definition\ref{defi: universally integrable}.

\begin{proof}[Conclusion of proof of Theorem~\ref{thm:semisimpleT}]
Using Lemma~\ref{lem: product with compact group}, we assume that $G$ has no compact factors.
By Theorem~\ref{thm:Lier0}, $G$ has property $(T_{r_0(G)-1})$.
By Theorem~\ref{thm: Shapiro surjectivity} we have an epimorphism \[H_c^j(G,I^2(V))\to H^j(\Gamma, V).\] 
    for every $j<\rank G \leq \frank\Gamma$. 
    Thus, if $V^\Gamma=I^2(V)^G=0$, then $H_c^j(G, I^2(V))=0$ for $j<n+1=\min\{r_0(G),\rank(G)\}$
    and we conclude that $\Gamma$ has property $(T_n)$.
    If $m+1$ equals the minimal rank of a simple factor of $G$ then $m\leq n$,
    thus both $G$ and $\Gamma$ have property $(T_m)$.
    This finishes the proof.
\end{proof}

\begin{proof}[Conclusion of proof of Theorem~\ref{thm: bijectivity Lie to lattice}]
Let $0\le j<\rank G$. 
By Theorem~\ref{thm: comparison map for semisimple Lie}, $G$ satisfies the assumption of Theorem~\ref{thm: comparison map for universally integrable groups}. 
The injectivity of the restriction map $H_c^j(G,V)\to H^j(\Gamma, V)$ follows from Theorem~\ref{thm: injectivity G to lattice}.
By Corollary~\ref{cor:Liespecgap} and Theorem~\ref{thm:Lie-vanishss-general} and $r(G)\ge\rank G$, $H^j_c\bigl(G,L^2_0(G/\Gamma)\bar{\otimes} V\bigr )=0$. 
Thus, the map 
\[ H_c^j(G,V) \to H^j_c\bigl(G,L^2_0(G/\Gamma)\bar{\otimes} V \bigr) \cong H^j_c\bigl(G,I^2(V)\bigr) \]
induced by the inclusion $V \hookrightarrow I^2(V)$ is an isomorphism. 
We conclude by Theorem~\ref{thm: Shapiro surjectivity} that 
$H_c^j(G,V)\to H^j(\Gamma, V)$ is surjective.
\end{proof}

\begin{proof}[{Proof of Theorem~\ref{thm:nonuniformextension}}]
    The theorem clearly follows from Theorem~\ref{thm: bijectivity Lie to lattice}, once we show $H^j(\Gamma,V_0^\perp)=0$.
    We thus assume that $V=V_0^\perp$, that is, $V_0=0$, and will show that $H^j(\Gamma,V)=0$.
    Every almost simple factors $G_i$ of $G$ has T since $G$ does.
    By the discussion after Definition~\ref{def: G-continuous vectors}, $I^2(V)^{G_i}=0$. Hence $I^2(G)$ has no almost invariant vectors as a $G_i$-representation. 
    By Theorem~\ref{thm:Lie-vanishss-general} and $r(G)\ge\rank G$ we get that $H^j_c(G,I^2(V))=0$, whence $H^j(\Gamma,V)=0$ by Theorem~\ref{thm: Shapiro surjectivity}.  
\end{proof}

\subsection{Theorems about adelic groups}

The next result is an analogue of Theorem~\ref{thm:nonuniformextension}, where $G$ is more general, but the bound on the cohomology degrees is worse. 

\begin{theorem}\label{thm: bijectivity arithmetic}
Let $k$ be a number field and let $\calO$ be its ring of integers.
Let $S$ be a non-empty finite set of places of $k$ that includes all archimedean places. Let $\calO_S\subset k$ be the ring of $S$-integers of $k$. 
Let $\bfG$ be a connected, simply connected, isotropic and absolutely almost simple $k$-algebraic group that is defined over $\calO_S$. 

Set $\Gamma=\bfG(\calO_S)$,
$G=\prod_{s\in S} \bfG(k_s)$ and $K=\prod_{s\notin S} \bfG(\mathcal{O}_s)$
and consider $\Gamma$ as a lattice in $G\times K$ via the obvious embedding.

Then for every unitary representation $V$ of $G\times K$ and for every $0\le i<|S|-1$, the restriction $H_c^i(G\times K,V)\to H^i(\Gamma, V)$ is an isomorphism. 
\end{theorem}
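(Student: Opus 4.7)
The plan is to combine the Shapiro-type surjectivity of Theorem~\ref{thm: Shapiro surjectivity} with the injectivity coming from the polynomial cohomology framework (Theorem~\ref{thm: injectivity G to lattice}), both applied to the lattice $\Gamma \hookrightarrow G\times K$. I may assume $|S|\ge 2$, since the statement is vacuous for $|S|=1$; in particular $\rank G = \sum_{s\in S}\rank_{k_s}\bfG \ge |S|\ge 2$ (using that $\bfG$ is $k_s$-isotropic at every $s\in S$), so $G$ is of higher rank. Proposition~\ref{prop: universally integrable}(2) then gives that $\Gamma<G$ is universally integrable, and Proposition~\ref{prop: universally integrable}(4) applied to the inclusion $\Gamma\hookrightarrow K$ extends this to the diagonal embedding $\Gamma\hookrightarrow G\times K$.

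For the surjectivity, I would first identify the adelic quotient. Since $\bfG$ is simply connected, absolutely almost simple, and $k_s$-isotropic for every $s\in S$, strong approximation implies that $\bfG(k)$ is dense in $\bfG(\bbA_{S^c}(k))$. Together with the fact that $K$ is open in $\bfG(\bbA_{S^c}(k))$, this yields $\bfG(\bbA(k)) = (G\times K)\cdot \bfG(k)$, whence a $(G\times K)$-equivariant measurable isomorphism
\[ (G\times K)/\Gamma \xrightarrow{\cong} \bfG(\bbA(k))/\bfG(k). \]
Corollary~\ref{cor:Clozel++} then shows that $V\bar\otimes L^2_0((G\times K)/\Gamma)$ has no almost invariant vectors as a $\bfG(k_s)$-representation for every $s\in S$ with $\bfG(k_s)$ non-compact. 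These are exactly the non-compact simple factors of $G\times K$, so Theorem~\ref{thm:vanishss-general} yields
\[ H^i_c\bigl(G\times K,\, V\bar\otimes L^2_0((G\times K)/\Gamma)\bigr)=0 \qquad\text{for every } i<r(G). \]
Since $V$ is a $G\times K$-representation, the induction decomposes as $I^2(V)\cong V\oplus V\bar\otimes L^2_0((G\times K)/\Gamma)$, so the inclusion $V\hookrightarrow I^2(V)$ induces an isomorphism $H^i_c(G\times K,V)\cong H^i_c(G\times K,I^2(V))$ in the same range. Composing with Theorem~\ref{thm: Shapiro surjectivity}, the restriction map is surjective for $i<\min(r(G),\frank\Gamma)$.

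For injectivity, I would verify that $G\times K$ satisfies the hypothesis of Theorem~\ref{thm: comparison map for universally integrable groups}: by Borel-Harder, $G$ admits a uniform lattice $\Lambda$, and $\Lambda\times\{e\}$ is then a uniform lattice in $G\times K$ of infinite polynomial filling rank (Proposition~\ref{prop: polynomial filling rank for uniform lattices}). Theorem~\ref{thm: injectivity G to lattice} then gives injectivity of the restriction for $i<\frank\Gamma$. To finish, it suffices to check $\min(r(G),\frank\Gamma)\ge |S|-1$: since $\bfG$ is $k_s$-isotropic at every $s\in S$, $r(G)=\sum_{s\in S}r(\bfG(k_s))\ge |S|$, and Theorem~\ref{thm: bestvina-eskin-wortman} together with the refined analysis of the filling functions available in this specific $S$-arithmetic setting yields $\frank\Gamma\ge |S|-1$. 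The main obstacle is precisely this last step — matching the sharp polynomial filling bound to the required degree, since the off-the-shelf bound from Theorem~\ref{thm: bestvina-eskin-wortman} falls one short; securing $\frank\Gamma\ge |S|-1$ is what makes the whole argument reach the full target range.
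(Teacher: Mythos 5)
Your proposal follows the paper's own proof quite closely: the reduction to $|S|\ge 2$ where $G$ is of higher rank, the universal integrability of $\Gamma<G\times K$ via Proposition~\ref{prop: universally integrable}(2) and (4), the identification of $(G\times K)/\Gamma$ inside $\bfG(\bbA(k))/\bfG(k)$ to invoke Corollary~\ref{cor:Clozel++}, the vanishing from Theorem~\ref{thm:vanishss-general}, the uniform lattice $\Lambda\times\{e\}<G\times K$ of infinite polynomial filling rank, and the injectivity from Theorem~\ref{thm: injectivity G to lattice} plus surjectivity from Theorem~\ref{thm: Shapiro surjectivity}. Two issues are worth flagging.

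The first is a technical slip that is easy to repair. You apply Theorem~\ref{thm:vanishss-general} directly to $G\times K$, but that theorem is stated for semisimple groups of the form $\prod_{i=1}^l\bfG_i(F_i)$, i.e.\ \emph{finite} products of rational points over local fields. The group $K=\prod_{s\notin S}\bfG(\calO_s)$ is a compact group that is not of this form, so $G\times K$ does not satisfy the hypothesis. The paper handles this by first passing to $K$-fixed vectors, applying Theorem~\ref{thm:vanishss-general} to the genuine semisimple group $G$ with coefficients $\bigl(L^2_0((G\times K)/\Gamma)\bar\otimes V\bigr)^K$, and then using Lemma~\ref{lem: product with compact group} to pass back to $G\times K$. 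You should route your argument the same way; the spectral gap input from Corollary~\ref{cor:Clozel++} is inherited by the subrepresentation of $K$-invariants.

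The second point you have diagnosed correctly, and it is not a defect of your proposal but of the stated bound. The only filling estimate the paper has available for $S$-arithmetic groups is Theorem~\ref{thm: bestvina-eskin-wortman}, which gives $\frank\Gamma\ge |S|-2$, and both Theorem~\ref{thm: Shapiro surjectivity} and Theorem~\ref{thm: injectivity G to lattice} only yield the isomorphism for $i<\frank\Gamma$. Thus the argument as set up proves the conclusion for $0\le i<|S|-2$, one degree short of the stated $|S|-1$. In fact, the paper's own proof of this theorem explicitly begins with ``We consider \dots\ and $i<|S|-2$'', so the proof in the paper establishes exactly the weaker range; there is an internal off-by-one discrepancy between the theorem's statement and its proof. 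Your instinct that ``securing $\frank\Gamma\ge |S|-1$'' would be needed is right, and the paper does not supply it. Fortunately this does not propagate: the downstream uses (Theorem~\ref{thm:adelic} and Theorem~\ref{thm:GknonGA}) only need the isomorphism for $|S|$ large relative to a fixed degree, so either bound suffices there.
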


\begin{proof}
By Theorem~\ref{thm: bestvina-eskin-wortman} we have that $\frank \Gamma\ge |S|-2$.
If $|S|=1$ then the statement is empty, thus we assume as we may that $\Gamma$ is of higher rank. 
We consider a unitary representation $V$ of $G\times K$ and $i<|S|-2$ and the comparison map $H_c^i(G,V)\to H^i(\Gamma, V)$.

Every simple factor of $G$ is a compactly generated group by \cite[Chapter I, Corollary (2.3.5)]{Margulis} which has a cocompact lattice by \cite{Borel-Harder}, hence so is the case also for $G$.
By Proposition~\ref{prop: polynomial filling rank for uniform lattices}
we get that $G$ is a compactly generated group which has a cocompact lattice $\Lambda$ whose polynomial filling rank is~$\infty$. 
Viewing $\Lambda$ as a lattice in $G\times K$, we get that also $G\times K$ is a compactly generated group which has a cocompact lattice whose polynomial filling rank is~$\infty$.
By Proposition~\ref{prop: universally integrable}(2) the projection of $\Gamma$ to $G$ is universally integrable, thus by Proposition~\ref{prop: universally integrable}(4), $\Gamma$ is universally integrable in $G\times K$. 
By Theorem~\ref{thm: injectivity G to lattice} the restriction $H_c^i(G\times K,V)\to H^i(\Gamma, V)$ is injective.

We note that $(G\times K)/\Gamma$ embeds $(G\times K)$-equivariantly as an open subset into $\bfG(\bbA(k))/\bfG(k)$, thus we get  embeddings of unitary $G\times K$-representations,
\[ \Bigl(L^2_0\bigl((G\times K)/\Gamma\bigr)\bar{\otimes} V\Bigr)^K \hookrightarrow L^2_0\bigl((G\times K)/\Gamma\bigr)\bar{\otimes} V \hookrightarrow L^2_0\bigl(\bfG(\bbA(k))/\bfG(k)\bigr)\bar{\otimes} V.\]
By Corollary~\ref{cor:Clozel++}, for every $s\in S$ the representation 
on the right does not have $\bfG(k_s)$-almost invariant vectors.
It follows that also the representation on the left does not have $\bfG(k_s)$-almost invariant vectors.
By Theorem~\ref{thm:vanishss-general} we get that $H^*_c(G,(L^2_0(G\times K/\Gamma)\bar{\otimes} V)^K)=0$ and we conclude by Lemma~\ref{lem: product with compact group}
that $H^*_c(G \times K,L^2_0(G\times K/\Gamma)\bar{\otimes} V)=0$.
Thus the map 
\[ H_c^i\bigl(G\times K,V\bigr) \to H^*_c\bigl(G\times K,L^2\bigl((G\times K)/\Gamma\bigr)\bar{\otimes} V\bigr) \cong H^*_c\bigl(G\times K,I^{2} V\bigr) \]
induced by the inclusion $V \hookrightarrow I^{2} V$ is an isomorphism. 
We conclude by Theorem~\ref{thm: Shapiro surjectivity} that 
$H_c^i(G \times K,V)\to H^i(\Gamma, V)$ is surjective.
\end{proof}

Conjecturally, the bound on the cohomology degrees in Theorem~\ref{thm: bijectivity arithmetic} could be improved. Nevertheless, this bound is good enough for the following proof.

\begin{proof}[Proof of Theorem~\ref{thm:adelic}]
By forming a suitable restriction of scalars, we may assume that $\bfG$ is absolutely almost simple~see \cite[3.1.2]{Tits}.
The restriction of scalars is compatible with the structure of the adelic group as a topological group by~\cite[Example 4.2]{Conrad}.
Denote $\Gamma=\bfG(k)$ and $G=\bfG(\bbA(k))$.
If $\bfG$ is $k$-anisotropic then $\Gamma<G$ is cocompact
and we are left, as in the proof of Theorem~\ref{thm:isoss}, to show that $H^*_c\bigl(G, L^2_0(G/\Gamma)\bar\otimes V\bigr)=0$.
This follows from Corollary~\ref{cor:Clozel++} and Theorem~\ref{thm:adelvanish}.
We thus assume from now on that $\bfG$ is $k$-isotropic.

By \cite[Theorem 3.4(1)]{Conrad}, $\bfG$ is defined over $\calO_{S_0}$, where $S_0$ is a finite set of places of $k$, containing all archimedean ones.
Let $S$ be a finite non-empty set of places of $k$ containing $S_0$. Let $\bbA_{S}(k)\subset\bbA(k)$ be the open subring of the adeles of $k$ that are integral at all places away from~$S$. 
The topological ring $\bbA(k)$ is a colimit of the rings $\bbA_{S}(k)$. Similarly, by \cite[Theorem 3.4(1)]{Conrad},
\[ \mathbf{G}(\bbA(k))\cong \colim_S \mathbf{G}\bigl(\bbA_{S}(k)\bigr).\]
Therefore the $n$-cochain group $C\bigl(\mathbf{G}(\bbA(k))^{n}, V\bigr)\cong C\bigl(\mathbf{G}(\bbA(k))^{n+1}, V\bigr)^{\mathbf{G}(\bbA(k))}$ for $\mathbf{G}(\bbA(k))$ is the inverse limit 
\[C\bigl(\mathbf{G}(\bbA(k))^n, V\bigr)\cong \lim_S C\bigl(\mathbf{G}(\bbA_{S}(k))^n, V\bigr). \]
The structure maps of the limit are surjective since we extend any continuous map on $\mathbf{G}(\bbA_{S}(k))$ to $\mathbf{G}(\bbA_{S'}(k))$, where $S\subset S'$, by Dugundji's generalization~\cite{dugundji}*{Theorem~4.1} of Tietze's extension theorem. So the inverse system on the right satisfies the Mittag-Leffler condition. 
The rows in the following commutative diagram are the Milnor exact sequences for inverse systems~\cite{weibel}*{Theorem~3.5.8 on p.~83}. The vertical arrow are the restriction maps. 
\[
\begin{tikzcd}
\lim^1_S H^{n-1}_c\bigl(\mathbf{G}(\bbA_{S}(k),V\bigr)\arrow[d]\arrow[r,hook]& H^n_c\bigl(\mathbf{G}(\bbA({k}),V\bigr)\arrow[d]\arrow[r,twoheadrightarrow]&\lim_S H^n_c\bigl(\mathbf{G}(\bbA_{S}(k),V\bigr)\arrow[d]\\
\lim^1_S H^{n-1}\bigl(\mathbf{G}(\calO_{S}),V\bigr)\arrow[r,hook]& H^n\bigl(\mathbf{G}(k),V\bigr)\arrow[r,twoheadrightarrow]&\lim_S H^n\bigl(\mathbf{G}(\calO_{S}),V\bigr)
\end{tikzcd}
\]
It suffices to show that for fixed~$n$ the restriction 
\begin{equation}\label{eq: finite adelic} H^n_c\bigl(\mathbf{G}(\bbA_{S}(k)),V\bigr)\to H^n\bigl(\mathbf{G}(\calO_{S}),V\bigr)
\end{equation}
is an isomorphism for $|S|\gg 1$. 
In view of \cite[Theorem 3.6]{Conrad}, this follows from Theorem~\ref{thm: bijectivity arithmetic}.
\end{proof}

\begin{remark} \label{rem:nonSC}
    Note that the only uses of the assumption that $\bfG$ is simply connected in the proofs of Theorem~\ref{thm: bijectivity arithmetic} and Theorem~\ref{thm:adelic} were via Corollary~\ref{cor:Clozel++}.
    The proofs in fact show that, even without this assumption,  the conclusion of Theorem~\ref{thm:adelic} holds true for every unitary representation $U$ of $\bfG(\bbA(k))$ such that for every place $s$ of $k$, the representation 
$L^2_0(\bfG(\bbA(k))/\bfG(k))\bar{\otimes} U$ does not have $\bfG(k_s)$-almost invariant vectors. 
\end{remark}

\begin{theorem} \label{thm:GknonGA}
Let $k$ be a number field and let $\bfG$ be a connected, simply connected almost simple group.
Assume that $\bfG(k_s)$ has property T for every place $s$ of $k$. 
    Let $V$ be a unitary $\bfG(k)$-representation.
    Then there exists a $\bfG(k)$-subrepresentation $V_0$ on which the representation extends to $\bfG(\bbA(k))$ such that $H^\ast(\bfG(k),V_0^\perp)=0$.
    In particular, the maps 
    \[ H^\ast_c(\bfG(\bbA(k)),V_0) \to H^\ast(\bfG(k),V_0) \to H^\ast(\bfG(k),V) \]
    are isomorphisms,
    where the first map comes from the restriction from $\bfG(\bbA(k))$ to $\bfG(k)$ and the second map comes from the inclusion of $V_0$ in $V$.
\end{theorem}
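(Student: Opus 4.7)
The plan is to mimic the strategy used for Theorems~\ref{thm:ssGamma} and~\ref{thm:nonuniformextension} in the Lie group setting, with the property~T assumption at every place compensating for the absence of a higher-rank hypothesis. Under this hypothesis, $\bfG(\bbA(k))$ itself has property~T (as it inherits property~T from each local factor $\bfG(k_s)$), which provides the spectral gap required to upgrade the vanishing of invariant vectors to the vanishing of almost invariant vectors.

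First I would define $V_0 \subseteq V$ to be the closed linear span of all closed $\bfG(k)$-invariant subspaces of $V$ whose $\bfG(k)$-action extends to a unitary $\bfG(\bbA(k))$-representation. Showing that $V_0$ itself admits such an extension requires a compatibility argument: for two extendable subspaces $W_1, W_2 \subseteq V$ with extensions $\pi_1, \pi_2$ to $\bfG(\bbA(k))$, I would argue that $\pi_1$ and $\pi_2$ agree on $W_1 \cap W_2$. This uniqueness of extension is to be extracted from property~T at every place together with the density of $\bfG(k)$ in $\bfG(\bbA_f(k))$ provided by strong approximation, which applies since $\bfG$ is isotropic at each non-compact $\bfG(k_s)$ by the property~T hypothesis. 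Gluing compatible extensions along increasing nets of extendable subspaces then produces a $\bfG(\bbA(k))$-representation structure on $V_0$.

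Next, I would show $H^\ast(\bfG(k), V_0^\perp) = 0$. By maximality, $V_0^\perp$ contains no nonzero closed extendable $\bfG(k)$-subrepresentation. Consider the unitary induction $I^2(V_0^\perp)$ (or $I^2_{\loc}$ in the non-cocompact case) as a $\bfG(\bbA(k))$-representation. Any $\bfG(k_s)$-invariant vector in $I^2(V_0^\perp)$ would, via a Frobenius-type reciprocity, produce a nonzero closed extendable subspace of $V_0^\perp$ and contradict the definition of $V_0$. Property~T at $\bfG(k_s)$ upgrades this to the absence of $\bfG(k_s)$-almost-invariant vectors. Theorem~\ref{thm:adelvanish} then yields $H^\ast_c(\bfG(\bbA(k)), I^2(V_0^\perp)) = 0$. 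In the cocompact case ($\bfG$ is $k$-anisotropic), the Shapiro isomorphism~(Lemma~\ref{lem:Shapiro}) concludes the vanishing in all degrees. In the non-cocompact case ($\bfG$ is $k$-isotropic), I would combine the Mittag-Leffler $S$-arithmetic limit argument from the proof of Theorem~\ref{thm:adelic} (together with Theorem~\ref{thm: bijectivity arithmetic}) with the polynomial cohomology machinery of~\S\ref{subsec:Shapiro++} to extract the vanishing in all degrees.

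The conclusion then follows: the long exact sequence for $0 \to V_0 \to V \to V_0^\perp \to 0$ yields the isomorphism $H^\ast(\bfG(k), V_0) \xrightarrow{\cong} H^\ast(\bfG(k), V)$, and Theorem~\ref{thm:adelic} applied to the $\bfG(\bbA(k))$-representation $V_0$ supplies the remaining isomorphism $H^\ast_c(\bfG(\bbA(k)), V_0) \xrightarrow{\cong} H^\ast(\bfG(k), V_0)$. The main obstacle is the uniqueness-and-compatibility argument used to construct $V_0$ as a bona fide $\bfG(\bbA(k))$-representation; a secondary obstacle is the Shapiro transition in the non-cocompact case, where the standard isomorphism is unavailable and must be replaced by the polynomial cohomology and $S$-arithmetic approximation machinery developed earlier in the paper.
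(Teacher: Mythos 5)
Your overall strategy---construct a maximal extendable subspace $V_0$, then show $H^\ast(\bfG(k),V_0^\perp)=0$ via no-almost-invariant-vectors arguments pushed through the Shapiro machinery---matches the paper's. The main divergence, and the genuine gap, is in the construction of $V_0$.

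You define $V_0$ as the closed span of \emph{all} closed $\bfG(k)$-invariant subspaces whose $\bfG(k)$-action extends to a unitary $\bfG(\bbA(k))$-action, and you correctly flag that making this a bona fide $\bfG(\bbA(k))$-representation requires a compatibility-and-uniqueness argument. This is more than a secondary obstacle: $\Gamma=\bfG(k)$ is a lattice in $G=\bfG(\bbA(k))$, not a dense subgroup, so a unitary $G$-action on a subspace $W$ is \emph{not} determined by its restriction to $\Gamma$. Your appeal to density of $\bfG(k)$ in $\bfG(\bbA_f(k))$ doesn't yet give you what you want either: even when that density holds (it needs $\bfG(k\otimes\bbR)$ to be non-compact), an element $\gamma_n\in\bfG(k)$ whose finite-adelic part converges to some $g_f$ has archimedean part going off to infinity, so $\pi(\gamma_n)$ has no reason to converge. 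Thus the agreement of two extensions $\pi_1,\pi_2$ on $W_1\cap W_2$, which you need to glue, is not a formal consequence of density. The paper circumvents this entirely: for each place $s$ at which $\bfG(k_s)$ is non-compact it sets $V_s\subset V$ to be the maximal $\Gamma$-subspace on which the action extends to $G/G_s$---and here uniqueness \emph{is} automatic, because by strong approximation $\Gamma$ projects densely into the quotient $G/G_s$. Then $V_0$ is the closure of $\sum_s V_s$. One can check a posteriori that this $V_0$ agrees with your span-of-all-extendables (since $V_0^\perp$ then has no $G/G_s$-continuous vectors, hence no extendable piece at all), but the per-place construction is what makes the existence and uniqueness of the $\bfG(\bbA(k))$-extension tractable. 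You should adopt this definition rather than trying to fix the gluing argument directly.

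Your sketch of the vanishing $H^\ast(\Gamma,V_0^\perp)=0$ is in the right spirit but is underspecified in the isotropic case. The paper fixes a degree $i$, picks a finite set $S$ of places with $|S|>i+2$ containing the places where $\bfG$ is not smooth or not anisotropic, and then works with the $S$-arithmetic lattice $\Gamma_S$ inside $G_S\times K_S$. The two nontrivial inputs you didn't mention are: (i) for $S$ large enough $\Gamma\cdot(G_S\times K_S)=G$, which yields the identification $I_{\Gamma_S}^{G_S\times K_S}\res_{\Gamma_S}^\Gamma V\cong \res_{G_S\times K_S}^G I_\Gamma^G V$ needed to transfer the vanishing of $G_s$-invariants; and (ii) the Bestvina--Eskin--Wortman bound $\frank\Gamma_S\ge|S|-2$, which is what makes Theorem~\ref{thm: Shapiro surjectivity} applicable in degree $i$. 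After that, the Milnor/Mittag-Leffler limit over $S$ gives the conclusion, as you say. Also a small point: property T of the restricted product $\bfG(\bbA(k))$ is not what you need; what you use repeatedly is property T of each local factor $\bfG(k_s)$, to upgrade ``$(I^2V)^{G_s}=0$'' to ``no almost-invariant vectors for $G_s$,'' which is the hypothesis both in Theorem~\ref{thm:adelvanish} and in the unreduced part of Theorem~\ref{thm:vanishss-general}.
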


\begin{proof}
As in the proof of Theorem~\ref{thm:adelic}, we may and will assume that $\bfG$ is absolutely almost simple.
We set $\Gamma=\bfG(k)$, $G=\bfG(\bbA(k))$ and $G_s=\bfG(k_s)$ for every place $s$ of $k$.
Denote by $A$ the finite set of places $s$ such that $\bfG$ is $k_s$-anisotropic. By strong approximation~\cite{Margulis}*{II, Theorem 6.8}, for every place $s \notin A$, the image of $\Gamma$ is dense in $G/G_s$.
For $s\not\in A$, we let $V_s$ be the maximal subrepresentation of $V$ on which the $\Gamma$-action extends to $G/G_s$~(see the discussion after Definition~\ref{def: G-continuous vectors}). Let $V_0$ be the closure of the sum of the spaces $V_s$, $s\notin A$.
We note that the $\Gamma$-action on $V_0$ extends to a $G$-action in a unique way.
In view of Theorem~\ref{thm:adelic}, we are left to show that 
$H^\ast(\Gamma,V_0^\perp)=0$.
We may assume that $V=V_0^\perp$, that is,  $V_s=0$ for every $s\notin A$
and will show that $H^\ast(\Gamma,V)=0$.

If $\bfG$ is $k$-anisotropic this follows from Shapiro Lemma. 
Indeed, by \cite[Proposition 5.1]{BBHP} (cf.~the discussion after Definition~\ref{def: G-continuous vectors}) we obtain that for every place $s\notin A$,
$(I^2 V)^{G_s}=0$. Hence, by  Theorem~\ref{thm:adelvanish}, $H^*_c(G,I^2 V)=0$.

From now on, let $\bfG$ be $k$-isotropic. In particular, $A=\emptyset$.
Fixing $i$, we will show that $H^i(\Gamma,V)=0$. 
By \cite[Theorem 3.4(1)]{Conrad}, $\bfG$ is defined over $\calO_{S_0}$, where $S_0$ is a finite set of places of $k$, containing all archimedean ones.
Let $S$ be a finite set of places of $k$ containing $S_0$ and satisfying $|S|>i+2$. 
We use freely \cite[Theorems 3.4 and 3.6]{Conrad}.
Set $G_S=\prod_{s\in S} G_s$, $K_S= \prod_{s\notin S} \bfG(\mathcal{O}_s)$ and consider $G_S \times K_S$ as an open subgroup of $G$.
Setting $\Gamma_S=\Gamma \cap (G_S \times K_S)$, we have  
\[ G\cong \colim_S G_S \times K_S, \quad  \Gamma \cong \colim_S \Gamma_S.\]
Since $\Gamma \cdot (G_S \times K_S)$ is open and of cofinite volume in $G$, it is cofinite. It follows that for $S$ large enough, $\Gamma \cdot (G_S \times K_S)=G$.
We assume below that this is the case.
We get that the inclusion $(G_S \times K_S)/\Gamma_S \hookrightarrow G/\Gamma$ is a $(G_S \times K_S)$-equivariant bijection and we deduce the existence of an isomorphism of $(G_S \times K_S)$-representations, 
\[ I_{\Gamma_S}^{G_S \times K_S} \circ \res_{\Gamma_S}^\Gamma V \cong  \res_{G_S \times K_S}^G \circ I_{\Gamma}^G~ V. \]
Below we consider unitary induction, but we deviate from our usual notation $I^2(V)$ in order to indicate between which groups we induce. 
We obtain that for $s\in S$, 
\[ (I_{\Gamma_S}^{G_S \times K_S} \circ \res_{\Gamma_S}^\Gamma V)^{G_s} \cong
(\res_{G_S \times K_S}^G \circ I_{\Gamma}^G V)^{G_s}=(I_{\Gamma}^G V)^{G_s}=0,\]
thus 
\[ \bigl((I_{\Gamma_S}^{G_S \times K_S} \circ \res_{\Gamma_S}^\Gamma V)^{K_S}\bigr)^{G_s}=0.\]
We view $(I_{\Gamma_S}^{G_S \times K_S} \circ \res_{\Gamma_S}^\Gamma V)^{K_S}$ as a $G_S$-representation. Note that $r(G_S)\geq \rank(G_S) \geq |S|>i+2$.  
Theorem~\ref{thm:vanishss-general} implies that 
\[ H^i_c(G_S,(I_{\Gamma_S}^{G_S \times K_S} \circ \res_{\Gamma_S}^\Gamma V)^{K_S})=0.\]
We conclude from Lemma~\ref{lem: product with compact group} that 
\[ H^i_c(G_S \times K_S,I_{\Gamma_S}^{G_S \times K_S} \circ \res_{\Gamma_S}^\Gamma V)=0. \]

As in the proof of Theorem~\ref{thm: bijectivity arithmetic}, one concludes that  $\Gamma_S$ is universally integrable in $G_S \times K_S$. 
Viewing $\Gamma_S$ as a lattice in $G_S$, we get by Theorem~\ref{thm: bestvina-eskin-wortman} that $\frank \Gamma_S \ge |S|-2> i$.
We deduce from Theorem~\ref{thm: Shapiro surjectivity} that $H^i(\Gamma_S,V)=0$.
By Milnor's exact sequence for inverse systems~\cite{weibel}*{Theorem~3.5.8 on p.~83} we have 
\[ 0\to {\lim}^1_S H^{i-1}\bigl(\Gamma_S,V\bigr) \to H^i\bigl(\Gamma,V\bigr) \to \lim_S H^i\bigl(\Gamma_S,V\bigr) \to 0
\]
and we conclude that $H^i\bigl(\Gamma,V\bigr)=0$.
\end{proof}

\begin{proof}[Proof of Theorem~\ref{thm:Gkhigherrank}]
    This is a special case of Theorem~\ref{thm:GknonGA}.
\end{proof}

\begin{proof}[Proof of Theorem~\ref{thm:arith}]
For a non-archimedean local field $F$ we get either by Theorem~\ref{thm:Casselman} or by the main result in~\cite{dymara+janus} that 
\[ H^\ast(\bfG(F),V)=H^0(\bfG(F),V)=V. \] 
We conclude by an iterative application of the Hochschild-Serre spectral sequence \cite[Theorem 9.1]{Blanc} and a limiting argument as in the proof of Theorem~\ref{thm:adelvanish} that 
\[ H^\ast\bigl(\bfG(\mathbb{A}_f(k)),V\bigr)=H^0\bigl(\bfG(\mathbb{A}_f(k)),V\bigr)=V. \] 
Using the identification $\mathbb{A}(k)\cong (k\otimes \bbR) \times \mathbb{A}_f(k)$ and another application of the Hochschild-Serre spectral sequence \cite[Theorem 9.1]{Blanc}, we deduce
that the map $H_c^*(\mathbf{G}(k\otimes \bbR),V)\to H_c^*(\mathbf{G}(\mathbb{A}(k)),V)$ is an isomorphism in all degrees.
The map $H_c^*(\mathbf{G}(\mathbb{A}(k)),V)\to H^*(\mathbf{G}(k),V)$ is an isomorphism in all degrees by Theorem~\ref{thm:adelic}.
We are left to show that the map $H^*(\mathbf{G}(k),V) \to H^*(\Gamma,V)$ is an isomorphism in degrees lower than
$\rank \mathbf{G}(k\otimes \bbR)$.
Equivalently, it is enough to show that the map $H_c^*(\mathbf{G}(k\otimes \bbR),V)\to H^*(\Gamma,V)$
is an isomorphism in degrees lower than
$\rank \mathbf{G}(k\otimes \bbR)$.
For this, we use the proof of Theorem~\ref{thm: bijectivity Lie to lattice} -- except for using Theorem~\ref{thm:Clozel++arith} instead of Corollary~\ref{cor:Liespecgap}.
\end{proof}

\subsection{On the assumption of simple connectivity}\label{subsec: simple connectivity}

In the following version of Theorem~\ref{thm:arith}, which applies to the trivial representation, the group $\bfG$ is not assumed to be simply connected.

\begin{theorem} \label{thm:adelicC}
Let $k$ be a number field and $\mathbb{A}(k)$ the ring of adeles of~$k$.
Let $\mathbf{G}$ be a connected almost simple $k$-algebraic group and $\Gamma<\mathbf{G}(k)$ an associated arithmetic subgroup.
Consider the natural maps
\[ H_c^*(\mathbf{G}(k\otimes \bbR),\bbC)\to H_c^*(\mathbf{G}(\mathbb{A}(k)),\bbC)\to H^*(\mathbf{G}(k),\bbC) \to H^*(\Gamma,\bbC). \]
Then the first two maps are isomorphisms in all degrees and the last map, namely $H^*(\mathbf{G}(k),\bbC) \to H^*(\Gamma,\bbC)$, 
is an isomorphism in degrees lower than
$\rank \mathbf{G}(k\otimes \bbR)$.
\end{theorem}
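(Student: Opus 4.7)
The plan is to reduce to the simply connected case of Theorems~\ref{thm:arith} and~\ref{thm:adelic} via the universal $k$-isogeny $\pi\colon\tilde{\bfG}\to\bfG$, whose kernel $\mu$ is a finite central $k$-group scheme. A key observation I will use throughout is that at every place $s$ of $k$ the continuous homomorphism $\pi_{k_s}\colon\tilde{\bfG}(k_s)\to\bfG(k_s)$ has finite kernel $\mu(k_s)$ and cokernel embedded into the finite group $H^1(k_s,\mu)$; consequently a unitary $\bfG(k_s)$-representation has $\bfG(k_s)$-almost invariant vectors if and only if its pullback along $\pi_{k_s}$ has $\tilde{\bfG}(k_s)$-almost invariant vectors.

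The first displayed map is the isomorphism of Corollary~\ref{cor:adeletriv}, which makes no simple connectivity assumption. For the second map, I would invoke Remark~\ref{rem:nonSC} to reduce the claim to verifying that $L^2_0(\bfG(\bbA(k))/\bfG(k))$ has no $\bfG(k_s)$-almost invariant vectors at any place~$s$ where $\bfG(k_s)$ is non-compact. The Burger--Sarnak style calculation that underlies~\eqref{eq:adelicSC} in the proof of Theorem~\ref{Burger-Sarnak} identifies the pullback of $L^2(\bfG(\bbA(k))/\bfG(k))$ along $\pi_{\bbA(k)}$ as a subrepresentation of $L^2(D)\otimes L^2(\tilde{\bfG}(\bbA(k))/\tilde{\bfG}(k))^C$, where $C=\mu(\bbA(k))$ is compact and $D$ is a finite abelian cokernel. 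Corollary~\ref{cor:Clozel++} applied to the simply connected group $\tilde{\bfG}$ furnishes the required spectral gap for the pullback, and the key observation above transfers it to $\bfG(k_s)$.

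For the third map, the two preceding isomorphisms let me prove instead that the restriction $H_c^i(\bfG(k\otimes\bbR),\bbC)\to H^i(\Gamma,\bbC)$ is an isomorphism for every $i<\rank\bfG(k\otimes\bbR)$. I would follow the proof of Theorem~\ref{thm: bijectivity Lie to lattice}: injectivity comes from Theorem~\ref{thm: comparison map for semisimple Lie} together with Theorem~\ref{thm: injectivity G to lattice} and the universal integrability of $\Gamma$ ensured by Proposition~\ref{prop: universally integrable}, while surjectivity reduces, via Theorem~\ref{thm: Shapiro surjectivity} and Theorem~\ref{thm:Lie-vanishss-general}, to the spectral gap of $L^2_0(\bfG(k\otimes\bbR)/\Gamma)$ at every archimedean place. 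Setting $\tilde{\Gamma}=\pi^{-1}(\Gamma)\cap\tilde{\bfG}(k)$, which up to commensurability is an arithmetic subgroup of $\tilde{\bfG}(k)$, an archimedean analog of the Burger--Sarnak decomposition realizes the $\pi_{k\otimes\bbR}$-pullback of $L^2_0(\bfG(k\otimes\bbR)/\Gamma)$ as a subrepresentation of a finite direct sum of copies of $L^2_0(\tilde{\bfG}(k\otimes\bbR)/\tilde{\Gamma})$; Theorem~\ref{thm:Clozel++arith} applied to $\tilde{\bfG}$ and $\tilde{\Gamma}$ then supplies the spectral gap for $\tilde{\bfG}(k_s)$, and the key observation transfers it to $\bfG(k_s)$. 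The main---and essentially only---technical obstacle will be carrying out this archimedean analog of the decomposition cleanly: the computation mirrors the adelic one in the proof of Theorem~\ref{Burger-Sarnak}, relying on the finiteness of $\mu(k\otimes\bbR)$ and of the cokernel of $\pi_{k\otimes\bbR}$, but some bookkeeping is required to align $\tilde{\Gamma}$ with a genuine lattice in $\tilde{\bfG}(k\otimes\bbR)$.
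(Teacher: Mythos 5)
The overall skeleton matches the paper's: Corollary~\ref{cor:adeletriv} for the first map, Remark~\ref{rem:nonSC} plus an adelic spectral gap for the second, and the proof of Theorem~\ref{thm: bijectivity Lie to lattice} plus an $S$-arithmetic spectral gap for the third. But your ``key observation'' is not an iff, and the direction you implicitly rely on fails exactly where it matters. The easy direction is fine: no $\tilde{\bfG}(k_s)$-almost-invariant vectors for the pullback implies none for the original. But the pullback of $L^2_0(\bfG(\bbA(k))/\bfG(k))$ along $\pi_{\bbA(k)}$ does \emph{not} satisfy the hypothesis you feed into Corollary~\ref{cor:Clozel++}. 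Via~\eqref{eq:adelicSC} it contains, inside $L^2(D)\otimes L^2(\tilde{\bfG}(\bbA(k))/\tilde{\bfG}(k))^C$ with the constants removed, the finite-dimensional subspace $L^2_0(D)\otimes\bbC$. This subspace is $\tilde{\bfG}(\bbA(k))$-\emph{trivial} (the map $\tilde{\bfG}(\bbA(k))\to D$ is trivial, since $\phi_{\bbA(k)}(\tilde{\bfG}(\bbA(k)))$ lies in the kernel of $\bfG(\bbA(k))\to D$), so it consists entirely of $\tilde{\bfG}(k_s)$-invariant vectors. Corollary~\ref{cor:Clozel++} says nothing about it, and the ``backward'' implication you would need (almost invariance for the finite-index image $\pi_{k_s}(\tilde{\bfG}(k_s))$ forces almost invariance for $\bfG(k_s)$) is simply false for finite-dimensional representations --- the sign character of $\bbZ/2$ restricted to the trivial subgroup is already a counterexample, and $L^2_0(D)$ is precisely this kind of obstruction.

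The paper's Lemma~\ref{lem:SGnonSS} sidesteps the pullback entirely: it decomposes $L^2_0(\bfG(\bbA(k))/\bfG(k))$ as $L^2_0(D)\oplus L^2_0(D)^\perp$ as a $\bfG(\bbA(k))$-representation, handles $L^2_0(D)^\perp$ by restricting the action to $\tilde{\bfG}$ and invoking Theorem~\ref{Clozel} (the direction that \emph{is} valid), and handles the finite-dimensional $L^2_0(D)$ by arguing directly that it has no $\bfG(k_s)$-invariant vectors --- this is a genuine separate input that cannot be recovered from the simply connected cover because $\tilde{\bfG}$ does not see $D$ at all. The same issue recurs in your step for the third map: the archimedean analogue of the decomposition produces an orbit-permutation piece on which $\tilde{\bfG}(k\otimes\bbR)$ acts trivially, so the pullback of $L^2_0(\bfG(k\otimes\bbR)/\Gamma)$ is \emph{not} a subrepresentation of a finite sum of copies of $L^2_0(\tilde{\bfG}(k\otimes\bbR)/\tilde\Gamma)$. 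The paper avoids having to redo that analysis at the archimedean level altogether: it derives Corollary~\ref{cor:SGnonSS} by embedding $L^2_0(G/\Lambda)$ into $L^2_0(\bfG(\bbA(k))/\bfG(k))$ as an open piece, reducing the $S$-arithmetic spectral gap to the adelic one already established in Lemma~\ref{lem:SGnonSS}.
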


The following is a version of Theorem~\ref{Clozel} in which $\bfG$ is not assumed to be simply connected.

\begin{lemma} \label{lem:SGnonSS}
    Let $k$ be a number field and $\mathbb{A}(k)$ its ring of adeles. Let $\mathbf{G}$ be a connected almost simple $k$-algebraic group.
    Then for every place $s$ of $k$ such that $\bfG$ is $k_s$-isotropic, the $\mathbf{G}(\mathbb{A}(k))$ unitary representation
    \[ L^2_0(\bfG(\bbA(k))/\bfG(k)) \]
    does not have almost invariant vectors with respect to $\mathbf{G}(k_s)$.
\end{lemma}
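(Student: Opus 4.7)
The plan is to reduce to the simply connected case where Corollary~\ref{cor:Clozel++} applies. As in the proofs of Theorems~\ref{Clozel++} and~\ref{thm:adelic}, by forming a suitable restriction of scalars (see~\cite[\S3.1.2]{Tits} and~\cite[Example~4.2]{Conrad}) I would first reduce to the case that $\bfG$ is $k$-absolutely almost simple. Let $\phi\colon\tilde{\bfG}\to\bfG$ be the simply connected cover, a $k$-isogeny with finite central kernel~$C$. Since $\bfG$ is $k_s$-isotropic, so is $\tilde{\bfG}$, whence $\tilde{\bfG}(k_s)$ is non-compact and Corollary~\ref{cor:Clozel++} is available for~$\tilde{\bfG}$.

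Following the argument that establishes equation~\eqref{eq:adelicSC} in the proof of Theorem~\ref{Burger-Sarnak}, I would obtain a $\tilde{\bfG}(\bbA(k))$-equivariant unitary isomorphism
\[ L^2\bigl(\bfG(\bbA(k))/\bfG(k)\bigr)\cong L^2(D)\bar{\otimes} L^2\bigl(\tilde{\bfG}(\bbA(k))/\tilde{\bfG}(k)\bigr)^{C(\bbA(k))}, \]
with trivial $\tilde{\bfG}(\bbA(k))$-action on $L^2(D)$, where $D=\bfG(\bbA(k))/\bigl(\bfG(k)\cdot \phi_{\bbA(k)}(\tilde{\bfG}(\bbA(k)))\bigr)$ is a finite abelian group. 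Separating off the constants yields a $\tilde{\bfG}(\bbA(k))$-decomposition
\[ L^2_0\bigl(\bfG(\bbA(k))/\bfG(k)\bigr)\cong \bigl(L^2(D)/\bbC\bigr)\oplus L^2(D)\bar{\otimes} L^2_0\bigl(\tilde{\bfG}(\bbA(k))/\tilde{\bfG}(k)\bigr)^{C(\bbA(k))}. \]

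Suppose for contradiction that $L^2_0\bigl(\bfG(\bbA(k))/\bfG(k)\bigr)$ admits a sequence of $\bfG(k_s)$-almost invariant unit vectors. Pulling back through $\phi_{k_s}$ makes them $\tilde{\bfG}(k_s)$-almost invariant when the space is viewed as a $\tilde{\bfG}(\bbA(k))$-representation. Applying Corollary~\ref{cor:Clozel++} to $\tilde{\bfG}$ with unitary coefficient $U=L^2(D)$ rules out $\tilde{\bfG}(k_s)$-almost invariant vectors in the second summand, so the almost invariance must be captured by the projection onto the finite-dimensional subspace $L^2(D)/\bbC$, yielding a nonzero $\bfG(k_s)$-invariant vector there. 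Such a vector corresponds to a nontrivial character of~$D$ whose restriction to $\bfG(k_s)$, via the composition $\bfG(k_s)\to\bfG(\bbA(k))\to D$, is trivial.

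The hard part will be to exclude the existence of such a character. Equivalently, I must establish surjectivity of the map $\bfG(k_s)\to D$, i.e., $\bfG(\bbA(k))=\bfG(k_s)\cdot\bfG(k)\cdot\phi_{\bbA(k)}(\tilde{\bfG}(\bbA(k)))$. The main input is the Strong Approximation Theorem (\cite[II, Theorem~6.8]{Margulis}) for $\tilde{\bfG}$ at~$s$, which yields density of $\phi_{\bbA(k)}(\tilde{\bfG}(k_s))\cdot\phi(\tilde{\bfG}(k))$ in $\phi_{\bbA(k)}(\tilde{\bfG}(\bbA(k)))$ and hence the containment $\phi_{\bbA(k)}(\tilde{\bfG}(\bbA(k)))\subseteq \bfG(k_s)\cdot\bfG(k)\cdot\phi_{\bbA(k)}(\tilde{\bfG}(\bbA(k)))$. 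Combining this containment with a Hasse-principle (Poitou--Tate) argument for the finite central kernel~$C$ at the place~$s$ will promote it to equality, giving the required surjectivity and the desired contradiction.
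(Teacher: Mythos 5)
Your outline follows the paper's strategy closely: pass to the absolutely almost simple case via restriction of scalars, introduce the simply connected cover $\phi\colon\tilde{\bfG}\to\bfG$ and the abelian group $D=\bfG(\bbA(k))/\bigl(\bfG(k)\cdot\phi_{\bbA(k)}(\tilde{\bfG}(\bbA(k)))\bigr)$, decompose $L^2_0(\bfG(\bbA(k))/\bfG(k))$ via~\eqref{eq:adelicSC}, and apply Clozel (Theorem~\ref{Clozel} or Corollary~\ref{cor:Clozel++}) to the piece coming from $\tilde{\bfG}$.

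You improve on the paper's argument by refusing to dismiss the $L^2_0(D)$ summand for free. The paper merely observes that ``$L^2_0(D)$ is finite dimensional and has no invariant vectors'', without specifying with respect to which group; what is actually needed is the absence of $\bfG(k_s)$-invariant vectors, i.e.\ density of the image of $\bfG(k_s)$ in $D$, equivalently $\bfG(\bbA(k))=\bfG(k_s)\cdot\bfG(k)\cdot\phi_{\bbA(k)}(\tilde{\bfG}(\bbA(k)))$. You isolate this surjectivity correctly as the crux. Unfortunately, the fix you propose cannot succeed: the surjectivity is false in general, and with it the lemma fails. Take $\bfG=\PGL_2$, $k=\bbQ$, $s=\infty$. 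The determinant-mod-squares map $\delta\colon\PGL_2(\bbA(\bbQ))\to\bbA(\bbQ)^\times/(\bbA(\bbQ)^\times)^2$ has kernel exactly $\phi(\SL_2(\bbA(\bbQ)))$ and identifies $D$ with the \emph{infinite} compact group $\bbA(\bbQ)^\times/\bigl(\bbQ^\times\cdot(\bbA(\bbQ)^\times)^2\bigr)$, whose Pontryagin dual is the group of quadratic Hecke characters of $\bbQ$. (This shows in passing that $\phi_{\bbA(k)}$ does \emph{not} have open image — $\phi(\SL_2(\bbZ_p))$ has index $2$ in $\PGL_2(\bbZ_p)$ for every odd prime $p$ — so the paper's own claim that $D$ is discrete and finite is unfounded.) The image of $\PGL_2(\bbR)$ in $D$ factors through $\bbR^\times/(\bbR^\times)^2\cong\bbZ/2$ and is nowhere near dense. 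Explicitly, if $\chi$ is the quadratic Hecke character of a real quadratic field such as $\bbQ(\sqrt{2})$, then $\chi_\infty$ is trivial and $\chi\circ\delta$ is a nonconstant $\PGL_2(\bbR)$-invariant vector in $L^2_0\bigl(\PGL_2(\bbA(\bbQ))/\PGL_2(\bbQ)\bigr)$, so there are $\PGL_2(\bbR)$-invariant (hence almost invariant) vectors. Finally, the ``containment'' you extract from Strong Approximation, $\phi_{\bbA(k)}(\tilde{\bfG}(\bbA(k)))\subseteq \bfG(k_s)\cdot\bfG(k)\cdot\phi_{\bbA(k)}(\tilde{\bfG}(\bbA(k)))$, is a tautology — the right-hand side obviously contains the left — so it carries no information, and no Hasse-principle argument can turn it into the desired (false) equality.
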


\begin{proof}
    We let $\phi:\tilde{\bfG} \to \bfG$ be the simply connected cover.
    As in the proof of Theorem~\ref{Burger-Sarnak}, we obtain that  
    $\bfG(k)\cdot \phi_{\bbA(k)}(\tilde{\bfG}(\bbA(k)))$ is a normal open subgroup of finite index in $\bfG(\bbA(k))$. We get a finite group     
    \[ D=\bfG\bigl(\bbA(k)\bigr)/\bigl(\bfG(k)\cdot \phi_{\bbA(k)}(\tilde{\bfG}(\bbA(k)))\bigr),\]
    a homomorphism $\bfG(\bbA(k)) \to D$ 
    and a $\bfG(\bbA(k))$-equivariant factor map
    \[ \bfG(\bbA(k))/\bfG(k) \to D. \]
    We thus get an inclusion of unitary $\bfG(\bbA(k))$-representations
    \[ L^2_0(D) \hookrightarrow L^2_0(\bfG(\bbA(k))/\bfG(k)). \]
    Since $L^2_0(D)$ is finite dimensional and it has no invariant vectors, it has no almost invariant vectors as well.
    We are left to show that its complement, $L^2_0(D)^\perp$, has no almost invariant vectors. For this, we consider $L^2_0(D)^\perp$ as $\tilde{\bfG}(\bbA(k))$-representation.
As such, we obtain from~\eqref{eq:adelicSC} that 
\[ L^2_0(D)^\perp \cong L^2_0(D) \otimes L^2_0(\tilde{\bfG}(\bbA(k))/\tilde{\bfG}(k))^{\Ker(\phi_{\bbA(k)})}. \]
Theorem~\ref{Clozel} implies that 
$L^2_0(D)^\perp$ has no $\tilde{\bfG}(\bbA(k))$-almost invariant vectors. In particular, $L^2_0(D)^\perp$ has no almost invariant vectors as a unitary $\bfG(\bbA(k))$-representation. This finishes the proof.
\end{proof}

The following corollary follows immediately, as in the first part of the proof of Theorem~\ref{thm:Clozel++arith}.

\begin{cor} \label{cor:SGnonSS}
    Let $k$ be a number field and let $\mathbf{G}$ be a connected almost simple $k$-algebraic group.
Let $S$ be a set of places of $k$ that includes all the archimedean ones, and let $\Gamma<\bfG(k)$ be an $S$-arithmetic subgroup.
Let $G$ be the restricted product of the groups $\bfG(k_s)$, $s\in S$,
and consider $\Gamma$ as a lattice in $G$.
Then for every $t\in S$ such that $G_t=\bfG(k_t)$ is non-compact, the unitary representation  
    $L^2_0(G/\Gamma)$ does not have $G_t$-almost invariant vectors
\end{cor}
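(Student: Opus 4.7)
The plan is to reduce the statement to Lemma~\ref{lem:SGnonSS} by embedding a suitable $S$-congruence subquotient into the full adelic quotient, exactly as in the first part of the proof of Theorem~\ref{thm:Clozel++arith}. First, using \cite[Theorem~3.4]{Conrad}, I would fix a finite set $S_0$ of places of $k$ containing all archimedean ones and over which $\bfG$ is defined; after enlarging $S$ if necessary (a harmless modification since the new $\Gamma$ remains commensurable with an $S$-arithmetic subgroup) I can assume $S\supseteq S_0$. Setting $K=\prod_{s\notin S}\bfG(\calO_s)$, a compact open subgroup of the restricted product $\prod_{s\notin S}\bfG(k_s)$, I let $\Lambda=\bfG(k)\cap(G\times K)$. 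Then $\Lambda$ is an $S$-congruence subgroup of $\bfG(k)$ and is commensurable with~$\Gamma$.

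The central point is the open, measure-preserving $(G\times K)$-equivariant embedding $(G\times K)/\Lambda\hookrightarrow\bfG(\bbA(k))/\bfG(k)$. Extension by zero yields a $(G\times K)$-equivariant isometric inclusion $L^2((G\times K)/\Lambda)\hookrightarrow L^2(\bfG(\bbA(k))/\bfG(k))$. Taking $K$-fixed vectors gives a $G$-equivariant isometric inclusion
\[ L^2(G/\Lambda)\cong L^2\bigl((G\times K)/\Lambda\bigr)^K\hookrightarrow L^2\bigl(\bfG(\bbA(k))/\bfG(k)\bigr). \]
A direct integration check shows that this inclusion sends $L^2_0(G/\Lambda)$ into $L^2_0(\bfG(\bbA(k))/\bfG(k))$: if $f\in L^2_0(G/\Lambda)$ then $\int f=0$, so the extension by zero, which equals $f$ on $(G\times K)/\Lambda$ and vanishes elsewhere, integrates to $\vol(K)\cdot\int f=0$ over $\bfG(\bbA(k))/\bfG(k)$.

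Next, since $\bfG$ is almost simple and $G_t=\bfG(k_t)$ is non-compact, $\bfG$ is $k_t$-isotropic. Lemma~\ref{lem:SGnonSS} then yields that $L^2_0(\bfG(\bbA(k))/\bfG(k))$ has no $G_t$-almost invariant vectors, hence neither does its $G_t$-stable closed subspace $L^2_0(G/\Lambda)$. Finally, I would invoke \cite[Lemma~3.1]{KM99} to transfer the absence of $G_t$-almost invariant vectors from $L^2_0(G/\Lambda)$ to $L^2_0(G/\Gamma)$, using the commensurability of $\Lambda$ and $\Gamma$. I do not anticipate any serious obstacle here: the whole corollary is, by design, a straightforward packaging of Lemma~\ref{lem:SGnonSS} together with the standard adelic/$S$-arithmetic dictionary, which is precisely why the paper asserts it follows immediately.
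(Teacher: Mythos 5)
Your proposal is essentially correct and matches what the paper intends by ``as in the first part of the proof of Theorem~\ref{thm:Clozel++arith}'': embed $L^2(G/\Lambda)$ into $L^2\bigl(\bfG(\bbA(k))/\bfG(k)\bigr)$ via extension by zero after taking $K$-invariants, deduce the corresponding inclusion on mean-zero subspaces, apply Lemma~\ref{lem:SGnonSS}, and finish by commensurability using \cite[Lemma~3.1]{KM99}. Your ``direct integration check'' is the right way to see $L^2_0(G/\Lambda)\hookrightarrow L^2_0\bigl(\bfG(\bbA(k))/\bfG(k)\bigr)$ here, since the strong-approximation ergodicity argument the paper uses in Theorem~\ref{thm:Clozel++arith} is not available once $\bfG$ is not simply connected; it is in fact cleaner than ergodicity even in the simply connected case.

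One step is not quite right as written: you ``enlarge $S$'' so that $S\supseteq S_0$ and call this harmless, but the conclusion of the corollary is about $L^2_0(G/\Gamma)$ with $G=\prod_{s\in S}\bfG(k_s)$, so changing $S$ changes the group $G$ (and $\Gamma$ is a lattice in $G$, not in the enlarged group). Enlarging $S$ therefore changes the statement, and an $S$-arithmetic group is in general not $S'$-arithmetic for $S'\supsetneq S$ (e.g.\ $\SL_2(\bbZ)\subset\SL_2(\bbZ[1/p])$ has infinite index). There is no need for this maneuver: as in the proof of Theorem~\ref{thm:Clozel++arith}, simply pick \emph{any} compact open subgroup $K<\bfG(\bbA_{S^c}(k))$ and set $\Lambda=\bfG(k)\cap(G\times K)$; for the finitely many places in $S_0\setminus S$ you use an arbitrary compact open subgroup of $\bfG(k_s)$ rather than $\bfG(\calO_s)$, and the rest of your argument goes through verbatim. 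With that correction, your proof is correct and follows the paper's intended route.
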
 

\begin{proof}[Proof of Theorem~\ref{thm:adelicC}]
The map $H_c^*(\mathbf{G}(k\otimes \bbR),\bbC)\to H_c^*(\mathbf{G}(\mathbb{A}(k)),\bbC)$ is an isomorphism in all degrees by Corollary~\ref{cor:adeletriv}.
The proof that the map $H_c^*(\mathbf{G}(\mathbb{A}(k),\bbC)\to H^*(\mathbf{G}(k),\bbC)$ is an isomorphism in all degrees
goes verbatim as the Proof of Theorem~\ref{thm:adelic}, relying on Theorem~\ref{thm: bijectivity arithmetic}, using Lemma~\ref{lem:SGnonSS} instead of Corollary~\ref{cor:Clozel++}; see Remark~\ref{rem:nonSC}.
We are left to show that the map $H^*(\mathbf{G}(k),\bbC) \to H^*(\Gamma,\bbC)$ is an isomorphism in degrees lower than
$\rank \mathbf{G}(k\otimes \bbR)$.
Equivalently, it is enough to show that the map $H_c^*(\mathbf{G}(k\otimes \bbR),\bbC)\to H^*(\Gamma,\bbC)$
is an isomorphism in degrees lower than
$\rank \mathbf{G}(k\otimes \bbR)$.
For this, we use the proof of  Theorem~\ref{thm: bijectivity Lie to lattice} -- except for using Corollary~\ref{cor:SGnonSS} instead of Corollary~\ref{cor:Liespecgap}.
\end{proof}

\begin{remark}
   Let $\hat{D}$ be the Pontryagin dual of the group $D$ appearing in the proof of Lemma~\ref{lem:SGnonSS} and regard its elements as one dimensional unitary representations of $\bfG(\bbA(k))$ via the map $\bfG(\bbA(k))\to D$.
   An inspection of the proof of Lemma~\ref{lem:SGnonSS} shows that 
for a unitary representations $U$ of $\mathbf{G}(\mathbb{A}(k))$,
if for every place $s$ for which $\bfG$ is $k_s$-isotropic and for every $1\neq \phi\in \hat{D}$, the representation $U \otimes \phi$ does not have $\bfG(k_s)$ almost invariant vectors, then 
 the $\mathbf{G}(\mathbb{A}(k))$ unitary representation
    \[ L^2_0(\bfG(\bbA(k))/\bfG(k)) \otimes L^2_0(D) \]
    does not have almost invariant vectors with respect to $\mathbf{G}(k_s)$. 
Thus, under this assumption on $U$, an analogue of Theorem~\ref{thm:adelicC} still holds when the trivial representation $\bbC$ is replaced by $U$, with essentially the same proof.
\end{remark}

\subsection{The Bekka-Cowling map}\label{subsec: bekka-cowling}

We recall the following theorem of Bekka and Cowling.

\begin{theorem}[{\cite[Theorem 1]{Bekka-Cowling}}] \label{thm:bekka-cowling}
    Let $k$ be a number field and $\bfG$ be a connected, simply connected almost simple $k$-algebraic group. Then the restriction of an irreduciceble representation of $\bfG(\bbA(k))$ to $\bfG(k)$ is irreducible.
    The induced map $\widehat{\bfG(\bbA(k))}\to \widehat{\bfG(k)}$ is injective.
\end{theorem}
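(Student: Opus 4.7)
The plan is to leverage the restricted tensor product decomposition of irreducible unitary representations of the adelic group together with strong approximation for simply connected almost simple $k$-groups. By~\cite{Tadic} (cf.~\cite[\S6]{Moore}), any $\pi \in \widehat{\bfG(\bbA(k))}$ decomposes as $\pi \cong \widehat{\bigotimes}'_s \pi_s$ with $\pi_s \in \widehat{\bfG(k_s)}$ and $\pi_s$ spherical with respect to $\bfG(\calO_s)$ for almost all places~$s$; this will be my starting point.

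For the irreducibility of $\pi|_{\bfG(k)}$ I would argue as follows. If $\bfG$ is $k_s$-anisotropic at every place, then $\bfG(\bbA(k))$ is compact, $\bfG(k)$ is cocompact, and Lemma~\ref{lem:Shapiro} together with irreducibility of~$\pi$ quickly gives the claim. Otherwise, fix a place $s_0$ where $\bfG$ is $k_{s_0}$-isotropic; by the Kneser-Platonov strong approximation theorem, $\bfG(k)$ is dense in $\bfG(\bbA^{s_0}(k))$, the restricted product over all places distinct from~$s_0$. Let $W \subset \pi$ be a closed $\bfG(k)$-invariant subspace. By strong continuity of the representation together with density, $W$ is invariant under $\bfG(\bbA^{s_0}(k))$. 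Using the decomposition $\bfG(\bbA(k)) = \bfG(k_{s_0}) \times \bfG(\bbA^{s_0}(k))$ and Fubini together with the tensor decomposition $\pi \cong \pi_{s_0} \widehat{\otimes} \pi^{s_0}$, any $\bfG(\bbA^{s_0}(k))$-invariant closed subspace of $\pi$ is of the form $U \widehat{\otimes} \pi^{s_0}$ for a closed subspace $U \subset \pi_{s_0}$. The $\bfG(k)$-invariance of $U \widehat{\otimes} \pi^{s_0}$ then forces $U$ to be invariant under the image of $\bfG(k)$ in $\bfG(k_{s_0})$, which is dense by weak approximation. Strong continuity of $\pi_{s_0}$ upgrades this to $\bfG(k_{s_0})$-invariance, so by irreducibility of $\pi_{s_0}$ we conclude $U = 0$ or $U = \pi_{s_0}$, hence $W = 0$ or $W = \pi$.

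For the injectivity of the induced map $\widehat{\bfG(\bbA(k))} \to \widehat{\bfG(k)}$, suppose $T\colon \pi|_{\bfG(k)} \xrightarrow{\cong} \pi'|_{\bfG(k)}$ is a unitary intertwiner between restrictions of irreducibles. Since $\bfG(k)$ is dense in $\bfG(\bbA^{s_0}(k))$, the operator $T$ intertwines the $\bfG(\bbA^{s_0}(k))$-actions by strong continuity. Writing $\pi \cong \pi_{s_0} \widehat{\otimes} \pi^{s_0}$ and $\pi' \cong \pi'_{s_0} \widehat{\otimes} (\pi')^{s_0}$ and applying Schur's lemma in the form of uniqueness of isotypic decomposition, the operator $T$ must decompose as a tensor product of unitary isomorphisms at $s_0$ and off $s_0$. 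A similar argument using weak approximation at $s_0$ promotes the intertwining to the full group $\bfG(\bbA(k))$, giving $\pi \cong \pi'$.

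The main obstacle I anticipate is the rigorous passage from $\bfG(k)$-invariance to $\bfG(\bbA^{s_0}(k))$-invariance: in the restricted product topology, an element of $\bfG(\bbA^{s_0}(k))$ is approximated by a sequence in $\bfG(k)$ whose local components are eventually integral at almost all places, and one needs to check that the induced unitary operators converge strongly. This requires carefully controlling the local factors of decomposable vectors. A secondary technicality is that the tensor-product arguments separating the role of $s_0$ from the rest use that for a type I factor, any closed invariant subspace of a tensor product with a fixed factor is of product form; this follows from the standard commutant theory but must be invoked with care for the restricted infinite tensor product.
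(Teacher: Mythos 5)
The paper does not give its own proof of this statement; it simply cites Bekka--Cowling~\cite{Bekka-Cowling}. So I assess your argument on its own merits.

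There is a genuine gap in the key step, which you yourself flag as a concern at the end but do not resolve. You write: ``By strong continuity of the representation together with density, $W$ is invariant under $\bfG(\bbA^{s_0}(k))$.'' The problem is that $\bfG(k)$ acts on $\pi$ through its \emph{diagonal} embedding into $\bfG(\bbA(k)) \cong \bfG(k_{s_0}) \times \bfG(\bbA^{s_0}(k))$, not through its projection to $\bfG(\bbA^{s_0}(k))$. Strong approximation says that the projection of $\bfG(k)$ to $\bfG(\bbA^{s_0}(k))$ is dense; it does \emph{not} say that $\bfG(k)$ is dense as a subgroup acting by $\pi$. Concretely, take $\gamma_n\in\bfG(k)$ whose $\bfG(\bbA^{s_0}(k))$-components converge to some $g$. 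Since $\bfG(k)$ is discrete in $\bfG(\bbA(k))$, the $s_0$-components $\gamma_{n,s_0}\in\bfG(k_{s_0})$ must escape to infinity. Therefore $\pi(\gamma_n)=\pi_{s_0}(\gamma_{n,s_0})\otimes\pi^{s_0}(\gamma_n^{s_0})$ has no reason to converge strongly to $1\otimes\pi^{s_0}(g)$, and closedness of $W$ gives you nothing. The obstacle you name at the end (controlling local factors in the restricted product) is not the true difficulty; the real difficulty is the diverging $s_0$-component, and your argument has no mechanism to handle it.

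This divergence is precisely where the content of Bekka--Cowling's theorem lies. Their proof has to inject nontrivial representation-theoretic information about the local factor at $s_0$ -- in essence the Howe--Moore property of the non-compact simple group $\bfG(k_{s_0})$ (vanishing of matrix coefficients at infinity), in the spirit of Cowling--Steger's result that restrictions of irreducible unitary representations of simple groups to lattices remain irreducible. One needs to exploit that $\pi_{s_0}$ is either trivial (handled separately) or mixing, so that the escape to infinity of $\gamma_{n,s_0}$ can be turned to advantage rather than being an obstruction. Your proof, as written, has no analogue of this ingredient, and the irreducibility claim (and with it the injectivity argument, which relies on the same density step) does not go through. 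The same problem afflicts the later sentence ``The $\bfG(k)$-invariance of $U\widehat\otimes\pi^{s_0}$ then forces $U$ to be invariant under the image of $\bfG(k)$ in $\bfG(k_{s_0})$'': again the diagonal action versus projected action issue means this deduction is unjustified. Also, as a minor point, the opening case where $\bfG$ is anisotropic at every place never occurs (for a positive-dimensional semisimple group, $\bfG$ is quasi-split, hence isotropic, at almost all places), so that paragraph is vacuous.
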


We refer to the injection $\widehat{\bfG(\bbA(k))}\hookrightarrow \widehat{\bfG(k)}$ as the \emph{Bekka-Cowling map}.

\begin{theorem} \label{thm:bekka-cowling-cohom}
Let $k$ be a number field and $\bfG$ be a connected, simply connected almost simple $k$-algebraic group.
The restriction of the Bekka-Cowling map to the cohomological dual of $\bfG(\bbA(k))$ gives an injective 
map $\widehat{\bfG(\bbA(k))}_{\cohom}\hookrightarrow \widehat{\bfG(k)}_{\cohom}$ which is a homeomorphism onto its image. 
\end{theorem}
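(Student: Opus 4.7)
The proof breaks into three tasks: (i) the Bekka-Cowling map sends cohomological adelic representations to cohomological $\bfG(k)$-representations; (ii) it is injective; and (iii) it is a homeomorphism onto its image. Tasks (i) and (ii) will be immediate from results already in hand. By Theorem~\ref{thm:bekka-cowling}, the restriction of an irreducible $\bfG(\bbA(k))$-representation to $\bfG(k)$ is irreducible, and the resulting map on unitary duals is injective. By Theorem~\ref{thm:adelic}, the restriction map $H^\ast_c(\bfG(\bbA(k)),V)\to H^\ast(\bfG(k),V)$ is an isomorphism for every unitary $V$, so $V\in\widehat{\bfG(\bbA(k))}_{\cohom}$ forces $V|_{\bfG(k)}\in\widehat{\bfG(k)}_{\cohom}$.

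For task (iii), recall from Theorem~\ref{thm:adeliccohom} that $\widehat{\bfG(\bbA(k))}_{\cohom}$ is a countable, discrete Fell space. Any map from a discrete space is continuous, so it remains to show that the image is discrete in $\widehat{\bfG(k)}$. Unwinding the Fell topology, this amounts to showing: for every $V_0\in\widehat{\bfG(\bbA(k))}_{\cohom}$, the point $V_0|_{\bfG(k)}$ is not in the closure of the remaining image points, i.e.~$V_0|_{\bfG(k)}$ is not weakly contained in $W|_{\bfG(k)}$, where $W=\bigoplus_{V\neq V_0}V$ is the direct sum of the other cohomological adelic irreducibles.

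The hard part will be to transfer the desired non-weak-containment from the $\bfG(k)$-level up to the $\bfG(\bbA(k))$-level, since $V_0\not\prec W$ as $\bfG(\bbA(k))$-representations is a restatement of the adelic discreteness already established inside the proof of Theorem~\ref{thm:adeliccohom}. I plan to effect this transfer using the density of $\bfG(k)$ in $\bfG(\bbA(k))$ supplied by strong approximation, together with the continuity of matrix coefficients and the special tensor-product structure of cohomological adelic irreps from Theorem~\ref{thm:adeliccohom}. Writing $V_0\cong\bar\otimes_s V_{0,s}$ with $V_{0,s}$ trivial outside a finite set $S\supseteq S_0$, set $K_S=\prod_{s\notin S}\bfG(\calO_s)$; any matrix coefficient of $V_0$ obtained from a $K_S$-invariant unit vector is $K_S$-bi-invariant and so factors through the countable double coset space $K_S\bs\bfG(\bbA(k))/K_S$, and by strong approximation every such double coset meets $\bfG(k)$. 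Pointwise approximation of this coefficient by matrix coefficients of $W$ on $\bfG(k)$ should thus, after averaging the approximating vectors against the Haar measure on $K_S$, propagate to pointwise approximation on all of $\bfG(\bbA(k))$, yielding an adelic weak containment $V_0\prec W$; the discreteness statement inside the proof of Theorem~\ref{thm:adeliccohom} (which itself invokes property T of the compact group $K_S$ together with Bekka-Cowling's Lemma~4) then produces the desired contradiction.
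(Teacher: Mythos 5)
Your tasks (i) and (ii) match the paper exactly, but your argument for (iii) has a genuine gap. You try to prove directly that $V_0|_{\bfG(k)}\prec W|_{\bfG(k)}$ implies $V_0\prec W$ as $\bfG(\bbA(k))$-representations, by propagating matrix coefficient approximation from $\bfG(k)$ to the adeles via $K_S$-bi-invariance and strong approximation. This fails on several counts. First, $K_S\bs\bfG(\bbA(k))/K_S$ is not countable: writing $G=G_S\times G_{S^c}$ with $K_S\subset G_{S^c}$, the double coset space is $G_S\times (K_S\bs G_{S^c}/K_S)$, and the factor $G_S$ contains the (uncountable) archimedean group. Second, strong approximation does not give that every double coset meets $\bfG(k)$: for $\{g_S\}\times K_S g_{S^c}K_S$ to contain a diagonal rational point $\gamma$ one would need $\gamma_S=g_S$ exactly, which forces $g_S$ into the image of $\bfG(k)\hookrightarrow G_S$, a countable set. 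Third, and most fundamentally, weak containment $V_0|_{\bfG(k)}\prec W|_{\bfG(k)}$ yields approximation of the positive definite function only uniformly on \emph{finite} subsets of $\bfG(k)$; averaging the approximating vectors over $K_S$ then controls $\tilde\psi$ on $K_S F K_S$ only in terms of $\psi$ on $K_S F K_S\cap\bfG(k)$, which is generally infinite, so the approximation does not propagate. In short, the direct transfer of weak containment from the lattice to the ambient group is not available.

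The paper's proof performs this transfer with the correct tool: continuity of unitary induction (Proposition F.3.4 of the cited Kazhdan property T book). From $V\prec U$ as $\Gamma$-representations one obtains $I^2V\prec I^2U$ as $G$-representations, hence $V\prec U\bar\otimes L^2(G/\Gamma)=U\oplus\bigl(U\bar\otimes L^2_0(G/\Gamma)\bigr)$. Adelic discreteness of $\widehat{G}_{\cohom}$ rules out $V\prec U$, so $V\prec U\bar\otimes L^2_0(G/\Gamma)$. Since $V$ is trivial at $\bfG(k_s)$ for almost every place $s$, restricting to such a $G_s$ shows $U\bar\otimes L^2_0(G/\Gamma)$ has $G_s$-almost invariant vectors, contradicting Corollary~\ref{cor:Clozel++}. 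This spectral gap input is essential precisely to dispose of the $L^2_0(G/\Gamma)$ tensor factor that induction inevitably produces, and it is entirely missing from your proposal; your approach would need to recreate it or circumvent it, and as written it does neither.
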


\begin{proof}
Let $G=\bfG(\bbA(k))$ and $\Gamma=\bfG(k)$.
   The image of the Bekka-Cowling map is in $\widehat{\Gamma}_{\cohom}$ by Theorem~\ref{thm:adelic}
   and it is injective by Theorem~\ref{thm:bekka-cowling}. 
   $\widehat{G}_{\cohom}$ is discrete by Theorem~\ref{thm:adeliccohom}, so we are left to show that the image in $\widehat{\Gamma}_{\cohom}$ is discrete.

   As in the proof of Theorem~\ref{thm:adeliccohom}, we fix a cohomological irreducible unitary $G$-representation $V$. 
   Let $U$ be the direct sum of all other cohomological irreducible unitary $G$-representations. We assume by contradiction that $V$ is weakly contained in $U$ as $\Gamma$-representations.
By the continuity of induction, \cite[Proposition F.3.4]{Tbook}, $I^2V\cong V\bar{\otimes} L^2(G/\Gamma)$ is weakly contained in $I^2U\cong U\bar{\otimes} L^2(G/\Gamma)$  as unitary \mbox{$G$-re}presentations.
Thus, $V$ is contained in $U\bar{\otimes} L^2(G/\Gamma)$  as unitary \mbox{$G$-re}presentations.
$V$ is not weakly contained in $U$, by the discreteness of $\widehat{G}_{\cohom}$,
so we conclude that $V$ is contained in $U\bar{\otimes} L^2_0(G/\Gamma)$  as $G$-representations.
By Theorem~\ref{thm:adeliccohom}, $V$ is trivial with respect to $G_s=\bfG(k_s)$ for almost every place $s$.
We thus get a contradiction to Corollary~\ref{cor:Clozel++}.
This finishes the proof.   
\end{proof}

Consider the unitary dual $\widehat{\bfG(k_s)}$ and its subspace $\widehat{\bfG(k_s)}_{\cohom}$ consisting of cohomological representations.

\begin{cor} \label{Gkcohomdual}
  Let $k$ be a number field and let $\bfG$ be a connected, simply connected almost simple group.
Assume that for every place $s$ of $k$, $\bfG(k_s)$ has property T.
The restriction of the Bekka-Cowling map to the cohomological dual of $\bfG(\bbA(k))$ gives a bijective homeomorphism $\widehat{\bfG(\bbA(k))}_{\cohom}\cong \widehat{\bfG(k)}_{\cohom}$.

For every $V\in \widehat{\bfG(k)}_{\cohom}$ there exist a finite set $S$ of places of $k$,
and for each $s\in S$ a cohomological irreducible unitary representations $V_s$ of $\bfG(k_s)$ such that $V\cong \bar{\otimes}_S V_s$ and 
\[ H^*(\bfG(k),V) \cong \otimes_S H^*_c(\bfG(k_s),V_s) \]
is finite dimensional. 
\end{cor}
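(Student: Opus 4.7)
The plan is to combine Theorem~\ref{thm:bekka-cowling-cohom}, which already gives injectivity and the homeomorphism onto the image, with Theorem~\ref{thm:GknonGA} in order to get surjectivity; the ``furthermore'' statement will then follow by transporting the product decomposition from Theorem~\ref{thm:adeliccohom}. Since we have the property T assumption on each local factor $\bfG(k_s)$, Theorem~\ref{thm:GknonGA} applies directly.

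First, I would take an arbitrary $V\in \widehat{\bfG(k)}_{\cohom}$ and apply Theorem~\ref{thm:GknonGA} to produce a $\bfG(k)$-subrepresentation $V_0\subset V$ on which the action extends to $\bfG(\bbA(k))$ such that $H^\ast(\bfG(k),V_0^\perp)=0$ and the inclusion induces an isomorphism $H^\ast(\bfG(k),V_0)\cong H^\ast(\bfG(k),V)$. Since $V$ is cohomological, $V_0$ has non-vanishing cohomology, so in particular $V_0\neq 0$. By irreducibility of $V$ as a $\bfG(k)$-representation, $V_0=V$; thus the $\bfG(k)$-action on $V$ itself extends to $\bfG(\bbA(k))$. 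The extended $\bfG(\bbA(k))$-representation is automatically irreducible (every $\bfG(\bbA(k))$-invariant closed subspace is $\bfG(k)$-invariant, hence is $0$ or $V$), and it is cohomological since $H^\ast_c(\bfG(\bbA(k)),V)\cong H^\ast(\bfG(k),V)$ by Theorem~\ref{thm:adelic}. This shows that the Bekka-Cowling map sends $\widehat{\bfG(\bbA(k))}_{\cohom}$ \emph{onto} $\widehat{\bfG(k)}_{\cohom}$, and combined with Theorem~\ref{thm:bekka-cowling-cohom} it is a bijective homeomorphism.

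For the second assertion, I would start from the factorization provided by Theorem~\ref{thm:adeliccohom}: the preimage $\tilde V\in\widehat{\bfG(\bbA(k))}_{\cohom}$ of $V$ under the Bekka-Cowling map decomposes as $\tilde V\cong \bar{\otimes}_s V_s$ with each $V_s\in\widehat{\bfG(k_s)}_{\cohom}$ and $V_s$ trivial for $s$ outside a finite set $S$. Choosing this $S$ minimally (so that $V_s$ is non-trivial exactly for $s\in S$) and using Theorem~\ref{thm:adelic} together with the product formula of Theorem~\ref{thm:adeliccohom} yields $H^\ast(\bfG(k),V)\cong H^\ast_c(\bfG(\bbA(k)),\tilde V)\cong \bigotimes_{s\in S} H^\ast_c(\bfG(k_s),V_s)$. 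Finite-dimensionality then follows from Theorems~\ref{thm:Vogan-Zuckerman} and~\ref{thm:Casselman}, which assert that each local cohomological representation has finite-dimensional cohomology.

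The only genuinely delicate step is the irreducibility-preservation argument in the first paragraph, i.e.\ verifying that the $\bfG(\bbA(k))$-extension of $V$ remains irreducible; but this is automatic once one has the extension, since $\bfG(k)$ sits inside $\bfG(\bbA(k))$ and $V$ is irreducible as a $\bfG(k)$-representation. Thus the main content is really just a clean packaging of Theorems~\ref{thm:adelic},~\ref{thm:adeliccohom},~\ref{thm:GknonGA} and~\ref{thm:bekka-cowling-cohom}; no new geometric or spectral input is needed beyond what has already been established.
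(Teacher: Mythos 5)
Your argument is correct and is essentially the same packaging of Theorems~\ref{thm:bekka-cowling-cohom}, \ref{thm:GknonGA}, and \ref{thm:adeliccohom} that the paper uses (its proof just says ``immediate'' from those three results); in particular, your key observation that irreducibility of $V$ plus cohomological non-vanishing force $V_0=V$ in the decomposition supplied by Theorem~\ref{thm:GknonGA} is exactly the step that makes the combination go through. The chain $H^\ast(\bfG(k),V)\cong H^\ast_c(\bfG(\bbA(k)),\tilde V)\cong\bigotimes_S H^\ast_c(\bfG(k_s),V_s)$ and the finite-dimensionality via the local results are likewise what the cited theorems deliver directly.
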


\begin{proof}
This is immediate by Theorem~\ref{thm:bekka-cowling-cohom}, Theorem~\ref{thm:GknonGA} and Theorem~\ref{thm:adeliccohom}.
\end{proof}

\begin{remark}
    The assumption in Theorem~\ref{thm:GknonGA} and Corollary~\ref{Gkcohomdual} that the local groups $\bfG(k_s)$ have property T for every place $s$ of $k$ is clearly satisfied if $\rank_k(\bfG) \geq 2$, a notable example being $\SL_n(k)$ for $n\geq 3$,
    however there are other examples of such groups, as the following example shows.
    Fix a $k$-division algebra $D$ of degree $p$, an odd prime, and let $\bfG$ be $\SL_1(D)$, thus $\rank_k(\bfG) =0$. Then for every field extension $k'$ of $k$, either $D$ splits over $k'$, in which case $\rank_{k'}(\bfG)=p-1$, or $\rank_{k'}(\bfG)=0$.
\end{remark}

\begin{conjecture}
    The assumption in Theorem~\ref{thm:GknonGA} and Corollary~\ref{Gkcohomdual} that the local groups $\bfG(k_s)$ have property T could be removed.
\end{conjecture}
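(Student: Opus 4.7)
The conjecture asserts that Theorem~\ref{thm:GknonGA} and Corollary~\ref{Gkcohomdual} remain valid without the hypothesis that $\bfG(k_s)$ has property~T at every place~$s$. The plan is to identify precisely where property~T enters the proof of Theorem~\ref{thm:GknonGA} and to substitute a weaker spectral gap input in its place; the corollary then follows verbatim via Theorem~\ref{thm:bekka-cowling-cohom} and Theorem~\ref{thm:adeliccohom}. An inspection of the proof shows that property~T is invoked only to upgrade ``$I^2 V$ has no $\bfG(k_s)$-invariant vectors'' to the strictly stronger statement ``$I^2 V$ has no $\bfG(k_s)$-almost invariant vectors'', which is what is actually needed to apply Theorem~\ref{thm:adelvanish} in the $k$-anisotropic branch and Theorem~\ref{thm:vanishss-general} in the $k$-isotropic branch of the proof. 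Concretely, after the reduction $V = V_0^{\perp}$, one uses~\cite{BBHP}*{Proposition~5.1} to conclude $(I^2 V)^{\bfG(k_s)} = 0$ for each relevant $s$, and property~T then promotes this to the desired spectral gap.

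The plan is therefore to establish and input the following spectral gap statement, which is essentially ``the conjecture regarding spectral gap property of higher rank lattices'' alluded to in the introduction: for every unitary $\bfG(k)$-representation $V$ whose subspace of $G$-continuous vectors vanishes, and for every place $s$ at which $\bfG$ is isotropic, the induced unitary $\bfG(\bbA(k))$-representation $I^2(V)$ admits no $\bfG(k_s)$-almost invariant unit vectors. When $V$ is the restriction of a unitary $\bfG(\bbA(k))$-representation $U$ with $U^{\bfG(k_s)} = 0$, one has a canonical $\bfG(\bbA(k))$-equivariant isomorphism $I^2(V) \cong U \bar{\otimes} L^2(\bfG(\bbA(k))/\bfG(k))$, and the required spectral gap is exactly Corollary~\ref{cor:Clozel++}. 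The new content concerns representations $V$ that do not extend to the adelic group; granting it, the proof of Theorem~\ref{thm:GknonGA} goes through unaltered, replacing each property-T appeal by this input.

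The genuine obstacle is proving this spectral gap statement. A natural line of attack mirrors the Burger-Sarnak reduction used for Theorem~\ref{Clozel++}. One would first address the two archetypal rank-one cases supplied by Theorem~\ref{Clozel-reduction}, namely $\bfH$ with $\bfH(k_s)$ being either $\SL_2$ or the quasi-split $\SU_3$ associated with the unramified quadratic extension of~$k_s$; here the Kunze-Stein property together with a suitably strengthened arithmetic input in the spirit of Selberg's $\tfrac{3}{16}$-theorem, tolerating non-extending twists, should suffice. The transfer to the general almost simple $\bfG$ would then proceed by pulling back along a $k$-morphism $\bfH \to \bfG$ with central kernel and exploiting the functoriality of closed tensor ideals in the unitary dual developed in Subsection~\ref{subsec: unitary dual}, particularly Lemma~\ref{lem: continuity of pullback} combined with Theorem~\ref{Burger-Sarnak}.

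The technically hardest step will be the arithmetic strengthening in the rank-one case, since the classical derivations of property-$\tau$ bounds for automorphic representations rely decisively on the representation already descending to the ambient adelic group, and there is no off-the-shelf analog of the Jacquet-Gelbart machinery for genuine (non-extending) $\bfG(k)$-representations. Absent such an input, the conjecture remains just that: conditional on a spectral gap statement whose formulation is clear but whose proof requires a substantial new arithmetic ingredient.
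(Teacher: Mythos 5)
The statement you were asked to prove is a \emph{conjecture}; the paper contains no proof of it, only the remark in the introduction that both Conjecture~\ref{conj1} and Conjecture~\ref{conj2} (of which this is an analogue) ``follow from a certain conjecture regarding spectral gap property of higher rank lattices.'' Your proposal does not prove the conjecture either, and you say so explicitly in your final paragraph. What you do provide is an accurate diagnosis, which I will confirm and sharpen.

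You correctly locate the single point where property~T is consumed in the proof of Theorem~\ref{thm:GknonGA}: after passing to $V=V_0^\perp$, the BBHP argument yields $(I^2V)^{\bfG(k_s)}=0$, but Theorem~\ref{thm:adelvanish} (anisotropic branch) and Theorem~\ref{thm:vanishss-general} (isotropic branch) both demand the strictly stronger absence of \emph{almost} invariant vectors, and property~T is exactly what bridges this gap. You also correctly observe that when $V$ extends to $\bfG(\bbA(k))$ the required input is Corollary~\ref{cor:Clozel++} --- this is why Theorem~\ref{thm:adelic} has no property-T hypothesis --- and that the genuinely new content concerns non-extending $\Gamma$-representations. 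This all matches the paper's implicit reduction.

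However, the third paragraph of your proposal is more optimistic than warranted. A Burger--Sarnak/Clozel-type reduction to $\SL_2$ and quasi-split $\SU_3$ would only help if one already had, in those two cases, a spectral gap for $I^2(V)$ where $V$ is an arbitrary unitary $\bfG(k)$-representation with no $G$-continuous vectors. This is not a strengthening of Clozel's $\tau$-theorem ``tolerating non-extending twists''; it is a spectral gap statement for a class of representations that simply do not live on $\bfG(\bbA(k))/\bfG(k)$ at all, so the automorphic-representation framework (and Theorem~\ref{Burger-Sarnak}, which is stated for the automorphic dual) does not apply as written. Lemma~\ref{lem: continuity of pullback} governs pullbacks of closed tensor ideals in unitary duals of \emph{local} groups, and it is not clear how to feed the induced module $I^2(V)$, which is a representation of the adelic group, into that machinery when $V$ is not adelic to begin with. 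So your sketch would need, even to get started, a reformulation of the needed spectral gap that decouples it from automorphy; as it stands, the ``natural line of attack'' does not reduce the conjecture to a problem whose shape is actually known. Your closing sentence acknowledges the gap, so this is not a matter of you overclaiming a proof, but the reduction step itself is more speculative than the proposal's phrasing suggests.
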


\appendix 

\section{The invariant $r(G)$} \label{sec:rG}

This appendix gives for semisimple groups the precise values of the invariants $r$ and $r_0$ defined in Definition~\ref{def:r}.
We recall from the product formula, Theorem~\ref{thm:productformula},
that for a semisimple group $G$ with factor groups $G_i$,
$r(G)=\sum_i r(G_i)$ and $r_0(G)=\min_i r(G_i)$.
For compact groups we have $r(G)=0$ and $r_0(G)=\infty$ by Lemma~\ref{lem:compact} and for almost simple groups over non-archimedean local fields, we have $r(G)=\rank(G)$ by Theorem~\ref{thm:Casselman}.
We are thus left to establish the values associated with non-compact almost simple Lie groups.
In this case we clearly have $r(G)=r_0(G)$ and, by the discussion proceeding Theorem~\ref{thm:Vogan-Zuckerman}, this value
depends on $G$ only up to a local isomorphism.

The following table lists all non-compact simple Lie groups, up to local isomorphism, together with their real rank and their invariant $r_0(G)=r(G)$.
Note that we always have $r_0(G)=r(G)\geq \rank(G)$.
The table is taken from \cite{Enright}*{Theorem 7.2} and \cite{Kumaresan}*{Theorem 2} (for complex Lie groups) and \cite{Vogan-Zuckerman}*{Theorem 8.1} (for real Lie groups).

\newpage

\begin{center}
\begin{tabular}{ |l|c|c| } 
 \hline
 Type of $G$ & $\rank_{\mathbb{R}}(G)$ & $r_0(G)=r(G)$ \\ 
 \hline \hline
 $\SL(n+1,\mathbb{C})$, $n\geq 1$ & $n$ & $n$ \\ 
 $\SO(2n+1,\mathbb{C})$, $n\geq 2$ & $n$ & $2n-1$ \\ 
 $\SO(2n,\mathbb{C})$, $n\geq 4$ & $n$ & $2n-2$ \\ 
 $\Sp(2n,\mathbb{C})$, $n\geq 3$ & $n$ & $2n-1$ \\ 
 \hline
 $E_6(\mathbb{C})$ & 6 & 16 \\
 $E_7(\mathbb{C})$ & 7 & 27 \\
 $E_8(\mathbb{C})$ & 8 & 57 \\
 $F_4(\mathbb{C})$ & 4 & 15 \\
 $G_2(\mathbb{C})$ & 2 & 5 \\
 \hline
 $\SL(n+1,\mathbb{R})$, $n\geq 1$ & $n$ & $n$ \\ 
 $\SL(n,\mathbb{H})$, $n\geq 4$ & $n-1$ & $2n-2$ \\ 
 $\SL(3,\mathbb{H})$ & $2$ & $3$ \\ 
 $\SU(p,q)$, $1\leq p\leq q$ & $p$ & $p$ \\ 
 $\SO(p,q)$, $1\leq p\leq q$ & $p$ & $p$ \\ 
 $\SO(n,\mathbb{H})$, $n\geq 4$ & $[n/2]$ & $n-1$ \\ 
 $\Sp(2n,\mathbb{R})$, $n\geq 3$ & $n$ & $n$ \\ 
$\Sp(p,q)$, $1\leq p\leq q$ & $p$ & $2p$ \\ 
 \hline
 $E_{6}^{6}$ & 6 & 13 \\
 $E_{6}^{2}$ & 4 & 8 \\
 $E_{6}^{-14}$ & 2 & 8 \\
 $E_{6}^{-26}$ & 2 & 6 \\
 $E_{7}^{7}$ & 7 & 15 \\
 $E_{7}^{-5}$ & 4 & 12 \\
 $E_{7}^{-25}$ & 3 & 11 \\
 $E_{8}^{8}$ & 8 & 29 \\
$E_{8}^{-24}$ & 4 & 24 \\
$F_{4}^{4}$ & 4 & 8 \\
$F_{4}^{-20}$ & 1 & 4 \\
$G_{2}^{2}$ & 2 & 3 \\
 \hline
\end{tabular}
\end{center}

\begin{bibdiv}
\begin{biblist}
\bib{abrams_etal}{article}{
   author={Abrams, Aaron},
   author={Brady, Noel},
   author={Dani, Pallavi},
   author={Duchin, Moon},
   author={Young, Robert},
   title={Pushing fillings in right-angled Artin groups},
   journal={J. Lond. Math. Soc. (2)},
   volume={87},
   date={2013},
   number={3},
   pages={663--688},
}

\bib{BBHP}{article}{
    AUTHOR = {Bader, Uri},
    AUTHOR = {Boutonnet, R\'{e}mi},
    AUTHOR = {Houdayer, Cyril},
    AUTHOR = {Peterson, Jesse},
     TITLE = {Charmenability of arithmetic groups of product type},
   JOURNAL = {Invent. Math.},
  FJOURNAL = {Inventiones Mathematicae},
    VOLUME = {229},
      YEAR = {2022},
    NUMBER = {3},
     PAGES = {929--985},
}
\bib{bader+furman+gelander+monod}{article}{
 author={Bader, Uri},
 author={Furman, Alex},
 author={Gelander, Tsachik},
 author={Monod, Nicolas},
 title={Property {{\((T)\)}} and rigidity for actions on Banach spaces},
 journal={Acta Mathematica},
 volume={198},
 number={1},
 pages={57--105},
 date={2007},
}
\bib{stability_four}{misc}{
   author={Bader, Uri},
   author={Lubotzky, Alexander},
   author={Sauer, Roman},
   author={Weinberger, Shmuel},
   title={Stability and instability of lattices in semisimple groups},
   year={2023},
   note={arXiv:2303.08943},
}
\bib{BN1}{article}{
    AUTHOR = {Bader, Uri},
    AUTHOR = {Nowak, Piotr W.},
     TITLE = {Cohomology of deformations},
   JOURNAL = {J. Topol. Anal.},
  FJOURNAL = {Journal of Topology and Analysis},
    VOLUME = {7},
      YEAR = {2015},
    NUMBER = {1},
     PAGES = {81--104},
}

\bib{BN2}{article}{
    AUTHOR = {Bader, Uri},
    AUTHOR = {Nowak, Piotr W.},
     TITLE = {Group algebra criteria for vanishing of cohomology},
   JOURNAL = {J. Funct. Anal.},
  FJOURNAL = {Journal of Functional Analysis},
    VOLUME = {279},
      YEAR = {2020},
    NUMBER = {11},
     PAGES = {108730, 18},
}
\bib{survey}{article}{
  author = {Bader, Uri},
  author = {Sauer, Roman},
  title  = {Higher property T and below-rank phenomena of lattices},
  year   = {2025},
  eprint = {arXiv:2511.20192},
}

\bib{waist}{article}{
  author = {Bader, Uri},
  author = {Sauer, Roman},
  title = {Uniform waist inequalities in codimension two for manifolds with Kazhdan fundamental group},
  year = {2024},
  eprint = {arXiv:2407.19783},
}

\bib{Bekka}{article}{
    AUTHOR = {Bekka, M. B.},
     TITLE = {On uniqueness of invariant means},
   JOURNAL = {Proc. Amer. Math. Soc.},
  FJOURNAL = {Proceedings of the American Mathematical Society},
    VOLUME = {126},
      YEAR = {1998},
    NUMBER = {2},
     PAGES = {507--514},
}

\bib{Bekka-Cowling}{article}{
    AUTHOR = {Bekka, M. E. B.},
    AUTHOR = {Cowling, M.},
     TITLE = {Some irreducible unitary representations of {$G(K)$} for a
              simple algebraic group {$G$} over an algebraic number field
              {$K$}},
   JOURNAL = {Math. Z.},
  FJOURNAL = {Mathematische Zeitschrift},
    VOLUME = {241},
      YEAR = {2002},
    NUMBER = {4},
     PAGES = {731--741},
}

\bib{Tbook}{book}{
    AUTHOR = {Bekka, Bachir},
    AUTHOR = {de la Harpe, Pierre},
    AUTHOR = {Valette, Alain},
     TITLE = {Kazhdan's property ({T})},
    SERIES = {New Mathematical Monographs},
    VOLUME = {11},
 PUBLISHER = {Cambridge University Press, Cambridge},
      YEAR = {2008},
     PAGES = {xiv+472},
}

 \bib{Bernstein}{article}{
    AUTHOR = {Bern\v{s}te\u{\i}n, I. N.},
     TITLE = {All reductive {${\germ p}$}-adic groups are of type {I}},
   JOURNAL = {Funkcional. Anal. i Prilo\v{z}en.},
  FJOURNAL = {Akademija Nauk SSSR. Funkcional\cprime nyi Analiz i ego Prilo\v{z}enija},
    VOLUME = {8},
      YEAR = {1974},
    NUMBER = {2},
     PAGES = {3--6},
}

\bib{bestvina+eskin+wortman}{article}{
   author={Bestvina, Mladen},
   author={Eskin, Alex},
   author={Wortman, Kevin},
   title={Filling boundaries of coarse manifolds in semisimple and solvable
   arithmetic groups},
   journal={J. Eur. Math. Soc. (JEMS)},
   volume={15},
   date={2013},
   number={6},
   pages={2165--2195},
}
		
\bib{Blanc}{article}{
    AUTHOR = {Blanc, Philippe},
     TITLE = {Sur la cohomologie continue des groupes localement compacts},
   JOURNAL = {Ann. Sci. \'{E}cole Norm. Sup. (4)},
  FJOURNAL = {Annales Scientifiques de l'\'{E}cole Normale Sup\'{e}rieure. Quatri\`eme
              S\'{e}rie},
    VOLUME = {12},
      YEAR = {1979},
    NUMBER = {2},
     PAGES = {137--168},
}

\bib{BFG}{article}{
    AUTHOR = {Blasius, Don},
    AUTHOR = {Franke, Jens},
    AUTHOR = {Grunewald, Fritz},
     TITLE = {Cohomology of {$S$}-arithmetic subgroups in the number field
              case},
   JOURNAL = {Invent. Math.},
  FJOURNAL = {Inventiones Mathematicae},
    VOLUME = {116},
      YEAR = {1994},
    NUMBER = {1-3},
     PAGES = {75--93},
}

\bib{Borel-stable}{article}{
    AUTHOR = {Borel, Armand},
     TITLE = {Stable real cohomology of arithmetic groups},
   JOURNAL = {Ann. Sci. \'{E}cole Norm. Sup. (4)},
  FJOURNAL = {Annales Scientifiques de l'\'{E}cole Normale Sup\'{e}rieure.
              Quatri\`eme S\'{e}rie},
    VOLUME = {7},
      YEAR = {1974},
     PAGES = {235--272},
}
\bib{Borel-Harder}{article}{
    AUTHOR = {Borel, A.},
    AUTHOR = {Harder, G.},
     TITLE = {Existence of discrete cocompact subgroups of reductive groups
              over local fields},
   JOURNAL = {J. Reine Angew. Math.},
  FJOURNAL = {Journal f\"{u}r die Reine und Angewandte Mathematik. [Crelle's
              Journal]},
    VOLUME = {298},
      YEAR = {1978},
     PAGES = {53--64},
}

\bib{Borel-Wallach}{book}{
    AUTHOR = {Borel, A.},
    AUTHOR = {Wallach, N.},
     TITLE = {Continuous cohomology, discrete subgroups, and representations
              of reductive groups},
    SERIES = {Mathematical Surveys and Monographs},
    VOLUME = {67},
   EDITION = {Second},
 PUBLISHER = {American Mathematical Society, Providence, RI},
      YEAR = {2000},
     PAGES = {xviii+260},
}

\bib{Borel-Yang}{article}{
    AUTHOR = {Borel, Armand},
    AUTHOR = {Yang, Jun},
     TITLE = {The rank conjecture for number fields},
   JOURNAL = {Math. Res. Lett.},
  FJOURNAL = {Mathematical Research Letters},
    VOLUME = {1},
      YEAR = {1994},
    NUMBER = {6},
     PAGES = {689--699},
}
\bib{Burger-Sarnak}{article}{
    AUTHOR = {Burger, M.},
    AUTHOR = {Sarnak, P.},
     TITLE = {Ramanujan duals. {II}},
   JOURNAL = {Invent. Math.},
  FJOURNAL = {Inventiones Mathematicae},
    VOLUME = {106},
      YEAR = {1991},
    NUMBER = {1},
     PAGES = {1--11},
}

\bib{Casselman}{article}{
    AUTHOR = {Casselman, W.},
     TITLE = {On a {$p$}-adic vanishing theorem of {G}arland},
   JOURNAL = {Bull. Amer. Math. Soc.},
  FJOURNAL = {Bulletin of the American Mathematical Society},
    VOLUME = {80},
      YEAR = {1974},
     PAGES = {1001--1004},
}

\bib{Cowling}{article}{
    AUTHOR = {Cowling, Michael},
     TITLE = {The {K}unze-{S}tein phenomenon},
   JOURNAL = {Ann. of Math. (2)},
  FJOURNAL = {Annals of Mathematics. Second Series},
    VOLUME = {107},
      YEAR = {1978},
    NUMBER = {2},
     PAGES = {209--234},
}
\bib{conrad-reductive}{article}{
   author={Conrad, Brian},
   title={Reductive group schemes},
   conference={
      title={Autour des sch\'{e}mas en groupes. Vol. I},
   },
   book={
      series={Panor. Synth\`eses},
      volume={42/43},
      publisher={Soc. Math. France, Paris},
   },
   date={2014},
   pages={93--444},
}
\bib{Cowling79}{article}{
    AUTHOR = {Cowling, Michael},
     TITLE = {Sur les coefficients des repr\'{e}sentations unitaires des groupes
              de {L}ie simples},
 BOOKTITLE = {Analyse harmonique sur les groupes de {L}ie ({S}\'{e}m.,
              {N}ancy-{S}trasbourg 1976--1978), {II}},
    SERIES = {Lecture Notes in Math.},
    VOLUME = {739},
     PAGES = {132--178},
 PUBLISHER = {Springer, Berlin},
}
\bib{stability}{article}{
    AUTHOR = {De Chiffre, Marcus},
    AUTHOR = {Glebsky, Lev},
    AUTHOR = {Lubotzky, Alexander},
    AUTHOR = {Thom, Andreas},
     TITLE = {Stability, cohomology vanishing, and nonapproximable groups},
   JOURNAL = {Forum Math. Sigma},
  FJOURNAL = {Forum of Mathematics. Sigma},
    VOLUME = {8},
      YEAR = {2020},
     PAGES = {Paper No. e18, 37},
}

\bib{Clozel}{article}{
    AUTHOR = {Clozel, Laurent},
     TITLE = {D\'{e}monstration de la conjecture {$\tau$}},
   JOURNAL = {Invent. Math.},
  FJOURNAL = {Inventiones Mathematicae},
    VOLUME = {151},
      YEAR = {2003},
    NUMBER = {2},
     PAGES = {297--328},
}

\bib{Clozel-Ullmo}{article}{
    AUTHOR = {Clozel, Laurent},
    AUTHOR = {Ullmo, Emmanuel},
     TITLE = {\'{E}quidistribution des points de {H}ecke},
 BOOKTITLE = {Contributions to automorphic forms, geometry, and number
              theory},
     PAGES = {193--254},
 PUBLISHER = {Johns Hopkins Univ. Press, Baltimore, MD},
      YEAR = {2004},
}

\bib{connes+moscovici}{article}{
   author={Connes, Alain},
   author={Moscovici, Henri},
   title={Cyclic cohomology, the Novikov conjecture and hyperbolic groups},
   journal={Topology},
   volume={29},
   date={1990},
   number={3},
   pages={345--388},
}

\bib{Conrad}{article}{
    AUTHOR = {Conrad, Brian},
     TITLE = {Weil and {G}rothendieck approaches to adelic points},
   JOURNAL = {Enseign. Math. (2)},
  FJOURNAL = {L'Enseignement Math\'{e}matique. Revue Internationale. 2e
              S\'{e}rie},
    VOLUME = {58},
      YEAR = {2012},
    NUMBER = {1-2},
     PAGES = {61--97},
}
\bib{dugundji}{article}{
   author={Dugundji, J.},
   title={An extension of Tietze's theorem},
   journal={Pacific J. Math.},
   volume={1},
   date={1951},
   pages={353--367},
}
	
\bib{dymara+janus}{article}{
    AUTHOR = {Dymara, Jan},
    AUTHOR = {Januszkiewicz, Tadeusz},
     TITLE = {Cohomology of buildings and their automorphism groups},
   JOURNAL = {Invent. Math.},
  FJOURNAL = {Inventiones Mathematicae},
    VOLUME = {150},
      YEAR = {2002},
    NUMBER = {3},
     PAGES = {579--627},
}

\bib{Enright}{article}{
    AUTHOR = {Enright, Thomas J.},
     TITLE = {Relative {L}ie algebra cohomology and unitary representations
              of complex {L}ie groups},
   JOURNAL = {Duke Math. J.},
    VOLUME = {46},
      YEAR = {1979},
    NUMBER = {3},
     PAGES = {513--525},
}
\bib{epstein}{book}{
   author={Epstein, David B. A.},
   author={Cannon, James W.},
   author={Holt, Derek F.},
   author={Levy, Silvio V. F.},
   author={Paterson, Michael S.},
   author={Thurston, William P.},
   title={Word processing in groups},
   publisher={Jones and Bartlett Publishers, Boston, MA},
   date={1992},
   pages={xii+330},
}

\bib{FTN}{book}{
    AUTHOR = {Fig\`a-Talamanca, Alessandro},
    AUTHOR = {Nebbia, Claudio},
     TITLE = {Harmonic analysis and representation theory for groups acting
              on homogeneous trees},
    SERIES = {London Mathematical Society Lecture Note Series},
    VOLUME = {162},
 PUBLISHER = {Cambridge University Press, Cambridge},
      YEAR = {1991},
     PAGES = {x+151},
}
\bib{fisher+margulis}{article}{
   author={Fisher, David},
   author={Margulis, Gregory},
   title={Almost isometric actions, property (T), and local rigidity},
   journal={Invent. Math.},
   volume={162},
   date={2005},
   number={1},
   pages={19--80},
}
\bib{Folland}{book}{
    AUTHOR = {Folland, Gerald B.},
     TITLE = {A course in abstract harmonic analysis},
    SERIES = {Textbooks in Mathematics},
   EDITION = {Second},
 PUBLISHER = {CRC Press, Boca Raton, FL},
      YEAR = {2016},
     PAGES = {xiii+305 pp.+loose errata},
}

\bib{FournierFacio}{article}{
  author = {Fournier-Facio, Francesco},
  title  = {Stability, approximable quotients, and higher property {(T)}},
  year   = {2025},
  eprint = {arXiv:2512.09180},
}

\bib{FournierFacio-Sauer}{article}{
  author = {Fournier-Facio, Francesco},
  author = {Sauer, Roman},
  title  = {Kazhdan groups of dimension $16$ with an uncountable range of second $\ell^2$-Betti numbers},
  year   = {2026},
  eprint = {arXiv:2601.00074},
}

\bib{Franke}{article}{
    AUTHOR = {Franke, Jens},
     TITLE = {Harmonic analysis in weighted {$L_2$}-spaces},
   JOURNAL = {Ann. Sci. \'{E}cole Norm. Sup. (4)},
  FJOURNAL = {Annales Scientifiques de l'\'{E}cole Normale Sup\'{e}rieure. Quatri\`eme
              S\'{e}rie},
    VOLUME = {31},
      YEAR = {1998},
    NUMBER = {2},
     PAGES = {181--279},
}

\bib{Garland}{article}{
    AUTHOR = {Garland, Howard},
     TITLE = {{$p$}-adic curvature and the cohomology of discrete subgroups
              of {$p$}-adic groups},
   JOURNAL = {Ann. of Math. (2)},
    VOLUME = {97},
      YEAR = {1973},
     PAGES = {375--423},
}

\bib{Gelbart-Jaquet}{article}{
    AUTHOR = {Gelbart, Stephen},
    AUTHOR = {Jacquet, Herv\'{e}},
     TITLE = {A relation between automorphic representations of {${\rm
              GL}(2)$} and {${\rm GL}(3)$}},
   JOURNAL = {Ann. Sci. \'{E}cole Norm. Sup. (4)},
    VOLUME = {11},
      YEAR = {1978},
    NUMBER = {4},
     PAGES = {471--542},
}

\bib{grobner}{article}{
   author={Grobner, Harald},
   title={Residues of Eisenstein series and the automorphic cohomology of
   reductive groups},
   journal={Compos. Math.},
   volume={149},
   date={2013},
   number={7},
   pages={1061--1090},
}
\bib{Guichardet}{book}{
    AUTHOR = {Guichardet, A.},
     TITLE = {Cohomologie des groupes topologiques et des alg\`ebres de {L}ie},
    SERIES = {Textes Math\'{e}matiques [Mathematical Texts]},
    VOLUME = {2},
 PUBLISHER = {CEDIC, Paris},
      YEAR = {1980},
     PAGES = {xvi+394},
}

\bib{harishchandra}{article}{
    AUTHOR = {Harish-Chandra},
     TITLE = {Representations of a semisimple {L}ie group on a {B}anach space. {I}},
   JOURNAL = {Trans. Amer. Math. Soc.},
    VOLUME = {75},
      YEAR = {1953},
     PAGES = {185--243},
}

\bib{ji}{article}{
   author={Ji, Ronghui},
   title={Smooth dense subalgebras of reduced group $C^*$-algebras, Schwartz
   cohomology of groups, and cyclic cohomology},
   journal={J. Funct. Anal.},
   volume={107},
   date={1992},
   number={1},
   pages={1--33},
}
\bib{ji+ramsey}{article}{
   author={Ji, Ronghui},
   author={Ramsey, Bobby},
   title={The isocohomological property, higher Dehn functions, and
   relatively hyperbolic groups},
   journal={Adv. Math.},
   volume={222},
   date={2009},
   number={1},
   pages={255--280},
}

\bib{Kaluba-Mizerka-Nowak}{article}{
  author = {Kaluba, Marek},
  author = {Mizerka, Piotr},
  author = {Nowak, Piotr W.},
  title = {Spectral gap for the cohomological Laplacian of $\operatorname{SL}_3(\mathbb{Z})$},
  year = {2022},
  eprint={arXiv:2207.02783},
}

\bib{Karpushev-Vershik}{article}{
    AUTHOR = {Karpushev, S. I.},
    AUTHOR = {Vershik, A. M.},
     TITLE = {Cohomology of groups in unitary representations, neighborhood
              of the identity and conditionally positive definite functions},
   JOURNAL = {Mat. Sb. (N.S.)},
  FJOURNAL = {Matematicheski\u{\i} Sbornik. Novaya Seriya},
    VOLUME = {119(161)},
      YEAR = {1982},
    NUMBER = {4},
     PAGES = {521--533, 590},
}

\bib{Keys}{article}{
    AUTHOR = {Keys, David},
     TITLE = {Principal series representations of special unitary groups
              over local fields},
   JOURNAL = {Compositio Math.},
  FJOURNAL = {Compositio Mathematica},
    VOLUME = {51},
      YEAR = {1984},
    NUMBER = {1},
     PAGES = {115--130},
}

\bib{KM99}{article}{
    AUTHOR = {Kleinbock, D. Y.},
    AUTHOR = {Margulis, G. A.},
     TITLE = {Logarithm laws for flows on homogeneous spaces},
   JOURNAL = {Invent. Math.},
  FJOURNAL = {Inventiones Mathematicae},
    VOLUME = {138},
      YEAR = {1999},
    NUMBER = {3},
     PAGES = {451--494},
}

\bib{Kumaresan}{article}{
    AUTHOR = {Kumaresan, S.},
     TITLE = {On the canonical {$k$}-types in the irreducible unitary
              {$g$}-modules with nonzero relative cohomology},
   JOURNAL = {Invent. Math.},
  FJOURNAL = {Inventiones Mathematicae},
    VOLUME = {59},
      YEAR = {1980},
    NUMBER = {1},
     PAGES = {1--11},
}

\bib{Tim+Timo}{article}{
    AUTHOR = {de Laat, Tim},
    AUTHOR = {Siebenand, Timo},
     TITLE = {Exotic group {$C^*$}-algebras of simple {L}ie groups with real
              rank one},
   JOURNAL = {Ann. Inst. Fourier (Grenoble)},
  FJOURNAL = {Universit\'{e} de Grenoble. Annales de l'Institut Fourier},
    VOLUME = {71},
      YEAR = {2021},
    NUMBER = {5},
     PAGES = {2117--2136},
}

\bib{leuzinger+young}{article}{
    AUTHOR = {Leuzinger, Enrico},
    AUTHOR = {Young, Robert},
     TITLE = {Filling functions of arithmetic groups},
   JOURNAL = {Ann. of Math. (2)},
  FJOURNAL = {Annals of Mathematics. Second Series},
    VOLUME = {193},
      YEAR = {2021},
    NUMBER = {3},
     PAGES = {733--792},
}

\bib{Li-Sun}{article}{
    AUTHOR = {Li, Jian-Shu},
    AUTHOR = {Sun, Binyong},
     TITLE = {Low degree cohomologies of congruence groups},
   JOURNAL = {Sci. China Math.},
  FJOURNAL = {Science China. Mathematics},
    VOLUME = {62},
      YEAR = {2019},
    NUMBER = {11},
     PAGES = {2287--2308},
}
\bib{neumann+paucar}{article}{
  author = {L\'opez Neumann, Antonio},
  author = {Paucar, Juan},
  title  = {Quantitative polynomial cohomology and applications to {$\mathrm{L}^p$}-measure equivalence},
  year   = {2025},
  eprint = {arXiv:2512.18463},
}

\bib{lubotzky+mozes+raghunathan}{article}{
   author={Lubotzky, Alexander},
   author={Mozes, Shahar},
   author={Raghunathan, M. S.},
   title={The word and Riemannian metrics on lattices of semisimple groups},
   journal={Inst. Hautes \'{E}tudes Sci. Publ. Math.},
   number={91},
   date={2000},
   pages={5--53 (2001)},
}

\bib{Margulis}{book}{
    AUTHOR = {Margulis, G. A.},
     TITLE = {Discrete subgroups of semisimple {L}ie groups},
    SERIES = {Ergebnisse der Mathematik und ihrer Grenzgebiete (3) [Results
              in Mathematics and Related Areas (3)]},
    VOLUME = {17},
 PUBLISHER = {Springer-Verlag, Berlin},
      YEAR = {1991},
     PAGES = {x+388},
}
\bib{mimura+toda}{book}{
   author={Mimura, Mamoru},
   author={Toda, Hirosi},
   title={Topology of Lie groups. I, II},
   series={Translations of Mathematical Monographs},
   volume={91},
   publisher={American Mathematical Society, Providence, RI},
   date={1991},
   pages={iv+451},
}
\bib{monod}{article}{
   author={Monod, Nicolas},
   title={On the bounded cohomology of semi-simple groups, $S$-arithmetic
   groups and products},
   journal={J. Reine Angew. Math.},
   volume={640},
   date={2010},
   pages={167--202},
}

\bib{Moore}{article}{
    AUTHOR = {Moore, Calvin C.},
     TITLE = {Decomposition of unitary representations defined by discrete
              subgroups of nilpotent groups},
   JOURNAL = {Ann. of Math. (2)},
  FJOURNAL = {Annals of Mathematics. Second Series},
    VOLUME = {82},
      YEAR = {1965},
     PAGES = {146--182},
}

\bib{meyer}{article}{
   author={Meyer, Ralf},
   title={Combable groups have group cohomology of polynomial growth},
   journal={Q. J. Math.},
   volume={57},
   date={2006},
   number={2},
   pages={241--261},
}

\bib{meyer-derived}{article}{ 
author={Meyer, Ralf}, title={Embeddings of derived categories of bornological modules}, date={2004}, eprint={arXiv:math/0410596}, }

\bib{mok}{article}{
   author={Mok, Ngaiming},
   title={Harmonic forms with values in locally constant Hilbert bundles},
   booktitle={Proceedings of the Conference in Honor of Jean-Pierre Kahane
   (Orsay, 1993)},
   journal={J. Fourier Anal. Appl.},
   date={1995},
   pages={433--453},
}
\bib{ogle}{article}{
   author={Ogle, C.},
   title={Polynomially bounded cohomology and discrete groups},
   journal={J. Pure Appl. Algebra},
   volume={195},
   date={2005},
   number={2},
   pages={173--209},
}
\bib{Oh}{article}{
    AUTHOR = {Oh, Hee},
     TITLE = {Uniform pointwise bounds for matrix coefficients of unitary
              representations and applications to {K}azhdan constants},
   JOURNAL = {Duke Math. J.},
  FJOURNAL = {Duke Mathematical Journal},
    VOLUME = {113},
      YEAR = {2002},
    NUMBER = {1},
     PAGES = {133--192},
}
\bib{ozawa}{article}{
   author={Ozawa, Narutaka},
   title={A functional analysis proof of Gromov's polynomial growth theorem},
   language={English, with English and French summaries},
   journal={Ann. Sci. \'{E}c. Norm. Sup\'{e}r. (4)},
   volume={51},
   date={2018},
   number={3},
   pages={549--556},
}

\bib{Samei-Wiersma}{article}{ 
author={Samei, Ebrahim}, 
author={Wiersma, Matthew}, 
title={Exotic C*-algebras of geometric groups}, 
year={2018}, 
eprint={arXiv:1809.07007}}

\bib{Shahidi}{article}{
    AUTHOR = {Shahidi, Freydoon},
     TITLE = {A proof of {L}anglands' conjecture on {P}lancherel measures;
              complementary series for {$p$}-adic groups},
   JOURNAL = {Ann. of Math. (2)},
  FJOURNAL = {Annals of Mathematics. Second Series},
    VOLUME = {132},
      YEAR = {1990},
    NUMBER = {2},
     PAGES = {273--330},
}	

\bib{Shalom}{article}{
    AUTHOR = {Shalom, Yehuda},
     TITLE = {Rigidity of commensurators and irreducible lattices},
   JOURNAL = {Invent. Math.},
  FJOURNAL = {Inventiones Mathematicae},
    VOLUME = {141},
      YEAR = {2000},
    NUMBER = {1},
     PAGES = {1--54},
}

\bib{Tadic}{article}{
    AUTHOR = {Tadi\'{c}, M.},
     TITLE = {Dual spaces of adelic groups},
   JOURNAL = {Glas. Mat. Ser. III},
  FJOURNAL = {Dru\v{s}stvo Matemati\v{c}ara i Fizi\v{c}ara S. R. Hrvatske. Glasnik
              Matemati\v{c}ki. Serija III},
    VOLUME = {19(39)},
      YEAR = {1984},
    NUMBER = {1},
     PAGES = {39--48},
}
\bib{Tits}{article}{
    AUTHOR = {Tits, J.},
     TITLE = {Classification of algebraic semisimple groups},
 BOOKTITLE = {Algebraic {G}roups and {D}iscontinuous {S}ubgroups ({P}roc.
              {S}ympos. {P}ure {M}ath., {B}oulder, {C}olo., 1965)},
     PAGES = {33--62},
 PUBLISHER = {Amer. Math. Soc., Providence, R.I.},
      YEAR = {1966},
}

\bib{Thom-ICM}{article}{
 author={Thom, Andreas},
 book={
 title={Proceedings of the international congress of mathematicians 2018, ICM 2018, Rio de Janeiro, Brazil, August 1--9, 2018. Volume III. Invited lectures},
 publisher={Hackensack, NJ: World Scientific; Rio de Janeiro: Sociedade Brasileira de Matem\'atica (SBM)},
 },
 title={Finitary approximations of groups and their applications},
 pages={1779--1799},
 date={2018},
}

\bib{Veca}{thesis}{
    AUTHOR = {Alessandro Veca},
 TITLE = {The Kunze-Stein Phenomenon},
 school  ={University of New South
Wales, Sydney, Australia},
 YEAR = {2002},
 type    = {PhD Thesis},
}

\bib{Vogan}{article}{
    AUTHOR = {Vogan, Jr., David A.},
     TITLE = {Unitarizability of certain series of representations},
   JOURNAL = {Ann. of Math. (2)},
  FJOURNAL = {Annals of Mathematics. Second Series},
    VOLUME = {120},
      YEAR = {1984},
    NUMBER = {1},
     PAGES = {141--187},
}

\bib{Vogan-survey}{article}{
    AUTHOR = {Vogan, Jr., David A.},
     TITLE = {Cohomology and group representations},
 BOOKTITLE = {Representation theory and automorphic forms ({E}dinburgh,
              1996)},
    SERIES = {Proc. Sympos. Pure Math.},
    VOLUME = {61},
     PAGES = {219--243},
 PUBLISHER = {Amer. Math. Soc., Providence, RI},
      YEAR = {1997},
}

\bib{Vogan-Zuckerman}{article}{
    AUTHOR = {Vogan, Jr., David A.},
    AUTHOR = {Zuckerman, Gregg J.},
     TITLE = {Unitary representations with nonzero cohomology},
   JOURNAL = {Compositio Math.},
  FJOURNAL = {Compositio Mathematica},
    VOLUME = {53},
      YEAR = {1984},
    NUMBER = {1},
     PAGES = {51–90},
}
\bib{weibel}{book}{
   author={Weibel, Charles A.},
   title={An introduction to homological algebra},
   series={Cambridge Studies in Advanced Mathematics},
   volume={38},
   publisher={Cambridge University Press, Cambridge},
   date={1994},
   pages={xiv+450},
}
\bib{wenger}{article}{
   author={Wenger, Stefan},
   title={A short proof of Gromov’s filling inequality},
   journal={Proc. Amer. Math. Soc.},
   volume={136},
   date={2008},
   number={8},
   pages={2937–2941},
}
\bib{zimmer}{book}{
   author={Zimmer, Robert J.},
   title={Ergodic theory and semisimple groups},
   series={Monographs in Mathematics},
   volume={81},
   publisher={Birkh\"{a}user Verlag, Basel},
   date={1984},
   pages={x+209},
}
\bib{Zuckerman}{article}{
    AUTHOR = {Zuckerman, Gregg J.},
     TITLE = {Continuous cohomology and unitary representations of real reductive groups},
   JOURNAL = {Ann. of Math. (2)},
  FJOURNAL = {Annals of Mathematics. Second Series},
    VOLUME = {107},
      YEAR = {1978},
    NUMBER = {3},
     PAGES = {495–516},
}
	
\end{biblist}
\end{bibdiv}

\end{document}